% CREATED: 30 AUG 2020
% FGG & PR

\documentclass[11pt, a4paper, dvipsnames]{amsart}

\usepackage[utf8]{inputenc}
\usepackage{lmodern}
\usepackage[T1]{fontenc}
\usepackage[english]{babel}
\usepackage{fullpage}

\usepackage{latexsym}
\usepackage{amsmath,amssymb,amsthm}
\usepackage{mathrsfs}
\usepackage{tikz-cd}

\usepackage{multirow}
\usepackage{booktabs}

%\usepackage[table,xcdraw]{xcolor}
%\usepackage{longtable}

% HYPERLINKS ARE COLORED BLUE

\usepackage{color}
\usepackage{hyperref}
\hypersetup{
	colorlinks=true,
	linkcolor=blue,
	citecolor=blue,
	urlcolor=blue
}
\usepackage{xcolor}

\usepackage{soul} % USE TO HIGHLIGHT TEXT: \hl{}

% COMMENT THIS OUT TO HIDE LINE NUMBERS

\usepackage{lineno}
%\linenumbers

\usepackage[textwidth=20mm]{todonotes}  % TO DO NOTES: \todo{...}

\numberwithin{equation}{section} %NUMBER EQUATIONS WITHIN SECTIONS

% LETTER LABELED THEOREMS

\newtheorem{theoremAlph}{Theorem}
 % Theorem in the Intro
%
\newtheorem{corollaryAlph}[theoremAlph]{Corollary} %% Corollary in the Intro

% NUMBERED LABELED THEOREMS

\newtheorem{theorem}{Theorem}[section]
\newtheorem{lemma}[theorem]{Lemma}	
\newtheorem{proposition}[theorem]{Proposition}
\newtheorem{corollary}[theorem]{Corollary}
\theoremstyle{definition}
\newtheorem{definition}[theorem]{Definition} 
\newtheorem{remark}[theorem]{Remark}	
\newtheorem{example}[theorem]{Example}

\theoremstyle{definition} 
\newtheorem*{ack}{Acknowledgements}
\newtheorem*{conflict}{Competing interests}
\newtheorem*{funding}{Financial support}

\newcommand{\C}{\mathbb{C}}
\newcommand{\R}{\mathbb{R}}

\newcommand{\Z}{\mathbb{Z}}
\newcommand{\N}{\mathbb{N}}
\newcommand{\CIso}[1]{[S^1,#1]^{fr}}
\newcommand{\B}{\mathrm{B}}
\newcommand{\E}{\mathrm{E}}
\newcommand{\ttimes}{\mathrel{\widetilde{\times} }}

\newcommand{\bigslant}[2]{{\raisebox{.2em}{$#1$}\left/\raisebox{-.2em}{$#2$}\right.}}

%For the \bigtimes symbol
\DeclareFontFamily{U}{mathx}{\hyphenchar\font45}
\DeclareFontShape{U}{mathx}{m}{n}{
      <5> <6> <7> <8> <9> <10>
      <10.95> <12> <14.4> <17.28> <20.74> <24.88>
      mathx10
      }{}
\DeclareSymbolFont{mathx}{U}{mathx}{m}{n}
\DeclareFontSubstitution{U}{mathx}{m}{n}
\DeclareMathSymbol{\bigtimes}{1}{mathx}{"91}

%\usepackage[backend=bibtex, style=alphabetical]{biblatex}
%\addbibresource{References.bib}

%--------------------------------------------------------------------------------

\begin{document}
	
	\title[Free torus actions and twisted suspensions]{Free torus actions and twisted suspensions}
%{Twisted suspensions and free torus actions}
 %{Twisted suspensions and free torus actions on connected sums of products of spheres}
 %{Free circle and torus actions on connected sums of products of spheres}
	
	%Author 1
	
	\author[F.~Galaz-Garc\'ia]{Fernando Galaz-Garc\'ia}
	\address[Galaz-Garc\'ia]{Department of Mathematical Sciences, Durham University, United Kingdom.}
	\email{\href{mailto:fernando.galaz-garcia@durham.ac.uk}{fernando.galaz-garcia@durham.ac.uk}}

	%Author 2
	
	\author[P.~Reiser]{Philipp Reiser}
	\address[Reiser]{Department of Mathematics, University of Fribourg, Switzerland.}
	\email{\href{mailto:philipp.reiser@unifr.ch}{philipp.reiser@unifr.ch}}
	
	%\thanks{$^{(1)}$The authors acknowledge funding by the Deutsche Forschungsgemeinschaft (DFG, German Research Foundation) -- 281869850 (RTG 2229).}
	%\thanks{$^{(2)}$Received support from the Deutsche Forschungsgemeinschaft grant GA 2050 2-1 within the SPP 2026 ``Geometry at Infinity''.}
    %\thanks{$^{(3)}$The author acknowledges funding by the SNSF-Project 200020E\textunderscore 193062 and the DFG-Priority programme SPP 2026.}
	
	% DATE
	
	\date{\today}

	% MATH SUBJECT CLASSIFICATION AND KEYWORDS

	\subjclass[2020]{57S15, 55R15, 53C20, 57R65, 57R22, 55R25}
	\keywords{circle bundle, torus bundle, circle action, torus action, positive Ricci curvature, cohomogeneity-two torus action}

	% ABSTRACT
	
	\begin{abstract}

	We express the total space of a principal circle bundle over a connected sum of two manifolds in terms of the total spaces of circle bundles over each summand, provided certain conditions hold. We then apply this result to provide sufficient conditions for the existence of free circle and torus actions on connected sums of products of spheres and obtain a topological classification of closed, simply-connected manifolds with a free cohomogeneity-four torus action. As a corollary, we obtain infinitely-many manifolds with Riemannian metrics of positive Ricci curvature and isometric torus actions. 
	\end{abstract}
	
	\maketitle

% SEC: INTRODUCTION
%------------------	

	\section{Introduction and main results}
    Manifolds equipped with torus actions are a central object of study in geometry and topology  (see e.g.\ \cite{E18,FOT08,Gr02,Ko,OR67,Ra68,Se23,W07} and the references therein, to name but a few general references in the literature). Despite being extensively studied, basic questions on these spaces remain open, such as which smooth manifolds admit a smooth, effective torus action. This article addresses this question in the case of free actions.

    If a closed (i.e.\ compact and without boundary) smooth manifold $M$ admits a free smooth torus action, then it is well-known that the Euler characteristic $\chi(M)$ and all Stiefel--Whitney and Pontryagin numbers (provided $M$ is orientable) of $M$ vanish (see Lemmas \ref{L:FREE_ACTION_CHAR} and \ref{L:PR_BDL_CHAR_NR} below). Other topological obstructions can be obtained in certain special cases using spectral sequences (see e.g.\ \cite{PSS10}) or assumptions on the rational homotopy groups of $M$ (see e.g. \cite{GGKR21}), and topological classifications of manifolds with free circle actions in low dimensions were obtained in \cite{DL05,GL71,J14}; see also \cite{ChLa74,HKS20,Kollar,Le73} for classification and obstruction results for almost-free and semi-free torus actions. In this article, we provide sufficient conditions for the existence of smooth, free circle and torus actions on closed, simply-connected manifolds (see Theorems~\ref{T:MAIN}--\ref{T:S1_BUNDLES}, Corollary~\ref{C:FREE_TORUS}, and Theorem~\ref{T:FREE_CIRCLE} below). 
 
    The main application we consider are connected sums of products of spheres. In particular, we show that closed, simply-connected smooth $n$-manifolds with a smooth, free action of $T^{n-4}$ are diffeomorphic to connected sums of products of spheres or non-trivial sphere bundles over $S^2$ (see Theorem~\ref{T:COHOM_4}). These manifolds are known to carry Riemannian metrics of positive Ricci curvature (see remarks before  Corollary~\ref{C:RIC>0}). By exhibiting these manifolds as total spaces of principal torus bundles, we may show that they admit, in fact, Riemannian metrics of positive Ricci curvature which are invariant under the given free torus action (see Corollary~\ref{C:RIC>0}; cf.~\cite{CG20}). Manifolds with such metrics play a role in the study of moduli spaces of Riemannian metrics with positive Ricci curvature (see, for example, \cite{DKT18,G23,KS93,TW15,WZ90}). 

    An important tool we will use are the \emph{twisted suspensions} $\Sigma_e M$ and $\widetilde{\Sigma}_e M$ of a smooth $n$-dimensional manifold $M$ determined by a class $e\in H^2(M;\Z)$. These twisted suspensions, which we will define in Section \ref{S:TW_SUSP}, are obtained by surgery along a fiber of the principal circle bundle over $M$ with Euler class $e$ and generalize the suspensions Duan introduced in \cite{Duan}. These are based on the spinning operation for knots, which is due to Artin \cite{Ar25}.

     Our first main result characterizes certain principal circle bundles in terms of twisted suspensions. Recall that, for $n$-manifolds $M_1$ and $M_2$, we have an isomorphism $H^2(M_1\# M_2;\Z)\cong H^2(M_1;\Z)\oplus H^2(M_2;\Z)$ if $n\geq 4$. A non-trivial integral cohomology class is \emph{primitive} if it is not a multiple of another class. We will denote diffeomorphism between smooth manifolds by the symbol ``$\cong$'' and assume that all manifolds and actions are smooth.

    \begin{theoremAlph}
        \label{T:MAIN}
        Let $B_1$, $B_2$ be closed, oriented $n$-manifolds with $n\geq 5$ and let $P\xrightarrow{\pi}B_1\# B_2$ be a principal $S^1$-bundle. For $i=1,2$, denote by $e_i\in H^2(B_i)$ the restriction of the Euler class of $P$ to $B_i$ and by $P_i\xrightarrow{\pi_i}B_i$ the principal $S^1$-bundle with Euler class $e_i$. If the fiber inclusion in $P_1$ is null-homotopic, or, equivalently, the pull-back of $e_1$ to the universal cover $\widetilde{B}_1$ of $B_1$ is primitive, then $P$ is diffeomorphic to
        \[ P\cong \begin{cases}
            P_1\#\Sigma_{e_2}B_2,\quad & \text{if }\widetilde{B}_1\text{ is non-spin},\\
            P_1\#\widetilde{\Sigma}_{e_2}B_2,\quad &\text{if }\widetilde{B}_1\text{ is spin.}
        \end{cases}
        \]
    \end{theoremAlph}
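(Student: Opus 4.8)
The plan is to exhibit both $P$ and $P_1\#\Sigma_{e_2}B_2$ (respectively $P_1\#\widetilde\Sigma_{e_2}B_2$) as explicit gluings built from the same pieces and to compare them. First decompose $B_1\#B_2=(B_1\setminus \mathring D^n)\cup_{S^{n-1}}(B_2\setminus \mathring D^n)$. As $n\geq 4$, removing an open disk does not change $H^2(-;\Z)$, and the restriction of the Euler class of $P$ to $B_i\setminus \mathring D^n$ equals the restriction of $e_i$; since principal $S^1$-bundles are classified by their Euler class, $P|_{B_i\setminus \mathring D^n}\cong P_i|_{B_i\setminus \mathring D^n}$, which is $P_i$ with the open preimage $\nu_i\cong \mathring D^n\times S^1$ of a disk removed, i.e.\ $P_i$ minus an open tubular neighbourhood of a fibre $\gamma_i$. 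Hence
\[
P\;\cong\;(P_1\setminus \nu_1)\cup_\phi(P_2\setminus \nu_2),
\]
glued along $S^{n-1}\times S^1$. One checks that $\phi$ may be taken to be $\psi\times\mathrm{id}_{S^1}$, where $\psi$ is the standard orientation-reversing identification of the two copies of $S^{n-1}$: the bundle $P$ is trivial over a collar of the connected-sum sphere because $H^2(S^{n-1};\Z)=0$, the two resulting trivialisations of $P$ over the collar agree up to homotopy since $[S^{n-1},S^1]=0$, and the transition is $S^1$-equivariant, so the only ambiguity in $\phi$ is a translation of the $S^1$-factor by a null-homotopic map $S^{n-1}\to S^1$, which is isotopic to the identity.

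Next recall from Section~\ref{S:TW_SUSP} that $\Sigma_{e_2}B_2$, respectively $\widetilde\Sigma_{e_2}B_2$, is obtained from $P_2$ by surgery on a fibre $\gamma_2$ using its natural normal framing, respectively that framing altered by the generator of $\pi_1(SO(n))\cong\Z/2$; thus
\[
\Sigma_{e_2}B_2=(P_2\setminus \nu_2)\cup_{\psi_2}(D^2\times S^{n-1})
\]
for the corresponding gluing $\psi_2$, and similarly for $\widetilde\Sigma_{e_2}B_2$. Performing the connected sum with $P_1$ inside the surgered-in handle then yields
\[
P_1\#\Sigma_{e_2}B_2\;\cong\;(P_2\setminus \nu_2)\cup_{\psi_2}\Bigl[(P_1\setminus \mathring D^{n+1})\cup_{S^n}\bigl((D^2\times S^{n-1})\setminus \mathring D^{n+1}\bigr)\Bigr],
\]
and analogously for $\widetilde\Sigma_{e_2}B_2$. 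Comparing with the description of $P$ above, the theorem reduces to the local statement
\[
(P_1\setminus \mathring D^{n+1})\cup_{S^n}\bigl((D^2\times S^{n-1})\setminus \mathring D^{n+1}\bigr)\;\cong\;P_1\setminus \nu_1,
\]
in which the common boundary $S^{n-1}\times S^1$ is identified by the standard map when $\widetilde B_1$ is non-spin and by that map composed with the $\pi_1(SO(n))$-twist when $\widetilde B_1$ is spin (together with the verification that, after this identification, the gluing to $P_2\setminus \nu_2$ is $\psi_2$, respectively its twisted analogue).

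To prove this local statement I would use the hypothesis that $\gamma_1$ is null-homotopic in $P_1$: since $\dim P_1=n+1\geq 6>2\cdot 2$, general position produces a smoothly embedded disk $D\subset P_1$ with $\partial D=\gamma_1$, meeting $\nu_1$ in a collar of $\gamma_1$. A regular neighbourhood $N$ of $D$ is an $(n+1)$-ball containing $\nu_1$ as a tubular neighbourhood of the now unknotted circle $\gamma_1$, so $P_1\setminus \nu_1=(P_1\setminus \mathring N)\cup_{S^n}(N\setminus \nu_1)$ with $P_1\setminus \mathring N\cong P_1\setminus \mathring D^{n+1}$. Writing $N=S^{n+1}\setminus \mathring D'$ for a ball $D'$ disjoint from $\nu_1$, and using that the complement of an unknotted solid torus in $S^{n+1}$ is $D^2\times S^{n-1}$, gives $N\setminus \nu_1\cong (D^2\times S^{n-1})\setminus \mathring D^{n+1}$; the sole indeterminacy is whether the natural framing of $\nu_1$ coincides with the $0$-framing induced by $D$, which is exactly the $\pi_1(SO(n))$-twist appearing in the statement.

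Identifying that twist with the spin type of $\widetilde B_1$ is the step I expect to be the main obstacle. Since $\tilde e_1$ is primitive there is a class $\sigma\in\pi_2(B_1)\cong H_2(\widetilde B_1;\Z)$ with $\langle \tilde e_1,\sigma\rangle=1$; over a $2$-sphere representing $\sigma$ the bundle $P_1$ restricts to the Hopf bundle $S^3\to S^2$, inside whose total space the disk $D$ can be taken. A normal framing computation — comparing the natural framing with the one induced by $D$ over this $S^3$, using that all of $H_2(\widetilde B_1;\Z)$ is spherical and that changing $D$ alters the induced framing by $\langle w_2(B_1),\sigma-\sigma'\rangle\in\Z/2$, or, equivalently, invoking the identifications of $\Sigma_{e_1}B_1$ and $\widetilde\Sigma_{e_1}B_1$ with $P_1\#(S^2\times S^{n-1})$ and $P_1\#(S^2\ttimes S^{n-1})$ from Section~\ref{S:TW_SUSP} together with the fact that a connected sum with a non-spin manifold identifies the two $S^{n-1}$-bundles over $S^2$ — shows that the twist can be removed precisely when $\widetilde B_1$ is non-spin. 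Re-assembling the pieces along $\phi=\psi\times\mathrm{id}_{S^1}$, with orientations chosen so that all connected sums are the standard ones, then yields $P\cong P_1\#\Sigma_{e_2}B_2$ when $\widetilde B_1$ is non-spin and $P\cong P_1\#\widetilde\Sigma_{e_2}B_2$ when $\widetilde B_1$ is spin.
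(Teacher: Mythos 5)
Your overall strategy coincides with the paper's: the decomposition $P\cong(P_1\setminus\nu_1)\cup(P_2\setminus\nu_2)$ glued along $S^{n-1}\times S^1$, and the reduction of the theorem to the single question of whether the bundle-induced (standard) framing of a fiber of $P_1$ is the trivial framing or the one altered by the generator of $\pi_1(\mathrm{SO}(n))$, is exactly how the paper argues (it packages the same comparison by writing $P_1\cong P_1\#S^{n+1}$ with $S^{n+1}=(D^2\times S^{n-1})\cup(S^1\times D^n)$ and comparing $\varphi_{\pi_1}$ with the visibly trivial framing of the $S^1\times D^n$ piece). Your treatment of the gluing over the neck and your ``local statement'' via a regular neighbourhood of a bounding disc are fine.

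The genuine gap is at the step you yourself flag: the assertion that the standard framing is trivial precisely when $\widetilde B_1$ is non-spin. This is the paper's Proposition \ref{P:FRAMING_TRIV}, which is proved by passing to the universal cover (Lemmas \ref{L:PR_BDL_TOPOLOGY2}, \ref{L:PR_BDL_TOPOLOGY}, \ref{L:TRIV_FRAMING}) and then quoting Goldstein--Lininger \cite[Theorem 8]{GL71}; neither of your two suggested justifications establishes it. The ``normal framing computation'' over the Hopf $S^3$ is only gestured at: the ingredients you list (sphericity of $H_2(\widetilde B_1)$ and the fact that changing the bounding disc alters the induced framing by $\langle w_2,\sigma-\sigma'\rangle$) control how the \emph{disc-induced} framing depends on choices (Lemmas \ref{L:F*_W2} and \ref{L:EXISTENCE_EMBEDDING}) and settle only the case where $\widetilde P_1$ is non-spin, where by Proposition \ref{P:NUM_ISOT_CLASSES} there is a unique isotopy class anyway; the substantial case is $\widetilde P_1$ spin, i.e.\ $w_2(\widetilde B_1)\in\{0,\tilde e_1\bmod 2\}$, where one must actually decide which of the two classes contains the \emph{bundle} framing (twisted when $w_2(\widetilde B_1)=0$, trivial when $w_2(\widetilde B_1)\equiv\tilde e_1$), balancing the linking of nearby Hopf fibers against the normal bundle of the chosen $2$-sphere --- and that computation is not carried out (note also that spin-ness of $\widetilde B_1$ and of $\widetilde P_1$ are different conditions). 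The fallback of invoking $\Sigma_{e_1}B_1\cong P_1\#(S^2\times S^{n-1})$ and $\widetilde\Sigma_{e_1}B_1\cong P_1\#(S^2\ttimes S^{n-1})$ together with Corollary \ref{C:CONN_SUM_DIFF} cannot close it either: your local statement requires the isotopy class of $\varphi_{\pi_1}$, i.e.\ an identification rel boundary, and this is not determined by the diffeomorphism type of the closed surgered manifold --- the two candidate outcomes are diffeomorphic whenever $\widetilde P_1$ is non-spin, and in the remaining case the non-spin-summand trick is unavailable; moreover, in the paper these identifications (Theorem \ref{T:SUSP_EX}(1)) are themselves deduced from the framing statement, so they carry no independent information about it. Citing Proposition \ref{P:FRAMING_TRIV} (equivalently \cite[Theorem 8]{GL71} after the covering-space reduction) at this point repairs the argument, and the rest of your outline then assembles exactly as in the paper.
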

    Theorem \ref{T:MAIN} generalizes \cite[Theorem B]{Duan}, where the same conclusion is obtained for $B_1$ simply-connected and $e_2=0$.
    
    To apply Theorem~\ref{T:MAIN}, we determine the twisted suspensions of certain manifolds in the following theorem. We denote by $S^2\ttimes S^{n-2}$ the total space of the unique non-trivial linear $S^{n-2}$-bundle over $S^2$.
    Recall that the \emph{divisibility} $d$ of an element $y$ in a free abelian group $G$ is the largest $d\in\N$ such that there exists an element $x\in G$ with $y=dx$. Note that the primitive elements of $G$ are precisely the elements of divisibility $1$.

    \begin{theoremAlph} \label{T:SUSP_EX} We have the following:
        \begin{enumerate}
            \item Let $B$ be a closed, oriented $n$-manifold with $n\geq 5$ and let $P\xrightarrow{\pi}B$ be a principal $S^1$-bundle with Euler class $e\in H^2(B)$. If the fiber inclusion in $P$ is null-homotopic, or, equivalently, the pull-back of $e$ to the universal cover $\widetilde{B}$ is primitive, then
            \[\Sigma_{e}B\cong\begin{cases}
            P\#(S^2\times S^{n-1}),\quad& \text{if }\widetilde{B}\text{ is non-spin},\\
            P\#(S^2\ttimes  S^{n-1}),\quad &\text{if }\widetilde{B}\text{ is spin},
            \end{cases} \]
            and
            \[\widetilde{\Sigma}_{e}B\cong\begin{cases}
            P\#(S^2\ttimes  S^{n-1}),\quad& \text{if }\widetilde{B}\text{ is non-spin},\\
            P\#(S^2\times S^{n-1}),\quad &\text{if }\widetilde{B}\text{ is spin}.
            \end{cases} \]
            \item Let $B=S^k\times S^{n-k}$ with $2\leq k\leq n-2$. Then
            \[ \Sigma_0 B\cong\widetilde{\Sigma}_0 B\cong (S^k\times S^{n-k+1})\# (S^{k+1}\times S^{n-k}). \]
            \item Let $B=S^2\times  S^{n-2}$ or $S^2\ttimes S^{n-2}$, let $e\in H^2(B)$ and let $d$ be the divisibilty of $e$. Then
            \[ \Sigma_e B\cong\begin{cases}
                (S^2\times  S^{n-1})\# (S^3\times S^{n-2}),\quad &\text{if }B=S^2\times S^{n-2}\text{ and }d\text{ is even, or}\\
                & B=S^2\ttimes S^{n-2}\text{ and } d\text{ is odd}, \\
                (S^2\ttimes  S^{n-1})\# (S^3\times S^{n-2}),\quad &\text{else,}
            \end{cases}
            \]
            and
            \[
                \widetilde{\Sigma}_e B\cong\begin{cases}
                    (S^2\times S^{n-1})\# (S^3\times S^{n-1}),\quad & B=S^2\times S^{n-2},\\
                    (S^2\ttimes  S^{n-1})\# (S^3\times S^{n-2}),\quad & B=S^2\ttimes S^{n-2}.
                \end{cases}
                \]
        \end{enumerate}
    \end{theoremAlph}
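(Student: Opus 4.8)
The plan is to work throughout from the description of $\Sigma_e M$ and $\widetilde\Sigma_e M$ as the result of surgery along a fibre of the principal $S^1$-bundle $P\to M$ with Euler class $e$, performed with the two possible normal framings of that fibre circle (the one induced by the bundle structure and its modification by the generator $\tau$ of $\pi_1(\mathrm{SO}(n))\cong\Z/2$), and to treat the three items in turn, deducing item (3) in part from item (1).

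\emph{Item (1).} As the fibre inclusion in $P$ is null-homotopic and $\dim P=n+1\ge 6$, a generic null-homotopy of the fibre $C\cong S^1$ is a smoothly embedded $2$-disc $\Delta\subset P$; being contractible, $\Delta$ has trivial normal bundle, so a tubular neighbourhood of $\Delta$ is an $(n+1)$-ball inside which $C$ is an unknot. Hence the surgery alters $P$ only inside this ball, so $\Sigma_e B\cong P\#Y$ and $\widetilde\Sigma_e B\cong P\#Y'$, where $\{Y,Y'\}=\{S^2\times S^{n-1},\,S^2\ttimes S^{n-1}\}$ is the pair of outcomes of surgery on the unknot in $S^{n+1}$ (a clutching-function computation). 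It then remains only to decide which framing yields which model, a framing-obstruction computation: after pulling $P$ back along the universal covering $\widetilde B\to B$ and lifting $C$ to a fibre of the resulting bundle over $\widetilde B$, the bundle framing differs from a framing bounding the lifted disc by $\langle w_2(\widetilde B),\alpha\rangle\in\Z/2$, where $\alpha\in\pi_2(\widetilde B)=H_2(\widetilde B)$ is the class carried by the chosen null-homotopy and satisfies $\langle\widetilde e,\alpha\rangle=\pm1$; thus the outcome is governed by the spin type of $\widetilde B$. I expect the one genuinely delicate point to be fixing the direction of this last correspondence and verifying its independence of choices; that can be settled by an explicit computation or by comparison with \cite[Theorem~B]{Duan} on a test manifold such as $B=\C P^{n/2}$.

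\emph{Item (2).} Here $e=0$, so $P=S^1\times S^k\times S^{n-k}$ and $\Sigma_0 B$ is surgery along $C=S^1\times\{*\}\times\{*\}$. Writing $S^k\times S^{n-k}=D^n\cup h^k\cup h^{n-k}\cup h^n$ and multiplying by $S^1$, the surgery deletes the handles coming from $S^1\times D^n$ and inserts $D^2\times S^{n-1}$; after the standard handle slides and cancellations one recovers exactly the handle decomposition, with matching attaching data, of $(S^k\times S^{n-k+1})\#(S^{k+1}\times S^{n-k})$ (alternatively one may invoke Duan's computation of the suspension of a product of spheres from \cite{Duan}). For $\Sigma_0 B\cong\widetilde\Sigma_0 B$: the two differ only by regluing $D^2\times S^{n-1}$ after a twist by $\tau$ supported near the boundary sphere of the deleted ball; but the circle action on $S^k\times S^{n-k}$ rotating a $2$-plane of the $S^k$-factor fixes a ball about one of its fixed points, and its restriction to the complement of that ball is a loop in the diffeomorphism group of the complement whose value on the boundary sphere represents $\tau$. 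Hence that twist extends over $S^1\times\bigl(S^k\times S^{n-k}\setminus\mathrm{ball}\bigr)$, and capping it off with the identity on $D^2\times S^{n-1}$ gives the desired diffeomorphism.

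\emph{Item (3).} Now $H^2(B)\cong\Z$, so $e=d\,a_0$ for a generator $a_0$. If $d=1$ then $e$ is primitive and item (1) applies with $\widetilde B=B$: one checks that $S^2\times S^{n-2}$ is spin whereas $S^2\ttimes S^{n-2}$ is not (computing $w_2$ from the vertical tangent bundle of the sphere bundle and using that $\pi^*$ is injective on mod $2$ cohomology), and that the principal $S^1$-bundle with Euler class $a_0$ is $S^3\times S^{n-2}$ over either base --- for the twisted base, the defining $\R^{n-1}$-bundle becomes trivial when pulled back along $S^3\to S^2$ since $\pi_2(\mathrm{SO}(n-1))=0$. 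This gives the rows with $d$ odd. For general $d$, the total space is $P_d=L(d,1)\times S^{n-2}$, resp.\ the analogous bundle over $S^2\ttimes S^{n-2}$, and $C$ is a Seifert fibre; running the handle computation of item (2) while tracking the Euler number $d$ shows that $\Sigma_{d a_0}B$ has the stated form, with the twist in the sphere-bundle summand over $S^2$ (that is, $S^2\times S^{n-1}$ versus $S^2\ttimes S^{n-1}$) determined by the parity of $d$ together with the spin type of $B$, while for $\widetilde\Sigma$ the would-be $d$-dependent twist gets absorbed into the $S^3\times S^{n-2}$ summand, so that answer is independent of $d$. The main obstacle here is the general-$d$ handle computation and the parity bookkeeping; the diffeomorphism types it produces can, if one wishes, be confirmed by comparing cohomology rings, second Stiefel--Whitney classes and (when these fail to separate) first Pontryagin classes or linking forms with the claimed targets.
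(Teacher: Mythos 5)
Your item (1) follows the paper's skeleton (reduce to the question of whether the standard framing of a fibre is trivial, then conclude by surgery-in-a-ball, i.e.\ Lemma~\ref{L:NULL-HTPIC} plus Proposition~\ref{P:FRAMING_TRIV}), but the framing comparison you state is incorrect, and it is exactly the input that carries the theorem. The standard framing differs from the framing bounding a disc whose projected class is $\alpha$ by $\langle w_2(\widetilde B),\alpha\rangle+1$, not by $\langle w_2(\widetilde B),\alpha\rangle$: the extra twist comes from $\langle \widetilde e,\alpha\rangle=\pm1$, since over a sphere representing $\alpha$ the bundle is the Hopf bundle and already inside $P|_\alpha\cong S^3$ the bundle framing of the fibre differs from the disc framing by one full rotation in $\mathrm{SO}(2)\subseteq\mathrm{SO}(n)$. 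With your formula, $\widetilde B$ spin would force the standard framing to bound and give $\Sigma_eB\cong P\#(S^2\times S^{n-1})$, the opposite of the statement; your own test case exposes this, since for $B=\C P^{m}$ the standard framing of the Hopf fibre in $S^{2m+1}$ differs from the product framing by multiplication by $\lambda$ on $\C^{m}$, i.e.\ by $m\equiv\langle w_2,\alpha\rangle+1\bmod 2$. So the ``delicate point'' you defer is not a sign convention to be fixed afterwards: you need the corrected local computation (or the citation of \cite[Theorem 8]{GL71} that the paper channels through Proposition~\ref{P:FRAMING_TRIV}), together with the remark that triviality means extendability over \emph{some} disc (so one varies $\alpha$ among classes with $\langle\widetilde e,\alpha\rangle=\pm1$) and a justification that triviality passes between $P$ and $\widetilde P$ (Lemma~\ref{L:TRIV_FRAMING}).

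The more serious gap is item (3) for general divisibility $d$. For $|d|\neq 1$ the fibre is \emph{not} null-homotopic in $P_d$ (it generates $\pi_1(L(d,1)\times S^{n-2})$, resp.\ $\pi_1$ of the analogous total space; for $d=0$ it generates $\Z$), so neither item (1) nor any surgery-in-a-ball/connected-sum argument applies, and ``running the handle computation of item (2) while tracking $d$'' is precisely the missing content rather than a proof; checking cohomology, $w_2$ and Pontryagin classes afterwards cannot pin down the diffeomorphism type unless you already know the result lies in the two-element candidate family. The paper's actual mechanism is different: it first suspends fibrewise over the base $S^2$, so that $\Sigma_{\xi^*e}E$ is exhibited (Proposition~\ref{P:ESIGMA_SURGERY}) as surgery on a \emph{null-homotopic} circle in $E(\Sigma_e\xi)\cong S^3\times S^{n-2}$ (using $\Sigma_eS^2\cong S^3$), after which Lemma~\ref{L:NULL-HTPIC} gives the two candidates and the spin computation of Lemma~\ref{L:SUSP_TOP}(3) decides between them; nothing in your sketch replaces this step. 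Smaller points: the $d=\pm1$ case of (3) via item (1) and $\pi_2(\mathrm{SO}(n-1))=0$ is fine; in item (2) your main handle claim is only asserted (citing \cite{Duan} is legitimate, as the paper acknowledges recovering that result), while your circle-action argument for $\Sigma_0B\cong\widetilde\Sigma_0B$ is correct once ``fixes a ball'' is replaced by ``preserves an invariant ball about a fixed point,'' and is a pleasant alternative to the paper's clutching-function proof.
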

    We note that item (2) of Theorem \ref{T:SUSP_EX} recovers \cite[Proposition 3.2]{Duan} and extends \cite[Lemma 1.3]{Su90}.
    
    We will say that a manifold $M^n$ \emph{is of the form \eqref{EQ:COND}} if
    \begin{equation}
        \label{EQ:COND}
        M\cong B_1\#\dots\# B_l\quad \text{ for }B_i=S^{m_i}\times S^{n-m_i}\text{ or }B_i= S^2\ttimes  S^{n-2} \tag{$*$}
    \end{equation}
    with $2\leq m_i\leq n-2$ and $n\geq 5$, where we define $M=S^n$ for $l=0$. Note that the diffeomorphism type of a manifold of the form \eqref{EQ:COND} is uniquely determined by its dimension $n$, the Betti numbers $b_2(M),\dots,b_{\lfloor\frac{n}{2}\rfloor}(M)$ (since $b_i(M)=b_{n-i}(M)$ by Poincaré duality), and whether $M$ is spin or not, since $S^2\ttimes S^{n-2}$ is non-spin and
    \[(S^2\ttimes S^{n-2})\#(S^2\ttimes S^{n-2})\cong (S^2\ttimes S^{n-2})\#(S^2\times S^{n-2})\]
    by Corollary \ref{C:CONN_SUM_DIFF} below.

Using Theorems \ref{T:MAIN} and \ref{T:SUSP_EX} and the existence of certain self-diffeomorphisms on connected sums of manifolds of the form \eqref{EQ:COND} with a given simply-connected manifold, we can determine the total space of a principal $S^1$-bundle over manifolds of the form \eqref{EQ:COND}, provided the Euler class is primitive.     

    \begin{theoremAlph}
        \label{T:S1_BUNDLES}
        Let $P\xrightarrow{\pi}B^n$ be a principal $S^1$-bundle with primitive Euler class $e$ and assume that $B$ is of the form \eqref{EQ:COND}. Then $P$ is also of the form \eqref{EQ:COND} with
        \[ b_i(P)=\begin{cases}
            b_2(B)-1,\quad & i=2,n-2,\\
            b_{i-1}(B)+b_{i}(B),\quad & 2<i<n-2.
        \end{cases} \]
        Moreover, $P$ is spin if and only if either $B$ has no $(S^2\ttimes S^{n-2})$-summand, or the restriction of $e$ to each $(S^2\ttimes S^{n-2})$-summand in $B$ has odd divisibility and the restriction of $e$ to each $(S^2\times S^{n-2})$-summand in $B$ has even divisibility.
    \end{theoremAlph}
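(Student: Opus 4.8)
The plan is to argue by induction on the number $l$ of summands in a decomposition \eqref{EQ:COND} of $B$: peel off one summand at a time using Theorem~\ref{T:MAIN}, identify each peeled-off factor with Theorem~\ref{T:SUSP_EX}, and at the end read the Betti numbers and the spin type off the Gysin sequence of $S^1\hookrightarrow P\xrightarrow{\pi}B$. Two preliminary observations set things up. Every summand of \eqref{EQ:COND} is simply-connected (both factors of $S^{m_i}\times S^{n-m_i}$ have dimension $\geq 2$, and $S^2\ttimes S^{n-2}$ is simply-connected for $n\geq5$), hence so is $B$; thus $\widetilde{B_i}=B_i$, and for a summand the hypothesis of Theorem~\ref{T:MAIN} becomes ``$e$ restricts to a primitive class on that summand''. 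Moreover $H^2(B)\cong\bigoplus_iH^2(B_i)$, with $H^2(B_i)\cong\Z$ exactly when $B_i$ is a $2$\nobreakdash-summand ($S^2\times S^{n-2}$ or $S^2\ttimes S^{n-2}$) and $H^2(B_i)=0$ otherwise; since $e$ is primitive it is non-zero, so $B$ has at least one $2$-summand.

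I would first dispose of the non-$2$-summands. Applying Theorem~\ref{T:MAIN} with $B_2$ a single non-$2$-summand $S^m\times S^{n-m}$ ($3\leq m\leq n-3$) and $B_1$ the connected sum of everything else—over which the Euler class is still $e$, hence still primitive—gives $P\cong Q\#\Sigma_0(S^m\times S^{n-m})$ or $Q\#\widetilde{\Sigma}_0(S^m\times S^{n-m})$, where $Q$ is the circle bundle with Euler class $e$ over the smaller base. By Theorem~\ref{T:SUSP_EX}(2) the suspended factor is of the form \eqref{EQ:COND}, and by the inductive hypothesis so is $Q$. Iterating, I may assume that $B$ is a connected sum of $2$-summands only, still with $e$ primitive.

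Next I would normalise $e$. Using the self-diffeomorphisms of $B$ supplied by the cited results—which realise the automorphisms of $H^2(B;\Z)\cong\Z^s$ fixing $w_2(B)$—together with the relation $(S^2\ttimes S^{n-2})\#(S^2\ttimes S^{n-2})\cong(S^2\ttimes S^{n-2})\#(S^2\times S^{n-2})$ of Corollary~\ref{C:CONN_SUM_DIFF}, one checks by a short argument over $\Z$ and over $\Z/2$ that $e$ may be moved to a vector which restricts to a generator of $H^2(B_1)$ on some $2$-summand $B_1$ and restricts to a class $e'$ on the connected sum $B_2$ of the remaining $2$-summands that is either zero or primitive. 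Theorem~\ref{T:MAIN} then yields $P\cong P_1\#\Sigma_{e'}B_2$ or $P_1\#\widetilde{\Sigma}_{e'}B_2$, according as $B_1$ is non-spin (twisted) or spin (untwisted). The bundle $P_1$ over a $2$-summand with primitive Euler class is a pullback of the Hopf bundle $S^3\to S^2$, and since $S^{n-2}$-bundles over $S^3$ are trivial for $n\geq5$ one gets $P_1\cong S^3\times S^{n-2}$, which is of the form \eqref{EQ:COND}. When $e'$ is primitive, Theorem~\ref{T:SUSP_EX}(1) identifies $\Sigma_{e'}B_2$ (resp.\ $\widetilde{\Sigma}_{e'}B_2$) with $Q'\#(S^2\times S^{n-1})$ or $Q'\#(S^2\ttimes S^{n-1})$, where $Q'\to B_2$ is the circle bundle of Euler class $e'$, and the inductive hypothesis applies to $Q'$; when $e'=0$, the suspensions $\Sigma_0B_2$ and $\widetilde{\Sigma}_0B_2$ of a connected sum of $2$-summands are computed by the same peeling scheme with all Euler classes trivial, the single-summand input being Theorem~\ref{T:SUSP_EX}(3). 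In every case $P$ is a connected sum of $2$-summands and of products of spheres, i.e.\ of the form \eqref{EQ:COND}.

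It remains to determine the Betti numbers and the spin type of $P$, which (as $P$ is of the form \eqref{EQ:COND} and simply-connected) pin down its diffeomorphism type. In the cohomology ring of a connected sum of products of spheres, the degree-$2$ class $e$ has vanishing cup product with everything except the degree-$(n-2)$ classes Poincaré-dual to the degree-$2$ generators; hence in the Gysin sequence $\cdots\to H^{k-2}(B)\xrightarrow{\cup e}H^k(B)\to H^k(P)\to H^{k-1}(B)\xrightarrow{\cup e}H^{k+1}(B)\to\cdots$ the map $\cup e$ is non-trivial only out of $H^0(B)$ and into $H^n(B)$, and a rank count gives the stated Betti numbers (the remaining ones following from Poincaré duality on $P$). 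For the spin type, the vertical tangent bundle of the principal bundle $P$ is trivial, so $w_2(P)=\pi^*w_2(B)$; the mod-$2$ Gysin sequence (using $H^1(B;\Z/2)=0$) shows that $\ker\big(\pi^*\colon H^2(B;\Z/2)\to H^2(P;\Z/2)\big)=\{0,\,e\bmod2\}$, whence $P$ is spin if and only if $w_2(B)\in\{0,\,e\bmod2\}$. Since $w_2(B)$ is the sum of the mod-$2$ generators of the twisted summands, while the $B_i$-coordinate of $e\bmod2$ vanishes precisely when $e$ restricts to a class of even divisibility on the $2$-summand $B_i$, this is exactly the asserted criterion. The main difficulty I expect is the normalisation step—producing the required self-diffeomorphisms and correctly bookkeeping twisted versus untwisted summands—together with the closely related computation of the twisted suspensions of connected sums when the residual Euler class vanishes; granting these, the remainder is a direct application of Theorems~\ref{T:MAIN} and~\ref{T:SUSP_EX} and a Gysin-sequence computation.
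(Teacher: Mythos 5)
Most of your plan is sound and runs parallel to the paper's: peeling with Theorem~\ref{T:MAIN}, identifying the pieces with Theorem~\ref{T:SUSP_EX}, realising $w_2$-preserving automorphisms by diffeomorphisms (Corollaries~\ref{C:CONN_SUM_DIFF} and \ref{C:HOMOLOGYDIFF}), the identification $P_1\cong S^3\times S^{n-2}$, and the final Gysin/$w_2$ computation (which, combined with Lemma~\ref{L:COND_DIFFEO} and Poincar\'e duality on the $(n+1)$-manifold $P$, is a perfectly good way to pin down the diffeomorphism type once $P$ is known to be of the form \eqref{EQ:COND}). The genuine gap is the branch $e'=0$ of your normalisation. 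There you must identify $\Sigma_0 B_2$ or $\widetilde{\Sigma}_0 B_2$ for $B_2$ a connected sum of several $2$-summands, and you assert this is ``computed by the same peeling scheme with all Euler classes trivial''. It is not: Theorem~\ref{T:MAIN} is a statement about principal circle bundles over a connected sum, and its hypothesis is that the Euler class restricted to the retained part $B_1$ pulls back primitively (equivalently, the fibre inclusion in $P_1$ is null-homotopic); when all restricted Euler classes vanish this hypothesis fails for every grouping, and in any case $\Sigma_0 B_2$ is not a circle bundle over $B_2$, so Theorem~\ref{T:MAIN} cannot be applied to it at all --- the paper contains no connected-sum formula for twisted suspensions. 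So exactly in the case $e'=0$ with $B_2$ non-trivial (for instance $B$ spin and $e$ moved to $x_1^*$) your induction does not close.

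Two repairs are available, and they also mark where your route diverges from the paper's. (a) Strengthen the normalisation: whenever $B_2$ is non-empty one can arrange $e'$ to be primitive rather than zero --- in the spin case send $e$ to $x_1^*+x_2^*$; if $e\equiv w_2(B)\bmod 2$ use Corollary~\ref{C:CONN_SUM_DIFF} to write $B$ with exactly two twisted summands so that $x_1^*+x_2^*\equiv w_2(B)$, and otherwise with one twisted summand away from $x_1^*,x_2^*$; in each case a $w_2$-preserving automorphism carrying $e$ to $x_1^*+x_2^*$ exists and Corollary~\ref{C:HOMOLOGYDIFF} realises it. Then your recursion always runs through Theorem~\ref{T:SUSP_EX}(1) and the circle bundle $Q'$ over the smaller base. (b) Alternatively, do what the paper does: only make the restriction of $e$ to \emph{one} $2$-summand primitive, keep that summand in the $B_1$-slot of Theorem~\ref{T:MAIN} (its restricted class stays primitive, so the hypothesis always holds), and peel the \emph{remaining} summands one at a time; then every suspension that appears is of a single summand with an arbitrary restricted Euler class, which is exactly what Theorem~\ref{T:SUSP_EX}(2) and (3) compute --- this is precisely why part (3) is stated for arbitrary divisibility, and it removes the need for your stronger normalisation and for any suspension of a connected sum. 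With either repair the remainder of your argument (including the Gysin count of the Betti numbers, which gives $b_2(P)=b_{n-1}(P)=b_2(B)-1$ and $b_i(P)=b_{i-1}(B)+b_i(B)$ for $2<i\le n-2$, and the spin criterion $w_2(B)\in\{0,e\bmod 2\}$ as in Corollary~\ref{C:P_W2}) goes through.
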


    We now give several applications of Theorems \ref{T:MAIN}--\ref{T:S1_BUNDLES}. For a topological space $X$ whose first $i\geq 0$ Betti numbers are finite, denote by $\chi_i(X)$ the $i$-th \emph{Euler characteristic}, defined by
    \[ \chi_i(X)=\sum_{j=0}^i(-1)^j b_j(X).\]
    Iterating this definition, we define $\chi^{(0)}_i(X)=(-1)^i b_i(X)$ and, for $m\in\N$,
    \[\chi_i^{(m)}(X)=\sum_{j=0}^i\chi_j^{(m-1)}(X). \]
    We then have $\chi_i(X)=\chi_i^{(1)}(X)$ and $\chi(X)=\chi_n(X)$ if $b_i(X)=0$ for all $i>n$.
    \begin{corollaryAlph}
        \label{C:FREE_TORUS}
        Let $M^n$ be a closed, simply-connected manifold and let $0\leq k\leq n$. Then $M$ admits a free action of the torus $T^k$ with quotient of the form \eqref{EQ:COND} if and only if $M$ is of the form \eqref{EQ:COND} and, for all $1\leq m\leq k$, we have 
        \begin{enumerate}
            \item $(-1)^i\chi_i^{(m)}(M)\geq 0$ for all $i=2,\dots,\lfloor\frac{n-m}{2}\rfloor$,
            \item $\chi_{\frac{n-m}{2}}^{(m)}(M)$ is even if $n-m$ is even, and
            \item $\chi^{(m)}_{n}(M)=0$.
        \end{enumerate}
    \end{corollaryAlph}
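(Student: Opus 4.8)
The plan is to prove the statement by induction on $k$, using Theorem~\ref{T:S1_BUNDLES} to pass between a manifold and the total space of a principal circle bundle over it at each step. The base case $k=0$ is trivial. For the inductive step, observe that a free $T^k$-action on $M$ with quotient $B$ of the form \eqref{EQ:COND} is the same as a free $S^1$-action on $M$ whose quotient $Q$ carries a free $T^{k-1}$-action with quotient $B$; equivalently, $M$ is the total space of a principal $S^1$-bundle $M \to Q$ (freeness of the circle action forces this), and $Q$ admits a free $T^{k-1}$-action with quotient of the form \eqref{EQ:COND}. For this reformulation to be useful I first need to check that such a circle bundle necessarily has \emph{primitive} Euler class when $M$ is simply-connected: since $M$ is simply-connected, the long exact homotopy sequence of $S^1 \to M \to Q$ shows $\pi_1(Q) \cong \Z$ or the fiber inclusion is understood appropriately — more precisely, simple-connectivity of $M$ forces the Euler class of the bundle to be primitive (this is the standard fact that $M$ simply-connected $\iff$ $Q$ has $\pi_1$ generated by the image of the fiber and the Euler class is primitive). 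So Theorem~\ref{T:S1_BUNDLES} applies: $Q$ of the form \eqref{EQ:COND} with primitive Euler class $e$ yields $M$ of the form \eqref{EQ:COND} with the stated Betti number formula. Conversely, given $Q$ of the form \eqref{EQ:COND} and any primitive $e \in H^2(Q)$, Theorem~\ref{T:S1_BUNDLES} produces a principal $S^1$-bundle whose total space $M$ is of the form \eqref{EQ:COND} and simply-connected, with the same Betti-number relation.

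The combinatorial heart of the argument is then to translate the Betti-number recursion of Theorem~\ref{T:S1_BUNDLES} into the numerical conditions (1)--(3). For a single circle-bundle step $Q \to M$ with $b_i := b_i(Q)$, $b_i' := b_i(M)$, Theorem~\ref{T:S1_BUNDLES} gives $b_2' = b_2 - 1$, $b_i' = b_{i-1} + b_i$ for $2 < i < n-2$, together with the Poincaré-duality symmetry. I would compute $\chi_i^{(m)}$ of $Q$ in terms of the $\chi$'s of $M$: the relation $b_i(M) = b_{i-1}(Q) + b_i(Q)$ is precisely a partial-sum (``$\chi^{(1)}$-type'') operation, so taking $i$-th iterated Euler characteristics of $M$ corresponds, up to the boundary correction coming from the $-1$ in $b_2' = b_2-1$ and from the top-dimensional duality terms, to taking $(m{-}1)$-th iterated Euler characteristics of $Q$. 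Thus $Q$ of the form \eqref{EQ:COND} \emph{exists} with the required primitivity exactly when the shifted quantities $\chi_i^{(m-1)}(Q)$ — which are determined by $M$ — satisfy the $k=1$ version of (1)--(3) for each level, i.e. when (1)--(3) hold for $M$ for all $1 \le m \le k$. The nonnegativity condition (1) encodes that each Betti number $b_i(Q)$ must be $\ge 0$ after solving the recursion; condition (2) is the parity constraint ensuring the middle-dimensional summand can be taken of $S^2\ttimes S^{n-2}$-type or split off correctly; condition (3) is simply $\chi(Q) = 0$, automatic for manifolds of the form \eqref{EQ:COND} of the relevant dimension, recorded iteratively.

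I expect the main obstacle to be the bookkeeping at the ``boundary'' indices — namely reconciling the special formula $b_2(P) = b_2(B) - 1$ (and its Poincaré dual $b_{n-2}$) and the parity/spin subtleties in Theorem~\ref{T:S1_BUNDLES} with the clean iterated-Euler-characteristic conditions, across both parities of $n-m$. In particular one must verify that the divisibility hypotheses controlling whether $P$ is spin in Theorem~\ref{T:S1_BUNDLES} never obstruct the construction: given target Betti numbers satisfying (1)--(3), one has the freedom to choose the Euler class on each summand (primitive overall, but with prescribed divisibilities on individual summands) so that both the Betti numbers \emph{and} the spin type of the resulting total space come out as needed at every stage of the induction. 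Handling the case $n-m$ even, where the middle Betti number can carry a $(S^2\ttimes S^{n-2})$-summand detected by the parity condition (2), and checking that the induction closes up cleanly through all $k$ levels simultaneously, is where the care is needed; the rest is a routine, if lengthy, unwinding of the recursions.
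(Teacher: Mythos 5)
Your overall route is the paper's: decompose the torus action into a tower of principal circle bundles, note that simple-connectivity of the total spaces forces each Euler class to be primitive (Lemma~\ref{L:PR_BDL_TOPOLOGY}), apply Theorem~\ref{T:S1_BUNDLES} at each stage, and convert the recursion $b_2(M)=b_2(Q)-1$, $b_i(M)=b_{i-1}(Q)+b_i(Q)$ into the iterated Euler characteristic conditions via $b_i(Q)=(-1)^i\chi_i(M)$. However, there is a genuine gap at the heart of your induction: the asserted equivalence that a free $T^k$-action on $M$ with quotient $B$ ``is the same as'' a principal $S^1$-bundle $M\to Q$ together with a free $T^{k-1}$-action on $Q$ with quotient $B$. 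Only the forward direction is formal. The backward direction --- which is exactly what your inductive construction needs --- requires lifting the $T^{k-1}$-action on $Q$ to $M$ compatibly with the circle action, i.e.\ assembling the tower into a single principal $T^k$-bundle over $B$. This is not automatic: it holds here only because the Euler class of $M\to Q$ is pulled back from $B$, which follows from the surjectivity of $\pi^*\colon H^2(B)\to H^2(Q)$ for these simply-connected bundles (Lemma~\ref{L:PR_BDL_TOPOLOGY}), after which Lemma~\ref{L:TORUS_BDL_STR} identifies the total space of the resulting $T^k$-bundle with $M$. The paper carries out precisely this step (choosing classes $e_i\in H^2(P_0)$ with $e(\pi_i)=\pi_{i-1}^*\cdots\pi_1^*e_i$ and forming the $T^k$-bundle with Euler class $(e_1,\dots,e_k)$); your proposal simply declares the equivalence, so as written the ``if'' direction does not produce a torus action, only a chain of circle actions.

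The second shortfall is that the constructive half is left as a programme. To realize a given $M$ satisfying (1)--(3), one must actually exhibit the quotient: the paper takes $Q$ of the form \eqref{EQ:COND}, of dimension $n-1$, \emph{non-spin}, with $b_i(Q)=(-1)^i\chi_i(M)$ (condition (2) is what makes $Q$ exist via Lemma~\ref{L:COND_DIFFEO}, and condition (3) is what makes the middle Betti number match when $n$ is even), and then chooses a primitive $e\in H^2(Q)$ with $e\equiv w_2(Q)\bmod 2$ exactly when $M$ is spin. Taking $Q$ non-spin is not optional bookkeeping: by the spin criterion in Theorem~\ref{T:S1_BUNDLES} a spin base only yields spin total spaces, so your ``freedom to choose the Euler class on each summand'' must be organized exactly this way, and your proposal leaves this, together with the boundary-index and parity checks you flag, as ``routine unwinding'' rather than proving it. These verifications are short but they are the actual content of the equivalence between the existence of the tower and conditions (1)--(3), so the argument is incomplete without them.
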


    By restricting to the case of $S^1$-actions, we can give further sufficient conditions for the existence of a free action.
    \begin{theoremAlph}
        \label{T:FREE_CIRCLE}
        \begin{enumerate}
            \item Let $M^n$ be of the form \eqref{EQ:COND} and suppose that $n$ is odd. Then there exists $m_0\in\N_0$ such that the manifolds
            \[ M\#_m(S^2\times S^{n-2})\text{ and }M\#_m(S^2\ttimes S^{n-2}) \]
            both admit a free circle action for all $m\geq m_0$.
            \item Let $M^n$ be of the form \eqref{EQ:COND} with $5\leq n\leq 10$ and suppose that $\chi(M)=0$ if $n$ is even and $\chi_4(M)\geq 0$ if $n=9$. Then $M$ admits a free circle action.
        \end{enumerate}
    \end{theoremAlph}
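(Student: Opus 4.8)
The plan is to use throughout that a closed manifold admits a free smooth circle action exactly when it is the total space of a principal $S^1$-bundle, and to realise the relevant manifolds as such total spaces via Theorems~\ref{T:MAIN} and~\ref{T:SUSP_EX}. The underlying mechanism is this: if $B_1,\dots,B_r$ are $(n-1)$-manifolds, each carrying a principal $S^1$-bundle $P_j$ whose Euler class pulls back to a primitive class on $\widetilde{B}_j$, then all intermediate total spaces are simply connected, so iterating Theorem~\ref{T:MAIN} and applying Theorem~\ref{T:SUSP_EX}(1) identifies the total space of the bundle over $B_1\#\dots\#B_r$ with a suitable primitive Euler class as
\[
P_1\#\dots\#P_r\#_{\,r-1}(S^2\times S^{n-2}),
\]
up to replacing some factors $S^2\times S^{n-2}$ by $S^2\ttimes S^{n-2}$, which by Corollary~\ref{C:CONN_SUM_DIFF} only influences the result through the parity of the number of twisted factors. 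Hence it suffices to decompose a given manifold of the form~\eqref{EQ:COND} as a connected sum of ``basic'' bundle spaces $P_j$ over simple bases $B_j$ — products of spheres with complex projective spaces — together with the correct number of $S^2\times S^{n-2}$ summands.

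For part~(1), since $n$ is odd every sphere-product summand $S^a\times S^b$ of $M$ has an odd factor, and the Hopf fibration of that factor exhibits it as a principal $S^1$-bundle with primitive Euler class over a simply connected $(n-1)$-manifold of the shape (complex projective space) $\times$ (sphere); the twisted summand $S^2\ttimes S^{n-2}$ arises similarly from a different Euler class, and $\C P^{(n-1)/2}$ is itself an $(n-1)$-manifold whose Hopf bundle is $S^n$. Writing $M=P_1\#\dots\#P_s$ accordingly and adjoining $t$ further copies of $\C P^{(n-1)/2}$ as bases — each contributing, by Theorem~\ref{T:SUSP_EX}(1) with $P=S^n$, precisely one $S^2\times S^{n-2}$ or $S^2\ttimes S^{n-2}$ summand — the displayed formula provides a free circle action on $M\#_m(S^2\times S^{n-2})$ and on $M\#_m(S^2\ttimes S^{n-2})$ for all $m$ beyond some $m_0$; the extra copies of $\C P^{(n-1)/2}$ serve both to make $m$ arbitrarily large and, together with a judicious choice of which bases are spin (which governs $\Sigma_e$ versus $\widetilde\Sigma_e$ in Theorem~\ref{T:MAIN}), to produce either parity of twisted summands. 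When $n=5$ the bases are $4$-dimensional and Theorem~\ref{T:MAIN} is unavailable; there one instead realises $M\#_m(S^2\times S^3)$ and $M\#_m(S^2\ttimes S^3)$ directly as circle bundles over $\#_{b_2(M)+m+1}\C P^2$ with a suitable Euler class, using the Smale--Barden classification of closed simply connected $5$-manifolds to identify the total space.

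For part~(2) we must realise $M$ itself, so there is no slack to absorb correction terms and the Euler-characteristic conditions of Corollary~\ref{C:FREE_TORUS} need not hold; a base not of the form~\eqref{EQ:COND}, typically involving complex projective factors, is then required. The restriction $5\le n\le 10$ forces $\lfloor\frac{n-1}{2}\rfloor\le 4$, so that only $b_2(M)$, $b_3(M)$, $b_4(M)$ (and, by Poincaré duality, their reflections) need be controlled, and the proof proceeds by a finite analysis of these Betti numbers and the spin type. When the conditions of Corollary~\ref{C:FREE_TORUS} hold that corollary finishes the argument — and one checks this is automatic for $n=6$ (given $\chi(M)=0$) and for $n=8$, where $\chi(M)=0$ forces $b_3(M)\ge b_2(M)+1$. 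In the remaining cases (which occur for $n\in\{5,7,9,10\}$, e.g.\ when $b_3(M)<b_2(M)+1$, as already for $M=S^2\times S^{n-2}$) one writes $M$ as $P_1\#\dots\#P_r\#_{r-1}(S^2\times S^{n-2})$, paying for the $r-1$ correction terms with the $S^2\times S^{n-2}$ summands genuinely present in $M$ and building the remaining $P_j$ from basic bundles — using $\Sigma_0(S^k\times S^{n-1-k})\cong(S^k\times S^{n-k})\#(S^{k+1}\times S^{n-1-k})$ from Theorem~\ref{T:SUSP_EX}(2), the seed $\C P^{(n-1)/2}$ for $n$ odd, the Hopf reductions of odd-dimensional sphere factors, and, for $n=5$, the direct construction over $\#\C P^2$'s from part~(1) — and then applies the mechanism above. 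The hypotheses $\chi(M)=0$ for $n$ even and $\chi_4(M)\ge 0$ for $n=9$ are exactly the obstructions to $M$ being a principal circle bundle coming from the Gysin sequence of the bundle: $\chi(M)=0$ is already necessary for a free circle action in even dimensions, and it forces the number of summands of $M$ with two even-dimensional sphere factors to be one fewer than the number with two odd-dimensional factors, which is what makes a matching decomposition available.

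The principal obstacle is the bookkeeping in part~(2): arranging the connected-sum decomposition of $M$ so that the $S^2\times S^{n-2}$ correction terms produced by Theorem~\ref{T:MAIN} cancel exactly against summands of $M$, while getting the spin type right — hence the choice between $\Sigma_e$ and $\widetilde\Sigma_e$ and the number of $S^2\ttimes S^{n-2}$ summands, which is where Corollary~\ref{C:CONN_SUM_DIFF} is used — and arguing that the constructed total space is genuinely diffeomorphic to $M$ rather than merely sharing its Betti numbers, for which one appeals to the classification of manifolds of the form~\eqref{EQ:COND} by dimension, Betti numbers and spin type recorded after~\eqref{EQ:COND}. Carrying this out for the finitely many residual cases in dimensions $5$, $7$, $9$ and $10$ is where most of the work lies.
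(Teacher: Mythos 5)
Your overall strategy is the paper's: realise the manifolds as total spaces of principal $S^1$-bundles over connected sums of ``basic'' bases built from complex projective spaces, peel off summands with Theorem~\ref{T:MAIN}, evaluate the suspensions with Theorem~\ref{T:SUSP_EX}, and identify the result via the Betti-number/spin classification of manifolds of the form~\eqref{EQ:COND}; your observations that $n=6,8$ in part~(2) reduce to Corollary~\ref{C:FREE_TORUS} and that $n=5$ needs the Smale--Barden route are also exactly what the paper does. But there are genuine gaps. First, in part~(1) your spin bookkeeping does not work with the building blocks you allow. With bases only of the form $\C P^i\times S^{n-2i-1}$ and $\C P^{(n-1)/2}$, and Euler classes restricting to (pullbacks of) generators, the spin type of each base summand is \emph{forced} by $i$ and $n\bmod 4$, and by Corollary~\ref{C:P_W2} the total space is spin precisely when the base summands are all spin or all non-spin; so there is no ``judicious choice of which bases are spin''. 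Concretely, for $n=9$ and $M=(S^3\times S^6)\#(S^4\times S^5)$ (spin), the spin manifold $M\#_m(S^2\times S^7)$ cannot be obtained from your bases: the only product base yielding $S^4\times S^5$ is the non-spin $\C P^2\times S^4$, while the only one yielding $S^3\times S^6$ is the spin $S^2\times S^6$, and any mixture gives a non-spin total space. The missing ingredient is the paper's Lemma~\ref{L:CPm_BUNDLE}: for every $m$ there are linear sphere bundles over $\C P^m$ of \emph{either} spin type (the $E_i^j$, $\widetilde E_i^j$) whose circle bundle with Euler class pulled back from $H^2(\C P^m)$ is still $S^{2m+1}\times S^r$; this is what makes an all-spin or all-non-spin base available, and you neither state nor construct it. Relatedly, your reading of Corollary~\ref{C:CONN_SUM_DIFF} as depending on the ``parity'' of the number of twisted summands is wrong: what matters is only whether at least one is present (spin versus non-spin), e.g.\ $\#_2(S^2\ttimes S^{n-2})$ is non-spin and hence not diffeomorphic to $\#_2(S^2\times S^{n-2})$.

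Second, in part~(2) the residual dimensions $n=7,9,10$ (and the boundary cases with small $b_3$, $b_2=1$, prescribed spin type) are where essentially all the content of the theorem lies, and your proposal only asserts that ``a finite analysis'' will produce matching bases. The paper carries this out by an explicit table of base manifolds covering every Betti-number/spin combination allowed by the hypotheses, and several rows require special bases beyond your toolkit (e.g.\ $\widetilde E_2^2\#_{p-1}\C P^3$, $E_2^4\#\widetilde E_2^4$, $E_3^2$, and the projective bundles $P(E)$ for the non-spin cases with $b_2=1$, $b_3=b_4=0$); without exhibiting such constructions, and without showing how the hypothesis $\chi_4(M)\geq 0$ is used to make the $n=9$ decompositions possible, the theorem is not proved. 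Finally, your side claim that the hypotheses are ``exactly the obstructions coming from the Gysin sequence'' overstates the situation: $\chi(M)=0$ is necessary in even dimensions, but $\chi_4(M)\geq 0$ for $n=9$ is not known to be necessary --- by Proposition~\ref{P:S3xS6} the manifolds $\#_{2p+1}(S^3\times S^6)$ admit no free circle action, while the case of an even number of summands is open.
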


    The simplest examples not covered by Theorem \ref{T:FREE_CIRCLE} with vanishing Euler characteristic are the manifolds $\#_m(S^3\times S^6)$ with $m\geq 2$. By Proposition \ref{P:S3xS6} below, these manifolds do not admit a free circle action when $m$ is odd. To the best of our knowledge, it is open whether these manifolds admit a free circle action when $m$ is even.

%%%%-------

    We can also use Theorem \ref{T:S1_BUNDLES} to determine the total space of a principal torus bundle over any closed, simply-connected $4$-manifold (see Theorem \ref{T:Tk_BUNDLES} below). This yields a complete topological classification of the total spaces of such principal bundles, and extends a result of Duan and Liang \cite{DL05} for principal circle bundles and of Duan \cite{Duan} for principal $T^k$-bundles over $4$-manifolds $M^4$ with $b_2(M)=k$.
 
    We apply this result to free torus actions of large cohomogeneity. Note that the dimension of a torus acting freely on a closed, simply-connected $n$-manifold with $n\geq 4$ must be at most $n-4$ (see Remark \ref{R:COHOM_4} below). In the case of maximal dimension, we have the following classification. For that we first define $a_{ki}(r)$ for $r,k\in\N_0$ and $2\leq i \leq k+2$ by
    \[a_{ki}(r)= (i-2)%{k\choose i-1}
    \binom{k}{i-1}
    +r%{k\choose i-2}
    \binom{k}{i-2}
    +(2+k-i)%{k\choose i-3}
    \binom{k}{i-3}. 
    \]
    \begin{theoremAlph}
        \label{T:COHOM_4}
        A closed, simply-connected $n$-manifold $M$ admits a free action of the torus $T^{n-4}$ if and only if $M$ is of the form \eqref{EQ:COND} with $b_i(M)=a_{n-4,i}(b_2(M))$ for all $2\leq i\leq n-2$.
    \end{theoremAlph}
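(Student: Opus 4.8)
The plan is to identify free $T^{n-4}$-actions with principal torus bundle structures over closed, simply-connected $4$-manifolds, to control the diffeomorphism type by presenting such a bundle as a tower of circle bundles and applying Theorem~\ref{T:S1_BUNDLES}, and to compute the Betti numbers with the Serre spectral sequence. First I would record the reduction: a free smooth $T^{n-4}$-action on $M^n$ makes the orbit map $\pi\colon M\to B:=M/T^{n-4}$ a principal $T^{n-4}$-bundle over a closed, smooth $4$-manifold $B$, and the homotopy exact sequence of this fibration together with connectedness of $T^{n-4}$ gives $\pi_1(M)\twoheadrightarrow\pi_1(B)$, so $B$ is simply-connected; conversely, every principal $T^{n-4}$-bundle over a closed manifold carries a free $T^{n-4}$-action on its total space. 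Hence the theorem is equivalent to the assertion that the total space of a principal $T^{n-4}$-bundle over a closed, simply-connected $4$-manifold is always of the form \eqref{EQ:COND} with $b_i=a_{n-4,i}(b_2)$, together with the converse that every manifold of the form \eqref{EQ:COND} with those Betti numbers arises as such a total space. I would also note that $\pi_1(M)=0$ is equivalent to the $n-4$ Euler classes generating a primitive sublattice $L$ of rank $n-4$ in $H^2(B;\Z)\cong\Z^{b_2(B)}$, so in particular $b_2(B)\geq n-4$.

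For the diffeomorphism type, I would pick a basis of $H^2(B;\Z)$ extending a basis $e_1,\dots,e_{n-4}$ of $L$ and, after changing the splitting of $T^{n-4}$ by the corresponding element of $\mathrm{GL}_{n-4}(\Z)$, present $M$ as an iterated principal circle bundle
\[ M=M_{n-4}\longrightarrow M_{n-5}\longrightarrow\cdots\longrightarrow M_1\longrightarrow M_0=B, \]
where the Euler class of $M_j\to M_{j-1}$ is the image of a basis vector under $H^2(B;\Z)\to H^2(M_{j-1};\Z)=H^2(B;\Z)/\langle e_1,\dots,e_{j-1}\rangle$ and is therefore primitive. Now $M_1$ is a closed, simply-connected $5$-manifold whose integral homology is torsion-free (by the Gysin sequence, using $\pi_1(B)=0$), hence of the form \eqref{EQ:COND} by the classification of closed, simply-connected $5$-manifolds (cf.\ \cite{DL05}); by induction and Theorem~\ref{T:S1_BUNDLES}, each $M_j$, and in particular $M=M_{n-4}$, is of the form \eqref{EQ:COND}.

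To compute the Betti numbers I would use the Serre spectral sequence of $T^{n-4}\to M\to B$, with $E_2=H^*(B;\Q)\otimes\Lambda(x_1,\dots,x_{n-4})$ and $d_2x_i=e_i$. Linear independence of $e_1,\dots,e_{n-4}$ in $H^2(B;\Q)$ makes $d_2$ injective in positive fibre degree on the $p=0$ column, and nondegeneracy of the intersection form of $B$ makes $d_2$ surjective onto the $p=4$ column in the bidegrees that matter; a degree count then forces collapse at $E_3=E_\infty$. Reading off $E_\infty$ gives $b_2(B)=b_2(M)+(n-4)$ and $b_i(M)=b_2(B)\binom{n-4}{i-2}-\binom{n-4}{i-3}-\binom{n-4}{i-1}$ for $2\le i\le n-2$, which coincides with $a_{n-4,i}(b_2(M))$ after a Pascal-type identity. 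For the converse, given a manifold $M$ of the form \eqref{EQ:COND} with $b_i(M)=a_{n-4,i}(b_2(M))$, I would take $B=\#_{b}\mathbb{CP}^2$ with $b=b_2(M)+(n-4)$ and choose $n-4$ classes forming a basis of a primitive sublattice of $H^2(B;\Z)$; the spectral-sequence computation shows the resulting total space $M'$ has the same Betti numbers as $M$, and since $w_2(M')=\pi^*w_2(B)$ one can arrange (using the spin criterion in Theorem~\ref{T:S1_BUNDLES} along the tower, and observing that $b_2(M)=0$ forces both $M$ and $M'$ to be spin) that $M'$ is spin precisely when $M$ is. Since a manifold of the form \eqref{EQ:COND} is determined by dimension, Betti numbers and spin type, this gives $M'\cong M$.

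The step I expect to be the main obstacle is the spectral-sequence computation: proving collapse at $E_3$, which is exactly where primitivity of the Euler-class sublattice and nondegeneracy of the intersection form of $B$ are both used, and then matching the alternating sum of binomial coefficients with the closed form $a_{n-4,i}(b_2(M))$. A secondary difficulty is the realization direction, where the base $4$-manifold and the Euler classes must be chosen carefully so that $w_2$ pulls back to zero (respectively does not), in order to realize both the spin and the non-spin manifold of the form \eqref{EQ:COND} with the prescribed Betti numbers.
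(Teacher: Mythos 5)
Your proposal is correct and follows the same overall architecture as the paper's proof: identify free $T^{n-4}$-actions with principal $T^{n-4}$-bundles over the simply-connected $4$-manifold $M/T^{n-4}$, note that simple connectivity of the total space is equivalent to the Euler classes spanning (a basis of) a primitive rank-$(n-4)$ sublattice of $H^2(B)$, factor the bundle as a tower of circle bundles with primitive Euler class at each stage, settle the first stage by the Smale--Barden classification of simply-connected $5$-manifolds (as in Duan--Liang), run an induction with Theorem~\ref{T:S1_BUNDLES}, and realize the converse over $\#_b\C P^2$ with Euler classes chosen so that $w_2$ lies in (or outside) their mod~$2$ span according to whether $M$ is spin; this is exactly Theorem~\ref{T:Tk_BUNDLES} and its proof. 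The one place you diverge is the Betti-number bookkeeping: the paper simply iterates the recursion $b_2(P)=b_2(B)-1$, $b_i(P)=b_{i-1}(B)+b_i(B)$ that Theorem~\ref{T:S1_BUNDLES} already provides and verifies the binomial identity $a_{k,i-1}(r)+a_{k,i}(r)=a_{k+1,i}(r-1)$, whereas you recompute the ranks via the rational Serre spectral sequence of $T^{n-4}\to M\to B$ (a Koszul complex). Your collapse claim is in fact correct --- $E_2^{1,q}=0$ and $d_2$ is injective on the zeroth column since the $e_i$ are linearly independent, so no $d_3$ or $d_4$ can be nonzero --- and your closed form $(b_2(M)+k)\binom{k}{i-2}-\binom{k}{i-3}-\binom{k}{i-1}$ does equal $a_{ki}(b_2(M))$; but this extra computation is redundant (and only sketched, e.g.\ the surjectivity ``in the bidegrees that matter'' onto the $p=4$ column would still need care), since the recursion from Theorem~\ref{T:S1_BUNDLES}, which you must invoke anyway to conclude that each stage is of the form \eqref{EQ:COND}, already yields the Betti numbers by induction. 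Your handling of the spin/non-spin realization, including the observation that $b_2(M)=0$ forces the spin case, matches the paper's.
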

 
    An interesting special case of Theorem \ref{T:COHOM_4} is where the quotient space $B^4=M/T^{n-4}$ itself admits an effective action of a $2$-torus (see, for example, \cite{CG20,GGK14}). It is then possible to lift the action to $M$, so that, together with the free $T^{n-4}$-action, we obtain a torus action of cohomogeneity two on $M$ (see \cite{HY,Su} and cf.\ \cite{CG20}). Closed, simply-connected manifolds with a cohomogeneity-two torus action have been classified (both topologically and equivariantly) by Orlik and Raymond \cite{OR70} in dimension $4$ and by Oh \cite{Oh82,Oh83} in dimensions $5$ and $6$. The orbit space structure and equivariant classification of closed, simply-connected $n$-manifolds with a cohomogeneity-two torus action may be found in \cite{KMP74}. In dimensions $7$ and above, however, no topological classification is known. By the above lifting argument, in combination with the four-dimensional classification, Theorem \ref{T:COHOM_4} provides a topological classification in any dimension, provided there exists a free cohomogeneity-four subaction. If we instead use Oh's $6$-dimensional classification, we can strengthen this as follows.
    
    \begin{corollaryAlph}
    \label{C:COHOM_2}
        A closed, simply-connected $n$-manifold $M$, $n\geq 6$, admits a smooth effective action of $T^{n-2}$ with a free subaction of a torus  of dimension $(n-6)$ if and only if $M$ is of the form \eqref{EQ:COND} with $b_i(M)=a_{ki}(b_2(M))$ for all $2\leq i\leq n-2$.
    \end{corollaryAlph}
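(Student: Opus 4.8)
The plan is to follow the scheme of Theorem~\ref{T:COHOM_4}, using Oh's classification \cite{Oh83} of closed, simply-connected $6$-manifolds with a cohomogeneity-two torus action in place of the four-dimensional (Orlik--Raymond) classification. Write $k=n-6$ for the dimension of the free subtorus; as in Theorem~\ref{T:COHOM_4} the relevant Betti-number condition is $b_i(M)=a_{n-4,i}(b_2(M))$ (this is what $a_{ki}$ denotes here, since the index range is $2\le i\le n-2=(n-4)+2$), and part of the point is that the same formula governs both settings.

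\emph{Only if.} Suppose $M$ carries an effective $T^{n-2}$-action with a subtorus $H\cong T^{k}$ acting freely, and set $B:=M/H$, a closed smooth $6$-manifold; the homotopy exact sequence of $T^{k}\to M\to B$ gives $\pi_1(B)=0$. Effectiveness of the $T^{n-2}$-action forces trivial principal isotropy, hence cohomogeneity two, so $M/T^{n-2}=B/(T^{n-2}/H)$ is $2$-dimensional; thus the residual action of $T^{n-2}/H\cong T^4$ on $B$ is of cohomogeneity two, and dividing by its (necessarily finite) ineffective kernel we obtain an effective cohomogeneity-two $T^4$-action on $B$. By Oh \cite{Oh83}, $B$ is of the form \eqref{EQ:COND} in dimension $6$; having an effective $T^4$-action it has no $T^4$-fixed point, so $\chi(B)=0$, i.e.\ $b_3(B)=2b_2(B)+2$. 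Since $\pi_1(M)=0$, the connecting map $\pi_2(B)\to\pi_1(T^{k})=\Z^{k}$ is onto, so the Euler classes $e_1,\dots,e_{k}\in H^2(B;\Z)$ span a rank-$k$ direct summand; picking a $\Z$-basis of it and refactoring $T^{k}$ accordingly writes $M$ as a tower of principal $S^1$-bundles, each with primitive Euler class and each base of the form \eqref{EQ:COND} (the relevant classes staying primitive in the successive Gysin quotients). Iterating Theorem~\ref{T:S1_BUNDLES} gives that $M$ is of the form \eqref{EQ:COND}. Finally, as all cup products $H^2(B;\Q)\cdot H^2(B;\Q)$ vanish for $B$ of the form \eqref{EQ:COND} in dimension $6$, the Serre spectral sequence of $T^{k}\to M\to B$ collapses at $E_3$, and the resulting count — using $b_2(B)=b_2(M)+k$ and $b_3(B)=2b_2(B)+2$ — yields $b_i(M)=a_{n-4,i}(b_2(M))$ for $2\le i\le n-2$ (the remaining Betti numbers by Poincaré duality).

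\emph{If.} Conversely, let $M$ be of the form \eqref{EQ:COND} with $b_i(M)=a_{n-4,i}(b_2(M))$. Set $s:=b_2(M)+k$ and take $B$ to be the $6$-manifold of the form \eqref{EQ:COND} with $b_2(B)=s$, $b_3(B)=2s+2$ and with its spin type chosen (by using $0$ or $1$ twisted summands $S^2\ttimes S^4$ among its rank-one summands, which changes neither $b_2$ nor $b_3$) so that the construction below outputs a manifold with the spin type of $M$. Then $\chi(B)=0$, so by Oh \cite{Oh83} $B$ admits an effective cohomogeneity-two $T^4$-action. Choose $e_1,\dots,e_{k}\in H^2(B;\Z)$ spanning a rank-$k$ summand, placing them on appropriate rank-one summands so as to realise the desired spin type, and let $P\to B$ be the associated principal $T^{k}$-bundle. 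By the iterated Theorem~\ref{T:S1_BUNDLES} (ultimately via Theorem~\ref{T:MAIN}, which tracks the spin type through the connected-sum decomposition) $P$ is of the form \eqref{EQ:COND} with $b_i(P)=b_i(M)$ (by the spectral-sequence computation) and the same spin type as $M$; since a manifold of the form \eqref{EQ:COND} is determined by its dimension, its Betti numbers and its spin type, $P\cong M$. The $T^4$-action on $B$ lifts to a $T^4$-action on $P$ commuting with the principal $T^{k}$-action (the $e_i$ are $T^4$-invariant as $T^4$ is connected, and the residual obstruction vanishes since $B$ is simply-connected; see \cite{HY,Su}). The free $T^{k}$-action together with this lifted $T^4$-action then yield an effective $T^{n-2}=T^{k}\times T^4$-action on $P\cong M$ with free $T^{k}$-subaction: an element $(t,r)$ acting trivially on $P$ projects to $r$ acting trivially on $B$, so $r=e$, and then $t$ acts trivially on $P$, so $t=e$ by freeness.

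The principal difficulty is the Betti-number and spin-type bookkeeping. In the forward direction one must carry out the spectral-sequence (iterated Gysin) computation over a six-dimensional base of the form \eqref{EQ:COND}, exploiting the vanishing of degree-two cup products together with $\chi(B)=0$, and verify that the outcome depends only on $b_2(M)$ and equals $a_{n-4,i}$. In the converse one must solve the corresponding realisation problem and, in particular, understand precisely how Theorem~\ref{T:S1_BUNDLES} and Theorem~\ref{T:MAIN} transform the spin type under iteration, so that both the spin and the non-spin manifold of the form \eqref{EQ:COND} with the prescribed Betti numbers arise. A secondary point, needed so that Oh's classification applies, is that the induced action on the six-dimensional quotient is indeed of cohomogeneity two with finite ineffective kernel — which follows from the triviality of the principal isotropy of an effective torus action.
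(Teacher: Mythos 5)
Your overall skeleton (quotient to a $6$-manifold, Oh's classification, lifting via \cite{HY,Su}) is in the spirit of the paper, but you diverge at the decisive point and this is where your proposal has genuine gaps: you explicitly defer ``the principal difficulty,'' namely the Betti-number and spin-type bookkeeping, rather than proving it. In the forward direction, the assertion that the Serre spectral sequence of $T^{n-6}\to M\to B$ ``collapses at $E_3$'' and that ``the resulting count yields $b_i(M)=a_{n-4,i}(b_2(M))$'' is not justified and is not a proof; similarly, in the converse the choice of spin type of $B$ and the placement of the Euler classes ``so as to realise the desired spin type'' is exactly the realisation problem you admit you have not solved, and your converse additionally relies on the existence direction of Oh's classification (that \emph{every} $6$-manifold of the form \eqref{EQ:COND} with $\chi=0$ carries an effective $T^4$-action), a dependency the paper does not need. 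These gaps are fillable within your framework --- for instance, the Betti step needs no spectral sequence: Oh plus $\chi(B)=0$ gives $b_i(B)=a_{2,i}(b_2(B))$, and then $k=n-6$ applications of Theorem~\ref{T:S1_BUNDLES} together with the identity $a_{k,i-1}(r)+a_{k,i}(r)=a_{k+1,i}(r-1)$ verified in the proof of Theorem~\ref{T:Tk_BUNDLES} give $b_i(M)=a_{n-4,i}(b_2(M))$, while the spin statement would follow from Corollary~\ref{C:P_W2} with careful choices --- but as written the proposal stops short of the actual content.

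The idea you are missing is the paper's reduction, which makes all of this bookkeeping unnecessary. Forward direction: by Oh, the quotient $B=M/T^{n-6}$ is of the form \eqref{EQ:COND} and satisfies the hypotheses of Theorem~\ref{T:COHOM_4} in dimension $6$, hence $B$ admits a \emph{free} $T^2$-action; lifting this action to $M$ by \cite{HY,Su} and combining it with the given free $T^{n-6}$-action produces a free $T^{n-4}$-action on $M$, and then Theorem~\ref{T:COHOM_4} applied to $M$ itself delivers both the form \eqref{EQ:COND} and the formula $b_i(M)=a_{n-4,i}(b_2(M))$ with no new computation. Converse: instead of building a $6$-dimensional base, apply Theorem~\ref{T:Tk_BUNDLES} (as in the proof of Theorem~\ref{T:COHOM_4}) to realise $M$ as a principal $T^{n-4}$-bundle over $\#_{b_2(M)+n-4}\C P^2$, use Orlik--Raymond \cite{OR70} to get an effective $T^2$-action on this $4$-manifold, and lift it via \cite{HY,Su}; the resulting cohomogeneity-two action on $M$ has a free subaction of cohomogeneity four, hence in particular one of cohomogeneity six. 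So both the Betti identity and the spin realisation are absorbed into results already proven, which is precisely what your route forgoes.
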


    We note that not all  cohomogeneity-two torus actions on a closed, simply-connected $n$-manifold $M$ have a free subaction as in Corollary \ref{C:COHOM_2} (see Remark~\ref{R:NO.FREE.SUBTORUS} below). However, it is open whether the manifolds in Corollary \ref{C:COHOM_2} already provide all diffeomorphism types of closed, simply-connected manifolds with a cohomogeneity-two torus action. In dimensions $5$ and $6$, this is known to be true if one considers free cohomogeneity-four subactions (see \cite{CG20,Oh82,Oh83}).

    Using the core metric construction introduced in \cite{Bu19}, one obtains that every manifold of the form \eqref{EQ:COND} admits a metric of positive Ricci curvature, by \cite{Bu20} and \cite{Re21}. However, these metrics need not be invariant under the actions established in Corollaries \ref{C:FREE_TORUS} and \ref{C:COHOM_2} and Theorems \ref{T:FREE_CIRCLE} and \ref{T:COHOM_4}. The existence of an invariant metric of positive Ricci curvature can now be obtained in combination with the lifting results of \cite{GPT98}.

    \begin{corollaryAlph}
        \label{C:RIC>0}
        Let $M$ be a manifold of the form \eqref{EQ:COND} satisfying the assumptions of Corollary \ref{C:FREE_TORUS} or \ref{C:COHOM_2}, or Theorem \ref{T:FREE_CIRCLE} or \ref{T:COHOM_4}, thus admitting a free action of a torus. Then $M$ admits a metric of positive Ricci curvature that is invariant under the free torus action.
    \end{corollaryAlph}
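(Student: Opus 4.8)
The plan is to feed the explicit principal-bundle descriptions of $M$ supplied by the proofs of the quoted results into the fibration-lifting theorem of Gilkey, Park and Tuschmann \cite{GPT98}. First I would record that each of Corollary \ref{C:FREE_TORUS}, Corollary \ref{C:COHOM_2}, and Theorems \ref{T:FREE_CIRCLE} and \ref{T:COHOM_4} proves more than the bare existence of a free $T^k$-action on $M$: it realizes $M$, together with that action, as the total space of a principal $T^k$-bundle $T^k\to M\xrightarrow{\pi}B$ whose principal action coincides, up to equivariant diffeomorphism, with the free action in the statement, the base being the quotient $B=M/T^k$. For Corollary \ref{C:FREE_TORUS} the base $B$ is of the form \eqref{EQ:COND} by hypothesis; for Theorem \ref{T:FREE_CIRCLE} it is the $(n-1)$-dimensional quotient, of the form \eqref{EQ:COND} when $n\ge 6$ and a closed, simply-connected $4$-manifold when $n=5$; and for Theorem \ref{T:COHOM_4} and Corollary \ref{C:COHOM_2} it is the four-, respectively six-dimensional quotient, which the construction takes to be a closed, simply-connected manifold of that dimension: in dimension six one of the form \eqref{EQ:COND}, and in dimension four a sphere or a connected sum of copies of $S^2\times S^2$, $S^2\ttimes S^2$ and $\pm\mathbb{CP}^2$. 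In every case $M$ is simply-connected and $\pi$ is a principal torus bundle, so the homotopy exact sequence gives $\pi_1(B)=0$; in particular $\pi_1(M)$ is finite.

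Next I would verify that the base $B$ carries a metric of positive Ricci curvature. If $B$ is of the form \eqref{EQ:COND}, this is the fact recalled before the statement, obtained from the core-metric constructions of Burdick \cite{Bu19,Bu20} together with Reiser \cite{Re21} (the latter handling the twisted-product summands, which are linear sphere bundles over $S^2$). The remaining, necessarily low-dimensional, bases arising in Theorems \ref{T:FREE_CIRCLE} and \ref{T:COHOM_4} and Corollary \ref{C:COHOM_2} are spheres, or connected sums of products of spheres, twisted products $S^2\ttimes S^q$, and complex projective spaces; each of these summands admits a core metric by \cite{Bu19,Bu20,Re21}, so these bases too carry a metric of positive Ricci curvature.

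Finally I would invoke \cite{GPT98}: the total space of a principal torus bundle over a closed manifold that carries a metric of positive Ricci curvature itself admits such a metric, invariant under the torus action, provided the total space has finite fundamental group. Applying this to $T^k\to M\xrightarrow{\pi}B$ together with the two previous steps produces a $T^k$-invariant metric of positive Ricci curvature on $M$, and pulling it back along the equivariant diffeomorphism of the first step yields a metric invariant under the free torus action of the statement. I expect the main obstacle to lie in the first step: one has to return to the proofs of Corollaries \ref{C:FREE_TORUS} and \ref{C:COHOM_2} and Theorems \ref{T:FREE_CIRCLE} and \ref{T:COHOM_4}, confirm that they really do present $M$ as a principal $T^k$-bundle over a base of the type described (identifying the low-dimensional bases precisely enough to know that they carry a core metric), and check that the identification intertwines the principal action with the prescribed free action; this last point is most delicate for Corollary \ref{C:COHOM_2}, where the torus action is assembled by lifting an action on the quotient. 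Once this bookkeeping is in place, the remaining two steps are immediate from \cite{Bu19,Bu20,Re21} and \cite{GPT98}.
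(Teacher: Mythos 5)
Your overall strategy is exactly the paper's: exhibit $M$, with the constructed free action, as a principal torus bundle over the quotient, show the quotient carries positive Ricci curvature, and lift an invariant metric via the result of \cite{GPT98}, using that $M$ is simply-connected. However, there is a genuine gap in your second step, namely in the inventory of the quotient manifolds. For Theorem \ref{T:FREE_CIRCLE} the quotients are \emph{not} of the form \eqref{EQ:COND} (nor merely spheres, products of spheres, $S^2\ttimes S^q$ and complex projective spaces): the bases constructed in that proof are connected sums whose summands include $\C P^m$ with $m\geq 3$, the total spaces $E_i^j$, $\widetilde{E}_i^j$ of linear sphere bundles over complex projective spaces (Lemma \ref{L:CPm_BUNDLE}), and the projective bundles $P(E)$ over $S^2$ (Lemma \ref{L:PROJ_BUNDLE}); see Table \ref{TB:BUNDLES}. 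Your verification of positive Ricci curvature on the base does not cover these summands, so as written the argument fails precisely for the cases of Theorem \ref{T:FREE_CIRCLE}. The paper closes this by invoking core metrics on total spaces of linear sphere bundles over spheres and complex projective spaces in dimension at least $6$ (\cite{Re21}), together with Burdick's gluing theorem for connected sums (\cite{Bu19}), and Nash's theorem (\cite{Na79}) for the projective bundles $P(E)$, which are not amenable to the core-metric connected-sum argument since they appear as a whole base rather than a summand.

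Two smaller points. First, for quotients of the form \eqref{EQ:COND} of dimension $5$ (which occur in Corollary \ref{C:FREE_TORUS} and in the $6$-dimensional case of Theorem \ref{T:FREE_CIRCLE}), the core-metric route you cite is delicate because the sphere-bundle core metrics of \cite{Re21} require total space dimension at least $6$; the paper instead quotes Sha--Yang \cite{SY91} for $5$-manifolds of the form \eqref{EQ:COND}. Second, for the $5$-dimensional case of Theorem \ref{T:FREE_CIRCLE}(2) the base is the simply-connected $4$-manifold furnished by \cite{DL05}, and one should say explicitly why that particular $4$-manifold admits positive Ricci curvature, rather than appealing to core metrics for a loosely described list of $4$-dimensional summands. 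Your first and third steps (equivariant identification of the action with the principal action, and the application of \cite{GPT98}) match the paper and are fine.
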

  
    The existence of invariant metrics of positive Ricci curvature on the manifolds in Theorem \ref{T:COHOM_4} has already been shown in \cite{CG20} without identifying the total spaces if the dimension of the total space is at least $7$.    

    This article is organized as follows. In Section \ref{S:PRELIMINARIES}, we recall basic facts on principal torus bundles and results from differential topology. In Section \ref{S:FRAMED_CIRCLES}, we study isotopy classes of normally framed circles which will be crucial for the proofs of Theorems \ref{T:MAIN} and \ref{T:SUSP_EX}, and in Section \ref{S:SURGERY} we consider the effect of surgery on a normally framed circle and establish the existence of certain self-diffeomorphisms on manifolds of the form \eqref{EQ:COND}. In Section \ref{S:TW_SUSP}, we introduce the twisted suspensions and prove Theorems \ref{T:MAIN} and \ref{T:SUSP_EX}. Finally, in Section \ref{S:CIRCLE_BUNDLES}, we apply Theorems \ref{T:MAIN} and \ref{T:SUSP_EX} to prove Theorems \ref{T:S1_BUNDLES}, \ref{T:FREE_CIRCLE}, and \ref{T:COHOM_4}, and Corollaries \ref{C:FREE_TORUS}, \ref{C:COHOM_2}, and \ref{C:RIC>0}.

    \begin{ack}
        The authors would like to thank Martin Kerin and Sam Hagh Shenas Noshari for helpful comments on an earlier version of this article, Lee Kennard and Lawrence Mouillé for providing the example in Remark~\ref{R:NO.FREE.SUBTORUS}, and Haibao Duan for helpful discussions. Philipp Reiser would also like to thank the Department of Mathematical Sciences of Durham University for its hospitality during a first visit where this work was initiated and a second visit where it was completed. Finally, the authors would like to thank the anonymous referee for their suggestions that helped to improve the exposition.
    \end{ack}

    % S: PRELIMINARIES [DONE]
 
	\section{Preliminaries}
	\label{S:PRELIMINARIES}

	We will identify $\R^k$ with a subspace of $\R^l$ if $k\leq l$ via the map 
    \[
    (v_1,\dots,v_k)\mapsto (v_1,\dots,v_k,0,\dots,0).
    \]
    Similarly, we consider $\mathrm{SO}(k)$ as a subgroup of $\mathrm{SO}(l)$ by applying $\phi\in\mathrm{SO}(k)$ to the first $k$ entries of $v\in\R^l$. We will use homology and cohomology with integer coefficients, unless explicitly stated otherwise. We will denote the fundamental class of a closed, oriented manifold $M$ by $[M]$. The closed $m$-disk will be denoted by $D^m$. The symbol ``$\cong$'' will denote isomorphism between algebraic structures and diffeomorphism between manifolds. Given a vector space $V$ and a manifold $M$, we denote by $\underline{V}_M$ the trivial bundle $M\times V\to M$.
	
	% SS: AUXILIARY FACTS ON PRINCIPAL TORUS BUNDLES [OK]
    
	\subsection{Auxiliary facts on principal torus bundles} 
    We denote by $T^k$ the torus of dimension $k$, i.e.\ $T^k=S^1\times\overset{k}{\cdots}\times S^1$ and $S^1\subseteq \C$ is the unit circle. We first recall the connection between principal torus bundles and free torus actions.
    \begin{lemma}
        \label{L:FREE_ACTION_CHAR}
        A manifold $M$ admits a free action of a Lie group $G$ if and only if it is the total space of a principal $G$-bundle. In this case, if $G=T^k$, the Euler characteristic $\chi(M)$ vanishes.
    \end{lemma}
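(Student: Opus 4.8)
The plan is to prove the two assertions separately: first the equivalence between free $G$-actions and principal $G$-bundle structures, then the vanishing of the Euler characteristic when $G = T^k$.

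For the equivalence, one direction is immediate: if $M$ is the total space of a principal $G$-bundle $M \to M/G$, then by definition $G$ acts freely on $M$. For the converse, suppose $G$ acts freely and smoothly on $M$. Since $M$ is a manifold (hence, in the conventions of this paper, a smooth manifold) and $G$ is a Lie group, I would invoke the standard slice theorem / quotient manifold theorem: a free and proper smooth action of a Lie group admits local slices, the orbit space $M/G$ inherits a smooth manifold structure, and the projection $M \to M/G$ is a smooth principal $G$-bundle. The only subtlety is properness of the action. If one assumes $M$ is closed (which is the standing hypothesis throughout the relevant parts of the paper, e.g.\ in Lemmas \ref{L:FREE_ACTION_CHAR} and \ref{L:PR_BDL_CHAR_NR}), then $G$ acting on compact $M$ makes the action automatically proper, so this is not an obstacle; otherwise one should assume $G$ compact (as $T^k$ is) and again properness is automatic. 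So this step reduces to citing the orbit space theorem for free proper Lie group actions.

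For the vanishing of $\chi(M)$ when $G = T^k$: it suffices to treat $k=1$, i.e.\ to show that a free $S^1$-action on $M$ forces $\chi(M) = 0$, since a free $T^k$-action restricts to a free $S^1$-action via any circle subgroup. The cleanest argument: a free $S^1$-action produces a nowhere-zero vector field on $M$ (the fundamental vector field of the action, which is nonvanishing precisely because no point has nontrivial stabilizer), and by the Poincaré–Hopf theorem a closed manifold admitting a nowhere-zero vector field has $\chi(M) = 0$. Alternatively, and perhaps more in keeping with a bundle-theoretic treatment, one can use that $M \to M/S^1$ is a fiber bundle with fiber $S^1$ and multiplicativity of the Euler characteristic for fiber bundles, $\chi(M) = \chi(S^1)\,\chi(M/S^1) = 0$; this also handles $T^k$ directly via $\chi(T^k) = 0$. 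I would present the vector-field argument as the primary one since it needs no compactness of the base and is elementary.

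The main obstacle, such as it is, is purely expository rather than mathematical: being careful about which hypotheses ($M$ closed, or $G$ compact) are needed to guarantee properness of the action so that the quotient manifold theorem applies, and about the smoothness conventions in force. No deep input is required beyond the quotient manifold theorem and Poincaré–Hopf (or multiplicativity of $\chi$ in fiber bundles).
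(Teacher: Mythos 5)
Your proposal is correct in substance, but it handles the Euler characteristic differently from the paper. For the bundle equivalence, both you and the paper simply invoke the standard quotient theorem for free actions of compact Lie groups (the paper cites Bredon); here your route is the same. For $\chi(M)=0$, the paper does not pass to a vector field or to multiplicativity of $\chi$ in fiber bundles: it uses the fact that for a torus action on a closed manifold the Euler characteristic of $M$ equals that of the fixed-point set (Kobayashi's theorem), which is empty for a free action. Your Poincar\'e--Hopf argument (restrict to a circle subgroup, observe the fundamental vector field is nowhere zero) and the multiplicativity argument are both standard and perfectly adequate here, and arguably more elementary; the paper's fixed-point argument has the mild advantage of applying verbatim to the full $T^k$-action without choosing a circle subgroup, and of being the natural statement in the transformation-group literature the paper is drawing on. Note, though, that Poincar\'e--Hopf does require $M$ closed, which is indeed the standing hypothesis when the lemma is used (e.g.\ in Corollary \ref{L:PR_BDL_CHAR_NR}).

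One small but genuine slip in your discussion of properness: it is not true that ``$G$ acting on compact $M$ makes the action automatically proper.'' If $G$ is non-compact and $M$ is compact, the action map $G\times M\to M\times M$ is never proper, and in fact a free smooth action on a closed manifold need not give a principal bundle in that generality (the irrational flow of $\R$ on $T^2$ is free but its orbit space is not even Hausdorff). The correct hypothesis, which is the one implicit in the paper's citation of Bredon and the only case used ($G=T^k$), is that $G$ is compact; your fallback sentence already says this, so the fix is simply to drop the ``closed $M$ implies proper'' claim and rely on compactness of $G$.
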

    \begin{proof}
        For the first statement see, e.g.\ \cite[Corollary VI.2.5]{Br72}. If $M$ admits an effective $T^k$-action, then the Euler characteristic of $M$ equals the Euler characteristic of the fixed point set of the action (see \cite{Ko58} and cf.\ \cite[Ch.\ II, Theorem 5.5]{Ko}). In particular, if the action is free, then $\chi(M)$ vanishes.
    \end{proof}
  
    Now, let $P\xrightarrow{\pi}X$ be a principal $T^k$-bundle. Let $\E S^1\xrightarrow{\pi_{S^1}}\B S^1$ be the universal bundle for $S^1$ (we refer to \cite[Sections 4.10--4.13]{Hu94} for the definition and basic properties of universal bundles). Then the product bundle
    \[\E S^1\times\overset{k}{\cdots}\times \E S^1\to \B S^1\times\overset{k}{\cdots}\times \B S^1\]
    is the universal bundle for $T^k$, and we denote the corresponding bundle map by $\pi_{T^k}$. Hence, there exists a map $f_\pi\colon X\to \B S^1\times\overset{k}{\cdots}\times \B S^1$ such that $\pi$ is isomorphic to $f_{\pi}^*\pi_{T_k}$.
    Since $f_{\pi}$ is unique up to homotopy, we obtain a unique element in
    \begin{align}
    \label{eq:product.homotopy.classes}
    [X,\B S^1\times\overset{k}{\cdots}\times \B S^1  ]\cong [X,\B S^1]\times \overset{k}{\cdots}\times  [X,\B S^1].
    \end{align}
    Since $\B S^1$ is a $K(\Z,2)$-space, the right-hand side of equation~\eqref{eq:product.homotopy.classes} can be identified with $H^2(X,\Z)\times \overset{k}{\cdots}\times H^2(X,\Z)$. Thus, the bundle $\pi$ is uniquely determined by a $k$-tuple
    \[e(\pi)=(e_1(\pi),\dots,e_k(\pi))\in H^2(X,\Z)\times \overset{k}{\cdots}\times H^2(X,\Z).\]
    We call this $k$-tuple the \emph{Euler class} of $\pi$ and note that it coincides with the usual definition of the Euler class if $k=1$.

    \begin{lemma}
        \label{L:TORUS_BDL_STR}
        Let $P\xrightarrow{\pi}X$ be a principal $T^k$-bundle with Euler class $(e_1(\pi),\dots,e_k(\pi))$. Then there is a sequence of principal $S^1$-bundles $P_i\xrightarrow{\pi_i}P_{i-1}$, $i=1,\dots,k$, such that
        \begin{enumerate}
            \item $P_k=P$, $P_0=X$ and $\pi_1\circ\dots\circ \pi_k=\pi$; 
            \item $e(\pi_i)=\pi_{i-1}^*\dots \pi_1^* e_i(\pi)$.
        \end{enumerate}
    \end{lemma}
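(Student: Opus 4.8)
The plan is to realize $P$ concretely as an iterated fibre product of principal circle bundles and then to peel off one circle factor at a time. For $i=1,\dots,k$ I would let $Q_i\xrightarrow{\rho_i}X$ be the principal $S^1$-bundle with Euler class $e_i(\pi)\in H^2(X)$; concretely $Q_i=g_i^*\E S^1$, where $g_i\colon X\to\B S^1$ is the $i$-th component of the classifying map $f_\pi$ appearing in the discussion preceding the lemma. Since $\pi$ is classified by $f_\pi=(g_1,\dots,g_k)\colon X\to\B S^1\times\overset{k}{\cdots}\times\B S^1$ and $\pi_{T^k}$ is the product of $k$ copies of $\pi_{S^1}$, the pullback $f_\pi^*\pi_{T^k}$ is, as a principal $T^k$-bundle, naturally isomorphic to the $k$-fold fibre product $Q_1\times_X\cdots\times_X Q_k$ with $T^k$ acting factorwise; hence $P\cong Q_1\times_X\cdots\times_X Q_k$.

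Next I would set $P_0=X$ and, for $1\leq j\leq k$, $P_j=Q_1\times_X\cdots\times_X Q_j$, with $\pi_j\colon P_j\to P_{j-1}$ the map forgetting the last factor. Each $\pi_j$ is a principal $S^1$-bundle (the $j$-th circle factor of $T^k$ acting on the last coordinate), one has $P_k=P$, and $\pi_1\circ\cdots\circ\pi_k\colon P\to X$ is the bundle projection $\pi$, since it records only the common base point; this gives item (1).

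For item (2), the point is that $P_j=P_{j-1}\times_X Q_j$, where $P_{j-1}$ is viewed as a space over $X$ via $\pi_1\circ\cdots\circ\pi_{j-1}$; in other words, $\pi_j$ is the pullback of $\rho_j\colon Q_j\to X$ along $\pi_1\circ\cdots\circ\pi_{j-1}$. By naturality of the Euler class under pullback of principal circle bundles,
\[ e(\pi_j)=(\pi_1\circ\cdots\circ\pi_{j-1})^*e(Q_j)=(\pi_1\circ\cdots\circ\pi_{j-1})^*e_j(\pi)=\pi_{j-1}^*\cdots\pi_1^*e_j(\pi), \]
which is the asserted formula (for $j=1$ it reads $e(\pi_1)=e_1(\pi)$).

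The only step that needs genuine care is the identification $P\cong Q_1\times_X\cdots\times_X Q_k$ — equivalently, the observation that the classifying map of the product bundle $\pi_{T^k}$ is the product of the classifying maps $g_i$ and that pullback along a product map is the iterated fibre product — together with the standard naturality of the Euler class; everything else is formal. An essentially equivalent alternative is induction on $k$: the last circle factor of $T^k$ acts freely on $P$, exhibiting $P\to P/S^1$ as a principal $S^1$-bundle and $P/S^1\to X$ as a principal $T^{k-1}$-bundle whose Euler class consists of the first $k-1$ components of $e(\pi)$, so that the inductive hypothesis applies once the Euler class of $P\to P/S^1$ is identified by the same pullback argument.
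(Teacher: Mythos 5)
Your proof is correct and in substance the same as the paper's: both arguments rest on the product structure of the universal $T^k$-bundle and the componentwise classifying map $f_\pi=(g_1,\dots,g_k)$, the paper presenting the intermediate spaces as quotients $P_i=P/T^{k-i}$ while you present them as the fibre products $Q_1\times_X\cdots\times_X Q_i$, which are canonically isomorphic (as you yourself note in your closing remark on the inductive variant). If anything, your explicit identification $P_j\cong P_{j-1}\times_X Q_j$ together with naturality of the Euler class makes item (2) slightly more transparent than the paper's terser computation of the Euler class of $P_i\to X$ viewed as a $T^i$-bundle.
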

    \begin{proof}
        We set $P_i=P/T^{k-i}$, where we view $T^j$, for $j<k$, as a subgroup of $T^k$ via
        \[ T^j\cong (\{1\}\times \overset{k-j}{\cdots}\times \{1\})\times T^j\subseteq T^k. \]
        Then the projection $P_i\xrightarrow{\pi_{i}} P_{i-1}$ is a principal $S^1$-bundle, where the action is induced by the action of the $i$-th $S^1$-factor of $T^k$ on $P$. This proves claim (1).

        For the second claim we show that the projection $P_i\to X$, when viewed as a principal $T^i$-bundle, has Euler class $(e_1(\pi),\dots,e_i(\pi))$. By construction of $P_i$, the bundle $P_i\to X$ is the pull-back along $f_\pi$ of the principal $T^i$-bundle
        \[\E S^1\times \overset{i}{\cdots}\times \E S^1\times \B S^1\times \overset{k-i}{\cdots}\times \B S^1\to \B S^1\times \overset{k}{\cdots}\times \B S^1, \]
        where the bundle map is given by $\pi_{T^i}$ on the first $i$ factors and by the identity on the last $(k-i)$ factors. We obtain the same bundle when we pull back the universal bundle $\pi_{T^i}\colon\E S^1\times\overset{i}{\cdots}\times \E S^1\to\B S^1\times\overset{i}{\cdots}\times \B S^1$ along $\textrm{pr}_i\circ f_\pi$, where $\textrm{pr}_i$ denotes the projection $\B S^1\times \overset{k}{\cdots}\times \B S^1\to\B S^1\times \overset{i}{\cdots}\times \B S^1$ onto the first $i$ factors. Thus, the Euler class of $P_i\to X$ is given by $(e_1(\pi),\dots,e_i(\pi))$.
    \end{proof}
    \begin{lemma}
    	\label{L:PR_BDL_TOPOLOGY}
        Let $P\xrightarrow{\pi}X$ be a principal $T^k$-bundle with Euler class $e(\pi) = (e_1(\pi),\dots,e_k(\pi))$ such that $X$ is simply-connected. Then
        \[\pi_1(P)\cong \bigslant{\Z^k}{\mathrm{im}(e(\pi))} \]
        and
        \[ H^2(P)\cong \bigslant{H^2(X)}{\langle e_1(\pi),\dots,e_k(\pi)\rangle }, \]
        where in the first case we view $e(\pi)$ as a homomorphism $H_2(X)\to \Z^k$ and in the second case the isomorphism is induced by $\pi$.
        In particular, $P$ is simply-connected if and only if the Euler class $e(\pi)$ generates a direct summand in $H^2(X)$, that is, $(e_1(\pi),\dots,e_k(\pi))$ can be extended to a basis of $H^2(X)$.
    \end{lemma}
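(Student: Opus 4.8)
The plan is to use the classifying map $f_\pi\colon X\to \B S^1\times\cdots\times \B S^1$ together with the associated fibration. Since $X$ is simply-connected, we work with the fibration sequence
\[
T^k\longrightarrow P\xrightarrow{\ \pi\ } X,
\]
or equivalently, since each $\B S^1=K(\Z,2)$, the composite classifying map into $\prod_{i=1}^k \B S^1 = K(\Z^k,2)$. First I would analyze $\pi_1(P)$ via the long exact homotopy sequence of the bundle $T^k\to P\to X$. Because $X$ is simply-connected, this gives an exact sequence
\[
\pi_2(X)\xrightarrow{\ \partial\ }\pi_1(T^k)\longrightarrow \pi_1(P)\longrightarrow \pi_1(X)=0,
\]
so $\pi_1(P)\cong \mathrm{coker}(\partial)$, where $\partial\colon\pi_2(X)\to\pi_1(T^k)\cong\Z^k$. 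The key step is to identify $\partial$ with the Euler class viewed as a map $H_2(X)\to\Z^k$: the connecting homomorphism $\partial$ of the pulled-back bundle is obtained from the connecting homomorphism of the universal bundle $T^k\to \E(T^k)\to \B(T^k)$ precomposed with $(f_\pi)_*$, and for the universal bundle over $K(\Z^k,2)$ the connecting map $\pi_2(\B T^k)=\Z^k\to\pi_1(T^k)=\Z^k$ is the identity (since $\E T^k$ is contractible). Composing with the Hurewicz isomorphism $\pi_2(X)\cong H_2(X)$ (valid as $X$ is simply-connected) and noting that $(f_\pi)_*\colon H_2(X)\to H_2(\B T^k)\cong\Z^k$ is exactly $e(\pi)=(e_1(\pi),\dots,e_k(\pi))$ under the evaluation/Kronecker pairing with $H^2$, we obtain $\pi_1(P)\cong\Z^k/\mathrm{im}(e(\pi))$.

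For the second isomorphism, I would decompose the principal $T^k$-bundle into a tower of circle bundles using Lemma \ref{L:TORUS_BDL_STR}, $P=P_k\to P_{k-1}\to\cdots\to P_0=X$, and analyze each step with the Gysin sequence of the circle bundle $P_i\xrightarrow{\pi_i}P_{i-1}$:
\[
H^1(P_{i-1})\xrightarrow{\ \cup\, e(\pi_i)\ }H^3(P_{i-1})\to H^3(P_i)\to\cdots,
\]
together with
\[
H^0(P_{i-1})\xrightarrow{\ \cup\, e(\pi_i)\ }H^2(P_{i-1})\xrightarrow{\pi_i^*}H^2(P_i)\to H^1(P_{i-1})\xrightarrow{\ \cup\, e(\pi_i)\ }H^3(P_{i-1}).
\]
The group $H^1(P_{i-1})$ vanishes: $H^1(X)=0$ since $X$ is simply-connected, and inductively $H^1(P_i)=0$ because at each stage the Gysin sequence gives $H^1(P_i)\cong\mathrm{coker}(H^{-1}(P_{i-1})\to H^1(P_{i-1}))$, which is $0$. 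Hence the relevant portion of the Gysin sequence degenerates to a short exact sequence
\[
0\to H^0(P_{i-1})\xrightarrow{\cup\, e(\pi_i)}H^2(P_{i-1})\xrightarrow{\pi_i^*}H^2(P_i)\to 0,
\]
so $H^2(P_i)\cong H^2(P_{i-1})/\langle e(\pi_i)\rangle$. By Lemma \ref{L:TORUS_BDL_STR}(2), $e(\pi_i)=\pi_{i-1}^*\cdots\pi_1^*e_i(\pi)$, i.e.\ it is the image of $e_i(\pi)$ under the (surjective) composite of pullbacks; iterating from $i=1$ to $k$ yields $H^2(P)\cong H^2(X)/\langle e_1(\pi),\dots,e_k(\pi)\rangle$ with the isomorphism induced by $\pi^*=\pi_1^*\circ\cdots\circ\pi_k^*$.

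Finally, the ``in particular'' statement is immediate: $\pi_1(P)=0$ iff the map $e(\pi)\colon H_2(X)\to\Z^k$ is surjective, and since $H^2(X)$ and $\Z^k$ are finitely generated and the pairing between $H^2$ and $H_2$ (modulo torsion) is perfect, surjectivity of $e(\pi)$ as a map on $H_2$ is equivalent to $(e_1(\pi),\dots,e_k(\pi))$ spanning a direct summand of $H^2(X)$, i.e.\ being extendable to a basis. The main obstacle I anticipate is bookkeeping the identification of the connecting homomorphism $\partial$ with the Euler class precisely — making sure the duality between $\pi_2(X)\cong H_2(X)$ and $H^2(X)$ is set up so that $\partial$ really is ``$e(\pi)$ viewed as a homomorphism $H_2(X)\to\Z^k$'' rather than some transpose or sign-twisted variant; everything else is a routine application of the Gysin sequence and the homotopy exact sequence.
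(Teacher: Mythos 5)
Your treatment of $\pi_1(P)$ and of the $k=1$ cohomology statement follows the paper's proof essentially verbatim (long exact homotopy sequence compared with the universal bundle via $f_\pi$, Hurewicz, then the Gysin sequence), and those parts, as well as your dualization argument for the ``in particular'' clause, are fine. The genuine gap is your inductive claim that $H^1(P_i)=0$ at every stage. The relevant portion of the Gysin sequence for $P_i\xrightarrow{\pi_i}P_{i-1}$ is
$H^1(P_{i-1})\to H^1(P_i)\to H^0(P_{i-1})\xrightarrow{\cdot\smile e(\pi_i)} H^2(P_{i-1})$;
you have in effect truncated it after $H^1(P_i)$, which is why you get ``$H^1(P_i)\cong\mathrm{coker}(H^{-1}(P_{i-1})\to H^1(P_{i-1}))=0$''. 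Even when $H^1(P_{i-1})=0$, the sequence only gives $H^1(P_i)\cong\ker\bigl(\cdot\smile e(\pi_i)\colon H^0(P_{i-1})\to H^2(P_{i-1})\bigr)$, which vanishes precisely when $e(\pi_i)$ has infinite order. If $e(\pi_i)$ is zero or torsion this fails: for the trivial circle bundle $P_1=X\times S^1$ one has $H^1(P_1)\cong\Z$, and then at the next stage the degree-two Gysin argument no longer yields surjectivity of $\pi_2^*$ with the stated kernel. Concretely, for $X=S^2$ and $e(\pi)=(0,0)$ the total space is $S^2\times T^2$, whose $H^2$ is $\Z^2$ rather than $H^2(S^2)/\langle 0\rangle\cong\Z$, so the inductive step (and the unrestricted statement for $k\geq 2$) genuinely breaks down when the classes $e_1(\pi),\dots,e_k(\pi)$ are linearly dependent.

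What your argument needs is the additional observation that at each stage $e(\pi_i)=\pi_{i-1}^*\cdots\pi_1^*e_i(\pi)$ has infinite order in $H^2(P_{i-1})$, equivalently that $\cdot\smile e(\pi_i)\colon H^0(P_{i-1})\to H^2(P_{i-1})$ is injective; this is exactly what makes both $H^1(P_i)=0$ and your short exact sequence $0\to H^0(P_{i-1})\to H^2(P_{i-1})\to H^2(P_i)\to 0$ correct. That hypothesis holds, for instance, whenever $(e_1(\pi),\dots,e_k(\pi))$ extends to a basis of $H^2(X)$ (so in all the simply-connected situations where the paper actually invokes the lemma), but not for an arbitrary Euler class. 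The paper's own proof says only ``repeated application of the above argument'' and thus glosses the same point, but your write-up turns it into an explicitly incorrect reading of the Gysin sequence, which you should repair or hypothesize away before the induction can be carried out.
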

    \begin{proof}
        The long exact sequence of homotopy groups for the bundles $\pi$ and $\pi_{T^k}$ together with the induced maps of $f_\pi$ gives the following commutative diagram with exact rows (see e.g.\ \cite[17.4 and 17.5]{St99}):
        \[
            \begin{tikzcd}[column sep=scriptsize]
                \pi_2(P)\arrow{r}{\pi_*}\arrow{d}{} & \pi_2(X) \arrow{r}{}\arrow{d}{{f_\pi}_*} & \pi_1(T^k)\arrow{r}{}\arrow{d}{\textrm{id}_{\pi_1(T^k)}} & \pi_1(P)\arrow{r}{\pi_*}\arrow{d}{} & \pi_1(X)\arrow{d}{{f_\pi}_*}\arrow{r}{} & \pi_0(T^k)\arrow[swap]{d}{\textrm{id}_{\pi_0(T^k)}}\\
                \pi_2(\bigtimes_k\E S^1 )\arrow{r}{\pi_*} & \pi_2(\bigtimes_k\B S^1) \arrow{r}{} & \pi_1(T^k)\arrow{r}{} & \pi_1(\bigtimes_k\E S^1)\arrow{r}{\pi_*} & \pi_1(\bigtimes_k\B S^1)\arrow{r}{} & \pi_0(T^k)
            \end{tikzcd}.
        \]
        Since $\E S^1$ is contractible, all its homotopy groups vanish, so the map $\pi_2(\bigtimes_k\B S^1)\to\pi_1(T^k)$ is an isomorphism and $\pi_1(\bigtimes_k\B S^1)$ is trivial. In particular, the group $\pi_2(\bigtimes_k\B S^1)$ is isomorphic to $\Z^k$. Since $X$ is simply-connected, it follows that the group $\pi_1(P)$ is isomorphic to the quotient of $\pi_2(\bigtimes_k\B S^1)$ by the image of ${f_\pi}_*$. By the Hurewicz theorem, we can identify the image of ${f_\pi}_*$ in $\pi_2(\bigtimes_k\B S^1)$ with the image of the induced map of $f_\pi$ in homology, which by construction is precisely the image of the Euler class $(e_1(\pi),\dots,e_k(\pi))$.

        For the cohomology, we first consider the case $k=1$, i.e.\ $\pi$ is a principal $S^1$-bundle, and apply the Gysin sequence (see e.g.\ \cite[Theorem 12.2]{MS74}):
        \[
            \begin{tikzcd}[column sep=large]
                H^0(X)\arrow{r}{\cdot\smile e_1(\pi) } & H^2(X)\arrow{r}{\pi^*} & H^2(P)\arrow{r}{} & H^1(X).
            \end{tikzcd}
        \]
        Since $X$ is simply-connected, we have $H^1(X)=0$, so $\pi^*\colon H^2(X)\to H^2(P)$ is surjective with kernel given by the image of the map $\cdot\smile e_1(\pi)\colon H^0(X)\to H^2(X)$, which is precisely the subgroup generated by $e_1(\pi)$.

        For general $k$, we apply Lemma \ref{L:TORUS_BDL_STR} to divide $\pi$ into a sequence of principal $S^1$-bundles. Repeated application of the above argument for principal $S^1$-bundles now gives the claim.
    \end{proof}
    In case of a non-simply-connected base we have the following result.
    \begin{lemma}
        \label{L:PR_BDL_TOPOLOGY2}
        Let $P\xrightarrow{\pi}X$ be a principal $S^1$-bundle with Euler class $e(\pi)$. Then the inclusion of a fiber in $P$ is null-homotopic if and only if the pull-back of $e(\pi)$ to the universal cover $\widetilde{X}$ is primitive.
    \end{lemma}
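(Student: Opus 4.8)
The plan is to reduce the statement to the simply-connected case already settled in Lemma~\ref{L:PR_BDL_TOPOLOGY}, by pulling the bundle back to the universal cover of the base. Write $\iota\colon S^1\to P$ for the inclusion of a fiber; since $X$ is path-connected this is well defined up to homotopy, and a map out of $S^1$ is null-homotopic exactly when it induces the zero map on $\pi_1$. So the claim amounts to showing that $\iota_*\colon\pi_1(S^1)\to\pi_1(P)$ vanishes if and only if $p^*e(\pi)$ is primitive, where $p\colon\widetilde X\to X$ is the universal covering.

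First I would form the pull-back $\widetilde\pi\colon\widetilde P\to\widetilde X$ of $\pi$ along $p$; this is the principal $S^1$-bundle with Euler class $\widetilde e:=p^*e(\pi)$. The tautological map $q\colon\widetilde P\to P$ covering $p$ is the pull-back of the covering map $p$ along $\pi$, hence itself a covering map, and it restricts to a diffeomorphism from a fiber of $\widetilde\pi$ onto a fiber of $\pi$. Thus $\iota$ factors as $S^1\xrightarrow{\widetilde\iota}\widetilde P\xrightarrow{q}P$ with $\widetilde\iota$ a fiber inclusion of $\widetilde P$, and since $q_*$ is injective on fundamental groups (as $q$ is a covering map), $\iota_*=0$ if and only if $\widetilde\iota_*\colon\pi_1(S^1)\to\pi_1(\widetilde P)$ is zero. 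Because $\widetilde X$ is simply-connected, the homotopy exact sequence of $S^1\to\widetilde P\to\widetilde X$ shows $\widetilde\iota_*$ is surjective, so $\widetilde\iota_*=0$ is equivalent to $\pi_1(\widetilde P)=0$.

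It then remains to recognize when $\widetilde P$ is simply-connected. Applying Lemma~\ref{L:PR_BDL_TOPOLOGY} to $\widetilde\pi$ (whose base $\widetilde X$ is simply-connected) gives $\pi_1(\widetilde P)\cong\Z/\operatorname{im}(\widetilde e)$, where $\widetilde e$ is viewed as a homomorphism $H_2(\widetilde X)\to\Z$; hence $\widetilde P$ is simply-connected precisely when this homomorphism is surjective. Since $\widetilde X$ is simply-connected, $H_1(\widetilde X)=0$, so the universal-coefficient theorem identifies $H^2(\widetilde X;\Z)$ with $\operatorname{Hom}(H_2(\widetilde X),\Z)$ via the Kronecker pairing, compatibly with the above description of $\widetilde e$; this identification preserves divisibility, and a homomorphism $A\to\Z$ is primitive (i.e.\ of divisibility one) exactly when it is surjective. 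Therefore $\widetilde P$ is simply-connected if and only if $\widetilde e$ is primitive, and chaining the equivalences proves the lemma.

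I do not expect a genuine obstacle here; the only point that needs care is the passage to the universal cover, namely checking that $q\colon\widetilde P\to P$ is a covering and that the fiber inclusion of $P$ factors through it, which is exactly what lets us test the null-homotopy upstairs over a simply-connected base. If one prefers not to invoke Lemma~\ref{L:PR_BDL_TOPOLOGY}, the same conclusion follows directly by identifying the connecting homomorphism $\partial\colon\pi_2(X)\to\pi_1(S^1)\cong\Z$ in the homotopy sequence of $\pi$ --- after the identifications $\pi_2(X)\cong\pi_2(\widetilde X)\cong H_2(\widetilde X)$ and via the classifying map $\widetilde X\to K(\Z,2)$ --- with the Kronecker evaluation against $\widetilde e$, and noting that $\iota_*=0$ if and only if $\partial$ is surjective, which happens if and only if $\widetilde e$ is primitive.
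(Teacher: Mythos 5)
Your proof is correct and takes essentially the same route as the paper: pull the bundle back along the universal covering $\widetilde X\to X$ and reduce, via the simply-connected base, to Lemma~\ref{L:PR_BDL_TOPOLOGY}. The only cosmetic difference is how the null-homotopy question is transferred upstairs — you observe that $\widetilde P\to P$ is a covering map (hence $\pi_1$-injective) through which the fiber inclusion factors, while the paper compares the two homotopy exact sequences in a commutative diagram using the isomorphism on $\pi_2$; both yield the same equivalence that the fiber inclusion is null-homotopic if and only if $\pi_1(\widetilde P)=0$.
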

    \begin{proof}
        Let $\overline{P}\xrightarrow{\widetilde{\pi}}\widetilde{X}$ denote the pull-back of $\pi$ along the covering projection $\widetilde{X}\to X$. The long exact sequence of homotopy groups for the bundles $\pi$ and $\widetilde{\pi}$ gives the following commutative diagram with exact rows:
        \[
            \begin{tikzcd}
                \pi_2(\widetilde{X})\arrow{r}\arrow{d} & \pi_1(S^1)\arrow{r}\arrow{d}{=} & \pi_1(\overline{P})\arrow{d}\arrow{r}{\widetilde{\pi}} & \pi_1(\widetilde{X})\arrow{d} \\
                \pi_2(X)\arrow{r} & \pi_1(S^1)\arrow{r} & \pi_1(P)\arrow{r}{\pi} & \pi_1(X)
            \end{tikzcd}
        \]
        Since the map $\pi_2(\widetilde{X})\to\pi_2(X)$ is an isomorphism, it follows that the map $\pi_1(S^1)\to\pi_1(P)$ is trivial if and only if the map $\pi_2(\widetilde{X})\to\pi_1(S^1)$ is surjective. Since $\widetilde{X}$ is simply-connected, this is the case if and only if $\pi_1(\overline{P})$ is trivial. Since the Euler class of $\widetilde{\pi}$ is the pull-back of $e(\pi)$ along the projection $\widetilde{X}\to X$, the claim follows from Lemma \ref{L:PR_BDL_TOPOLOGY}.
    \end{proof}
    
	Recall that a \emph{stable characteristic class} is an element $c\in H^i(\mathrm{BO};R)$ for a ring $R$. For a vector bundle $E\xrightarrow{\pi}X$ of rank $k$, we then set $c(\pi)=f_\pi^*\iota_k^*c$, where $\iota_k\colon \mathrm{BO}(k)\to\mathrm{BO}$ is the map induced by the inclusion $\mathrm{O}(k)\hookrightarrow\mathrm{O}$ and $f_\pi\colon X\to\B \mathrm{O}(k)$ is the classifying map of $\pi$. For a manifold $M$ we set $c(M)=c(TM)$. We then have
	\[c(\pi\oplus\underline{\R}_X)=c(\pi) \]
	for every vector bundle $E\xrightarrow{\pi}X$. The Stiefel--Whitney classes $w_i\in H^i(\mathrm{BO};\Z/2)$ and the Pontryagin classes $p_i\in H^{4i}(\mathrm{BO};\Z)$ are examples of stable characteristic classes.
    \begin{lemma}
    	\label{L:PR_BDL_CHAR_CL}
        Let $P\xrightarrow{\pi}M$ be a principal $G$-bundle for a Lie group $G$ and let $c\in H^i(\mathrm{BO};R)$ be a stable characteristic class. Then
        \[c(P)=\pi^*c(M). \]
    \end{lemma}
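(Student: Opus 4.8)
The plan is to reduce the statement to two facts: that the vertical tangent bundle of a principal bundle is trivial, and that a stable characteristic class is natural under pullback and unchanged by adding trivial summands. First I would trivialize the vertical tangent bundle of $P$. Writing $\mathfrak{g}$ for the Lie algebra of $G$ and $m=\dim G$, the assignment $(p,\xi)\mapsto\frac{d}{dt}\big|_{t=0}\,p\cdot\exp(t\xi)$ defines a morphism of vector bundles $\underline{\mathfrak{g}}_P\to TP$ over $P$. Because the $G$-action on $P$ is free it is fibrewise injective, and because the fibres of $\pi$ are exactly the $G$-orbits its image is precisely the vertical subbundle $\ker(d\pi)\subseteq TP$; hence $\ker(d\pi)\cong\underline{\mathfrak{g}}_P\cong\underline{\R^m}_P$ is trivial. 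Since $\pi$ is a submersion, the short exact sequence of smooth vector bundles $0\to\ker(d\pi)\to TP\to\pi^*TM\to0$ over $P$ splits (for instance, by equipping $P$ with a Riemannian metric and taking orthogonal complements), so
\[ TP\;\cong\;\pi^*TM\;\oplus\;\underline{\R^m}_P. \]

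Next I would feed this into the formal properties of $c$ recorded above. Iterating the identity $c(\,\cdot\,\oplus\underline{\R})=c(\,\cdot\,)$ a total of $m$ times gives $c(P)=c(TP)=c(\pi^*TM)$. To finish, I would use that the classifying map of the pullback bundle $\pi^*TM$ is $f_{TM}\circ\pi\colon P\to\B\mathrm{O}(n)$, where $n=\dim M$ and $f_{TM}$ is the classifying map of $TM$; therefore
\[ c(\pi^*TM)=(f_{TM}\circ\pi)^*\iota_n^*c=\pi^*\bigl(f_{TM}^*\iota_n^*c\bigr)=\pi^*c(M), \]
and combining the two displays yields $c(P)=\pi^*c(M)$, as claimed.

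The one step that needs genuine care is the triviality of the vertical bundle, precisely because $G$ is not assumed compact, so one cannot simply invoke an invariant metric to split $TP$ equivariantly; but the fundamental-vector-field trivialization above uses only that the action is free and that $\pi$ is the orbit projection of a principal bundle, so it goes through for an arbitrary Lie group. The remaining manipulations are purely formal consequences of the stability and naturality of $c$, so I expect no further difficulty.
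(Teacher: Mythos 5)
Your proposal is correct and follows essentially the same argument as the paper: trivialize the vertical bundle $\ker(d\pi)\cong\underline{\mathfrak{g}}_P$ via fundamental vector fields, split $TP\cong\pi^*TM\oplus\underline{\mathfrak{g}}_P$, and conclude by stability and naturality of $c$. The only cosmetic difference is that you split the sequence with an arbitrary Riemannian metric on $P$, while the paper chooses a principal connection so that the horizontal bundle is $\pi^*TM$; both yield the same isomorphism of bundles, and your remark that no invariant metric is needed is a correct observation.
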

	\begin{proof}
		The tangent bundle of $P$ is given by
        \[ TP\cong \pi^* TM\oplus T_\pi P, \]
        where $T_\pi P=\ker(\pi_*)$ is the bundle of vertical vectors. This can be seen by choosing a connection on the bundle $P$, so that the horizontal bundle is isomorphic to $\pi^* TM$. The bundle $T_\pi P$ is now isomorphic to the trivial bundle $\underline{\mathfrak{g}}_P$ via the isomorphism
        \[P\times\mathfrak{g}\to T_\pi P,\quad (p,X)\mapsto \frac{d}{dt}\Big|_{t=0}(p\cdot \exp(tX)). \]
        It follows that
        \[c(P)=c(\pi^*TM\oplus \underline{\mathfrak{g}}_P)=\pi^*c(M). \]
	\end{proof}
    \begin{corollary}
        \label{C:P_W2}
    	Let $P\xrightarrow{\pi}M$ be a principal $T^k$-bundle with Euler class $(e_1(\pi),\dots,e_k(\pi))$ and assume that $M$ is orientable. Then $P$ is spin if and only if
    	\[w_2(M)\in\langle e_1(\pi),\dots,e_k(\pi)\rangle\mod 2. \]
    \end{corollary}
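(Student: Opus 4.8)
The plan is to reduce the corollary to a computation of the kernel of $\pi^*$ on second cohomology with $\Z/2$-coefficients. An orientable manifold is spin precisely when its second Stiefel--Whitney class vanishes, so the first step is to observe, via Lemma~\ref{L:PR_BDL_CHAR_CL} applied to the Stiefel--Whitney classes, that $w_1(P)=\pi^*w_1(M)$ and $w_2(P)=\pi^*w_2(M)$. Since $M$ is orientable, $w_1(M)=0$, hence $P$ is orientable, and $P$ is spin if and only if $\pi^*w_2(M)=0$ in $H^2(P;\Z/2)$. Thus the corollary follows once we establish
\[\ker\bigl(\pi^*\colon H^2(M;\Z/2)\to H^2(P;\Z/2)\bigr)=\langle e_1(\pi),\dots,e_k(\pi)\rangle \bmod 2,\]
the right-hand side denoting the image of the integral subgroup $\langle e_1(\pi),\dots,e_k(\pi)\rangle$ under reduction modulo $2$.

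To prove this identity I would follow the cohomology part of the proof of Lemma~\ref{L:PR_BDL_TOPOLOGY}, but working with $\Z/2$-coefficients, which removes the need for simply-connectedness. Using Lemma~\ref{L:TORUS_BDL_STR}, write $\pi$ as a tower of principal $S^1$-bundles $P=P_k\xrightarrow{\pi_k}P_{k-1}\to\cdots\to P_0=M$ in which the Euler class of $\pi_i$ is the pull-back of $e_i(\pi)$ to $P_{i-1}$, and note that each $P_i$ is connected. For a principal $S^1$-bundle $Q\xrightarrow{q}Y$ over a connected base with Euler class $\varepsilon$, the Gysin sequence with $\Z/2$-coefficients gives the exact sequence $H^0(Y;\Z/2)\xrightarrow{\cup\bar\varepsilon}H^2(Y;\Z/2)\xrightarrow{q^*}H^2(Q;\Z/2)$, so the kernel of $q^*$ on $H^2(\,\cdot\,;\Z/2)$ is the subgroup generated by the mod~$2$ reduction $\bar\varepsilon$ of $\varepsilon$. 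I would then induct on $i$, the claim being that the kernel of the composite $H^2(M;\Z/2)\to H^2(P_i;\Z/2)$ equals $\langle\bar e_1(\pi),\dots,\bar e_i(\pi)\rangle$; the inductive step combines this description of $\ker(\pi_i^*)$ with the inductive hypothesis, applied to the factorization $H^2(M;\Z/2)\to H^2(P_{i-1};\Z/2)\xrightarrow{\pi_i^*}H^2(P_i;\Z/2)$. Taking $i=k$ yields the displayed identity and finishes the argument.

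Every ingredient here is standard, so I do not expect a genuine obstacle; the only points requiring care are checking that each $P_i$ is connected, so that $H^0(P_{i-1};\Z/2)=\Z/2$ and the Gysin argument produces a two-element kernel, and carrying out the inductive step correctly: an element $\alpha\in H^2(M;\Z/2)$ maps to zero in $H^2(P_i;\Z/2)$ if and only if its image in $H^2(P_{i-1};\Z/2)$ lies in $\{0,\overline{e(\pi_i)}\}$, i.e.\ if and only if $\alpha$ or $\alpha+\bar e_i(\pi)$ lies in $\ker\bigl(H^2(M;\Z/2)\to H^2(P_{i-1};\Z/2)\bigr)$, which by the inductive hypothesis equals $\langle\bar e_1(\pi),\dots,\bar e_{i-1}(\pi)\rangle$.
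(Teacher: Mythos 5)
Your proposal is correct and follows essentially the same route as the paper: apply Lemma~\ref{L:PR_BDL_CHAR_CL} to get $w_1(P)=\pi^*w_1(M)=0$ and $w_2(P)=\pi^*w_2(M)$, then use Lemma~\ref{L:TORUS_BDL_STR} together with the $\Z/2$-Gysin sequence to identify $\ker\bigl(\pi^*\colon H^2(M;\Z/2)\to H^2(P;\Z/2)\bigr)$ with $\langle e_1(\pi),\dots,e_k(\pi)\rangle\bmod 2$. The only difference is that you spell out the inductive step over the tower of circle bundles, which the paper leaves implicit (citing the analogous argument in Lemma~\ref{L:PR_BDL_TOPOLOGY}), and your details are sound.
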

	\begin{proof}
		Since $M$ is orientable, it follows from Lemma \ref{L:PR_BDL_CHAR_CL} that
		\[w_1(P)=\pi^* w_1(M)=0. \]
		Hence, $P$ is orientable. For the second Stiefel--Whitney class $w_2(P)$ we have $w_2(P)=\pi^*w_2(M)$ by Lemma \ref{L:PR_BDL_CHAR_CL} and by Lemma \ref{L:TORUS_BDL_STR} and the Gysin sequence in $\Z/2$-coefficients (cf.\ \cite[Section 2.2]{Re22}) that the kernel of $\pi^*\colon H^2(M;\Z/2)\to H^2(P;\Z/2)$ is given by
		\[\langle e_1(\pi),\dots,e_k(\pi)\rangle\mod 2. \]
	\end{proof}
    Lemma \ref{L:PR_BDL_CHAR_CL} also provides a topological obstruction for the existence of free torus actions.
    \begin{corollary}
        \label{L:PR_BDL_CHAR_NR}
        Let $M$ be a closed $n$-manifold that admits a free $T^k$-action. Then any product of stable characteristic classes of $M$ of total degree at least $n-k+1$ vanishes. In particular, all Stiefel--Whitney numbers of $M$ and (if $M$ is orientable) all Pontryagin numbers of $M$ vanish.
    \end{corollary}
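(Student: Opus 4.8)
The plan is to turn the hypothesis around: a free $T^k$-action exhibits $M$ itself as the \emph{total space} of a principal torus bundle, after which the vanishing follows for free from the low dimension of the base together with Lemma~\ref{L:PR_BDL_CHAR_CL}. First I would invoke the first part of Lemma~\ref{L:FREE_ACTION_CHAR} to write $M$ as the total space of a principal $T^k$-bundle $\pi\colon M\to X$ with $X=M/T^k$. Since the action is free and smooth and $M$ is closed, $X$ is a closed manifold of dimension $n-k$; in particular $H^m(X;R)=0$ for every $m>n-k$ and every coefficient ring $R$.

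Next, given stable characteristic classes $c_1,\dots,c_r\in H^*(\mathrm{BO};R)$ with $\deg c_1+\dots+\deg c_r\geq n-k+1$, Lemma~\ref{L:PR_BDL_CHAR_CL} gives $c_j(M)=\pi^*c_j(X)$ for each $j$. Since $\pi^*$ is a ring homomorphism, this yields
\[ c_1(M)\smile\dots\smile c_r(M)=\pi^*\!\bigl(c_1(X)\smile\dots\smile c_r(X)\bigr), \]
and the class inside the parentheses lies in $H^m(X;R)$ with $m\geq n-k+1>\dim X$, hence vanishes. Therefore the product on the left vanishes as well, proving the first assertion. For the ``in particular'' part, every Stiefel--Whitney number of $M$ is $\langle w_{i_1}(M)\smile\dots\smile w_{i_r}(M),[M]\rangle$ with $i_1+\dots+i_r=n$, and (for $M$ orientable) every Pontryagin number is $\langle p_{i_1}(M)\smile\dots\smile p_{i_r}(M),[M]\rangle$ with $i_1+\dots+i_r=n/4$; in both cases the product of characteristic classes has total degree $n\geq n-k+1$ (here $k\geq 1$), so by the first part it already vanishes in $H^n(M;R)$, and pairing $0$ with $[M]$ gives $0$.

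I do not expect a genuine obstacle: the whole argument is essentially a one-line consequence of Lemma~\ref{L:PR_BDL_CHAR_CL} once one notes that $\pi^*$ respects cup products and that $M/T^k$ is again a closed manifold, now of dimension $n-k$. The only point deserving a word of care is precisely this last observation about the quotient (together with the remark that for $k=0$ the first statement is vacuous and the ``in particular'' statement should be read with $k\geq 1$).
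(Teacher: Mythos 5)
Your proposal is correct and follows essentially the same route as the paper: apply Lemma~\ref{L:FREE_ACTION_CHAR} to realize $M$ as the total space of a principal $T^k$-bundle over the closed $(n-k)$-dimensional quotient, then use Lemma~\ref{L:PR_BDL_CHAR_CL} and the fact that $\pi^*$ is a ring homomorphism to see that any product of stable characteristic classes of total degree exceeding $n-k$ is pulled back from a vanishing group. The paper's proof is just a terser statement of the same argument, so no further comparison is needed.
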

    \begin{proof}
        By Lemma \ref{L:FREE_ACTION_CHAR}, $M$ is the total space of a principal $T^k$-bundle $M\xrightarrow{\pi} B$. By Lemma \ref{L:PR_BDL_CHAR_CL}, since $B$ has dimension $n-k$, any cup product of stable characteristic classes with total degree at least $n-k+1$ vanishes.
    \end{proof}
	
	% SS: AUXILIARY RESULTS ON SMOOTH MANIFOLDS [OK]
	
	\subsection{Auxiliary results on smooth manifolds and vector bundles}
	Recall that the \textit{normal bundle }$\nu_N$ of an embedded submanifold $ N\subseteq M$ is the bundle
	\[\nu_N=\bigslant{TM|_N}{TN}. \]
	
	By choosing a Riemannian metric on $M$, we can identify $\nu_N$ with the orthogonal complement of $TN$ within $TM|_N$.
	
	We will use the following relative version of the classical Whitney embedding theorem.
	\begin{theorem}[{Relative Weak Whitney Embedding Theorem, see \cite[Theorem 5]{Wh36}}]
		\label{T:W-EMBEDDING}
		Let $f\colon N\to M$ be a continuous map and let $A\subseteq N$ be a closed subset such that $f|_A\colon A\to M$ is a smooth embedding. If $\dim(M)> 2\dim(N)$, then there is an embedding $g\colon N\hookrightarrow M$ which is homotopic to $f$ such that $g|_A=f|_A$.
	\end{theorem}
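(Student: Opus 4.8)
The plan is to obtain $g$ from $f$ by a preliminary relative smoothing followed by a single small, rel-$A$ perturbation in general position that simultaneously makes the map an immersion and injective. The hypothesis $\dim M>2\dim N$ is exactly what makes this work: it forces the relevant ``bad'' loci --- the non-immersive points and the (self-)intersection points --- to have strictly negative expected dimension, hence to be empty for a generic perturbation; and since each perturbation can be taken $C^1$-small, it preserves the embedding already present near $A$ and yields the required homotopy $g\simeq f$ (slide along a tubular neighbourhood of $M$ in some $\R^K$).

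Write $n=\dim N$ and $m=\dim M$, so $m\ge 2n+1$. Since $f|_A$ is a smooth embedding in Whitney's sense, $f$ coincides near $A$ with a smooth map that is an embedding on an open neighbourhood; a relative smooth approximation argument (embed $M\subseteq\R^K$, retract from a tubular neighbourhood, and smooth coordinatewise using a partition of unity subordinate to a suitable cover) then replaces $f$ by a $C^0$-close, homotopic map that is smooth and still agrees with $f$ on a closed neighbourhood $U_0$ of $A$, on which it is a smooth embedding. I may therefore assume $f$ itself is smooth and an embedding on $U_0$, and from now on all perturbations are $C^1$-small and supported in $N\setminus\operatorname{int}U_0$ (obtained by multiplying perturbing data by a bump function vanishing near $A$ and equal to $1$ off $U_0$), so that $g=f$ near $A$ throughout and, by $C^1$-openness of immersions and stability of injectivity on the compact collar, $g$ stays an embedding on $U_0$.

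Now I would invoke the jet transversality theorem and the parametric transversality theorem for a family of such perturbations rich enough to move $g$ in all tangent directions of $M$ at each point outside $U_0$ (for instance, those induced by adding small smooth $\R^K$-valued functions after the embedding $M\subseteq\R^K$, composed with the tubular retraction). The stratum $\Sigma^1\subseteq J^1(N,M)$ of $1$-jets whose linear part has corank at least one satisfies $\operatorname{codim}\Sigma^1=m-n+1>n=\dim N$, so for a generic member of the family $j^1 g$ is transverse to $\Sigma^1$ and hence $(j^1 g)^{-1}(\Sigma^1)=\varnothing$: $g$ is an immersion. For injectivity, consider $G\colon (N\times N)\setminus\Delta_N\to M\times M$, $G(x,y)=(g(x),g(y))$, and the diagonal $\Delta_M\subseteq M\times M$ of codimension $m$; since $\dim((N\times N)\setminus\Delta_N)=2n<m$, making $G$ transverse to $\Delta_M$ forces $G^{-1}(\Delta_M)=\varnothing$, so $g$ has no double points off the diagonal. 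A third count of the same type --- for pairs consisting of a perturbed point and a fixed point near $A$, with source dimension at most $2n<m$, where moving $g(x)$ in all directions keeps the relevant evaluation map transverse to $\Delta_M$ even though $f$ is fixed on the other factor --- rules out the remaining ``mixed'' double points, after a routine choice of nested neighbourhoods of $A$. Thus $g$ is an injective immersion with $g|_A=f|_A$; for $N$ compact --- the only case needed in this article --- an injective immersion is automatically an embedding, while for non-compact or boundaried $N$ the usual additional arguments (an exhaustion of $N$, respectively running the transversality arguments on $\operatorname{int}N$ and $\partial N$ separately) apply.

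The main obstacle is the bookkeeping imposed by the word ``relative'': keeping every perturbation supported away from $A$ while still obtaining immersivity and injectivity globally and, in particular, preventing the perturbed part of $N$ from meeting $f(A)$. This is precisely where, in addition to the codimension counts for $\Sigma^1$ and for $\Delta_M$, one uses the trivial bound $\dim A\le\dim N=n$, which makes every collision locus involving $A$ have expected dimension at most $2n<m$, hence empty for a generic member of the perturbation family. The remaining points --- the exact meaning of ``$f|_A$ is a smooth embedding'' on a closed set and the attendant relative smoothing, and upgrading an injective immersion to an embedding for boundaried or non-compact $N$ --- are standard and do not interact with the dimension hypothesis.
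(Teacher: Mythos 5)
The paper does not prove this statement at all---it is quoted verbatim as Whitney's Theorem 5 and used as a black box---so your attempt can only be judged on its own merits, and as written it has a genuine gap. The problem is the reduction in your second paragraph: from ``$f|_A$ is a smooth embedding'' you conclude that $f$ agrees near $A$ with a smooth map that is an embedding \emph{on an open neighbourhood} $U_0$ of $A$, and you then keep all perturbations supported in $N\setminus\operatorname{int}U_0$. This implication is false: an embedding of the closed subset $A$ says nothing about immersivity or injectivity of $f$ at points of $N\setminus A$ arbitrarily close to $A$. The failure is not a pathology but exactly the situation in which this article applies the theorem: in Lemma \ref{L:EXISTENCE_EMBEDDING} (and similarly in Proposition \ref{P:NUM_ISOT_CLASSES} and Lemma \ref{L:NULL-HTPIC}) one takes $N=T^2$, $A=S^1\times\{1\}$ and $f(\lambda_1,\lambda_2)=f(\lambda_1)$ with $f|_A$ an embedded circle; this map is constant in $\lambda_2$, hence nowhere an immersion and not injective on any neighbourhood of $A$. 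With your perturbations switched off on $U_0$, the map stays non-immersive and non-injective on $U_0\setminus A$, so no generic choice in your family can produce an embedding, and the three codimension counts ($\Sigma^1$, $\Delta_M$, and the mixed count) never get to act where the defect sits.

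The transversality strategy itself is sound and can be repaired, but the repair is exactly the delicate part you have skipped: the perturbations must be allowed arbitrarily close to $A$, constrained only to vanish \emph{on} $A$, and one must then argue separately (i) that $g$ can be made an immersion at points of $A$ and at points near $A$ even though its values on $A$ are pinned (using that prescribing $g$ on the closed set $A$ still leaves the $1$-jet free in directions transverse to $A$, and that at interior points of $A$ the hypothesis already gives immersivity), and (ii) that double points with one or both entries tending to $A$ are excluded, which cannot be done by a single generic choice in a family supported away from $A$. Your dimension counts $m-n+1>n$ and $2n<m$ are the right ones and your treatment of the smoothing and of the homotopy $g\simeq f$ is fine, but without the rel-$A$ perturbation scheme just described the proof does not cover the cases for which the paper invokes the theorem; this bookkeeping is precisely the content of Whitney's relative argument that the authors chose to cite rather than reproduce.
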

The preceding theorem implies the following result, which is also due to Whitney \cite{Wh36}.
	\begin{theorem}[{\cite[Theorem 6]{Wh36}}]
		\label{T:HOM-ISO}
		Let $f_0,f_1\colon N\to M$ be smooth maps that are homotopic. If $\dim(M)>2\dim(N)+1$, then $f_0$ and $f_1$ are isotopic.
	\end{theorem}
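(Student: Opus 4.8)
The plan is to take an arbitrary smooth homotopy from $f_0$ to $f_1$ and perturb it, fixing its two ends, until every time-slice is an embedding; such a homotopy is exactly an isotopy. (For this to be meaningful $f_0$ and $f_1$ must themselves be embeddings; since $\dim M>2\dim N$, Theorem~\ref{T:W-EMBEDDING} shows this is the case of interest, and we read the statement in that light.)

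So first I would fix a smooth homotopy $H\colon N\times[0,1]\to M$ with $H_0=f_0$ and $H_1=f_1$ and pass to its \emph{track}
\[
\widehat H\colon N\times[0,1]\longrightarrow M\times[0,1],\qquad \widehat H(x,t)=(H(x,t),t),
\]
which preserves the projection to $[0,1]$ and restricts to the embedding $f_0\sqcup f_1$ on the closed subset $A=N\times\{0,1\}$. The aim is then to perturb $\widehat H$, rel $A$ and through projection-preserving maps, to an embedding $\widehat H'(x,t)=(H'(x,t),t)$; the family $t\mapsto H'(\cdot,t)$ is the sought isotopy.

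The point is that this projection-preserving perturbation is available precisely under the stated hypothesis, and it is obtained by running the general-position argument that proves Theorem~\ref{T:W-EMBEDDING} with $t$ treated as an inert extra coordinate. A projection-preserving map $N\times[0,1]\to M\times[0,1]$ fails to be an embedding exactly along the \emph{fiberwise} double-point set $\{(x,y,t):x\neq y,\ H'(x,t)=H'(y,t)\}$ and the \emph{fiberwise} non-immersion locus, i.e.\ the set of $(x,t)$ at which the differential of $H'(\cdot,t)$ is not injective. For generic $H'$ these have dimensions $2\dim N+1-\dim M$ and $2\dim N-\dim M$, so both are empty exactly when $\dim M>2\dim N+1$ — the extra ``$+1$'' over the absolute Whitney embedding theorem being the cost of the parameter $t$. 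Because $f_0$ and $f_1$ are already embeddings, this can be carried out rel $A$, as in the relative version of Theorem~\ref{T:W-EMBEDDING}.

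The step that I expect to require the most care is the (essentially bookkeeping) verification that the transversality argument behind Theorem~\ref{T:W-EMBEDDING} still goes through when the admissible perturbations are restricted to those preserving the projection to $[0,1]$ and fixing $A$ — that is, that one may argue inside the space of homotopies rel endpoints rather than the space of all maps $N\times[0,1]\to M\times[0,1]$; once this is granted, the dimension count above closes the argument. (An alternative is to apply Theorem~\ref{T:W-EMBEDDING} directly to $\widehat H$ together with $A$ — note $\dim(M\times[0,1])=\dim M+1>2(\dim N+1)=2\dim(N\times[0,1])$ — obtaining an embedding that agrees with $f_0\sqcup f_1$ on $A$, and then to straighten this concordance into a level-preserving one; there the straightening, which exploits that the codimension $\dim M-\dim N\geq\dim N+2$ is large, is the delicate part.)
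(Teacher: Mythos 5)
The paper does not actually prove this statement: it is quoted directly from Whitney \cite[Theorem 6]{Wh36}, with only the remark that it is implied by the preceding relative embedding theorem (Theorem~\ref{T:W-EMBEDDING}). Measured against that, your proposal is a correct and essentially self-contained route, and it is the standard one: put the track $\widehat H(x,t)=(H(x,t),t)$ into general position through level-preserving maps rel $N\times\{0,1\}$. Your dimension counts are right --- the generic fibrewise double-point set has dimension $2\dim N+1-\dim M$ and the fibrewise non-immersion locus has dimension $2\dim N-\dim M$, both negative exactly when $\dim M>2\dim N+1$ --- and working rel the ends is unproblematic because $f_0,f_1$ are embeddings of a compact manifold, hence $C^1$-stable, so the perturbation can be supported away from $t\in\{0,1\}$. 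The step you single out (that the jet/multijet transversality argument can be run inside the space of level-preserving maps fixing the ends) is indeed the only real content, and it is handled by standard parametric transversality. Your alternative route is the one the paper's phrasing suggests: apply Theorem~\ref{T:W-EMBEDDING} to $\widehat H$ rel $A=N\times\{0,1\}$, which is legitimate since $\dim M+1>2(\dim N+1)$ is equivalent to the hypothesis; but this only produces a concordance from $f_0$ to $f_1$, and straightening a concordance into an isotopy is genuinely additional work (fine in this range, but not free), so your primary, level-preserving argument is the cleaner of the two. Two small caveats, which you already flag or which are harmless in the paper's use: the statement should be read for $f_0,f_1$ embeddings (as stated for arbitrary smooth maps ``isotopic'' is not meaningful), and for non-compact $N$ one must ensure each slice is an embedding rather than merely an injective immersion; in the paper's applications $N=S^1$ is compact, so both points are immaterial.
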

	
	The following results are well-known. We include proofs for completeness.

	\begin{proposition}
		\label{P:BUNDLES_SURFACES}
		\begin{enumerate}
		    \item A vector bundle $E\xrightarrow{\pi} S^1$ is trivial if and only if the first Stiefel-Whitney class $w_1(\pi)$ vanishes, that is, if and only if the bundle $\pi$ is orientable.
		    \item Let $E\xrightarrow{\pi} S$ be an orientable vector bundle of rank $k\geq3$ over a closed surface $S$. Then $\pi$ is trivial if and only if the second Stiefel--Whitney class $w_2(\pi)$ vanishes.
		\end{enumerate}
	\end{proposition}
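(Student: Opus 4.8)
The plan is to handle the two parts separately, each time reducing the classification of the relevant bundles to the computation of a single low-degree homotopy group of the structure group.

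\emph{Part (1).} The forward direction is immediate: a trivial bundle is orientable, and $w_1$ is precisely the obstruction to orientability, so $w_1(\pi)=0$. For the converse I would use the clutching-function description of vector bundles over $S^1$. Covering $S^1$ by two arcs that overlap in two small intervals and trivializing $\pi$ over each arc, the isomorphism class of $\pi$ (of rank $k\ge 1$; for $k=0$ there is nothing to prove) is determined by the homotopy class of the clutching function $S^0\to\mathrm{GL}_k(\R)$, i.e.\ by an element of $\pi_0(\mathrm{GL}_k(\R))\cong\pi_0(\mathrm{O}(k))\cong\Z/2$. If $w_1(\pi)=0$, so $\pi$ is orientable, then, choosing the two local trivializations to be orientation-preserving, the clutching function can be taken with values in the path-connected group $\mathrm{GL}_k^+(\R)$, hence is null-homotopic, so $\pi$ is trivial. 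Equivalently: rank-$k$ bundles over $S^1$ are classified by $[S^1,B\mathrm{O}(k)]\cong\pi_0(\mathrm{O}(k))\cong\Z/2$ for $k\ge 1$, and $w_1$ detects the nontrivial class.

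\emph{Part (2).} Again the forward direction is trivial. For the converse, let $\pi$ be an oriented rank-$k$ bundle over the closed surface $S$ with $k\ge 3$ and $w_2(\pi)=0$. Then $\pi$ admits a spin structure, so its classifying map $f_\pi\colon S\to B\mathrm{SO}(k)$ lifts, up to homotopy, along $B\mathrm{Spin}(k)\to B\mathrm{SO}(k)$ to a map $S\to B\mathrm{Spin}(k)$. Now $\mathrm{Spin}(k)$ is $2$-connected for $k\ge 3$ — it is connected, it is the universal cover of $\mathrm{SO}(k)$ and so has trivial $\pi_1$, and $\pi_2$ of any Lie group vanishes — hence $B\mathrm{Spin}(k)$ is $3$-connected, and every map to it from the $2$-dimensional complex $S$ is null-homotopic. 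Thus $f_\pi$ is null-homotopic and $\pi$ is trivial. Alternatively one argues by obstruction theory: by part (1), $\pi$ is trivial over the $1$-skeleton of $S$ (there it is an oriented bundle over a wedge of circles), and the obstruction to extending the trivialization over the top cells lies in $H^2(S;\pi_1(\mathrm{SO}(k)))=H^2(S;\Z/2)$ and equals $w_2(\pi)$; equivalently, for $k\ge 3$ the Postnikov $2$-stage of $B\mathrm{SO}(k)$ is a $K(\Z/2,2)$, so that $[S,B\mathrm{SO}(k)]\xrightarrow{\ \cong\ }H^2(S;\Z/2)$ via $\pi\mapsto w_2(\pi)$.

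The statements are not difficult; the only point needing care is the bookkeeping of structure group and coefficients. The hypotheses $\mathrm{rank}\ge 1$ in (1) and $\mathrm{rank}\ge 3$ in (2) are exactly what makes the classifying datum live in $\pi_0(\mathrm{O}(k))\cong\Z/2$, resp.\ $\pi_1(\mathrm{SO}(k))\cong\Z/2$, matching the $\Z/2$-coefficients of $w_1$, resp.\ $w_2$; for rank $2$ one has $\pi_1(\mathrm{SO}(2))\cong\Z$ and the relevant invariant is the Euler class instead, so the criterion in (2) genuinely requires $k\ge 3$. I do not expect any serious obstacle beyond invoking these standard facts correctly.
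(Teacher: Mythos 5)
Your proposal is correct and takes essentially the same route as the paper: part (2) is the identical argument (lift the classifying map to $B\mathrm{Spin}(k)$ and use connectivity plus obstruction theory for the $2$-dimensional base), and your clutching-function treatment of part (1) is just a reformulation of the paper's observation that $B\mathrm{SO}(k)$ is simply connected, i.e.\ that $\mathrm{SO}(k)$ (equivalently $\mathrm{GL}_k^+(\R)$) is connected. Your added remarks on the rank hypotheses and the rank-$2$ Euler class caveat are accurate but not needed for the statement as given.
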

	\begin{proof}
	    \begin{enumerate}
	        \item Assume that $w_1(\pi)=0$. Then the bundle $\pi$ is orientable, hence its classifying map $f_\pi\colon S^1\to \mathrm{BO}(k)$ lifts to a map
	        \[\tilde{f}_\pi\colon S^1\to\mathrm{BSO}(k). \]
	        Since $\mathrm{BSO}(k)$ is simply-connected, this map is null-homotopic, so $\pi$ is trivial.
	        \item Assume that $w_2(\pi)=0$. Then the bundle $\pi$ admits a spin structure, hence its classifying map $f_\pi\colon S\to \mathrm{BSO}(k)$ lifts to a map
    		\[\tilde{f}_\pi\colon S\to\mathrm{BSpin}(k). \]
    		Since $k\geq3$, the group $\mathrm{Spin}(k)$ is simply-connected (see e.g.\ \cite[Theorem I.2.10]{LM89}). Hence, the space $\mathrm{BSpin}(k)$ is 2-connected. By obstruction theory, it follows that the map $\tilde{f}_\pi$, and hence $f_\pi$, is null-homotopic (see e.g.\ \cite[Corollary 7.13]{DK01}), so the bundle $\pi$ is trivial.
	    \end{enumerate}
	\end{proof}

    \begin{lemma}
        \label{L:SPIN}
        The complex projective space $\C P^n$ is spin if and only if $n$ is odd. Further, for every $n\in\N$ with $n\geq 2$ there exists a unique non-trivial linear sphere bundle over $S^2$, whose total space, denoted by $S^2\ttimes S^n$, is non-spin.
    \end{lemma}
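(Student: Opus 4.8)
The statement splits into two independent parts, which I would treat in turn.

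\emph{Part 1: $\C P^n$ is spin if and only if $n$ is odd.} Since $\C P^n$ is simply-connected it is orientable, so $w_1(\C P^n)=0$ and $\C P^n$ is spin exactly when $w_2(\C P^n)=0$. The plan is to read off $w_2$ from the complex structure: for any complex vector bundle one has $w_2=c_1\bmod 2$ (reduce the Euler class of each line summand mod $2$ and invoke the splitting principle), so $w_2(\C P^n)=c_1(T\C P^n)\bmod 2$. The Euler sequence $0\to\underline{\C}\to\mathcal{O}(1)^{\oplus(n+1)}\to T\C P^n\to 0$ gives $c(T\C P^n)=(1+x)^{n+1}$ for a generator $x\in H^2(\C P^n;\Z)$, hence $c_1(T\C P^n)=(n+1)x$. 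Since the reduction $\bar x$ generates $H^2(\C P^n;\Z/2)\cong\Z/2$ for $n\geq 1$, we conclude $w_2(\C P^n)=(n+1)\bar x$, which vanishes if and only if $n+1$ is even, i.e.\ if and only if $n$ is odd.

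\emph{Part 2: the non-trivial linear sphere bundle over $S^2$.} A linear $S^n$-bundle over $S^2$ is the unit sphere bundle $S(E)$ of a rank-$(n+1)$ real vector bundle $E\to S^2$. Since $H^1(S^2;\Z/2)=0$, every such $E$ is orientable, and oriented rank-$(n+1)$ bundles over $S^2$ are classified by $[S^2,\mathrm{BSO}(n+1)]\cong\pi_2(\mathrm{BSO}(n+1))\cong\pi_1(\mathrm{SO}(n+1))\cong\Z/2$, using $n+1\geq 3$; so there are exactly two, the trivial bundle and a non-trivial bundle $E$. The sphere bundle of the trivial bundle has total space $S^2\times S^n$, which is spin. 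I would then show that $S(E)$ is non-spin; this simultaneously shows that $S(E)$ is a non-trivial bundle and, by the count just made, that it is the unique non-trivial linear $S^n$-bundle over $S^2$. We denote $S(E)$ by $S^2\ttimes S^n$.

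To see $S(E)$ is non-spin, write $p\colon M:=S(E)\to S^2$ for the projection and view $M\subset E$ as the unit sphere bundle. The normal bundle of $M$ in $E$ is trivialized by the radial vector field, and $TE|_M\cong p^*(TS^2\oplus E)$, so $TM\oplus\underline{\R}\cong p^*(TS^2\oplus E)$ and hence $w(M)=p^*\big(w(TS^2)\,w(E)\big)$. Now $w(TS^2)=1$ since $\chi(S^2)$ is even, and $w(E)=1+w_2(E)$ since $H^i(S^2;\Z/2)=0$ for $i=1$ and for $i\geq 3$; moreover $w_2(E)\neq 0$ by Proposition~\ref{P:BUNDLES_SURFACES}(2), as $E$ is a non-trivial orientable bundle of rank $\geq 3$. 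Finally, the Gysin sequence of $S^n\hookrightarrow M\xrightarrow{p}S^2$ with $\Z/2$-coefficients shows that $p^*\colon H^2(S^2;\Z/2)\to H^2(M;\Z/2)$ is injective, because the preceding term $H^{1-n}(S^2;\Z/2)$ vanishes for $n\geq 2$. Therefore $w_2(M)=p^*w_2(E)\neq 0$ while $w_1(M)=0$, so $M$ is orientable and non-spin.

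The only genuinely delicate point is the bookkeeping in Part~2: one must check that ``unique non-trivial linear sphere bundle'' really does follow from the two-element classification of vector bundles over $S^2$ together with the non-spin computation (which separates $S(E)$ from $S^2\times S^n$), and one must correctly locate the hypothesis $n\geq 2$, which is used both for $\pi_1(\mathrm{SO}(n+1))\cong\Z/2$ and for the vanishing of the relevant Gysin term. Everything else is a routine characteristic-class computation.
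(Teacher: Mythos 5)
Your proposal is correct and follows essentially the same route as the paper: $w_2(\C P^n)=c_1(\C P^n)\bmod 2=(n+1)\bmod 2$ for the first part, and for the second part the classification of rank-$(n+1)$ bundles over $S^2$ by $\pi_1(\mathrm{SO}(n+1))\cong\Z/2$, the stable splitting $TS(E)\oplus\underline{\R}\cong p^*(TS^2\oplus E)$, Proposition~\ref{P:BUNDLES_SURFACES} to get $w_2(E)\neq 0$, and Gysin injectivity of $p^*$ on $H^2$. The only differences are cosmetic (you derive $c_1(T\C P^n)=(n+1)x$ from the Euler sequence rather than citing Milnor--Stasheff, and trivialize the normal direction by the radial field instead of choosing a horizontal distribution), so nothing further is needed.
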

    \begin{proof}
        For the first statement we have $w_2(\C P^n)=c_1(\C P^n)\mod 2=n+1\mod 2$, see e.g.\ \cite[Theorem 14.10]{MS74}. For the second statement, since $\pi_1(\mathrm{SO}(n+1))\cong\Z /2$, there exists a unique non-trivial vector bundle $E\xrightarrow{\xi}S^2$ of rank $(n+1)$. Hence, there exists a unique non-trivial linear $S^n$-bundle $S(E)\xrightarrow{\pi} S^2$. By Proposition \ref{P:BUNDLES_SURFACES}, $w_2(\xi)\neq 0$. By choosing a horizontal distribution for the bundle $\xi$, which is isomorphic to $\pi^*TS^2$, we have (cf.\ Lemma \ref{L:PR_BDL_CHAR_CL})
        \[TS(E)\oplus\underline{\R}_{S(E)}\cong \pi^*TS^2\oplus\pi^*E.  \]
        Hence,
        \[w_2(S(E))=w_2(S(E)\oplus\underline{\R}_{S(E)})=\pi^*w_2(S^2)+\pi^*w_2(\pi)=\pi^*w_2(E). \]
        By the Gysin sequence, the map $H^2(S^2)\xrightarrow{\pi^*}H^2(S(E))$ is injective, hence $w_2(S(E))$ is non-trivial.
    \end{proof}

Finally, we recall the following theorem, which is known as the \emph{Disc Theorem of Palais} \cite[Theorem 5.5]{Pa59}.

 \begin{theorem}
 \label{T:DISK-ISOTOPY}	
 	Let $f_1,f_2\colon D^m\to M$ be embeddings. If $m=\dim(M)$ and $M$ is orientable, assume  in addition that both $f_1$ and $f_2$ are orientation preserving. Then $f_1$ and $f_2$ are isotopic.
 \end{theorem}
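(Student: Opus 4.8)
The plan is to show that $f_1$ and $f_2$ are ambient isotopic (equivalently, since $D^m$ is compact, isotopic through embeddings, by the isotopy extension theorem) by shrinking both discs into a single coordinate chart, moving their centres to a common point via homogeneity of $M$, and then appealing to the elementary statement that orientation-compatible embeddings of $D^m$ into $\R^n$ are isotopic. We may assume $M$ is connected, since otherwise $f_1(D^m)$ and $f_2(D^m)$ must lie in the same component for the statement to have content, and we pass to that component. The first ingredient is the \emph{shrinking move}: for any embedding $f\colon D^m\to M$ the formula $f_s(x)=f(sx)$ with $s\in(0,1]$ is an isotopy of embeddings with $f_1=f$, and $f_s(D^m)=f(\overline{B_s})$ is contained in any prescribed neighbourhood of $f(0)$ once $s$ is small; hence every $f$ is isotopic to an embedding whose image lies inside a chart around $f(0)$.

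Next I would invoke \emph{homogeneity} of $M$: for any two points $p,q$ there is a diffeomorphism of $M$, isotopic to the identity, carrying $p$ to $q$. This follows because the relation ``$p\sim q$'' is an equivalence relation which is open, a nearby point being reachable from $p$ by the time-one flow of a compactly supported vector field that equals a constant vector in a chart, so that connectedness of $M$ forces a single equivalence class. Applying this with $p=f_1(0)$ and $q=f_2(0)$, and then shrinking again, I may assume $f_1(0)=f_2(0)=:p$ and that $f_1(D^m)$ and $f_2(D^m)$ both lie in a single chart $\phi\colon U\to\R^n$ with $\phi(p)=0$, where $n=\dim M$; in the case $n=m$ I choose $\phi$ orientation preserving, so that $g_i:=\phi\circ f_i\colon D^m\to\R^n$ are orientation-preserving embeddings fixing the origin.

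It then remains to prove the Euclidean case: an embedding $g\colon D^m\to\R^n$ with $g(0)=0$, orientation preserving if $n=m$, is isotopic through embeddings to the linear inclusion. Here I would use the rescaling family $g_t(x)=g(tx)/t$, $t\in(0,1]$; via the Hadamard identity $g(y)=\int_0^1 Dg_{sy}(y)\,ds$ one checks that $g_t$ extends to a smooth family of embeddings for $t\in[0,1]$ with $g_0=Dg_0$ and $g_1=g$, so $g$ is isotopic to the injective linear map $Dg_0$. Finally $Dg_0$ lies in the space of linear injections $\R^m\to\R^n$, which deformation retracts by Gram--Schmidt onto the Stiefel manifold $V_m(\R^n)$; this space is path-connected when $n>m$, and when $n=m$ the orientation-preserving injections form the path-connected group $\mathrm{GL}^+(m,\R)$. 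A path to the standard inclusion gives the last isotopy, and concatenating it with the isotopies of the previous two steps (reversing those applied to $f_2$) yields an isotopy, hence an ambient isotopy, from $f_1$ to $f_2$. To keep the rescaling isotopy inside the chart one first shrinks $g$ and, if necessary, enlarges $\phi(U)$ by post-composing $\phi$ with a dilation of $\R^n$, which bounds all the images $g_t(D^m)$ uniformly by roughly $\|Dg_0\|$.

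The only genuinely substantive point is the homogeneity step: building the compactly supported ambient isotopy that nudges one point to a nearby one, and propagating finitely many such moves along a path in $M$ using connectedness. The remaining pieces — the smoothness and embedding properties of the rescaling family $g_t$, the path-connectedness of the Stiefel manifolds and of $\mathrm{GL}^+(m,\R)$, and the bookkeeping needed to compose the various isotopies and keep them inside a chart — are all routine.
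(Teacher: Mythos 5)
The paper itself gives no proof of this statement: it is quoted directly from Palais \cite[Theorem 5.5]{Pa59}, so the benchmark is the classical argument, and your proposal reconstructs that argument faithfully — shrink each disc into a chart, use homogeneity of a connected manifold to match the centres, collapse an embedding $g$ of $D^m$ into $\R^n$ onto its differential via the rescaling $g_t(x)=g(tx)/t$, and finish with path-connectedness of the space of injective linear maps (the Stiefel manifold $V_m(\R^n)$ when $m<n$, and $\mathrm{GL}^+(m,\R)$ when $m=n$). For $m<\dim M$, and for $m=\dim M$ with $M$ orientable and both $f_i$ orientation-preserving, this is complete modulo the routine points you yourself flag, and your reduction to a single component (the statement tacitly assumes $M$ connected, as in Palais) is fine.

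There is, however, a genuine gap: the theorem as stated also asserts the conclusion when $m=\dim M$ and $M$ is \emph{non-orientable}, with no hypothesis on $f_1,f_2$, and your argument does not cover this case. The instruction ``in the case $n=m$ I choose $\phi$ orientation preserving'' presupposes an orientation of $M$; when $M$ is non-orientable the two embeddings, once brought into a common chart around $p$, may induce opposite local orientations there, so that $Dg_1(0)$ and $Dg_2(0)$ lie in different path components of $\mathrm{GL}(m,\R)$. Your concluding step (a path in $\mathrm{GL}^+(m,\R)$) then cannot join them, and no isotopy supported in the chart can repair this, since the sign of $\det Dh_t(0)$ is constant along any isotopy of embeddings of $D^m$ into $\R^m$. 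The missing idea is the standard extra move in this case: a non-orientable $M$ contains an orientation-reversing loop, and one must upgrade the homogeneity step to a diffeotopy of $M$ transporting the disc around such a loop, returning it to the chart with its local orientation reversed; only after this does the Euclidean argument apply. As written, your proof establishes the disc theorem only under the orientability/orientation-preserving hypotheses (which happens to be the only case the paper uses), not the full statement, and your closing remark that the homogeneity lemma is ``the only genuinely substantive point'' overlooks exactly this case.
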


    % S: NORMALLY FRAMED CIRCLES [DONE]
 
	\section{Normally framed circles}
    \label{S:FRAMED_CIRCLES}

    In this section, $M$ will denote an oriented manifold of dimension $n\geq5$. Some of the results in this section were already obtained by Goldstein and Lininger \cite{GL71} and Duan \cite{Duan} when $M$ is simply-connected. 

    \begin{definition}
        Let $f\colon S^1\hookrightarrow M$ be an embedding. A \emph{normal framing} of $f$ is an orientation-preserving embedding $\varphi\colon S^1\times D^{n-1}\hookrightarrow M$ such that $\varphi(\cdot,0)=f$. We introduce two equivalence relations:
        \begin{enumerate}
            \item Two normal framings $\varphi_0$ and $\varphi_1$ of $f$ are \emph{isotopic} if they are isotopic as embeddings, i.e.\ if there exists a smooth homotopy $\varphi_t$, $t\in [0,1]$, in $M$ between $\varphi_0$ and $\varphi_1$ such that $\varphi_t$ is an embedding for all $t\in[0,1]$. The set of isotopy classes of framings of embeddings $S^1\hookrightarrow M$ is denoted by $\CIso{M}$.
            \item Two normal framings $\varphi_0$ and $\varphi_1$ of $f$ are \emph{equivalent} if they are isotopic through normal framings of $f$.
        \end{enumerate}        
    \end{definition}
    
    Note that normal framings exist for any embedding $f\colon S^1\hookrightarrow M$: The orientation on $M$, together with the standard orientation on $S^1$, induces an orientation on $\nu_{f(S^1)}$ according to the splitting
    \[ TM|_{f(S^1)}\cong Tf(S^1)\oplus \nu_{f(S^1)}. \]
    By Proposition \ref{P:BUNDLES_SURFACES}, it follows that $\nu_{f(S^1)}$ is trivial and hence, by choosing a Riemannian metric on $M$, we obtain an embedding $S^1\times D^{n-1}\hookrightarrow M$ via the exponential map.
    
    It is clear from the definition that equivalent normal framings are isotopic. As we will see below, the converse holds in some cases, but not in general.

    \begin{lemma}
        \label{L:UNIQUENESS_TUB_NBHD}
        Let $f\colon S^1\hookrightarrow M$ be an embedding and let $\varphi_0$, $\varphi_1$ be normal framings of $f$. Then there exists a normal framing $\varphi_1'$ of $f$ that is equivalent to $\varphi_1$ and a smooth map $\alpha\colon S^1\to \mathrm{SO}(n-1)$ such that
        \[ \varphi_0(\lambda,\alpha_\lambda v)=\varphi_1'(\lambda,v) \]
        for all $(\lambda,v)\in S^1\times D^{n-1}$. In particular, there exist exactly two equivalence classes of normal framings of $f$.
    \end{lemma}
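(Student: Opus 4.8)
The plan is to use the uniqueness of tubular neighborhoods to reduce two arbitrary normal framings to a "difference" given by a clutching map $S^1 \to O(n-1)$, and then to analyze the group $\pi_0(O(n-1)) = \Z/2$ to conclude there are exactly two equivalence classes. First I would invoke the tubular neighborhood theorem (in its uniqueness-up-to-isotopy form): both $\varphi_0$ and $\varphi_1$ are parametrizations of tubular neighborhoods of $f(S^1)$, so after composing $\varphi_1$ with an isotopy of $M$ (equivalently, after replacing $\varphi_1$ by an equivalent normal framing $\varphi_1'$, using that an ambient isotopy fixing $f(S^1)$ carries $\varphi_1$ through normal framings of $f$) we may assume that $\varphi_0$ and $\varphi_1'$ have the same image and agree along the zero section $f$. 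Then $\varphi_0^{-1}\circ\varphi_1'$ is a diffeomorphism of $S^1\times D^{n-1}$ restricting to the identity on $S^1\times\{0\}$ and, by the usual linearization argument (differentiating in the normal directions along the zero section, cf.\ the proof of the tubular neighborhood theorem), it is isotopic through such diffeomorphisms to a bundle map of $S^1\times D^{n-1}\to S^1\times D^{n-1}$ covering the identity on $S^1$, i.e.\ to a map of the form $(\lambda,v)\mapsto(\lambda,\beta_\lambda v)$ for a smooth $\beta\colon S^1\to \mathrm{GL}(n-1,\R)$. Since both framings are orientation preserving, $\beta$ lands in the component of $\mathrm{GL}^+(n-1,\R)$, which deformation retracts onto $\mathrm{SO}(n-1)$; deforming $\beta$ accordingly (and absorbing all these isotopies into a further adjustment of $\varphi_1'$ within its equivalence class) yields $\alpha\colon S^1\to\mathrm{SO}(n-1)$ with $\varphi_0(\lambda,\alpha_\lambda v)=\varphi_1'(\lambda,v)$, as claimed.

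For the "in particular" statement, I would argue that the equivalence class of a normal framing $\varphi_1'$ relative to a fixed reference framing $\varphi_0$ is detected precisely by the class of $\alpha$ in $\pi_0(\mathrm{Map}(S^1,\mathrm{SO}(n-1))) \cong \pi_0(\mathrm{SO}(n-1))\times\pi_1(\mathrm{SO}(n-1))$. Two framings with clutching maps $\alpha,\alpha'$ are equivalent iff $\alpha$ and $\alpha'$ are homotopic: a homotopy $\alpha_t$ gives exactly an isotopy through normal framings of $f$, and conversely an equivalence between the framings produces, after undoing the reduction above, a homotopy of clutching maps. Since $\mathrm{SO}(n-1)$ is connected (for $n\geq 2$) we have $\pi_0(\mathrm{SO}(n-1))=0$, so the homotopy class of $\alpha$ is determined by its class in $\pi_1(\mathrm{SO}(n-1))$. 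For $n\geq 4$ one has $\pi_1(\mathrm{SO}(n-1))\cong\Z/2$, giving exactly two equivalence classes; the low-dimensional cases $n=3$ ($\pi_1(\mathrm{SO}(2))\cong\Z$) and $n\leq 2$ are excluded or handled separately, but since the standing hypothesis of this section is $n\geq 5$ we are always in the stable range $\pi_1(\mathrm{SO}(n-1))\cong\Z/2$. Hence there are exactly two equivalence classes of normal framings of $f$, represented by $\varphi_0$ and by $\varphi_0$ precomposed with a generator of $\pi_1(\mathrm{SO}(n-1))$.

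The main obstacle I expect is the careful bookkeeping in the first part: one must verify that each step in passing from $\varphi_1$ to the normalized $\varphi_1'$ can be realized by an isotopy \emph{through normal framings of $f$} (keeping the zero section equal to $f$ throughout), rather than merely by an ambient isotopy of $M$, so that the relation "equivalent" — not just "isotopic" — is preserved. This is where one uses that the tubular neighborhood uniqueness isotopy can be taken to fix $f(S^1)$ pointwise, and that the linearization and the $\mathrm{GL}^+\rightsquigarrow\mathrm{SO}$ retraction can all be performed fiberwise over $S^1$ so as to fix the zero section. The remaining homotopy-theoretic input ($\pi_0$ and $\pi_1$ of $\mathrm{SO}(n-1)$) is standard.
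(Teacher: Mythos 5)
Your proposal is correct and takes essentially the same approach as the paper: the paper's proof simply invokes the uniqueness of tubular neighborhoods (citing Kosinski, Corollary III.3.2) for the first statement and the fact that $\pi_1(\mathrm{SO}(n-1))\cong\Z/2$ for $n\geq 5$ for the second, which is exactly the argument you spell out in more detail (linearization to a clutching map $\alpha\colon S^1\to\mathrm{SO}(n-1)$ and the count of free homotopy classes of such maps).
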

    \begin{proof}
        The first statement follows directly from the uniqueness of tubular neighborhoods (see e.g.\ \cite[Corollary III.3.2]{Ko}) and the second statement then follows from the fact that $\pi_1(\mathrm{SO}(n-1))\cong \Z/2$, as $n\geq5$.
    \end{proof}
    It follows from Lemma~\ref{L:UNIQUENESS_TUB_NBHD} that, for an embedding $f\colon S^1\hookrightarrow M$, there are at most two isotopy classes of normal framings. To analyze when we have equality, we introduce the following notion.

    Let $f\colon S^1\hookrightarrow M$ be an embedding and let $F\colon T^2\hookrightarrow M$ be an embedding with $F(\cdot,1)=f$. We view $F$ as a self-isotopy $F_t$ of $f$ via $F_t=F(\cdot,e^{2\pi i t})$. Given a normal framing $\varphi$ of $f$, we extend $\varphi$ along the isotopy $F_t$ to an isotopy $\varphi_t$. We then define $F_*[\varphi]$ as the equivalence class of normal framings of $f$ represented by $\varphi_1$.
    
    \begin{lemma}
        \label{L:F_*_WELL_DEF}
        The class $F_*[\varphi]$ is well-defined, i.e.\ it does not depend on the choice of extension $\varphi_t$.
    \end{lemma}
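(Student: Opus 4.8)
The plan is to show that the equivalence class $[\varphi_1]$ is independent of the chosen extension by exhibiting, between any two extensions, an isotopy through normal framings of $f$. Suppose $\varphi_t$ and $\varphi'_t$ are two isotopies of normal framings with $\varphi_0=\varphi'_0=\varphi$, both obtained by extending $\varphi$ along the self-isotopy $F_t$ of $f$; in particular $\varphi_t(\cdot,0)=F_t=\varphi'_t(\cdot,0)$ for all $t$. I want to compare the endpoints $\varphi_1$ and $\varphi'_1$. The key observation is that at each time $t$ the two framings $\varphi_t$ and $\varphi'_t$ are normal framings of the \emph{same} embedding $F_t\colon S^1\hookrightarrow M$, so by Lemma~\ref{L:UNIQUENESS_TUB_NBHD} they differ (up to equivalence, which does not affect the endpoint class) by a loop $\alpha_t\colon S^1\to\mathrm{SO}(n-1)$, well-defined up to homotopy, i.e.\ by an element of $\pi_1(\mathrm{SO}(n-1))\cong\Z/2$.

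First I would set up the comparison carefully: after replacing $\varphi'_t$ by an equivalent isotopy (which changes neither the starting framing $\varphi$ nor the equivalence class of the endpoint), Lemma~\ref{L:UNIQUENESS_TUB_NBHD} applied pointwise in $t$ produces a continuous family $\alpha_t\colon S^1\to\mathrm{SO}(n-1)$ with $\varphi_t(\lambda,(\alpha_t)_\lambda v)=\varphi'_t(\lambda,v)$. Next I would observe that $t\mapsto[\alpha_t]\in\pi_1(\mathrm{SO}(n-1))$ is locally constant, hence constant on $[0,1]$. Since at $t=0$ we have $\varphi_0=\varphi'_0$, the class $[\alpha_0]$ is trivial; therefore $[\alpha_1]$ is trivial as well, which means $\alpha_1\colon S^1\to\mathrm{SO}(n-1)$ is null-homotopic. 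A null-homotopic loop $\alpha_1$ produces a framing $\varphi'_1$ that is equivalent to $\varphi_1$: a null-homotopy of $\alpha_1$ to the constant map at the identity gives exactly an isotopy through normal framings of $f$ from $\varphi'_1$ to $\varphi_1$. Hence $F_*[\varphi]$ does not depend on the extension.

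The main obstacle is the technical step of extracting a \emph{continuous} family $\alpha_t$ and making precise the claim that its homotopy class is locally constant in $t$. Concretely, fixing $t_0$, on a small interval around $t_0$ the framings $\varphi_t$ and $\varphi'_t$ stay within a tubular-neighborhood chart of $F_{t_0}(S^1)$, and one can use the uniqueness of tubular neighborhoods with parameters (the relevant statement is the parametrized version behind Lemma~\ref{L:UNIQUENESS_TUB_NBHD}, cf.\ \cite[Corollary III.3.2]{Ko}) to get a continuous $\alpha_t$ on that interval; the homotopy class of $\alpha_t$ is then visibly constant there because $\alpha_t$ varies continuously in $t$. Patching over a finite cover of $[0,1]$ and using connectedness gives the global statement. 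The only subtlety is bookkeeping: one must ensure that the equivalence-class-preserving adjustments made at the start (to bring $\varphi'_t$ into the form where Lemma~\ref{L:UNIQUENESS_TUB_NBHD} applies verbatim) are themselves compatible with the endpoint we care about, which they are since equivalence of framings is by definition an isotopy through normal framings of $f$ and hence does not change $F_*[\varphi]$.
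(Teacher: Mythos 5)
Your proposal is correct and follows essentially the same route as the paper: both compare the two extensions by a family of loops in $\mathrm{SO}(n-1)$ and use that the resulting class in $\pi_1(\mathrm{SO}(n-1))$ is constant in $t$ and trivial at $t=0$, hence trivial at $t=1$. The only difference is packaging: where you extract a continuous family $\alpha_t$ locally in $t$ and patch, the paper obtains it in one step by viewing the two extensions as neat tubular neighborhoods of the trace $\{(t,F_t(\lambda))\}\subseteq[0,1]\times M$ and invoking uniqueness of neat tubular neighborhoods, which is exactly the parametrized uniqueness statement you allude to.
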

    \begin{proof}
        Let $\varphi_t$ and $\psi_t$ be extensions of $\varphi$ along $F$. Then the maps $[0,1]\times S^1\times D^{n-1}\to [0,1]\times M$,
        \[(t,\lambda,v)\mapsto (t,\varphi_t(\lambda,v)),\quad (t,\lambda,v)\mapsto (t,\psi_t(\lambda,v)), \]
        are neat tubular neighborhoods of the neat submanifold $N=\{(t,F_t(\lambda))\mid (t,\lambda)\in[0,1]\times S^1\}$ of $[0,1]\times M$ in the sense of \cite[Chapter III.4]{Ko}. By the uniqueness of neat tubular neighborhoods (see e.g.\ \cite[Theorem III.4.2 and subsequent remark]{Ko}), after applying an isotopy of neat tubular neighborhoods that fixes $N$ pointwise (which corresponds to isotopies of $\psi_0$ and $\psi_1$ on the boundary components that fix $f(S^1)$ pointwise), we can assume that
        \[ \psi_t(\lambda,\cdot)=\varphi_t(\lambda,\alpha_{(t,\lambda)}(\cdot))\]
        for a smooth map $\alpha\colon [0,1]\times S^1\to \mathrm{SO}(n-1)$ with $\alpha_{(0,\cdot)}$ homotopic to the constant map $\equiv\mathrm{id}$. This shows that $\psi_1$ is equivalent to $\varphi_1$, where the isotopy is given by $\varphi_1(\lambda,\alpha_{(t,\lambda)}(\cdot))$.
    \end{proof}
    \begin{lemma}
        \label{L:F*_W2}
        Let $f\colon S^1\hookrightarrow M$ be an embedding and let $F\colon T^2\hookrightarrow M$ be an embedding with $F(\cdot,1)=f$. Then, for any normal framing $\varphi$ of $f$, we have $[\varphi]=F_*[\varphi]$ if and only if
        \[w_2(M)\frown F_*[T^2]_{\Z/2}=0.\]
    \end{lemma}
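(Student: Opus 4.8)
\emph{Proof strategy.}
The plan is to reduce the statement to the (non)triviality of a single vector bundle over the embedded torus $F(T^2)$. Write $\Sigma:=F(T^2)\subseteq M$, let $\nu$ denote its normal bundle in $M$, a rank-$(n-2)$ bundle over $\Sigma\cong T^2$, and set $W:=\underline{\R}_\Sigma\oplus\nu$, of rank $n-1$. Since $T^2$ is parallelizable, the splitting $TM|_\Sigma\cong T\Sigma\oplus\nu$ gives $w_1(\nu)=0$ and $w_2(W)=w_2(\nu)=w_2(TM|_\Sigma)$, hence $F^*(w_2(W))=F^*w_2(M)$ in $H^2(T^2;\Z/2)$. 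I would also record the identification $W|_{f(S^1)}\cong\nu_{f(S^1)}$, in which the $\underline{\R}_\Sigma$-summand is the (trivial) normal line of $f(S^1)$ inside $\Sigma$; under it, a normal framing of $f$ is the same data as an orientation-preserving trivialization of $W|_{f(S^1)}$, and by Lemma~\ref{L:UNIQUENESS_TUB_NBHD} its equivalence class depends only on the homotopy class of that trivialization, with exactly two possibilities.

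The heart of the argument is the chain of equivalences
\[
F_*[\varphi]=[\varphi]\iff \varphi\text{ extends to a trivialization of }W\text{ over }\Sigma\iff W\text{ is trivial over }\Sigma.
\]
For the first equivalence I would unwind the definition of $F_*[\varphi]$: extending $\varphi$ along the isotopy $F_t$ is, after the same neat-tubular-neighborhood bookkeeping as in the proof of Lemma~\ref{L:F_*_WELL_DEF}, the choice of a trivialization $\{\varphi_t\}$ of the pull-back $q^*W$ over the cylinder $S^1\times[0,1]$, where $q(\lambda,t)=F(\lambda,e^{2\pi i t})$ is the quotient onto $\Sigma$; both ends of the cylinder are glued by $q$ to $f(S^1)$, with $\varphi_0=\varphi$ and $[\varphi_1]=F_*[\varphi]$. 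Since the cylinder deformation retracts onto $S^1\times\{0\}$, any two trivializations of $q^*W$ restricting to $\varphi$ on $S^1\times\{0\}$ are homotopic rel that end; hence $\varphi_1\simeq\varphi_0$ if and only if $\{\varphi_t\}$ can be homotoped, rel its bottom, to the pull-back of a trivialization of $W$ over $\Sigma$ restricting to $\varphi$ on $f(S^1)$. For the second equivalence I would use that the restriction map, from homotopy classes of orientation-preserving trivializations of $W$ over $T^2$ to those over $f(S^1)$, is surjective --- a class $g\colon S^1\to\mathrm{SO}(n-1)$ is realized by precomposing with the projection $T^2\to S^1$ --- so a global trivialization can always be arranged to restrict to $\varphi$ on $f(S^1)$; here $n-1\ge 3$ is used.

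To conclude, $W$ is an orientable rank-$(n-1)$ bundle over the closed surface $T^2$ with $n-1\ge 3$, so Proposition~\ref{P:BUNDLES_SURFACES}(2) says it is trivial if and only if $w_2(W)=0$, i.e.\ $F^*w_2(M)=0$ in $H^2(T^2;\Z/2)$. Since evaluation against $[T^2]_{\Z/2}$ is an isomorphism $H^2(T^2;\Z/2)\xrightarrow{\sim}\Z/2$, this holds if and only if $\langle F^*w_2(M),[T^2]_{\Z/2}\rangle=0$, which by the projection formula equals $w_2(M)\frown F_*[T^2]_{\Z/2}$ in $H_0(M;\Z/2)$. Chaining the equivalences proves the lemma, and in passing shows the condition does not depend on $\varphi$, as the statement requires. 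I expect the main obstacle to be making the first displayed equivalence precise: one must verify carefully that ``extending $\varphi$ along $F_t$'' is genuinely modeled by trivializations of $q^*W$ on the cylinder, and that the identification of the two boundary circles with $f(S^1)$ is the identity on $W|_{f(S^1)}$ --- this is where the uniqueness of neat tubular neighborhoods, exactly as in Lemma~\ref{L:F_*_WELL_DEF}, does the work.
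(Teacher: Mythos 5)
Your proof is correct, and it reaches the same pivot as the paper---triviality of a bundle over the embedded torus, detected by $w_2$ via Proposition \ref{P:BUNDLES_SURFACES} and the projection formula---but it gets there by a different mechanism for the core equivalence. The paper works with the unstabilized normal bundle $\nu_F$ of $F(T^2)$ and proves ``$\nu_F$ trivial $\Leftrightarrow$ $[\varphi]=F_*[\varphi]$'' by two separate geometric constructions: when $\nu_F$ is trivial it writes down an explicit re-framing isotopy through a tubular neighborhood $\bar F\colon T^2\times D^{n-2}\hookrightarrow M$ with $\varphi_0=\varphi_1$; for the converse it assembles an extension $\varphi_t$ into an embedding $T^2\times D^{n-1}\hookrightarrow S^1\times M$, concludes that $\nu_F\oplus\underline{\R}_{T^2}$ is trivial, and destabilizes using Proposition \ref{P:BUNDLES_SURFACES} and stability of Stiefel--Whitney classes. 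You instead work throughout with the stabilized bundle $W=\underline{\R}_{\Sigma}\oplus\nu$ on the torus itself (which is precisely the normal bundle of the paper's torus in $S^1\times M$), encode framings as trivializations of $W|_{f(S^1)}$ and $F_*$ as transport of trivializations of $q^*W$ across the cut-open torus, so that both implications collapse into one extension/descent statement plus the gauge-by-$g\circ\mathrm{pr}$ surjectivity observation. What this buys is uniformity: both directions, the independence of the condition from $\varphi$, and the well-definedness of $F_*$ all come out of the single fact that trivializations of $q^*W$ over the cylinder with fixed bottom are homotopic rel the bottom. What it costs is exactly the point you flag: one must justify that ``extending $\varphi$ along $F_t$'' is faithfully modeled by trivializations of the normal bundle of the trace $\{(t,F_t(\lambda))\}\subseteq[0,1]\times M$, which is canonically $q^*W$, and that the identifications of the two boundary circles with $W|_{f(S^1)}$ coincide (they do, since the trivializing section of the $\underline{\R}$-summand is $F_*\partial_\theta$, globally defined on the torus); this is the same neat-tubular-neighborhood uniqueness already invoked in Lemma \ref{L:F_*_WELL_DEF}, so the technical burden is comparable to, and in fact subsumes, the paper's well-definedness argument.
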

    \begin{proof}
        Let $\xi=F_*[T^2]_{\Z/2}\in H_2(M;\Z/2)$. By Proposition \ref{P:BUNDLES_SURFACES}, the normal bundle $\nu_F$ is trivial if and only if $w_2(\nu_F)=0$. We have
		\[F^*w_2(\nu_F)=F^*w_2(\nu_F\oplus TF(T^2))=F^*w_2(TM|_{F(T^2)})=w_2(F^*TM)=F^*w_2(M),\]
		which vanishes if and only if
		\[0=F^*w_2(M)\frown [T^2]_{\Z/2}=w_2(M) \frown F_*[T^2]_{\Z/2}=w_2(M) \frown \xi. \]
		Hence, $w_2(M)\frown \xi=0$ if and only if $\nu_F$ is trivial.
		
		Now, suppose that $\nu_F$ is trivial, i.e.\ there exists an embedding $\bar{F}\colon T^2\times D^{n-2}\hookrightarrow M$ such that $\bar{F}(\cdot,0)=F$. The map $\varphi_t\colon S^1 \times D^{n-1}\to M$,
		\[\varphi_t(\lambda,v)=\bar{F}((\lambda,e^{2\pi i(t+\frac{v_{n-1}}{4})}),(v_1,\dots,v_{n-2})) \]
		for $\lambda\in S^1$, $v=(v_1,\dots,v_{n-1})\in D^{n-1}$ is an isotopy along $F_t$ with $\varphi_0=\varphi_1$, showing that $F$ induces the identity on equivalence classes of normal framings of $f$.
		
		Finally, suppose that $F$ induces the identity on equivalence classes of normal framings. Let $\varphi$ be a normal framing and let $\varphi_t$ be an extension along $F_t$. Then $\varphi=\varphi_0$ is equivalent to $\varphi_1$. By modifying $\varphi_t$ for $t\in[1-\varepsilon,1]$ for $\varepsilon>0$ small according to the isotopy between $\varphi$ and $\varphi_1$, we can assume that $\varphi=\varphi_0=\varphi_1$. Then we define the embedding $T^2\times D^{n-1}\hookrightarrow S^1\times M$,
		\[ (\lambda,e^{2\pi i t},v)\mapsto(e^{2\pi i t},\varphi_t(\lambda,v)), \]
		showing that the embedding $\overset{\circ}{F}\colon T^2\hookrightarrow S^1\times M$, $(\lambda,e^{2\pi i t})\mapsto (e^{2\pi i t},F_t(\lambda))$ has trivial normal bundle. On the other hand, the normal bundle of $\overset{\circ}{F}(T^2)$ is isomorphic to the sum of the normal bundle $\nu_F$ of $F$ in $M$ with the trivial bundle $\underline{\R}_{T^2}$ (corresponding to paths of the form $s\mapsto (e^{2\pi i (t+s)},F_t(\lambda))$). Thus, the bundle $\nu_F\oplus \underline{\R}_{T^2}$ is trivial. Since $n\geq 5$, it follows from Proposition \ref{P:BUNDLES_SURFACES}, together with the stability of the Stiefel--Whitney classes, that $\nu_F$ is trivial.
    \end{proof}

   For the existence of embeddings $F\colon T^2\hookrightarrow M$ that reverse the framing according to Lemma \ref{L:F*_W2} we have the following result.
    \begin{lemma}
        \label{L:EXISTENCE_EMBEDDING}
        Let $f\colon S^1\hookrightarrow M$ be an embedding. If there exists a continuous map $h\colon S^2\to M$ such that $w_2(M)\frown h_*[S^2]_{\Z/2}\neq 0$, then there exists an embedding $F\colon T^2\hookrightarrow M$ with $F(\cdot,1)=f$ and $w_2(M)\frown F_*[T^2]_{\Z/2}\neq 0$. If $f$ is null-homotopic, then also the converse holds.
    \end{lemma}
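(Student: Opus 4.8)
The plan is to treat the two implications separately. For the forward implication I would produce an auxiliary map $T^2\to M$ restricting to $f$ on $S^1\times\{1\}$ and carrying $[T^2]_{\Z/2}$ to $h_*[S^2]_{\Z/2}$, and then upgrade it to an embedding by the relative Whitney embedding theorem; the converse I would handle by a collapsing argument.

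\emph{Forward implication.} We may assume $M$ is connected. First I would build a continuous map $G\colon T^2\to M$ with $G(\cdot,1)=f$ and $G_*[T^2]_{\Z/2}=h_*[S^2]_{\Z/2}$. Begin with $G_0=f\circ\mathrm{pr}_1\colon T^2\to S^1\to M$, which restricts to $f$ on $S^1\times\{1\}$ and satisfies $(G_0)_*[T^2]_{\Z/2}=0$ since it factors through $S^1$. Now pick a small embedded disk $D_1\subset T^2$ disjoint from $S^1\times\{1\}$ and a small embedded disk $D_2\subset S^2$, and form the connected sum of the maps $G_0$ and $h$ along $D_1$ and $D_2$: the loops $G_0|_{\partial D_1}$ and $h|_{\partial D_2}$ are null-homotopic (they bound $G_0|_{D_1}$, resp.\ $h|_{D_2}$), hence homotopic to one another, so $G_0$ and $h$ may be glued across a cylinder into a map $G$ defined on $T^2\#S^2\cong T^2$. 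Since the gluing is supported away from $S^1\times\{1\}$, we still have $G(\cdot,1)=f$; and by the standard additivity of fundamental classes under connected sum, $G_*[T^2]_{\Z/2}=(G_0)_*[T^2]_{\Z/2}+h_*[S^2]_{\Z/2}=h_*[S^2]_{\Z/2}$. Finally, since $n\geq 5>4=2\dim T^2$, Theorem~\ref{T:W-EMBEDDING}, applied with $N=T^2$ and $A=S^1\times\{1\}$ (on which $G$ restricts to the smooth embedding $f$), yields an embedding $F\colon T^2\hookrightarrow M$ homotopic to $G$ with $F|_A=f$, i.e.\ $F(\cdot,1)=f$. Homotopy invariance of homology then gives $F_*[T^2]_{\Z/2}=G_*[T^2]_{\Z/2}=h_*[S^2]_{\Z/2}$, so $w_2(M)\frown F_*[T^2]_{\Z/2}=w_2(M)\frown h_*[S^2]_{\Z/2}\neq 0$.

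\emph{Converse.} Suppose $f$ is null-homotopic and let $F\colon T^2\hookrightarrow M$ be an embedding with $F(\cdot,1)=f$ and $w_2(M)\frown F_*[T^2]_{\Z/2}\neq 0$. Since $S^1\times\{1\}\hookrightarrow T^2$ is a cofibration and $F|_{S^1\times\{1\}}=f$ is null-homotopic, the homotopy extension property lets me homotope $F$ to a map $F'\colon T^2\to M$ that is constant on $S^1\times\{1\}$, so that $F'$ factors as $F'=\overline{F}'\circ q$ with $q\colon T^2\to Y:=T^2/(S^1\times\{1\})$. By the long exact sequence of the pair $(T^2,S^1\times\{1\})$---using that $H_1(S^1)\to H_1(T^2)$ is injective---the map $q$ induces an isomorphism $q_*\colon H_2(T^2)\xrightarrow{\cong}H_2(Y)$, where $Y\simeq S^2\vee S^1$. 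Letting $g\colon S^2\hookrightarrow Y$ be the inclusion of the wedge summand, $g_*[S^2]$ generates $H_2(Y)\cong\Z$, so $g_*[S^2]=\pm q_*[T^2]$; hence $h:=\overline{F}'\circ g\colon S^2\to M$ satisfies $h_*[S^2]_{\Z/2}=\overline{F}'_*\bigl(q_*[T^2]_{\Z/2}\bigr)=F'_*[T^2]_{\Z/2}=F_*[T^2]_{\Z/2}$, whence $w_2(M)\frown h_*[S^2]_{\Z/2}\neq 0$.

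The step I expect to be the main obstacle is the bookkeeping in the connected-sum-of-maps construction in the forward implication---verifying that $G$ restricts to $f$ on $S^1\times\{1\}$ on the nose and that $G_*[T^2]_{\Z/2}=h_*[S^2]_{\Z/2}$---together with checking the dimension hypothesis of Theorem~\ref{T:W-EMBEDDING}. Conceptually the content is simply that, once $n\geq 5$, every homotopy class of maps $T^2\to M$ that restricts to $f$ on a coordinate circle is realized by an embedding.
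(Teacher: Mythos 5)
Your argument is correct, and it splits naturally into a forward half that matches the paper and a converse that takes a somewhat different route. For the forward implication the paper does exactly what you do, only in the other order: it first uses Theorem~\ref{T:W-EMBEDDING} to replace $h$ by an embedded sphere and the trivial torus $(\lambda_1,\lambda_2)\mapsto f(\lambda_1)$ by an embedded torus through $f$, and then forms the connected sum of the two \emph{embeddings}; you instead form the connected sum at the level of continuous maps and apply the relative Whitney theorem once, which is essentially the same idea and arguably cleaner (no need to arrange disjoint images and an embedded connecting tube). The one step you gloss is the additivity $G_*[T^2]_{\Z/2}=(G_0)_*[T^2]_{\Z/2}+h_*[S^2]_{\Z/2}$ for a connected sum of maps glued along a homotopy cylinder; this is standard (e.g.\ map the $1$-handle bordism from $T^2\sqcup S^2$ to $T^2\# S^2$ into $M$, using that $M$ is connected), and the paper is equally terse here, but a one-line justification would not hurt. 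For the converse the paper builds a map of the punctured solid torus $(D^2\times S^1)\setminus(D^3)^\circ$ into $M$ extending $F$ and a disc bounding $f$, and reads off $h$ on the spherical boundary component, i.e.\ an explicit bordism from the torus to a sphere; you instead use the cofibration $S^1\times\{1\}\hookrightarrow T^2$ and the null-homotopy of $f$ to homotope $F$ to factor through $T^2/(S^1\times\{1\})\simeq S^2\vee S^1$, and compose with the sphere summand. Both yield $h_*[S^2]_{\Z/2}=F_*[T^2]_{\Z/2}$; your version is more homotopy-theoretic and avoids the three-dimensional bordism, while the paper's construction has the virtue of paralleling the argument reused later in the proof of Proposition~\ref{P:IND_HOM}. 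Note finally that, as in the paper, connectedness of $M$ is implicitly needed for the forward direction (so that $h$ can be connected to the torus through $f$), which you flag by assuming $M$ connected.
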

    \begin{proof}
        First suppose that such a map $h$ exists. By Theorem \ref{T:W-EMBEDDING}, we can assume that $h$ is an embedding. Again by Theorem \ref{T:W-EMBEDDING}, the map $T^2\to M,\, (\lambda_1,\lambda_2)\mapsto f(\lambda_1)$, which induces the trivial map on $H_2$, is homotopic to an embedding $F_0$ that extends $f$. Now, the connected sum embedding $T^2\# S^2\cong T^2\hookrightarrow M$ of $F_0$ and $h$ satisfies the required properties.

        Conversely, suppose that $f$ is null-homotopic and that $F\colon T^2\hookrightarrow M$ is an embedding with $F(\cdot,1)=f$ and $w_2(M)\frown F_*[T^2]_{\Z/2}\neq0$. Since $f$ is null-homotopic, there exists an embedding $\bar{f}\colon D^2\hookrightarrow M$ with $\bar{f}|_{S^1}=f$. We now define a map $\overline{F}\colon (D^2\times S^1)\setminus (D^3)^\circ\to M$ from the punctured solid torus to $M$ by first defining it on $T^2\cup (D^2\times \{1\})$ (and assume that the deleted $D^3$ is disjoint from this part) by setting
        \[\overline{F}(x)=F(x)\text{ for }x\in T^2\text{ and }\overline{F}(y,1)=\bar{f}(y)\text{ for }y\in D^2. \]
        Since $T^2\cup (D^2\times\{1\})$ is a deformation retract of $(D^2\times S^1)\setminus(D^3)^\circ$, we can extend this map to all of $(D^2\times S^1)\setminus(D^3)^\circ$ and we have $\overline{F}|_{T^2}=F$ by construction. We define $h\colon S^2\to M$ as the restriction of $\overline{F}$ to the other boundary component. Since $T^2$ and $S^2$ define the same homology classes inside $(D^2\times S^1)\setminus(D^3)^\circ$, it follows that they induce the same homology class $F_*[T^2]=h_*[S^2]$.
    \end{proof}
    
    Note that the assumptions of Lemma \ref{L:EXISTENCE_EMBEDDING} are satisfied, for example, if $M$ is non-spin and simply-connected.

    We will now consider the map
    \[\mu\colon \CIso{M}\to [S^1,M] \]
    given by forgetting the framing. We denote by $\widetilde{M}$ the universal cover of $M$. Since the projection $\widetilde{M}\xrightarrow{\pi} M$ is a local diffeomorphism, we have that $w_i(\widetilde{M})=\pi^*w_i(M)$. Hence, $\widetilde{M}$ is spin if and only if $w_2(M)$ lies in the kernel of $\pi^*\colon H^2(M;\Z/2)\to H^2(\widetilde{M};\Z/2)$. We now have the following proposition (cf.\ \cite{GL71} and \cite[Corollary 2.3]{Duan} in the simply-connected case).
    	
     % PROPOSITION
	
	\begin{proposition}
		\label{P:NUM_ISOT_CLASSES}
        The map $\mu$ is surjective. Further, we have:
		\begin{itemize}
			\item[\emph{(i)}] If $M$ is spin, then $\mu$ is two-to-one.
			\item[\emph{(ii)}] If $\widetilde{M}$ is non-spin, then $\mu$ is bijective.
            \item[\emph{(iii)}] The trivial class in $[S^1,M]$ has two preimages under $\mu$ if and only if $\widetilde{M}$ is spin. Otherwise, it has one preimage.
		\end{itemize}
	\end{proposition}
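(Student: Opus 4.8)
The statement is about the forgetful map $\mu\colon \CIso{M}\to[S^1,M]$, and the key structural input is Lemma~\ref{L:UNIQUENESS_TUB_NBHD}, which says that for each embedding $f\colon S^1\hookrightarrow M$ there are at most two equivalence classes of normal framings, and Lemmas~\ref{L:F_*_WELL_DEF} and \ref{L:F*_W2}, which describe when a self-isotopy $F$ of $f$ swaps the two framings. So the proof splits naturally into: (a) surjectivity of $\mu$; (b) analyzing the fiber of $\mu$ over a class $[f]\in[S^1,M]$, which by Theorem~\ref{T:HOM-ISO} (since $n\geq 5>2\cdot1+1=3$, homotopic maps $S^1\to M$ are isotopic) is the set of isotopy classes of normal framings of a single fixed embedding $f$; and (c) determining, for each case, whether the two equivalence classes of normal framings of $f$ remain distinct up to isotopy or get identified by some ambient isotopy carrying $f$ back to itself, i.e.\ by some $F_*$.

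\textbf{Surjectivity and the structure of the fiber.} First I would prove $\mu$ is surjective: given $[\gamma]\in[S^1,M]$, use the relative weak Whitney embedding theorem (Theorem~\ref{T:W-EMBEDDING}, applicable since $n\geq 5>2$) to realize $\gamma$ by an embedding $f\colon S^1\hookrightarrow M$, and then normal framings exist by the discussion following the definition (orientability of $\nu_{f(S^1)}$ via Proposition~\ref{P:BUNDLES_SURFACES}, then the exponential map). Next, fix $[f]\in[S^1,M]$; by Theorem~\ref{T:HOM-ISO} any two embeddings representing this class are isotopic, so the fiber $\mu^{-1}([f])$ is identified with the set of isotopy classes of normal framings of one chosen representative $f$. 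By Lemma~\ref{L:UNIQUENESS_TUB_NBHD} there are exactly two equivalence classes $[\varphi]$, $[\varphi']$ of normal framings of $f$ (differing by the nontrivial element of $\pi_1(\mathrm{SO}(n-1))\cong\Z/2$), so the fiber has one or two elements, and it has exactly one precisely when $\varphi$ and $\varphi'$ become isotopic in $M$ — which, as equivalence classes, happens iff there is a self-isotopy $F$ of $f$ (an embedding $T^2\hookrightarrow M$ with $F(\cdot,1)=f$) with $F_*[\varphi]\neq[\varphi]$, i.e.\ with $w_2(M)\frown F_*[T^2]_{\Z/2}\neq 0$ by Lemma~\ref{L:F*_W2}.

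\textbf{Case analysis.} For (i), $M$ spin means $w_2(M)=0$, so $w_2(M)\frown F_*[T^2]_{\Z/2}=0$ for every $F$, hence no self-isotopy swaps the framings and every fiber has exactly two elements: $\mu$ is two-to-one. For (iii): the trivial class in $[S^1,M]$ is represented by a null-homotopic embedded circle $f$, so Lemma~\ref{L:EXISTENCE_EMBEDDING} applies in both directions — there is an $F$ with $F(\cdot,1)=f$ and $w_2(M)\frown F_*[T^2]_{\Z/2}\neq 0$ if and only if there is a map $h\colon S^2\to M$ with $w_2(M)\frown h_*[S^2]_{\Z/2}\neq 0$, which by the remarks on $\widetilde M$ (namely $w_2(\widetilde M)=\pi^*w_2(M)$ and the fact that $h$ lifts to $\widetilde M$ since $f$ and hence $S^2$ are simply-connected targets after the null-homotopy) is equivalent to $\widetilde M$ being non-spin; so the trivial class has one preimage iff $\widetilde M$ is non-spin and two otherwise. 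For (ii): if $\widetilde M$ is non-spin I must show \emph{every} fiber is a singleton, i.e.\ for every embedded $f\colon S^1\hookrightarrow M$ there is a self-isotopy $F$ with $w_2(M)\frown F_*[T^2]_{\Z/2}\neq 0$. This follows from the first half of Lemma~\ref{L:EXISTENCE_EMBEDDING}: since $\widetilde M$ is non-spin there is $\tilde h\colon S^2\to\widetilde M$ with $w_2(\widetilde M)\frown\tilde h_*[S^2]_{\Z/2}\neq 0$, and composing with $\pi$ gives $h\colon S^2\to M$ with $w_2(M)\frown h_*[S^2]_{\Z/2}\neq 0$ (using $\pi^*w_2(M)=w_2(\widetilde M)$ and that $\pi$ has degree... here one must be slightly careful: one uses naturality $w_2(M)\frown(\pi\circ\tilde h)_*[S^2] = w_2(M)\frown \pi_*\tilde h_*[S^2]$ and the projection formula $= \pi^*w_2(M)\frown \tilde h_*[S^2]$ only after pulling back appropriately — the clean statement is just that $\pi_* \tilde h_*[S^2]$ pairs nontrivially with $w_2(M)$ because $\tilde h_*[S^2]$ pairs nontrivially with $\pi^* w_2(M)$), and then Lemma~\ref{L:EXISTENCE_EMBEDDING} produces the desired $F$ for any prescribed $f$. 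Combining the three cases with surjectivity finishes the proof; note that since the fiber over the trivial class can be a singleton (case (ii)/(iii)) while $\mu$ is still "two-to-one" in case (i), the three bullets are mutually consistent and exhaustive, covering $M$ spin, $\widetilde M$ non-spin, and ($M$ non-spin but $\widetilde M$ spin — handled by the general fiber argument plus (iii)).

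\textbf{Main obstacle.} The delicate point is (ii): showing that the single embedding-dependent self-isotopy $F$ from Lemma~\ref{L:EXISTENCE_EMBEDDING} exists uniformly for every $f$, and correctly transporting the nonvanishing of $w_2$ against a homology class from $\widetilde M$ down to $M$ via the covering projection (the projection formula/naturality bookkeeping with $\Z/2$-coefficients). A secondary subtlety worth a line: in case $M$ is non-spin with $\widetilde M$ spin, one should confirm the count agrees with (i) failing and with (iii) (two preimages over the trivial class), so that the proposition's three clauses genuinely partition the possibilities — this is where the distinction between "$w_2(M)=0$" and "$\pi^*w_2(M)=0$" does the work.
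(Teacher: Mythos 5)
Your proposal is correct and follows essentially the same route as the paper: surjectivity via Theorem~\ref{T:W-EMBEDDING}, identification of the fibers of $\mu$ with the (at most two) equivalence classes of framings of a fixed embedding via Theorem~\ref{T:HOM-ISO} and Lemma~\ref{L:UNIQUENESS_TUB_NBHD}, and the case analysis through Lemmas~\ref{L:F*_W2} and~\ref{L:EXISTENCE_EMBEDDING} together with the projection-formula computation transferring the $w_2$-pairing condition between $M$ and $\widetilde{M}$. The only step you assert rather than justify --- that an isotopy identifying the two framings may be taken to come from an \emph{embedded} torus self-isotopy $F$ --- is exactly the point the paper also settles with a one-line appeal to Theorem~\ref{T:W-EMBEDDING}, so your argument matches the paper's in both structure and level of detail.
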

	
	% PROOF
	
	\begin{proof}
		Since, by Theorem~\ref{T:W-EMBEDDING}, any map $S^1\to M$ is homotopic to an embedding, the map $\mu$ is surjective. Further, since any two homotopic embeddings $S^1\hookrightarrow M$ are isotopic by Theorem~\ref{T:HOM-ISO}, the preimages of a map $f\colon S^1\to M$ under $\mu$, which we can assume to be embeddings, can be represented by normal framings of $f$. This shows that there are either one or two preimages of the class represented by $f$, depending on whether the two non-equivalent normal framings of $f$ are isotopic or not.
		
		By Theorem \ref{T:W-EMBEDDING} we can assume that every self-isotopy of $f$ is an embedded torus. Hence, by Lemma \ref{L:F*_W2}, the two non-equivalent normal framings of $f$ are isotopic if and only if there is an embedded torus $F\colon T^2\to M$ with $F(\cdot,1)=f$ and $w_2(M)\frown F_*[T^2]_{\Z/2}\neq0$. In particular, if $M$ is spin, then there is no such isotopy, which shows item \emph{(i)}. Further, by Lemma \ref{L:EXISTENCE_EMBEDDING}, a sufficient condition for the existence of such an embedding is the existence of a continuous map $h\colon S^2\to M$ with $w_2(M)\frown h_*[S^2]_{\Z/2}\neq 0$, and this condition is also necessary if $f$ is null-homotopic. We now show that this condition is satisfied on $M$ if and only if it is satisfied on $\widetilde{M}$, showing items \emph{(ii)} and \emph{(iii)}.

        For any map $h\colon S^2\to \widetilde{M}$ we have
        \[w_2(M)\frown (\pi\circ h)_*[S^2]=w_2(M)\frown \pi_*h_*[S^2]=\pi_*(\pi^*w_2(M)\frown h_*[S^2])=\pi_*(w_2(\widetilde{M})\frown h_*[S^2]). \]
        Since $\pi$ induces an isomorphism on $H_0$, it follows that $h$ satisfies the required property for $\widetilde{M}$ if and only if $\pi\circ h$ satisfies it for $M$. Since any map $S^2\to M$ can be lifted to $\widetilde{M}$, the claim follows.
	\end{proof}

    By Proposition \ref{P:NUM_ISOT_CLASSES}, a null-homotopic embedding $S^1\hookrightarrow M$ can possibly have two non-isotopic normal framings. In this case we have a distinguished normal framing, which we now define.
    
    \begin{definition}\label{D:trivial_framing}
		Let $f\colon S^1\hookrightarrow M$ be an embedding. A normal framing $\varphi\colon S^1\times D^{n-1}\hookrightarrow M$ of $f$ is \emph{trivial}, if there is an embedding $\bar{\varphi}\colon D^2\times D^{n-2}$ with
		\[ \bar{\varphi}|_{S^1\times D^{n-2}}=\varphi|_{S^1\times D^{n-2}}. \]
		We say that $\varphi$ \emph{extends} over the embedded 2-disc $\bar{\varphi}(\cdot,0)$.
	\end{definition}
    It is a direct consequence of the Disc Theorem of Palais (Theorem \ref{T:DISK-ISOTOPY}) and the uniqueness of tubular neighborhoods, that any two trivial normal framings are isotopic (cf.\ also \cite[Lemma 3.7]{Re22}). Hence, there is precisely one isotopy class of trivial normal framings.
    
    \begin{lemma}
        \label{L:TRIV_FRAMING}
        Let $\varphi$ be a normal framing of a null-homotopic embedding $f\colon S^1\hookrightarrow M$. Then $\varphi$ is trivial if and only if its lift to $\widetilde{M}$ is trivial.
    \end{lemma}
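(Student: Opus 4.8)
The plan is to prove the two implications separately: the ``only if'' direction by directly lifting an extending disc, and the ``if'' direction by a counting argument. First, two preliminary observations. Since $f$ is null-homotopic it lifts to an embedding $\tilde{f}\colon S^1\hookrightarrow\widetilde{M}$ (a loop, as $[f]=0$), and since the fundamental group of the domain $S^1\times D^{n-1}$ of a normal framing of $f$ is generated by a loop whose image represents $[f]=0\in\pi_1(M)$, the framing $\varphi$, as well as any isotopy or equivalence of normal framings of $f$, lifts to $\widetilde{M}$; we write $\tilde{\varphi}$ for the lift of $\varphi$ lying over $\tilde{f}$. A lift of an embedding is again an embedding, since it remains injective and is an immersion from a compact manifold. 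Passing to a different choice of $\tilde{f}$ amounts to post-composing with a deck transformation, so ``the lift of $\varphi$ is trivial'' does not depend on this choice.

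For the ``only if'' direction, suppose $\varphi$ is trivial, witnessed by an embedding $\bar{\varphi}\colon D^2\times D^{n-2}\hookrightarrow M$ with $\bar{\varphi}|_{S^1\times D^{n-2}}=\varphi|_{S^1\times D^{n-2}}$. Since $D^2\times D^{n-2}$ is simply connected, $\bar{\varphi}$ lifts, and we choose the lift $\widetilde{\bar{\varphi}}\colon D^2\times D^{n-2}\hookrightarrow\widetilde{M}$ (an embedding, as above) that agrees with $\tilde{\varphi}$ at one point of $S^1\times\{0\}$. Then $\widetilde{\bar{\varphi}}|_{S^1\times D^{n-2}}$ and $\tilde{\varphi}|_{S^1\times D^{n-2}}$ are lifts of the same map from the connected space $S^1\times D^{n-2}$ agreeing at a point, hence equal, so $\widetilde{\bar{\varphi}}$ exhibits $\tilde{\varphi}$ as trivial.

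For the ``if'' direction, passing to lifts gives a map $\Phi$ from the set $\mathcal{F}$ of isotopy classes of normal framings of $f$ to the set $\widetilde{\mathcal{F}}$ of isotopy classes of normal framings of $\tilde{f}$, well-defined by the preliminary observations (an isotopy of framings of $f$ lifts to one of the lifts). Since $\mathcal{F}$ and $\widetilde{\mathcal{F}}$ are the fibers of the respective maps $\mu$ over the trivial classes (any null-homotopic embedded circle being isotopic to the given one, by Theorem \ref{T:HOM-ISO}), Proposition \ref{P:NUM_ISOT_CLASSES}, applied to $M$ and to $\widetilde{M}$ (whose universal cover is $\widetilde{M}$ itself), shows that both sets have two elements if $\widetilde{M}$ is spin and one element otherwise; in particular they have the same finite cardinality. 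Next, $\Phi$ is surjective: a normal framing $\tilde{\psi}$ of $\tilde{f}$ is equivalent, hence isotopic, to a framing with image inside any prescribed neighborhood of $\tilde{f}(S^1)$ (replace $\tilde{\psi}(\lambda,v)$ by $\tilde{\psi}(\lambda,\varepsilon v)$), and on a sufficiently small neighborhood of the compact set $\tilde{f}(S^1)$ the covering projection $\pi$ is injective; for such $\tilde{\psi}$ the composition $\pi\circ\tilde{\psi}$ is an orientation-preserving injective immersion, hence a normal framing of $f=\pi\circ\tilde{\psi}(\cdot,0)$, and its lift over $\tilde{f}$ is $\tilde{\psi}$ itself. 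Thus $\Phi$ is a surjection between finite sets of equal cardinality, hence a bijection. Finally, trivial normal framings exist ($f$ bounds an embedded $2$-disc with trivial normal bundle, by Theorem \ref{T:W-EMBEDDING}) and form a single isotopy class both in $M$ and in $\widetilde{M}$ (cf.\ the remark after Definition \ref{D:trivial_framing}), and the ``only if'' direction shows that $\Phi$ sends the trivial class of $\mathcal{F}$ to the trivial class of $\widetilde{\mathcal{F}}$; injectivity of $\Phi$ then gives that $\tilde{\varphi}$ is trivial if and only if $\varphi$ is.

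The main obstacle is the ``if'' direction. The naive approach — project an extending disc for $\tilde{\varphi}$ to $M$ and make it embedded by general position (Theorem \ref{T:W-EMBEDDING} applies since $n\geq5>2\cdot2$) — fails because the boundary framing induced by $\varphi$ need not extend over the resulting disc: the obstruction lies in $\Z/2$ and genuinely depends on the chosen disc. The counting argument avoids this by exploiting that Proposition \ref{P:NUM_ISOT_CLASSES} produces the same number of isotopy classes of normal framings of a null-homotopic circle in $M$ as in $\widetilde{M}$, so that the lifting map, being a surjection between these two finite sets, is automatically a bijection.
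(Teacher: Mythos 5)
Your proposal follows essentially the same strategy as the paper's proof: the ``only if'' direction by lifting the extending disc to $\widetilde{M}$, and the ``if'' direction by the counting argument based on Proposition \ref{P:NUM_ISOT_CLASSES}, which is exactly the paper's (very terse) argument with the details of the bijection spelled out. One step, however, is under-justified as written: the well-definedness of your map $\Phi$. If $\varphi_0$ and $\varphi_1$ are framings of $f$ that are isotopic as embeddings, the intermediate embeddings need not be framings of $f$, so the lifted isotopy starting at $\tilde{\varphi}_0$ ends at \emph{some} lift of $\varphi_1$, namely $\gamma\circ\tilde{\varphi}_1$ for a possibly non-trivial deck transformation $\gamma$, not necessarily at the lift $\tilde{\varphi}_1$ over $\tilde{f}$; since deck transformations are in general not isotopic to the identity, concluding $[\tilde{\varphi}_0]=[\tilde{\varphi}_1]$ requires the extra observation that $[\gamma\circ\tilde{\varphi}_1]=[\tilde{\varphi}_1]$ in $\widetilde{M}$. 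This is true and easy to supply: $\gamma$ preserves triviality of framings (compose an extension with $\gamma^{\pm1}$), and in the simply-connected $\widetilde{M}$ there is either only one isotopy class (non-spin case) or exactly two, distinguished precisely by triviality (spin case, using that trivial framings form a single isotopy class and that triviality is isotopy-invariant via the isotopy extension theorem), so $\gamma$ cannot interchange them. Alternatively, you can sidestep the issue by working with \emph{equivalence} classes of framings of $f$: an equivalence is an isotopy through framings of $f$, so its lift stays over $\tilde{f}$, and the same counting then goes through. With that one-line repair your argument is complete; the remaining ingredients (surjectivity of $\Phi$ by shrinking a framing into a neighborhood on which the covering is injective, equality of the two counts, and $\Phi(\text{trivial})=\text{trivial}$) are all correct.
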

    \begin{proof}
        Since we can lift extensions to the universal cover, it follows that any lift of a trivial normal framing is trivial. Since, by Proposition \ref{P:NUM_ISOT_CLASSES}, the numbers of isotopy classes of normal framings of null-homotopic embeddings for $M$ and $\widetilde{M}$ coincide, it follows that a normal framing is trivial if and only if its lift to the universal cover is trivial.
    \end{proof}
    
     \begin{definition}
    \label{DEF:STANDARD_FRAMING}
		Let $P\xrightarrow{\pi}M$ be a principal $S^1$-bundle and let $U\times S^1\cong \pi^{-1}(U)\subseteq P$ with $U\subseteq M$ open be a local trivialization. Let $D^n\hookrightarrow U$ be an orientation-preserving embedding. The corresponding embedding $S^1\times D^n\cong D^n\times S^1\hookrightarrow P$, denoted $\varphi_\pi$, is called the \emph{standard framing} of $\pi$.
	\end{definition}
	By Theorem \ref{T:DISK-ISOTOPY}, and since $\pi$ has connected structure group, the definition of standard framing is well-defined up to isotopy.
    \begin{proposition}
		\label{P:FRAMING_TRIV}
		Let $P\xrightarrow{\pi} M^n$ be a principal $S^1$-bundle such that the inclusion of a fiber is null-homotopic (which, by Lemma \ref{L:PR_BDL_TOPOLOGY2}, is equivalent to the pull-back of the Euler class to $\widetilde{M}$ being primitive). Then the standard framing $\varphi_\pi$ is trivial if and only if $\widetilde{M}$ is not spin.
	\end{proposition}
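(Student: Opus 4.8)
The plan is to reduce to the case where $M$ is simply-connected — so that $P$ is simply-connected and $e$ is primitive — and then to decide which of the two possible isotopy classes the standard framing $\varphi_\pi$ belongs to by comparing it with an explicit trivial framing, obtained by capping the fiber off with a disc inside the Hopf $3$-sphere lying over a suitable embedded $2$-sphere in the base.

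For the reduction, let $q\colon\widetilde M\to M$ be the universal covering and let $\overline P\xrightarrow{\overline\pi}\widetilde M$ be the pull-back of $\pi$ along $q$. Since the fiber of $\pi$ is null-homotopic, the diagram in the proof of Lemma~\ref{L:PR_BDL_TOPOLOGY2} shows that $\overline P$ is simply-connected, hence it is the universal cover of $P$; and taking the local trivialization and embedded disc that define $\varphi_\pi$ over an evenly covered open set exhibits the standard framing $\varphi_{\overline\pi}$ of $\overline\pi$ as a lift of $\varphi_\pi$. By Lemma~\ref{L:TRIV_FRAMING}, $\varphi_\pi$ is trivial if and only if $\varphi_{\overline\pi}$ is, so it suffices to prove the statement when $M$ is simply-connected; then $P$ is simply-connected and $e$ is primitive by Lemma~\ref{L:PR_BDL_TOPOLOGY}, and the claim becomes: $\varphi_\pi$ is trivial if and only if $M$ is non-spin.

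Here I would distinguish whether $P$ is spin. If $P$ is non-spin then $w_2(M)\notin\langle e\rangle\bmod 2$ by Corollary~\ref{C:P_W2}, so $w_2(M)\neq 0$ and $M$ is non-spin; and by Proposition~\ref{P:NUM_ISOT_CLASSES}(iii) the null-homotopic fiber admits a unique isotopy class of normal framing, which must be the trivial one, so $\varphi_\pi$ is trivial — in agreement with the claim. Now assume $P$ is spin. As $e$ is primitive, Hurewicz and the universal coefficient theorem provide $g\colon S^2\to M$ with $\langle e,g_*[S^2]\rangle=\pm1$, which by Theorem~\ref{T:W-EMBEDDING} we may take to be an embedding onto $\Sigma\cong S^2$. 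Then $P|_\Sigma\to\Sigma$ is the Hopf bundle $S^3\to S^2$, a tubular neighbourhood of $S^3=P|_\Sigma$ in $P$ is the disc bundle of $p^*\nu_{\Sigma,M}$ where $p\colon S^3\to S^2$ is the Hopf projection, and the fiber $f$ of $\pi$ over a point $x_0\in\Sigma$ is a Hopf circle in $S^3\subseteq P$. It bounds a disc $\Delta_0\subseteq S^3$, and a trivialization of $\nu_{\Delta_0,P}$ over $\Delta_0$ yields a trivial framing $\varphi_{\Delta_0}$ of $f$ in the sense of Definition~\ref{D:trivial_framing}. Choosing the disc that defines $\varphi_\pi$ adapted to $\Sigma$, both $\varphi_\pi$ and $\varphi_{\Delta_0}$ respect the splitting $\nu_{f,P}=\nu_{f,S^3}\oplus(\nu_{S^3,P}|_f)$ into summands of rank $2$ and $n-2$, so their difference in $\pi_1(\mathrm{SO}(n))\cong\Z/2$ is the sum of the differences in the two summands. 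On $\nu_{f,S^3}$, the framing of the Hopf circle induced by $\varphi_\pi$ is the one coming from a bundle trivialization over a disc, of self-linking equal to the Euler number $\pm1$, whereas $\varphi_{\Delta_0}$ induces the Seifert framing of self-linking $0$; this contribution is thus the non-trivial element. On $\nu_{S^3,P}|_f\cong\underline{(\nu_{\Sigma,M})_{x_0}}$, $\varphi_\pi$ induces the constant trivialization while $\varphi_{\Delta_0}$ induces the one extending over $\Delta_0$; since $p|_{\Delta_0}$ collapses $\partial\Delta_0$ and so descends to a degree $\pm1$ map $\Delta_0/\partial\Delta_0\to\Sigma$, the obstruction to extending the constant trivialization over $\Delta_0$ is, by Proposition~\ref{P:BUNDLES_SURFACES}(2), the class $\langle w_2(\nu_{\Sigma,M}),[\Sigma]\rangle=\langle w_2(M),g_*[S^2]\rangle\in\Z/2$. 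Altogether $\varphi_\pi$ and $\varphi_{\Delta_0}$ differ by $1+\langle w_2(M),g_*[S^2]\rangle$. Since $P$ is spin, Corollary~\ref{C:P_W2} gives $w_2(M)\equiv ce\bmod 2$ for some $c$, so this difference is $0$ if $M$ is spin and $\langle e,g_*[S^2]\rangle\bmod 2=1$ if $M$ is non-spin. In the first case $\varphi_\pi$ and $\varphi_{\Delta_0}$ are not equivalent, hence — as $P$ is spin — not isotopic by Proposition~\ref{P:NUM_ISOT_CLASSES}(i), so $\varphi_\pi$ is not trivial; in the second case they are equivalent, hence isotopic, so $\varphi_\pi$ is trivial. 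This establishes the claim in the simply-connected case, and combined with the reduction it proves the proposition.

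I expect the main obstacle to be the framing comparison in the last paragraph: arranging the splitting of $\nu_{f,P}$ inside $S^3\subseteq P$ so that both framings respect it, and then correctly identifying the two contributions — the self-linking calculation (it equals the Euler number, hence $\pm1$) in the rank-$2$ summand, and the $w_2$-obstruction (via the degree-one collapse $\Delta_0/\partial\Delta_0\to\Sigma$ together with Proposition~\ref{P:BUNDLES_SURFACES}(2)) in the rank-$(n-2)$ summand. One also has to be careful to keep the distinction between \emph{equivalent} and \emph{isotopic} normal framings: the computation only yields (in)equality of equivalence classes, and this gives the isotopy statement we want precisely because $P$ is spin in the remaining case.
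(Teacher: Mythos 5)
Your argument is correct, and its overall shape agrees with the paper's up to the point where the two part ways: the reduction to the simply-connected case (pulling back to $\widetilde M$, noting $\overline P=\widetilde P$ via Lemmas \ref{L:PR_BDL_TOPOLOGY2} and \ref{L:PR_BDL_TOPOLOGY}, observing that $\varphi_{\overline\pi}$ lifts $\varphi_\pi$, and invoking Lemma \ref{L:TRIV_FRAMING}) is exactly what the paper does. The difference is in the core simply-connected statement: the paper simply quotes \cite[Theorem 8]{GL71}, whereas you reprove it. Your proof of that core is sound: the dichotomy on whether $P$ is spin, handled via Corollary \ref{C:P_W2} and Proposition \ref{P:NUM_ISOT_CLASSES} (with the non-spin-$P$ case immediate from item (iii)), and, when $P$ is spin, the comparison of $\varphi_\pi$ with the disc framing $\varphi_{\Delta_0}$ inside the Hopf sphere $P|_\Sigma\cong S^3$ over an embedded $2$-sphere $\Sigma$ dual to $e$ (which exists by primitivity, Hurewicz, and Theorem \ref{T:W-EMBEDDING}). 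The two block contributions are correctly identified: in $\nu_{f,S^3}$ the standard framing has self-linking $\pm1$ (linking number of two Hopf fibers) against the Seifert framing's $0$, giving the nontrivial element under $\pi_1(\mathrm{SO}(2))\to\pi_1(\mathrm{SO}(n))$; in $\nu_{S^3,P}|_f\cong p^*\nu_{\Sigma,M}|_f$ the clutching/degree-one-collapse argument together with Proposition \ref{P:BUNDLES_SURFACES}(2) gives $\langle w_2(\nu_{\Sigma,M}),[\Sigma]\rangle=\langle w_2(M),g_*[S^2]\rangle$, so the total difference is $1+\langle w_2(M),g_*[S^2]\rangle$, and Corollary \ref{C:P_W2} converts this into the spin dichotomy for $M$. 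Your care in distinguishing equivalence from isotopy is exactly what is needed: non-equivalence only yields non-isotopy because $P$ is spin in that branch (Proposition \ref{P:NUM_ISOT_CLASSES}(i)), and triviality in the sense of Definition \ref{D:trivial_framing} is an isotopy-invariant via the isotopy extension theorem. What your route buys is a self-contained proof making the mechanism visible (the constant $+1$ from the Euler number and the $w_2$-term); what it costs is precisely the steps you flag — arranging both framings to respect the rank $2\oplus(n-2)$ splitting and justifying the two block computations — which are standard but should be written out, and which are in substance the content of the Goldstein--Lininger theorem the paper cites.
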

	\begin{proof}
        By Lemma \ref{L:PR_BDL_TOPOLOGY2}, since the inclusion of a fiber is null-homotopic, the pull-back of the Euler class of $\pi$ to $\widetilde{M}$ is primitive. This implies that the pull-back of $P$ along the projection $\widetilde{M}\to M$ is simply-connected by Lemma \ref{L:PR_BDL_TOPOLOGY}, in particular it is the universal cover $\widetilde{P}$, so we can write $\widetilde{P}\xrightarrow{\widetilde{\pi}}\widetilde{M}$ for the pull-back bundle. Hence, by Lemma \ref{L:TRIV_FRAMING}, $\varphi_\pi$ is trivial if and only if $\varphi_{\widetilde{\pi}}$ is trivial, that is, we can assume that $M$ and $P$ are simply-connected. This case now follows from \cite[Theorem 8]{GL71}.
    \end{proof}

	% S: SURGERY ALONG FRAMED CIRCLES [DONE]
 
	\section{Surgery along framed circles}
    \label{S:SURGERY}
     
     As in the previous section, $M$ denotes an oriented manifold of dimension $n\geq 5$. In this section, we consider the manifold we obtain when performing surgery along a fixed normal framing to establish the existence of certain self-diffeomorphisms of $M\#(S^2\times S^{n-2})$ and $M\#(S^2\ttimes S^{n-2})$. The technique we use is due to Wall \cite{Wa64}, who considered the corresponding problem in dimension $4$. As customary, we will assume that all corners have been smoothed after performing surgery.
	
	\begin{lemma}
		\label{L:NULL-HTPIC}
		Let $\varphi$ be a normal framing of an embedding $f\colon S^1\hookrightarrow M$ into a manifold $M$ and suppose that $f$ is null-homotopic, i.e.\ it bounds an embedded disc. Then the manifold obtained from $M$ by surgery along $\varphi$ is diffeomorphic to the connected sum of $M$ with a linear sphere bundle over $S^2$ which is trivial if and only if $\varphi$ is trivial.
	\end{lemma}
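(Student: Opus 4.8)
The plan is to reduce the surgery to a standard local model. Since $f$ is null-homotopic, it bounds an embedded $2$-disc $\bar f\colon D^2\hookrightarrow M$ by Theorem~\ref{T:W-EMBEDDING} (using $n\geq 5>2\cdot 2$), and by the Disc Theorem of Palais (Theorem~\ref{T:DISK-ISOTOPY}) together with the uniqueness of tubular neighborhoods, a tubular neighborhood of $\bar f(D^2)$ gives an embedded copy of $D^2\times D^{n-2}$ in $M$ whose restriction to $S^1\times D^{n-2}$ we may compare with $\varphi$. First I would use this to isolate the surgery inside a ball: the framing $\varphi$ restricted to $S^1\times D^{n-2}$ either agrees (up to isotopy of normal framings) with the boundary framing coming from $\bar\varphi$, in which case $\varphi$ is the trivial framing, or it differs from it by the nontrivial element of $\pi_1(\mathrm{SO}(n-1))\cong\Z/2$ by Lemma~\ref{L:UNIQUENESS_TUB_NBHD}. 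In either case, $f$ together with its tubular neighborhood lies in an embedded $D^n\subseteq M$, so that performing surgery along $(f,\varphi)$ only changes $M$ within this ball: the result is $M\# N_\varphi$, where $N_\varphi$ is the closed manifold obtained by doing the corresponding surgery on $S^1\subseteq S^n$ with framing $\varphi$.

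Next I would identify $N_\varphi$. Surgery on an unknotted $S^1\subseteq S^n$ with a chosen framing removes $S^1\times D^{n-1}$ and glues in $D^2\times S^{n-2}$. Writing $S^n = \partial(D^2\times D^{n-1}) = (S^1\times D^{n-1})\cup(D^2\times S^{n-2})$, the complement of the framed tube is already $D^2\times S^{n-2}$, and re-gluing $D^2\times S^{n-2}$ along the chosen framing of the boundary $S^1\times S^{n-2}$ produces the linear $S^{n-2}$-bundle over $S^2$ classified by the clutching function $\alpha\colon S^1\to\mathrm{SO}(n-1)$ measuring the difference of the two framings, viewed in $\pi_1(\mathrm{SO}(n-1))\cong\Z/2$. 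When $\varphi$ is the trivial framing (i.e.\ $\alpha$ is null-homotopic) this gives the trivial bundle $S^2\times S^{n-2}$; when $\varphi$ is the nontrivial framing it gives the unique nontrivial linear $S^{n-2}$-bundle $S^2\ttimes S^{n-2}$ of Lemma~\ref{L:SPIN}. (For $n=5$, where $S^2\times S^3$ and $S^2\ttimes S^3$ happen to be diffeomorphic, the statement is still literally correct, since one of the two bundles is the trivial one.) This yields $N_\varphi\cong S^2\times S^{n-2}$ if $\varphi$ is trivial and $N_\varphi\cong S^2\ttimes S^{n-2}$ otherwise, which is precisely the claim.

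The main obstacle I anticipate is making rigorous the passage from ``$f$ is null-homotopic'' to ``the framed circle sits inside a standard ball on which the surgery is modeled by surgery on the unknot in $S^n$'', i.e.\ controlling the framing carefully rather than just the underlying embedded circle. Concretely, after Theorem~\ref{T:W-EMBEDDING} produces the spanning disc $\bar f(D^2)$, one must check that the tubular neighborhood data can be arranged so that $\varphi|_{S^1\times D^{n-2}}$ is one of the two standard boundary framings determined by $\bar\varphi$; this is exactly where Lemma~\ref{L:UNIQUENESS_TUB_NBHD} is needed, and one must verify that the ambiguity it records (the element of $\pi_1(\mathrm{SO}(n-1))$) is the same $\Z/2$ that distinguishes the two linear sphere bundles over $S^2$. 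Once the surgery is localized in a ball and identified with the model surgery on $(S^1,\varphi)\subseteq S^n$, the connected-sum decomposition $M\# N_\varphi$ is standard (the surgery does not touch $M\setminus D^n$), and the identification of $N_\varphi$ with $S^2\times S^{n-2}$ or $S^2\ttimes S^{n-2}$ via the clutching-function description is routine.
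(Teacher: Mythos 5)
Your proposal is correct and follows essentially the same route as the paper: produce the spanning disc via Theorem~\ref{T:W-EMBEDDING}, use uniqueness of trivial framings (Palais plus uniqueness of tubular neighbourhoods, together with Lemma~\ref{L:UNIQUENESS_TUB_NBHD}) to reduce to the two standard framings of the unknot in an embedded ball, and identify the surgered ball with $S^2\times S^{n-2}$ or $S^2\ttimes S^{n-2}$ via the clutching element of $\pi_1(\mathrm{SO}(n-1))$ -- this is exactly the paper's decomposition $M\cong M\#\bigl((D^2\times S^{n-2})\cup(S^1\times D^{n-1})\bigr)$. One small correction: your parenthetical claim that $S^2\times S^3$ and $S^2\ttimes S^3$ are diffeomorphic when $n=5$ is false, since $S^2\ttimes S^3$ is non-spin by Lemma~\ref{L:SPIN}; fortunately this remark is not needed anywhere in your argument.
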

	\begin{proof}
	    Since $f$ is null-homotopic, it bounds an embedded disc by Theorem \ref{T:W-EMBEDDING}. Then the statement of the lemma is well-known. For completeness, we give the proof below.
     
        We can write $M$ as
        \begin{equation*}
            M\cong M\# S^n\cong M\# (D^2\times S^{n-2}\cup_{\mathrm{id}_{S^1\times S^{n-2}}}S^1\times D^{n-1})
        \end{equation*}
        and the inclusion $\varphi_0$ of $S^1\times D^{n-1}$ into the second factor is a trivial normal framing, the extension $\bar{\varphi}_0\colon D^2\times D^{n-2}\hookrightarrow (D^2\times S^{n-2})\cup_{S^1\times S^{n-2}} (S^1\times D^{n-1})$ is given by
        \[D^2\times D^{n-2}\cong(D^2\times D^{n-2}) \cup_{S^1\times D^{n-2}}(S^1\times D^{n-1}_+)\hookrightarrow (D^2\times S^{n-2})\cup_{S^1\times S^{n-2}} (S^1\times D^{n-1}), \]
        where $D^{n-1}_+\subseteq D^{n-1}$ denotes the upper half-ball. We use the obvious embedding on each factor and embed $D^{n-2}\subseteq S^{n-2}$ as the upper half-sphere.

        Hence, if $\varphi$ is trivial, then it is isotopic to $\varphi_0$ (as noted after Definition \ref{D:trivial_framing}), so
        the manifold obtained by surgery along $\varphi$ is diffeomorphic to
        \begin{equation}
            \label{EQ:M+S2xSn-2}
            M\# (D^2\times S^{n-2}\cup_{\mathrm{id}_{S^1\times S^{n-2}}} D^2\times S^{n-2})\cong M\# (S^2\times S^{n-2}).
        \end{equation}
        If $\varphi$ is non-trivial, then $\varphi\circ\tilde{\alpha}$ is trivial, where $\tilde{\alpha}\colon S^1\times D^{n-1}\to S^1\times D^{n-1}$ is defined by $\tilde{\alpha}(\lambda,v)=(\lambda,\alpha_\lambda v)$ and $\alpha$ is a smooth representative of the unique non-trivial class in $\pi_1(\mathrm{SO}(n-1))$. Hence, $\varphi$ is isotopic to $\varphi_0\circ\tilde{\alpha}$, so the manifold obtained by surgery along $\varphi$ is diffeomorphic to 
        \begin{equation}
            \label{EQ:M+S2~xSn-2}
            M\# (D^2\times S^{n-2}\cup_{\tilde{\alpha}} D^2\times S^{n-2})\cong M\# (S^2\ttimes S^{n-2}).
        \end{equation}
        % if $\varphi$ is non-trivial, where $\tilde{\alpha}\colon S^1\times S^{n-1}\to S^1\times S^{n-1}$ is defined by $\tilde{\alpha}(\lambda,v)=(\lambda,\alpha_\lambda v)$ and $\alpha$ is a smooth representative of the non-trivial class in $\pi_1(\mathrm{SO}(n))$.
	\end{proof}
	
    The following result was already proven by Goldstein and Lininger in \cite{GL71} in the simply-connected case.
	
	% COROLLARY
	
	\begin{corollary}
	    \label{C:CONN_SUM_DIFF}
		Let $M$ be a closed oriented manifold of dimension $n\geq 5$ and suppose that $w_2(\widetilde{M})\neq0$. Then $M\# (S^2\times S^{n-2})$ is diffeomorphic to $M\# (S^2\ttimes  S^{n-2})$.
	\end{corollary}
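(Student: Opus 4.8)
The plan is to realize both connected sums as the outcome of surgery along one and the same null-homotopic embedded circle in $M$, equipped with its two non-equivalent normal framings, and then to use that these two framings become isotopic as embeddings exactly because $\widetilde{M}$ is non-spin.

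First I would fix a null-homotopic embedding $f\colon S^1\hookrightarrow M$ that bounds an embedded $2$-disc, for instance a small round circle inside a coordinate chart. By Lemma~\ref{L:UNIQUENESS_TUB_NBHD} there are exactly two equivalence classes of normal framings of $f$; let $\varphi_0$ be a trivial normal framing (Definition~\ref{D:trivial_framing}) and let $\varphi_1$ be a non-trivial one. By Lemma~\ref{L:NULL-HTPIC} (see equation~\eqref{EQ:M+S2xSn-2} and the subsequent paragraph in its proof), surgery along $\varphi_0$ yields $M\#(S^2\times S^{n-2})$ while surgery along $\varphi_1$ yields $M\#(S^2\ttimes S^{n-2})$. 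So it suffices to show that these two surgeries produce diffeomorphic manifolds.

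Now I would invoke the hypothesis $w_2(\widetilde{M})\neq 0$, i.e.\ that $\widetilde{M}$ is non-spin, together with item~(iii) of Proposition~\ref{P:NUM_ISOT_CLASSES}: the trivial class in $[S^1,M]$ has a single preimage under the forgetful map $\mu\colon\CIso{M}\to[S^1,M]$. Since $f$ is null-homotopic, both $\varphi_0$ and $\varphi_1$ map to this trivial class, so $[\varphi_0]=[\varphi_1]$ in $\CIso{M}$; that is, $\varphi_0$ and $\varphi_1$ are isotopic as embeddings $S^1\times D^{n-1}\hookrightarrow M$. Finally, by the isotopy extension theorem this isotopy extends to an ambient isotopy of $M$, producing a diffeomorphism of $M$ that carries the framed circle $\varphi_0$ onto $\varphi_1$; since surgery along a framed circle depends, up to diffeomorphism, only on the ambient isotopy class of the framed embedding, the two surgered manifolds are diffeomorphic, giving $M\#(S^2\times S^{n-2})\cong M\#(S^2\ttimes S^{n-2})$.

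The corollary is therefore essentially immediate once the machinery of Section~\ref{S:FRAMED_CIRCLES} is in place; there is no serious obstacle. The only point deserving a sentence of care is the last one — that the diffeomorphism type of the result of surgery is unchanged when the framed embedding is moved by an ambient isotopy — but this is a standard consequence of the isotopy extension theorem and requires no computation. (As throughout Section~\ref{S:FRAMED_CIRCLES}, orientability of $M$ and $n\geq 5$ are used implicitly, the latter both to guarantee exactly two equivalence classes of normal framings and to apply the embedding and uniqueness results cited above.)
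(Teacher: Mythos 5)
Your overall strategy is the paper's: surger one and the same null-homotopic circle along representatives of its two equivalence classes of normal framings, identify the two outcomes as $M\#(S^2\times S^{n-2})$ and $M\#(S^2\ttimes S^{n-2})$, and use Proposition~\ref{P:NUM_ISOT_CLASSES}(iii) together with the isotopy extension theorem to conclude that the outcomes are diffeomorphic. That last part is fine and is exactly the paper's argument. The gap is in the step ``let $\varphi_1$ be a non-trivial one'': under the hypothesis $w_2(\widetilde{M})\neq 0$ there is \emph{no} normal framing of a null-homotopic circle that is non-trivial in the sense of Definition~\ref{D:trivial_framing}. Indeed, by Proposition~\ref{P:NUM_ISOT_CLASSES}(iii) every framing of such a circle is isotopic, as an embedding $S^1\times D^{n-1}\hookrightarrow M$, to a trivial one, and triviality is preserved under such an isotopy: extend it to an ambient diffeotopy $\Phi_t$ and compose the disc extension $\bar{\varphi}_0$ with $\Phi_1$. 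So every framing of $f$ extends over \emph{some} embedded $2$-disc, and applying the statement of Lemma~\ref{L:NULL-HTPIC} to any actual framing here can only ever return $M\#(S^2\times S^{n-2})$; as written, your argument never produces the twisted summand, because the object on which the second surgery is performed does not exist.

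The repair is precisely the extra care the paper takes: fix one embedded disc bounded by $f$ and let $\varphi_0$ be the standard framing extending over it, as in \eqref{EQ:M+S2xSn-2}; then take $\varphi_1=\varphi_0\circ\tilde{\alpha}$, a representative of the other equivalence class from Lemma~\ref{L:UNIQUENESS_TUB_NBHD}. What identifies the surgery along $\varphi_1$ with $M\#(S^2\ttimes S^{n-2})$ is not a global non-triviality of $\varphi_1$ (it may well extend over some other disc), but the explicit gluing \eqref{EQ:M+S2~xSn-2} in the proof of Lemma~\ref{L:NULL-HTPIC}, which only uses that $\varphi_1$ differs from the standard framing over the \emph{chosen} disc by $\tilde{\alpha}$. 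With that substitution, your isotopy and ambient-isotopy conclusion goes through verbatim and yields the corollary.
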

	
	% PROOF
	
	\begin{proof}
	    By Proposition \ref{P:NUM_ISOT_CLASSES}, since $w_2(\widetilde{M})\neq 0$, the two non-equivalent normal framings of an embedded null-homotopic circle $f\colon S^1\hookrightarrow M$ are isotopic, so surgery along these framings results in diffeomorphic manifolds.
	    
	    Now fix an embedded 2-disc bounded by $f$ together with a normal framing that extends over this 2-disc. Then a normal framing representing the other equivalence class does not extend over this disc. By Lemma \ref{L:NULL-HTPIC}, if we perform surgery along these normal framings, we therefore obtain the manifolds $M\#(S^2\times S^{n-2})$ and $M\#(S^2\ttimes  S^{n-2})$, respectively.
	\end{proof}

    Now fix a null-homotopic embedding $f\colon S^1\hookrightarrow M$, a normal framing $\varphi\colon S^1\times D^{n-1}\hookrightarrow M$ of $f$, and an embedding $F\colon T^2\to M$ with $F(\cdot,1)=f$. We extend $\varphi$ along $F$, i.e.\ we obtain an isotopy $\varphi_t$ of $\varphi$ with $\varphi_t(\cdot,0)=F(\cdot,e^{2\pi i t})$. We can assume that $\varphi_1=\varphi$ or $\varphi_1=\varphi\circ\tilde{\alpha}$, depending on whether $\varphi$ and $\varphi_1$ are equivalent or not. By the isotopy extension theorem (see e.g.\  \cite[Theorem 8.1.4]{Hi76}) we can extend $\varphi_t$ to a diffeotopy $\Phi_t$ of $M$. In particular, $\Phi_0=\textup{id}_M$ and $\Phi_1$ is a diffeomorphism of $M$ which fixes $f(S^1)$ pointwise.
	
    We denote by $M_t$ the manifold obtained from $M$ by surgery along the embedding $\varphi_t$. Then all the manifolds $M_t$ are diffeomorphic, with a diffeomorphism between $M_0$ and $M_t$ induced by $\Phi_t$. It follows from Lemma~\ref{L:NULL-HTPIC} that $M_0$ is diffeomorphic to the connected sum of $M$ and a linear sphere bundle over $S^2$ and if we choose the normal framing to be trivial, then $M_0\cong M\# (S^2\times S^{n-2})$. Hence, $M_1\cong M\# (S^2\times S^{n-2})$ and if $\varphi_1$ and $\varphi_0$ are non-equivalent, we also have $M_1\cong M\# (S^2\ttimes  S^{n-2})$.
	
	We now consider the map induced by $\Phi_1$ on (co)homology.  We denote the free part of $H^i(M)$ by $H^i_F(M)$, which is the quotient of $H^i(M)$ by its torsion subgroup. Let $x_i\in H_2(M_i)$ correspond to a generator of $H_2(S^2\times S^{n-2})$ or $H_2(S^2\ttimes S^{n-2})$ (depending on whether $\varphi_0$ and $\varphi_1$ are equivalent or not) and let  $x_i^*\in H^2(M_i)$ be its dual. We then have
    \[ H_2(M_i)\cong H_2(M)\oplus\Z x_i\quad\text{and}\quad H^2(M_i)\cong H^2(M)\oplus\Z x_i^*. \]

    Note that $x_i$ is represented by the inclusion of the first factor for $S^2\times S^{n-2}$ and by a section of the base for $S^2\ttimes S^{n-2}$.

    % PROPOSITION
	 
	\begin{proposition}
	    \label{P:IND_HOM}
    	For the induced map ${\Phi_1}_*\colon H_2(M_0)\to H_2(M_1)$, we have ${\Phi_1}_*(y)=y$ for any $y\in H_2(M)$ and ${\Phi_1}_*(x_0)=x_1+\xi$, where $\xi=F_*[T^2]$. Analogously, the induced map $\Phi_1^*\colon H^2_F(M_i)\to H^2_F(M_i)$ on the free part is given by $\Phi_1^*\varphi=\varphi+(\varphi\frown \xi)x_i^*$.
	\end{proposition}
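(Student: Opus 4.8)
The plan is to follow Wall's strategy from \cite{Wa64}: to describe each $M_i$ explicitly as the result of surgery along $\varphi_i$, to exhibit a concrete geometric representative of the surgery generator $x_i$, and to track how $\bar\Phi_1$ drags this representative around the torus $F(T^2)$. Write $W_i=M\setminus\varphi_i(S^1\times\mathring D^{n-1})$, so that $M_i=W_i\cup_{\varphi_i|_{S^1\times S^{n-2}}}(D^2\times S^{n-2})$. The relation $\Phi_1\circ\varphi_0=\varphi_1$ shows that $\Phi_1$ restricts to a diffeomorphism $W_0\to W_1$ compatible with the gluing maps, hence extends by the identity on the surgery handle to the diffeomorphism $\bar\Phi_1\colon M_0\to M_1$ of the statement. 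Since $n\geq5$, removing a tubular neighbourhood of the circle $f(S^1)$ (of codimension $n-1\geq4$) changes neither $H_2$ nor the free part of $H^2$, and a Mayer--Vietoris computation identifies $H_2(M)\subseteq H_2(M_i)$ with the image of $H_2(W_i)\cong H_2(M)$, with $\mathbb Z x_i$ as a complement. As $\Phi_0=\mathrm{id}_M$, the diffeotopy $\Phi_t$ shows that $\Phi_1\simeq\mathrm{id}_M$, so ${\Phi_1}_*=\mathrm{id}$ on $H_2(M)$; chasing the identifications $H_2(W_i)\xrightarrow{\sim}H_2(M)$ induced by the inclusions into $M$ and into $M_i$ then gives ${\Phi_1}_*(y)=y$ for every $y\in H_2(M)$, and likewise $\Phi_1^*$ fixes $H^2_F(M)$.

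The core of the proof is the value of ${\Phi_1}_*(x_0)$. Fix $v\in S^{n-2}=\partial D^{n-1}$ and let $\ell_i=\varphi_i(S^1\times\{v\})\subseteq\partial W_i$; as $f$, and hence $\ell_i$, is null-homotopic, $\ell_i$ bounds an embedded disc $C_i\subseteq W_i$, and, using the model in the proof of Lemma \ref{L:NULL-HTPIC}, one can choose $C_i$ so that $x_i$ is represented by the $2$-sphere $(D^2\times\{v\})\cup C_i$, with $D^2\times\{v\}$ the meridian disc of the handle. Suppose first that $\varphi_0$ and $\varphi_1$ are equivalent, so that we may take $\varphi_1=\varphi_0$; then $M_0=M_1$ and $C_1=C_0$. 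Since $\bar\Phi_1$ is the identity on the handle and $\Phi_1(\ell_0)=\varphi_1(S^1\times\{v\})=\ell_0$, the handle discs cancel and the class ${\Phi_1}_*(x_0)-x_1$ is represented by the $2$-cycle $\Phi_1(C_0)\cup\overline{C_0}$ (this also fixes the sign). Now the trace $\mathcal T=\bigcup_{t\in[0,1]}\Phi_t(C_0)$ is a $3$-chain with boundary $\Phi_1(C_0)-C_0-\Sigma$, where $\Sigma=\bigcup_{t\in[0,1]}\varphi_t(S^1\times\{v\})$ is the torus swept out by $\ell_0$ under the isotopy (using $\Phi_t\circ\varphi_0=\varphi_t$); hence $[\Phi_1(C_0)\cup\overline{C_0}]=[\Sigma]$. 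Shrinking $v$ to $0$ defines a homotopy from $\Sigma$ to $\bigcup_t F(S^1\times\{e^{2\pi it}\})=F(T^2)$, so $[\Sigma]=F_*[T^2]=\xi$, and therefore ${\Phi_1}_*(x_0)=x_1+\xi$. The cohomological statement is the Kronecker dual: feeding ${\Phi_1}_*(y)=y$, ${\Phi_1}_*(x_0)=x_1+\xi$, $x_i^*\frown x_i=1$ and $x_i^*\frown y=0$ for $y\in H_2(M)$ into $(\Phi_1^*\alpha)\frown a=\alpha\frown{\Phi_1}_*a$ yields $\Phi_1^*\varphi=\varphi+(\varphi\frown\xi)x_i^*$.

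The remaining case is $\varphi_1=\varphi_0\circ\tilde\alpha$ (non-equivalent framings, $M_1\cong M\#(S^2\ttimes S^{n-2})$): then $\Sigma$ is an annulus whose two boundary circles are parallel copies of $\ell_0$ differing by the generator of $\pi_1(\mathrm{SO}(n-1))$, the discs $C_0,C_1$ are no longer equal, and the computation of ${\Phi_1}_*(x_0)-x_1$ acquires an extra term $[E_0-E_1]$, where $E_i$ is the disc bounded by $f$ obtained from $C_i$ by adjoining a collar inside the tube $\varphi_i(S^1\times D^{n-1})$. I expect the main difficulty of the argument to lie exactly here: one must check that $[E_0-E_1]=0$, which is not automatic since $\xi$ itself is an $H_2(M)$-class, so error terms in $H_2(M)$ cannot simply be discarded. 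The point will be to choose the two handle-decompositions of $M_0$ and $M_1$ compatibly, so that $E_0$ and $E_1$ are both the standard meridian disc of the $S^n$-summand produced by Lemma \ref{L:NULL-HTPIC}, sitting in a common ball in $M$ on which $\Phi_1$ may be taken to be the identity; in the equivalent case they coincide outright, which is what made the previous paragraph clean. Granting this, the trace computation goes through verbatim and gives ${\Phi_1}_*(x_0)=x_1+\xi$ and the dual formula in the twisted case as well.
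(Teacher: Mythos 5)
Your overall strategy is the paper's: represent $x_i$ by the meridian disc $D^2\times\{v\}$ of the handle glued to a disc in the complement of the tube with boundary $\varphi_i(S^1\times\{v\})$, and compute ${\Phi_1}_*(x_0)-x_1$ by sweeping that disc along the diffeotopy (the paper packages your trace-chain computation as Lemma \ref{L:ISOTOPY} plus a map of a twice-punctured solid torus, but the content is the same). Your treatment of the untwisted case $\varphi_1=\varphi_0$, and the dualization to cohomology via the universal coefficient theorem, are correct.

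The gap is the twisted case $\varphi_1=\varphi_0\circ\tilde{\alpha}$, which is an essential half of the statement (it is what feeds Corollary \ref{C:HOMOLOGY_DIFF_CS_NONSP} and the non-spin part of Corollary \ref{C:HOMOLOGYDIFF}), and you leave it unproved: the vanishing of your error term $[E_0-E_1]$ is exactly what needs an argument, and the remedy you sketch is not available as stated. You cannot ``take $\Phi_1$ to be the identity'' on a ball containing the disc bounded by $\varphi_0(S^1\times\{v\})$: the diffeomorphism $\Phi_1$ is produced by isotopy extension and is prescribed on the tube by $\Phi_1\circ\varphi_0=\varphi_1=\varphi_0\circ\tilde{\alpha}$, and the relevant disc meets the boundary of the tube precisely along the circle that is being twisted, so near its boundary $\Phi_1$ acts by $\tilde{\alpha}$. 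The missing idea, which is how the paper eliminates the case distinction at the outset, is to normalize the point $v$: since $n\geq5$, the inclusion $\mathrm{SO}(n-2)\subseteq\mathrm{SO}(n-1)$ is surjective on $\pi_1$, so one may modify $\alpha$ within its homotopy class so that $\alpha_\lambda(v)=v$ for all $\lambda\in S^1$. With this choice $\varphi_1(\cdot,v)=\varphi_0(\cdot,v)$ pointwise, hence $\ell_1=\ell_0$, the same disc $C_0=C_1$ serves for both handle decompositions (so $E_0=E_1$ outright), the swept surface $\Sigma$ closes up to a torus homotopic to $F$ rather than the annulus you describe, and your untwisted computation applies verbatim with no extra term to discard. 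Without this normalization (or some substitute argument for $[E_0-E_1]=0$), the twisted half of the proposition is not established by your proposal.
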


    For the proof of Proposition \ref{P:IND_HOM} we need the following result.

    % LEMMA
    
    \begin{lemma}
        \label{L:ISOTOPY}
        Let $M$ be a manifold and let $\iota\colon W\hookrightarrow M$ be an embedding of a manifold $W$ with non-empty boundary $N$. Let $\phi\colon[0,1]\times M\to M$ be a diffeotopy of $M$ such that $\phi_0=\mathrm{id}_M$. We define the map $\iota_\phi\colon W\to M$ as follows: Fix a diffeomorphism $W\cong W\cup_N([0,1]\times N)$ and set $\iota_\phi|_W=\iota|_W$ and $\iota_\phi(t,p)=\phi_t(\iota(p))$ for $(t,p)\in[0,1]\times N$. Then $\iota_\phi$ and $\phi_1\circ\iota$ are homotopic rel $N$.
    \end{lemma}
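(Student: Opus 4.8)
The plan is to move the whole comparison onto the manifold $W':=W\cup_N([0,1]\times N)$, whose boundary is $\{1\}\times N$, and to build the homotopy there out of the diffeotopy $\phi$ directly. Two auxiliary maps on $W'$ will carry the bookkeeping: the \emph{collapse} $r\colon W'\to W$, given by $r|_W=\mathrm{id}_W$ and $r(t,p)=p$ on the collar, and the \emph{collar coordinate} $\rho\colon W'\to[0,1]$, given by $\rho|_W\equiv 0$ and $\rho(t,p)=t$; both are continuous (they match along the gluing $\{0\}\times N\sim\partial W$), and $\rho\equiv 1$ on $\partial W'$. Under the fixed identification $W\cong W'$, and using $\phi_0=\mathrm{id}_M$, the map $\iota_\phi$ is exactly the composite
\[ \iota_\phi(x)=\phi_{\rho(x)}\bigl(\iota(r(x))\bigr),\qquad x\in W', \]
because on the $W$-part this reads $\phi_0(\iota(x))=\iota(x)$ and on the collar it reads $\phi_t(\iota(p))$, matching the definition.

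Next I would rewrite the target map in the same shape. Taking the identifying diffeomorphism $W\cong W'$ to restrict to the identity of $N$ (which we may do), the map $\phi_1\circ\iota$, transported to $W'$, is homotopic rel $N$ to $\phi_1\circ\iota\circ r$: this is the standard fact that precomposition with the collapse of an externally attached collar is homotopic, rel the far end of that collar, to precomposition with the collar-stretching diffeomorphism. So it suffices to homotope, rel $\partial W'$, the composite $\phi_\rho\circ\iota\circ r$ to $\phi_1\circ\iota\circ r$.

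That last homotopy is immediate: put
\[ H\colon[0,1]\times W'\to M,\qquad H(s,x)=\phi_{\max(s,\,\rho(x))}\bigl(\iota(r(x))\bigr). \]
It is continuous because $\phi$, $\rho$ and $r$ are; $H(0,\cdot)=\phi_\rho\circ\iota\circ r=\iota_\phi$, and $H(1,\cdot)=\phi_1\circ\iota\circ r$ since $\max(1,\rho(x))=1$; and for $x\in\partial W'$ one has $\rho(x)=1$, so $H(s,x)=\phi_1(\iota(r(x)))$ is independent of $s$. Concatenating $H$ with the homotopy of the preceding paragraph yields a homotopy rel $N$ from $\iota_\phi$ to $\phi_1\circ\iota$, as claimed.

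The substance of the argument is entirely the bookkeeping in the first two paragraphs: writing $\iota_\phi$ in the factored form $\phi_\rho\circ\iota\circ r$ and checking that, under the chosen identification, $\phi_1\circ\iota$ agrees up to homotopy rel $N$ with $\phi_1\circ\iota\circ r$. I expect this collar manipulation to be the only mild obstacle; once it is in place, the displayed map $H$ finishes the proof with no further computation, and in particular no homotopy-theoretic or dimensional input is needed, since the diffeotopy $\phi$ supplies the homotopy for free.
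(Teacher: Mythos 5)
Your proof is correct and is essentially the paper's argument: you build an explicit homotopy from the diffeotopy that pushes the collar's time parameter to $1$ (the paper uses the linear interpolation $\Psi_t(s,p)=\phi_{(1-s)t+s}(\iota(p))$ where you use $\max(s,\rho)$), arriving at the map $\phi_1\circ\iota\circ r$, and then invoke the standard collar-collapse homotopy rel the far boundary to identify it with $\phi_1\circ\iota$. Your factorization through $r$ and $\rho$ and the remark that the identification $W\cong W\cup_N([0,1]\times N)$ may be taken to be the identity on $N$ only make explicit what the paper leaves implicit.
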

    The preceding lemma asserts, in short, that the homotopy class of $\phi_1\circ\iota$ rel $N$ only differs from that of $\iota$ in a collar neighborhood of $N$, where we modify it by $\Phi$.
    \begin{proof}
        We give the homotopy explicitly as follows. Define
        \[\Psi\colon[0,1]\times (W\cup_N [0,1]\times N)\to M\]
        by $\Psi_t(p)=\Phi_t(\iota(p))$ for $p\in W$ and $\Psi_t(s,p)=\Phi_{(1-s)t+s}(\iota(p))$ for $(s,p)\in[0,1]\times N$. Then $\Psi_0=\iota_\phi$, and $\Psi_1$ equals $\Phi_1\circ\iota$ on $W$ and $\Phi_1\circ\iota\circ\mathrm{pr}_N$ on $[0,1]\times N$, which, under the identification $W\cong W\cup_N([0,1]\times N)$ is homotopic rel $N$ to $\Phi_1\circ\iota$. Further, $\Psi_t|_N=\Phi_1\circ\iota|_N$ for all $t\in[0,1]$, showing that $\Psi$ is a homotopy rel $N$.
    \end{proof}
	
	\begin{proof}[Proof of Proposition \ref{P:IND_HOM}]
    	The long exact sequence in homology for the pair $(M,M\setminus \varphi(S^1\times D^{n-1}))$ yields the exact sequence
    	\begin{align}
    	    \label{EQ:LES.1}
    	    	H_3(M,M\setminus \varphi(S^1\times D^{n-1}))\to H_2(M\setminus \varphi(S^1\times D^{n-1}))\to H_2(M)\to H_2(M,M\setminus \varphi(S^1\times D^{n-1})). 
    	\end{align}
    	By excision, 
    	\[H_i(M,M\setminus \varphi(S^1\times D^{n-1}))\cong H_i(S^1\times D^{n-1},S^1\times S^{n-2})=0\]
    	for $i=2,3$, as $n\geq 5$. Hence, the map $H_2(M\setminus \varphi(S^1\times D^{n-1}))\to H_2(M)$ is an isomorphism, i.e.\
        \begin{align}
        \label{EQ:H2.M}
        H_2(M\setminus \varphi(S^1\times D^{n-1}))\cong H_2(M).
        \end{align}
    	
    	Now, consider the manifold $M_i$ for $i=0,1$. The long exact sequence in homology for the pair  $(M_i,M_i\setminus (D^2\times S^{n-2}))$ yields the exact sequence
    		\begin{align}
    	    \label{EQ:LES.2}
    	    	&H_3(M_i,M_i\setminus (D^2\times S^{n-2})) \longrightarrow  H_2(M_i\setminus (D^2\times S^{n-2})) \longrightarrow H_2(M_i)\\\longrightarrow & H_2(M_i,M_i\setminus
    	    	(D^2\times S^{n-2}))\longrightarrow H_1(M_i\setminus (D^2\times S^{n-2}))\longrightarrow H_1(M_i). \nonumber
    	\end{align}

    	As for the pair $(M,M\setminus f(S^1\times D^{n-1}))$, by excision, 
    	\[H_j(M_i,M_i\setminus (D^2\times S^{n-2}))\cong H_j(D^2\times S^{n-2},S^1\times S^{n-2})\]
    	and $H_j(D^2\times S^{n-2},S^1\times S^{n-2})$ vanishes for $j=3$ and is isomorphic to $\Z$ if $j=2$. Further, since the inclusion $M_i\setminus (D^2\times S^{n-2})\hookrightarrow M_i$ induces an isomorphism on fundamental groups,  we may rewrite the exact sequence~\eqref{EQ:LES.2} as	
    	\begin{align}
        \label{EQ:LES.3}
     0\longrightarrow H_2(M_i\setminus (D^2\times S^{n-2}))\longrightarrow H_2(M_i)\longrightarrow H_2(D^2\times S^{n-2},S^1\times S^{n-2})\longrightarrow0. 
        \end{align}
        By construction (cf.\ \eqref{EQ:M+S2xSn-2} and \eqref{EQ:M+S2~xSn-2}), the element $x_i\in H_2(M_i)$ maps to a generator of $H_2(D^2\times S^{n-2},S^1\times S^{n-2})$ in \eqref{EQ:LES.3}, since a generator of the latter is represented by $D^2\times\{v\}$ for any $v\in S^{n-2}$. We choose $v$ (by possibly modifying the map $\alpha$) so that $\alpha_\lambda(v)=v$ for all $\lambda\in S^1$ and we denote by $S$ the embedded $2$-disc in the first $(D^2\times S^{n-2})$-factor in \eqref{EQ:M+S2xSn-2} or \eqref{EQ:M+S2~xSn-2} given by $D^2\times\{v\}$. Hence, when glued to $D^2\times\{v\}$ in the second $(D^2\times S^{n-2})$-factor in \eqref{EQ:M+S2xSn-2} or \eqref{EQ:M+S2~xSn-2}, the disc $S$ represents the class $x_i$. Note that in $M\setminus\varphi(S^1\times D^{n-1})$, the surface $S$ has boundary $\varphi(S^1\times\{v\})$.

        Since $\Phi_0=\textup{id}_M$, the diffeomorphism $\Phi_1$ is isotopic to a map that fixes $M\setminus \varphi(S^1\times D^{n-1})$ pointwise. Hence, $\Phi_1$ induces the identity in homology for all classes in $H_*(M)$. Now, by Lemma \ref{L:ISOTOPY}, the inclusion of $S$, which we will denote by $\iota_S$, followed by $\Phi_1$ is homotopic rel $\varphi(S^1\times\{v\})$ to $\iota_S$ extended by the map $\tilde{F}\colon [0,1]\times S^1\to M$ defined by $\tilde{F}(t,\lambda)=\Phi_t(\varphi(\lambda,v))$, where we identify $S$ with $S\cup_{S^1}([0,1]\times S^1)$. It follows that the surface representing $x_0$ is mapped to a surface which represents the class $x_1+\xi$. This can be seen in a similar way as in the proof of Lemma \ref{L:EXISTENCE_EMBEDDING}: The map $\tilde{F}$ and the inclusions of $S$ and $(D^2\times \{v\})$ all coincide on their boundaries, hence they define a map from $T^2$ with $2$ discs glued into $S^1\times\{1\}$, and therefore define a map from the twice punctured solid torus into $M_1$. The map restricted to each boundary component represents $\xi$, $x_1$ and $\Phi_* x_0$, respectively. Hence, after a suitable choice of orientations, we obtain $\Phi_* x_0=x_1+\xi$.

        Finally, since $H_1(M_i)\cong H_1(M)$, the statement on the cohomology follows from the universal coefficient theorem.
    \end{proof}

	In the following, $x$ and $\tilde{x}$ denote, respectively, generators of $H_2(S^2\times S^{n-2})$ and $H_2(S^2\ttimes  S^{n-2})$, and $x^*$ and $\tilde{x}^*$ denote the corresponding dual elements in $H^2(S^2\times S^{n-2})$ and $H^2(S^2\ttimes  S^{n-2})$, i.e.\ $x^* \frown x=1$ and $\tilde{x}^*\frown \tilde{x} = 1$. The following corollaries now directly follow from Lemma \ref{L:EXISTENCE_EMBEDDING} and Proposition \ref{P:IND_HOM}.
	\begin{corollary}
		\label{C:HOMOLOGY_DIFF_CS}
		Let $M$ be a closed, oriented manifold of dimension $n\geq 5$. Then, for any continuous map $h\colon S^2\to M$ with $w_2(M)\frown h_*[S^2]_{\Z/2}= 0$, there is a diffeomorphism of $M\# (S^2\times S^{n-2})$ which induces the identity on $H_2(M)$ and maps $x$ to $x+\xi$, where $\xi=h_*[S^2]$. The induced map on cohomology fixes $x^*$ and maps $\varphi\in H^2_F(M)$ to $\varphi+\varphi(\xi)x^*$. An analogous statement holds if we replace $S^2\times S^{n-2}$ with $S^2\ttimes  S^{n-2}$, $x$ with $\tilde{x}$, and $x^*$ with $\tilde{x}^*$.
	\end{corollary}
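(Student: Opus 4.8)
The strategy is to run the surgery construction set up in the paragraph preceding Proposition~\ref{P:IND_HOM}, feeding it an embedded torus whose fundamental class realizes $\xi$. First I would fix, inside a coordinate chart of $M$, a null-homotopic embedded circle $f\colon S^1\hookrightarrow M$ together with a \emph{trivial} normal framing $\varphi=\varphi_0$. By Lemma~\ref{L:NULL-HTPIC} and the discussion following it, surgery along $\varphi_0$ produces $M\#(S^2\times S^{n-2})$, and we take $x=x_0$ to be the resulting generator of $H_2(S^2\times S^{n-2})\subseteq H_2(M_0)$.

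Next, after replacing $h$ by a homotopic embedding via Theorem~\ref{T:W-EMBEDDING}, I would invoke the construction in the first part of the proof of Lemma~\ref{L:EXISTENCE_EMBEDDING}: the ``constant'' torus $(\lambda_1,\lambda_2)\mapsto f(\lambda_1)$ is isotopic to an embedding $F_0$ extending $f$ and inducing the zero map on $H_2$, and the connected-sum embedding $F\colon T^2\cong T^2\# S^2\hookrightarrow M$ of $F_0$ with $h$ then satisfies $F(\cdot,1)=f$ and $F_*[T^2]=h_*[S^2]=\xi$. Since, by hypothesis, $w_2(M)\frown\xi_{\Z/2}=w_2(M)\frown F_*[T^2]_{\Z/2}=0$, Lemma~\ref{L:F*_W2} shows that extending $\varphi_0$ along $F$ returns an \emph{equivalent} normal framing of $f$, so in the notation of that paragraph we may take $\varphi_1=\varphi_0$. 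Consequently $M_1$ and $M_0$ arise from surgery along the same framing, $M_0=M_1=M\#(S^2\times S^{n-2})$, and the diffeotopy $\Phi_t$ of $M$ obtained from the isotopy extension theorem has $\Phi_1$ restricting to a self-diffeomorphism of $M\#(S^2\times S^{n-2})$.

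It then remains only to read off the induced maps from Proposition~\ref{P:IND_HOM}: ${\Phi_1}_*$ is the identity on $H_2(M)$ and (after a suitable choice of orientation of the embedded torus) sends $x=x_0$ to $x_1+\xi=x+\xi$; dually, $\Phi_1^*$ fixes $x^*$ and sends $\varphi\in H^2_F(M)$ to $\varphi+(\varphi\frown\xi)x^*=\varphi+\varphi(\xi)x^*$. The variant with $S^2\ttimes S^{n-2}$ is obtained by repeating the argument verbatim, starting instead from the \emph{non-trivial} normal framing of $f$, so that Lemma~\ref{L:NULL-HTPIC} gives $M_0\cong M\#(S^2\ttimes S^{n-2})$ and $x_0$ plays the role of $\tilde x$; the hypothesis $w_2(M)\frown\xi_{\Z/2}=0$ again guarantees, via Lemma~\ref{L:F*_W2}, that $\varphi_1$ stays equivalent to $\varphi_0$ (the two equivalence classes of framings of $f$ are permuted or fixed together), and the rest goes through with $x,x^*$ replaced by $\tilde x,\tilde x^*$.

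The only point requiring care — rather than a genuine obstacle — is the use of the hypothesis $w_2(M)\frown\xi_{\Z/2}=0$: it is exactly this that forces $\varphi_1$ to be equivalent to $\varphi_0$, hence that makes $\Phi_1$ a self-diffeomorphism of $M\#(S^2\times S^{n-2})$ (resp. $M\#(S^2\ttimes S^{n-2})$) and not a diffeomorphism onto the \emph{other} sphere-bundle connected sum, as happens in the complementary situation of Corollary~\ref{C:CONN_SUM_DIFF}. With that observation in place, the statement is a direct transcription of Lemma~\ref{L:EXISTENCE_EMBEDDING} and Proposition~\ref{P:IND_HOM}.
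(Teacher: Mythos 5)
Your proposal is correct and follows essentially the same route as the paper, which deduces the corollary directly from the torus construction in the proof of Lemma~\ref{L:EXISTENCE_EMBEDDING} together with Lemma~\ref{L:F*_W2} and Proposition~\ref{P:IND_HOM}; your write-up just makes explicit the steps the paper leaves implicit (realizing $\xi$ by an embedded torus extending $f$, using $w_2(M)\frown\xi=0$ to keep the framing class fixed so that $\Phi_1$ is a self-diffeomorphism, and reading off the (co)homological formulas).
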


	\begin{corollary}
		\label{C:HOMOLOGY_DIFF_CS_NONSP}
		Let $M$ be a closed, oriented manifold of dimension $n\geq 5$. Then, for any continuous map $h\colon S^2\to M$ with $w_2(M)\frown h_*[S^2]_{\Z/2}\neq 0$, there is a diffeomorphism between $M\# (S^2\times S^{n-2})$ and $M\# (S^2\ttimes  S^{n-2})$ which induces the identity on $H_2(M)$ and induces the map $x\mapsto \tilde{x}+\xi$, where $\xi=h_*[S^2]$. The induced map on cohomology maps $x^*$ to $\tilde{x}^*$ and maps $\varphi\in H^2_F(M)$ to $\varphi+\varphi(\xi)\tilde{x}^*$.
	\end{corollary}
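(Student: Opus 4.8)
The phrasing ``follows directly from Lemma~\ref{L:EXISTENCE_EMBEDDING} and Proposition~\ref{P:IND_HOM}'' already points to the strategy: I would realize \emph{both} connected sums as the outcome of surgery along a single null-homotopic, normally framed circle in $M$, so that the two surgered manifolds are identified by the diffeotopy $\Phi_1$ constructed in Section~\ref{S:SURGERY}, and then read off the action on $H_2$ and $H^2$ from Proposition~\ref{P:IND_HOM}.

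Concretely, I would first homotope the given $h$ to an embedding (Theorem~\ref{T:W-EMBEDDING}) and fix a null-homotopic embedded circle $f\colon S^1\hookrightarrow M$, say the boundary of a small embedded $2$-disc. Applying Lemma~\ref{L:EXISTENCE_EMBEDDING} to $f$ and $h$ produces an embedding $F\colon T^2\hookrightarrow M$ with $F(\cdot,1)=f$, with $F_*[T^2]=h_*[S^2]=\xi$, and with $w_2(M)\frown F_*[T^2]_{\Z/2}\neq 0$. Now I would run the surgery construction from the paragraph preceding Proposition~\ref{P:IND_HOM}: choose a \emph{trivial} normal framing $\varphi$ of $f$, extend it along $F_t$ to an isotopy $\varphi_t$, and extend $\varphi_t$ to a diffeotopy $\Phi_t$ of $M$ with $\Phi_0=\mathrm{id}_M$. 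By Lemma~\ref{L:F*_W2}, the condition $w_2(M)\frown F_*[T^2]_{\Z/2}\neq 0$ forces $\varphi_1$ to be non-equivalent to $\varphi_0=\varphi$, so up to isotopy $\varphi_1=\varphi\circ\tilde{\alpha}$ with $\alpha$ representing the nontrivial class of $\pi_1(\mathrm{SO}(n-1))$.

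With this in place, Lemma~\ref{L:NULL-HTPIC} identifies the surgery along the trivial framing $\varphi_0$ with $M_0\cong M\#(S^2\times S^{n-2})$ and the surgery along the nontrivial framing $\varphi_1$ with $M_1\cong M\#(S^2\ttimes S^{n-2})$; composing the diffeomorphism $M_0\to M_1$ induced by $\Phi_1$ with these two identifications yields the desired diffeomorphism between $M\#(S^2\times S^{n-2})$ and $M\#(S^2\ttimes S^{n-2})$. Finally, identifying the generator $x_0$ of $H_2(S^2\times S^{n-2})$ with $x$ and the generator $x_1$ of $H_2(S^2\ttimes S^{n-2})$ with $\tilde{x}$, Proposition~\ref{P:IND_HOM} gives that the induced map on $H_2$ is the identity on $H_2(M)$ and sends $x$ to $\tilde{x}+\xi$; dualizing via the universal coefficient theorem, exactly as at the end of the proof of Proposition~\ref{P:IND_HOM}, gives the stated action on the free part of $H^2$, namely $x^*\mapsto\tilde{x}^*$ and $\varphi\mapsto\varphi+\varphi(\xi)\tilde{x}^*$.

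Since the substantive work is already contained in Section~\ref{S:SURGERY} (in particular, the computation ${\Phi_1}_*(x_0)=x_1+\xi$ via Lemma~\ref{L:ISOTOPY}), I do not expect a genuine obstacle here. The only point requiring care is the bookkeeping: matching the trivial framing to the $(S^2\times S^{n-2})$-summand (which is precisely the content of Lemma~\ref{L:NULL-HTPIC}) and fixing orientations of the surgery discs consistently, so that the dual classes pair correctly and the formulas appear in the stated direction.
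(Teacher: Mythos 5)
Your proposal is correct and follows essentially the same route as the paper, which deduces the corollary directly from Lemma~\ref{L:EXISTENCE_EMBEDDING} and Proposition~\ref{P:IND_HOM} via the surgery/diffeotopy construction of Section~\ref{S:SURGERY}; your spelled-out bookkeeping (trivial framing giving the $S^2\times S^{n-2}$ side, Lemma~\ref{L:F*_W2} forcing $\varphi_1$ into the other equivalence class, Proposition~\ref{P:IND_HOM} giving the (co)homology formulas) is exactly the intended argument. The only point worth flagging is that the identity $F_*[T^2]=h_*[S^2]$ comes from the connected-sum construction inside the proof of Lemma~\ref{L:EXISTENCE_EMBEDDING} rather than from its statement, which you correctly use.
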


The following corollary is an analog of a result of Wall for $4$-manifolds (see \cite[Theorem 2]{Wa64}).

    \begin{corollary}
        \label{C:HOMOLOGYDIFF}
        Let $M_1$ and $M_2$ be $k$-fold connected sums of copies of $S^2\times S^{n-2}$ and $S^2\ttimes S^{n-2}$. Then every isomorphism between $H^2(M_1)$ and $H^2(M_2)$ that preserves $w_2$ is induced by a diffeomorphism. In particular, every isomorphism of the second cohomology of $\#_k(S^2\times S^{n-2})$ is induced by a diffeomorphism.
	\end{corollary}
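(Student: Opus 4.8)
I would prove Corollary \ref{C:HOMOLOGYDIFF} by reducing it to the case-by-case generators produced in Corollaries \ref{C:HOMOLOGY_DIFF_CS}, \ref{C:HOMOLOGY_DIFF_CS_NONSP}, and \ref{C:CONN_SUM_DIFF}, together with a computation in the orthogonal group of the relevant bilinear form. The starting point is that $H^2(M_i)$ is a free abelian group of rank $k$, and the intersection form (the pairing $H^2 \times H^{n-2} \to \Z$ composed with Poincaré duality, or more directly the cup product landing in $H^n \cong \Z$ when $n$ is even, and the homology intersection pairing $H_2 \times H_{n-2} \to \Z$ in general) identifies the pair $(H^2(M_i), w_2(M_i))$ with $(\Z^k, \omega)$ where $\omega \in (\Z/2)^k$ records which summands are twisted. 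For each summand $S^2 \times S^{n-2}$ the generator $x_j^*$ is \emph{characteristic-trivial}, i.e.\ $\langle w_2, x_j\rangle = 0$, while for each $S^2 \ttimes S^{n-2}$ summand one has $\langle w_2, \tilde x_j\rangle = 1$; moreover the pairing between an $H^2$ class and an $H_{n-2}$ class is hyperbolic on each summand, so $w_2$ is really a $\Z/2$-linear functional on $H_2(M_i)$. An isomorphism $\Psi\colon H^2(M_1) \to H^2(M_2)$ "preserving $w_2$" means $\Psi^* w_2(M_2) = w_2(M_1)$; the corollary asserts such a $\Psi$ is realized by a diffeomorphism.

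\textbf{Key steps.} First, by Corollary \ref{C:CONN_SUM_DIFF}, since $w_2(\widetilde M_i) \ne 0$ as soon as $M_i$ has a twisted summand (the relevant $w_2$ is detected on a two-sphere, which lifts to the universal cover, which here is $M_i$ itself since $M_i$ is simply-connected), I may replace any configuration of twisted summands by a single one: $(S^2\ttimes S^{n-2})^{\#a} \cong (S^2\ttimes S^{n-2})\#(S^2\times S^{n-2})^{\#(a-1)}$ when $a \ge 1$. Hence both $M_1$ and $M_2$ are diffeomorphic to a standard model depending only on $k$ and on whether there is a twisted summand, and the latter is detected precisely by whether $w_2 \ne 0$ — which is preserved by $\Psi$. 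So WLOG $M_1 = M_2 = M$, the standard model. Second, the remaining task is to show that the image of $\mathrm{Diff}(M) \to \mathrm{Aut}(H^2(M), w_2)$ is all of $\mathrm{Aut}(H^2(M), w_2)$, the group of automorphisms of $\Z^k$ preserving the functional $\omega$. I would do this by exhibiting generators: the transvections $x_i^* \mapsto x_i^* + x_j^*$ (for appropriate $i,j$) are realized by Corollary \ref{C:HOMOLOGY_DIFF_CS} applied with $h$ a sphere representing the class dual to $x_j$ (legal precisely when $\langle w_2, \xi\rangle = 0$, which is exactly the $w_2$-preservation constraint); sign changes $x_i^* \mapsto -x_i^*$ and coordinate permutations come from the obvious diffeomorphisms of the connected sum (swapping summands, or applying an orientation-reversing self-diffeomorphism of one $S^2 \times S^{n-2}$ or $S^2 \ttimes S^{n-2}$ factor). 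In the presence of a twisted summand, Corollary \ref{C:HOMOLOGY_DIFF_CS_NONSP} additionally lets me move the "twist" around, i.e.\ interchange which generator is the characteristic one, giving the transvections that are \emph{not} $w_2$-preserving-constrained in the naive sense but still land in $\mathrm{Aut}(H^2(M), w_2)$. Third, I invoke a standard generation statement: the group of automorphisms of $(\Z^k, \omega)$ for $\omega$ a $\Z/2$-valued linear functional is generated by these transvections, permutations, and sign changes — this is elementary linear algebra over $\Z$ (essentially the statement that $\mathrm{GL}_k(\Z)$ is generated by elementary matrices and diagonal $\pm1$ matrices, refined to respect the mod-$2$ functional by the extra moves supplied above). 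Assembling, any element of $\mathrm{Aut}(H^2(M), w_2)$ is a product of diffeomorphism-induced maps, hence diffeomorphism-induced. The final sentence — every automorphism of $H^2(\#_k(S^2\times S^{n-2}))$ is realized — is the special case $\omega = 0$, where $\mathrm{Aut}(H^2(M),w_2) = \mathrm{GL}_k(\Z)$ outright.

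\textbf{Main obstacle.} The genuinely delicate point is bookkeeping the interaction between the $w_2$-constraint and which transvections are available: Corollary \ref{C:HOMOLOGY_DIFF_CS} produces $\varphi \mapsto \varphi + \varphi(\xi)x^*$ only for $\xi$ with $\langle w_2, \xi\rangle = 0$, so one must check that these — together with the twist-transport maps of Corollary \ref{C:HOMOLOGY_DIFF_CS_NONSP} and the discrete symmetries — really do generate the \emph{full} stabilizer of $w_2$ in $\mathrm{GL}_k(\Z)$, and not merely an index-$>1$ subgroup. I expect this to come down to checking a short list of cases in $\mathrm{GL}_k(\Z/2)$ (reducing mod $2$, using that the $w_2$-stabilizer surjects onto its mod-$2$ reduction's stabilizer by a congruence-subgroup argument) and then lifting; the case $k=1$ and the case $k=2$ should be verified by hand first to fix conventions, after which the inductive step — peel off one hyperbolic summand not carrying the twist, act on the rest — is routine. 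A secondary care point is making sure every "obvious" diffeomorphism of the connected sum (summand permutation, reflection of a factor, the Gluck-type twist built into Corollary \ref{C:HOMOLOGY_DIFF_CS_NONSP}) genuinely induces the stated map on $H^2$ with the stated sign, since an overlooked sign would break the generation argument; but these are all verified by the explicit homological descriptions already recorded in Section \ref{S:SURGERY}.
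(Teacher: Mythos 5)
Your overall architecture (reduce to a standard model via Corollary \ref{C:CONN_SUM_DIFF}, then realize a generating set of automorphisms by the diffeomorphisms of Corollaries \ref{C:HOMOLOGY_DIFF_CS} and \ref{C:HOMOLOGY_DIFF_CS_NONSP} plus permutations and sign changes) is the right one, and the spin case is complete as you describe it. The gap is exactly where you flag it: in the non-spin case your proof hinges on the claim that the stabilizer of $w_2$ in $\mathrm{GL}_k(\Z)$ is generated by the \emph{constrained} transvections $x_j^*\mapsto x_j^*+m x_i^*$ (with $m$ even when the $j$-th summand is twisted) together with signed permutations, and you only assert this ("I invoke a standard generation statement", "I expect this to come down to \dots", "routine"). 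This is not a quotable standard fact; it is the entire content of the non-spin case and needs an actual argument. It is in fact true and can be proved by hand: after normalizing so that $w_2=x_1^*\bmod 2$, an automorphism in the stabilizer has first column congruent to $e_1$ mod $2$, and a Euclidean row/column reduction using only the allowed elementary matrices (arbitrary multiples of rows/columns with index $\geq 2$, even multiples involving index $1$) together with a sign change brings it to the block form $1\oplus A'$ with $A'\in\mathrm{GL}_{k-1}(\Z)$ unconstrained; but none of this is in your write-up. A secondary problem is your use of Corollary \ref{C:HOMOLOGY_DIFF_CS_NONSP} to "move the twist around" as extra self-maps of the fixed model: that corollary produces a diffeomorphism onto a \emph{differently decomposed} manifold, so to view it as a self-map you must compose with an identification from Corollary \ref{C:CONN_SUM_DIFF} whose effect on $H^2$ is not controlled by anything you have proved; as stated these are not well-defined generators (fortunately they are also not needed once the generation lemma above is proved).

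The paper's proof avoids this delicacy altogether, and it is worth comparing. In the non-spin case it does \emph{not} try to generate the stabilizer of $w_2$ directly: using Corollaries \ref{C:HOMOLOGY_DIFF_CS} and \ref{C:HOMOLOGY_DIFF_CS_NONSP} without any parity restriction, it realizes \emph{every} element of $\mathrm{GL}(k,\Z)$ by a diffeomorphism, at the price of allowing the bundle structure of the summands (i.e.\ which summands are $S^2\ttimes S^{n-2}$) to change along the way; since $w_2$ is natural under diffeomorphisms, the automorphisms preserving $w_2(N_k')$ are then precisely those whose realizing diffeomorphism returns to the same decomposition, i.e.\ the self-diffeomorphisms of $N_k'$. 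So the $w_2$-constraint is used to \emph{select} among already-realized maps rather than as a constraint on which generators are available, and no generation theorem for a congruence-type subgroup is needed. Your route can be completed, but only after supplying the reduction argument above; as written, the central step of the non-spin case is missing.
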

	\begin{proof}
	    We first consider the case where $M_1$ and $M_2$ are both spin, i.e.\ they are both diffeomorphic to $\#_k (S^2\times S^{n-2})$, which we will denote by $N_k$. Denote a generator of $H^2(S^2\times S^{n-2})$ in the $i$-th summand of $N_k$ by $x_i$. Then $(x_1,\dots,x_k)$ is a basis of $H^2(N_k)$. The automorphism group of $H^2(N_k)$ can be identified with $\mathrm{GL}(k,\Z)$ and, by applying Corollary \ref{C:HOMOLOGY_DIFF_CS} to the $i$-th summand of $N_k$ with $\xi$ a multiple of the dual of $x_j$, $i\neq j$, we obtain that all elementary matrices are induced by a diffeomorphism. Since the elementary matrices together with the permutation matrices, which are obviously induced by diffeomorphisms, generate $\mathrm{GL}(k,\Z)$, the claim follows.
	    
	    If $M_1$ and $M_2$ are non-spin, by applying Corollary \ref{C:CONN_SUM_DIFF} (possibly multiple times), we can assume that both $M_1$ and $M_2$ are diffeomorphic to a fixed connected sum of copies of $S^2\times S^{n-2}$ and $S^2\ttimes S^{n-2}$, where the latter appears at least once, and we denote this manifold by $N_k'$. As before, we denote by $x_i$ a generator of the second cohomology of the $i$-th summand of $N_k'$, so $(x_1,\dots,x_k)$ is a basis of $H^2(M_k')$. By Corollaries \ref{C:HOMOLOGY_DIFF_CS} and \ref{C:HOMOLOGY_DIFF_CS_NONSP}, we see as in the spin case that every automorphism of $H^2(N_k')$ is induced by a diffeomorphism if we allow the bundle structure of the summands to change. By restricting to those automorphisms that fix $w_2(N_k')$ we obtain all diffeomorphisms that do not change the bundle structures of the summands, i.e.\ all self-diffeomorphisms of $N_k'$.
	\end{proof}

    % S: TWISTED SUSPENSIONS 
    
	\section{Twisted suspensions}

    \label{S:TW_SUSP}

    Let $M^n$ be a connected $n$-manifold and let $e\in H^2(M;\Z)$. Generalizing Duan's suspension constructions in \cite{Duan}, for a class $e\in H^2(M;\Z)$ we now define two $(n+1)$-dimensional manifolds $\Sigma_e M$ and $\widetilde{\Sigma}_e M$, called \emph{suspensions of $M$ twisted by $e$}, as follows.
    
    The class $e$ defines a unique principal $S^1$-bundle $P\xrightarrow{\pi}M$ with Euler class $e(\pi)=e$. Let $D^n\hookrightarrow M$ be an embedding. If $M$ is orientable, we require, after choosing an orientation on $M$, that this embedding be orientation-preserving. Since $D^n$ is contractible, we can identify $\pi^{-1}(D^n)$ with $D^n\times S^1$ and we obtain an $S^1$-equivariant embedding
    \[\varphi_\pi\colon D^n\times S^1\hookrightarrow P.\]
    The definition of $\varphi_\pi$ is unique up to isotopy. This follows from the fact that the embedding $D^n\hookrightarrow M$ is unique up to isotopy by Theorem \ref{T:DISK-ISOTOPY} and that $S^1$ is connected, so the identification of $\pi^{-1}(D^n)$ with $D^n\times S^1$ is unique.

    \begin{definition}
        Assume $n\geq 2$ and let $\alpha\colon S^1\to\mathrm{SO}(n)$ be a smooth representative of a generator of $\pi_1(\mathrm{SO}(2))\cong\Z$ if $n=2$ and of the unique non-trivial class in $\pi_1(\mathrm{SO}(n))\cong\Z/2$ if $n>2$. The map $\alpha$ induces the diffeomorphism $\tilde{\alpha}\colon S^{n-1}\times S^1\to S^{n-1}\times S^1, (x,y)\mapsto (\alpha_y x,y)$. We define the \emph{suspensions of $M$ twisted by $e$} as
        \[\Sigma_e M = P\setminus(\varphi_\pi(D^n\times S^1)^\circ)\cup_{\mathrm{id}_{S^{n-1}\times S^1}}S^{n-1}\times D^2 \]
        and
        \[\widetilde{\Sigma}_e M = P\setminus(\varphi_\pi(D^n\times S^1)^\circ)\cup_{\tilde{\alpha}}S^{n-1}\times D^2. \] 
    \end{definition}
    When $e$ is the trivial class we recover the suspension constructions in \cite{Duan}, where they are denoted by $\Sigma_0M$ and $\Sigma_1 M$, respectively.

With the definition of twisted suspensions in hand, we now prove Theorem~\ref{T:MAIN}.

    \begin{proof}[Proof of Theorem \ref{T:MAIN}] We will follow the same strategy as in \cite[Theorem B]{Duan}. We write $P_1$ as
    \[P_1\cong P_1\# S^{n+1}\cong P_1\# (D^2\times S^{n-1}\cup_{S^1\times S^{n-1}} S^1\times D^n).\] 
        Now the inclusion $\varphi$ of $S^1\times D^n$ into the second factor is a trivial normal framing as in the proof of Lemma \ref{L:NULL-HTPIC}. By Proposition \ref{P:FRAMING_TRIV}, the normal framings $\varphi_{\pi_1}$ and $\varphi$ are isotopic if and only if $\widetilde{B}_1$ is not spin, and if $\widetilde{B}_1$ is spin, then $\varphi_{\pi_1}$ is isotopic to $\varphi\circ\tilde{\alpha}$, where we extend $\tilde{\alpha}$ to $S^1\times D^n$ in the obvious way. It follows that
        \begin{align*}
            P&\cong (P_1\setminus \varphi_{\pi_1}(S^1\times D^n)^\circ)\cup_{S^1\times S^{n-1}} (P_2\setminus \varphi_{\pi_2}(S^1\times D^n)^\circ)\\
            &\cong P_1\#(D^2\times S^{n-1})\cup_{S^1\times S^{n-1}} (P_2\setminus \varphi_{\pi_2}(S^1\times D^n)^\circ),
        \end{align*}
        and we use either $\mathrm{id}_{S^1\times S^{n-1}}$ or $\tilde{\alpha}$ as gluing map, depending on whether $\varphi_\pi$ is trivial or not. In the first case, we obtain $P\cong P_1\# \Sigma_{e_2} M_2$ and in the second case, we obtain $P\cong P_1\# \widetilde{\Sigma}_{e_2} M_2$.  
    \end{proof}

The following result yields basic topological information on twisted suspensions of manifolds.

    \begin{lemma}
        \label{L:SUSP_TOP}
        Let $M$ be a connected manifold of dimension $n\geq 2$ and let $e\in H^2(M)$. Then topological invariants of the twisted suspensions are given as follows:
        \begin{enumerate}
            \item Fundamental group:
                \[\pi_1(\Sigma_e M)\cong \pi_1(\widetilde{\Sigma}_e M)\cong\begin{cases}
            \pi_1(M),\quad & n\geq 3,\\ \pi_1(M\setminus D^2),\quad & n=2.
            \end{cases} \]
            \item The inclusions of $P\setminus(\varphi_\pi(D^n\times S^1)^\circ)=\pi^{-1}(M\setminus(D^n)^\circ)$ into $\Sigma_e M$ and $\widetilde{\Sigma}_e M$ induce isomorphisms in (co)homology in all degrees $i$ with $3\leq i\leq n$ (with coefficients in any ring).
            \item If $M$ is simply-connected and $n\geq 5$, then
            \[H^2(\Sigma_e M)\cong H^2(\widetilde{\Sigma}_e M)\cong H^2(M)\] 
            and similarly for $H_2$ (with coefficients in any ring). Further, $\Sigma_e M$ is spin if and only if $w_2(M)\equiv e\mod 2$, and $\widetilde{\Sigma}_e M$ is spin if and only if $M$ is spin.
        \end{enumerate}
    \end{lemma}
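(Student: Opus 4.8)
The plan is to build $\Sigma_e M$ and $\widetilde{\Sigma}_e M$ from the handle decomposition implicit in their definition as $(P \setminus (\varphi_\pi(D^n \times S^1))^\circ) \cup S^{n-1}\times D^2$, where $P \to M$ is the principal $S^1$-bundle with Euler class $e$. The operation replaces a neighborhood $D^n \times S^1$ of a fiber (a framed circle) by $S^{n-1}\times D^2$; this is surgery on that framed circle, and the complement $P \setminus (\varphi_\pi(D^n\times S^1))^\circ$ equals $\pi^{-1}(M \setminus (D^n)^\circ)$, a manifold with boundary $S^{n-1}\times S^1$. The twisted suspension is then obtained by attaching a $2$-handle (via $\mathrm{id}$ or $\tilde\alpha$) followed by an $(n+1)$-handle (the core $S^{n-1}\times D^2$ contributes cells in dimensions $0$, $2$ — no, rather: $S^{n-1}\times D^2$ glued along $S^{n-1}\times S^1$ contributes a single $2$-cell and a single $(n+1)$-cell relative to the complement). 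First I would set up this Mayer–Vietoris / handle-attachment picture carefully and keep track of which cells are added and in which degrees.

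For (1), the fundamental group: when $n \geq 3$, the gluing region $S^{n-1}\times S^1$ and the attached piece $S^{n-1}\times D^2$ are simply-connected in the relevant range except for the $S^1$ factor, so van Kampen shows that attaching $S^{n-1}\times D^2$ along $S^{n-1}\times S^1$ kills exactly the $S^1$-direction — which is the image of $\pi_1$ of the fiber. Since the fiber inclusion's image in $\pi_1(P)$ is already normally generated in a way compatible with $\pi_1(M)$ (from the homotopy exact sequence of the bundle, $\pi_1(P)/\langle\text{fiber}\rangle \cong \pi_1(M)$), one gets $\pi_1(\Sigma_e M) \cong \pi_1(M)$; the $\tilde\alpha$-twist does not affect $\pi_1$ since $\tilde\alpha$ is isotopic to the identity on $\pi_1$. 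For $n=2$ the fiber $S^1$ sits differently and one instead removes a $D^2$ from $M$, giving $\pi_1(M \setminus D^2)$. I would run van Kampen explicitly, using that $\pi_{\geq 1}$ of $S^{n-1}$ vanishes through degree $n-2$.

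For (2), the (co)homology isomorphism in degrees $3 \leq i \leq n$: again Mayer–Vietoris. Write $X = \pi^{-1}(M \setminus (D^n)^\circ)$, $Y = S^{n-1}\times D^2$, $X \cap Y \simeq S^{n-1}\times S^1$. Since $H_*(S^{n-1}\times D^2)$ and $H_*(S^{n-1}\times S^1)$ are concentrated in degrees $0,1$ (from the $S^1$ and $D^2$) and $n-1, n$ (from $S^{n-1}$), the Mayer–Vietoris sequence shows the inclusion $X \hookrightarrow \Sigma_e M$ induces an isomorphism on $H_i$ for $3 \leq i \leq n-2$ immediately, and the edge cases $i = n-1, n$ need a short argument comparing the $S^{n-1}$-classes in $X \cap Y$, $X$, and $Y$ — but $X \cap Y \to Y$ is a homotopy equivalence in those degrees, so the relevant part of the sequence splits off and again gives an isomorphism from $X$. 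The twist $\tilde\alpha$ acts trivially on homology of $S^{n-1}\times S^1$ (it's orientation-preserving and isotopic to the identity in the relevant sense), so the argument is identical for $\widetilde\Sigma_e M$.

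For (3), assume $M$ simply-connected, $n \geq 5$. By (1), $\Sigma_e M$ and $\widetilde\Sigma_e M$ are simply-connected. The claim $H^2(\Sigma_e M) \cong H^2(M)$: note $H_2(X) = H_2(\pi^{-1}(M\setminus(D^n)^\circ))$; removing an open $n$-disc from $M$ does not change $H_2$ (as $n \geq 5$), and then $\pi^{-1}(M) = P$ with $H_2(P)$ related to $H_2(M)$ by the Gysin sequence — but we are removing a fiber's neighborhood, which precisely undoes the effect of the Euler class, restoring $H_2(M)$. More directly: $X$ is homotopy equivalent to the pullback of $P$ over $M \setminus (D^n)^\circ \simeq (M \text{ minus a point})$, over which the bundle restriction has a section issue; I would instead argue that $\Sigma_e M$ is obtained from $M$ by a sequence of surgeries that preserve $H_2$, or compute directly via Mayer–Vietoris that the $2$-cell added by $S^{n-1}\times D^2$ cancels against the $1$-cell in $X \cap Y$ in the way dictated by how the fiber circle bounds. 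This is the step I expect to be the **main obstacle**: pinning down $H^2$ in degree exactly $2$ requires understanding how the attaching map of the $2$-handle $S^{n-1}\times D^2$ interacts with the generator coming from the base — one must see that the $S^1$-factor in $X \cap Y$, which bounds the $D^2$ in $Y$, maps to a nullhomologous (indeed nullhomotopic, by the hypothesis that the fiber inclusion is null-homotopic — wait, that hypothesis is in Theorem~\ref{T:MAIN}, not here; here $M$ is merely simply-connected) class. Actually since $M$ is simply-connected, $\pi_1(P) \cong \Z/\mathrm{im}(e) $ which is finite cyclic or trivial; removing a fiber neighborhood and capping kills this, and a Mayer–Vietoris computation with $\Z$ and $\Z/2$ coefficients gives $H_2(\Sigma_e M) \cong H_2(X) \cong H_2(M)$.

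Finally, the spin statements. Use the tangent bundle formula $TP \oplus \underline{\R} \cong \pi^*(TM) \oplus \underline{\R}^2$ from Lemma~\ref{L:PR_BDL_CHAR_CL}-type reasoning, so $w_2(P) = \pi^* w_2(M)$. On the complement $X$, $w_2$ is the restriction of $w_2(P) = \pi^* w_2(M)$. To compute $w_2$ of $\Sigma_e M$: its restriction to $X$ is $\pi^* w_2(M)$, and its restriction to $Y = S^{n-1}\times D^2 \simeq S^{n-1}$ vanishes ($n-1 \geq 4 > 2$). By Mayer–Vietoris for cohomology and the identification $H^2(\Sigma_e M) \cong H^2(M)$ from (3), one checks $w_2(\Sigma_e M)$ corresponds to $w_2(M) - e \bmod 2$ (or $w_2(M) + e \bmod 2$, same thing): the $-e$ term arises because the $2$-handle attached along $\mathrm{id}$ introduces a relation reflecting that the fiber (whose "co-class" is $e \bmod 2$ via the Gysin/Euler-class mechanism) now bounds a disc with the untwisted framing, so the framing obstruction $e \bmod 2$ gets added. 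For $\widetilde\Sigma_e M$, the $2$-handle is attached via $\tilde\alpha$, which changes the framing by the generator of $\pi_1(\mathrm{SO}(n))$, precisely cancelling the $e \bmod 2$ contribution, giving $w_2(\widetilde\Sigma_e M) = w_2(M) \bmod 2$, i.e. $\widetilde\Sigma_e M$ is spin iff $M$ is spin. I would make the "$-e \bmod 2$" precise by relating the attaching to the standard framing $\varphi_\pi$ and invoking Proposition~\ref{P:FRAMING_TRIV}: the standard framing of the fiber is trivial iff $\widetilde M$ — here $M$ itself, being simply-connected — is non-spin, which matches: $\Sigma_e M$ spin $\iff w_2(M) \equiv e$, and in the special case $e = 0$ this says $\Sigma_0 M$ spin $\iff M$ spin, consistent with Duan. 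The careful bookkeeping of the framing twist versus the mod-$2$ Euler class in the $w_2$ computation is the second delicate point, handled by comparing with the trivial framing via Proposition~\ref{P:FRAMING_TRIV} and Lemma~\ref{L:NULL-HTPIC}.
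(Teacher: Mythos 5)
Your treatment of items (1) and (2) matches the paper: van Kampen applied to the pushout $\bigl(P\setminus\varphi_\pi(D^n\times S^1)^\circ\bigr)\cup_{S^{n-1}\times S^1}(S^{n-1}\times D^2)$, combined with the homotopy exact sequence of the restricted circle bundle over $M\setminus (D^n)^\circ$, and Mayer--Vietoris for the same decomposition; this part is fine (your handling of the top degrees is no less careful than the paper's own one-line remark).

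Item (3) is where your route diverges and where it has genuine gaps. First, the intermediate claim ``$H_2(\Sigma_e M)\cong H_2(X)\cong H_2(M)$'' with $X=\pi^{-1}(M\setminus(D^n)^\circ)$ is false in general: by Gysin, $H^2(X)\cong H^2(M)/\langle e\rangle$, so already for $M=S^2\times S^{n-2}$ and $e=2x^*$ one gets $H^2(X)\cong\Z/2$ and $H_2(X)\not\cong H_2(M)$; the correct Mayer--Vietoris statement is an extension $0\to H_2(X)\to H_2(\Sigma_e M)\to \ker\bigl(H_1(S^{n-1}\times S^1)\to H_1(X)\bigr)\to 0$, and identifying this extension with $H_2(M)$ (including torsion) is exactly the work you leave undone. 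Second, and more seriously, your plan for the spin statements invokes Proposition~\ref{P:FRAMING_TRIV} and Lemma~\ref{L:NULL-HTPIC}; both require the fiber inclusion to be null-homotopic, i.e.\ $e$ primitive, which is \emph{not} assumed in Lemma~\ref{L:SUSP_TOP}(3) (you notice this mid-argument but never resolve it). When $e$ is non-primitive the fiber bounds no disc, so the trivial/non-trivial framing dichotomy and the claimed ``$w_2\mapsto w_2-e\bmod 2$'' bookkeeping have no justification. The paper's proof avoids all of this by a connected-sum trick: set $M'=(S^2\ttimes S^{n-2})\# M$ (resp.\ $(S^2\times S^{n-2})\# M$) and take the circle bundle $P'\to M'$ with Euler class $e'=x^*+e$; the restriction of $e'$ to the new summand is primitive, so Theorem~\ref{T:MAIN} applies and gives $P'\cong (S^3\times S^{n-2})\#\Sigma_e M$ (resp.\ $\#\,\widetilde{\Sigma}_e M$); then $H^2(\Sigma_e M)\cong H^2(P')\cong H^2(M')/\langle e'\rangle\cong H^2(M)$ by the Gysin sequence, and the spin criteria follow from $w_2(P')=\pi'^*w_2(M')$ together with $\ker\pi'^*=\langle e'\bmod 2\rangle$, since $w_2(M')\in\langle e'\bmod 2\rangle$ iff $w_2(M)\equiv e\bmod 2$ in the $S^2\ttimes S^{n-2}$ case and iff $w_2(M)=0$ in the $S^2\times S^{n-2}$ case. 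To repair your write-up you would either need to supply this (or an equivalent) device to reduce to the primitive case, or carry out the extension and $w_2$ computations directly, which your sketch does not do.
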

    \begin{proof}
        As before, we denote by $P\xrightarrow{\pi}M$ the principal $S^1$-bundle over $M$ with Euler class $e$. The spaces $\Sigma_e M$ and $\widetilde{\Sigma}_e M$ fit into the following pushout diagram:
        \[
        \begin{tikzcd}
            S^{n-1}\times S^1\arrow{r}{\mathrm{id}_{S^{n-1}\times S^1}\, (\text{resp. }\tilde{\alpha})}\arrow{d}{\varphi_\pi|_{S^{n-1}\times S^1}} & S^{n-1}\times D^2\arrow{d} \\
            P\setminus (\varphi_\pi(D^n\times S^1)^\circ)\arrow{r} & \Sigma_e M\, (\text{resp. }\widetilde{\Sigma}_e M)
        \end{tikzcd}
        \]
        Hence, by van Kampen's theorem, both $\pi_1(\Sigma_e M)$ and $\pi_1(\widetilde{\Sigma}_e M)$ are isomorphic to the quotient of $\pi_1(P\setminus (\varphi_\pi(D^n\times S^1)^\circ))$ by the subgroup generated by the class represented by a fiber. By the long exact sequence of homotopy groups for the $S^1$-bundle $P\setminus (\varphi_\pi(D^n\times S^1)^\circ)\xrightarrow{\pi} M\setminus {D^n}^\circ$, this quotient is isomorphic to $\pi_1(M\setminus {D^n}^\circ)$, which is isomorphic to $\pi_1(M)$ if $n\geq 3$. This proves item (1) and item (2) follows from the Mayer--Vietoris sequence for the same pushout diagram.
        
        Now, assume that $M$ is simply-connected. We consider $M'=(S^2\times S^{n-2})\# M$ and denote by $P'\xrightarrow{\pi'}M'$ the principal $S^1$-bundle with Euler class $e'=x^*+e$, where $x^*$ denotes a generator of $H^2(S^2\times S^{n-2})$. By Theorem \ref{T:MAIN}, it follows that
        \[P'\cong (S^3\times S^{n-2})\# \widetilde{\Sigma}_e M.\]
        By the Gysin sequence, we have the following exact sequence:
        \[ H^0(M')\xrightarrow{\cdot\smile e' } H^2(M')\xrightarrow{\pi'^*}H^2(P')\to 0. \]
        Hence,
        \[H^2(\widetilde{\Sigma}_e M)\cong H^2(P')\cong \bigslant{H^2(M')}{\langle e'\rangle}\cong H^2(M). \]
        By Lemma \ref{L:PR_BDL_CHAR_CL}, $w_2(P')=\pi'^*w_2(M')=\pi'^*w_2(M)$, which only lies in $\langle e'\mod 2\rangle$ when $w_2(M)$ is trivial.
        
        For $\Sigma_e M$ we proceed similarly by defining $M'=(S^2\ttimes S^{n-2})\# M$. In this case, since $w_2(S^2\ttimes S^{n-2})$ is non-trivial, $w_2(M')$ lies in $\langle e'\mod 2\rangle$ if and only if $w_2(M)\equiv e\mod 2$. This proves item (3).
    \end{proof}

    % EXAMPLE
    
    \begin{example}
        \label{EX:SPHERE_SUSP}
        We can explicitly determine the diffeomorphism type of the twisted suspension in the following cases:
        \begin{enumerate}
            \item We have, for $n\geq 2$,
            \[ \Sigma_0 S^n = (D^n\times S^1)\cup_{\mathrm{id}_{S^{n-1}\times S^1}}(S^{n-1}\times D^2)\cong \partial (D^n\times D^2)\cong \partial D^{n+2}=S^{n+1}. \]
            This also holds for $\widetilde{\Sigma}_0 S^n$, as the diffeomorphism $\tilde{\alpha}$ extends over the right-hand side, i.e.\ over $(D^n\times S^1)$.
            \item For $n=2$ and $e\in H^2(S^2)$ non-trivial, we also obtain that
            \[\Sigma_e S^2\cong \widetilde{\Sigma}_e S^2\cong S^3, \]
            since, by Lemma \ref{L:SUSP_TOP}, both $\Sigma_e S^2$ and $\widetilde{\Sigma}_e S^2$ are closed, simply-connected $3$-manifolds, which, by Perelman's proof of the Poincaré conjecture, can only be diffeomorphic to $S^3$.
            \item If $e\in H^2(\C P^n)$ denotes a generator, then we have
            \[ \Sigma_e\C P^n\cong\begin{cases}
                S^2\times S^{2n-1},\quad & n\text{ even},\\
                S^2\ttimes S^{2n-1},\quad & n\text{ odd},
            \end{cases}\quad\quad
            \widetilde{\Sigma}_e\C P^n\cong\begin{cases}
                S^2\ttimes S^{2n-1},\quad & n\text{ even},\\
                S^2\times S^{2n-1},\quad & n\text{ odd}.
            \end{cases}
            \]
            This will follow immediately from part (1) of Theorem \ref{T:SUSP_EX}.
        \end{enumerate}
    \end{example}

    Now, let $E\xrightarrow{\xi} M^n$ be a fiber bundle with fiber $F$. For $e\in H^2(M)$ we construct a fiber bundle $\Sigma_e\xi$ (resp.\ $\widetilde{\Sigma}_e\xi$) over $\Sigma_e M$ (resp.\ $\widetilde{\Sigma}_e M$) with fiber $F$ and the same structure group as $\xi$ as follows. Let $D^n\subseteq M$ be an embedded disc and extend it to local trivializations $\varphi_\xi\colon D^n\times F\hookrightarrow E$ and $\varphi_\pi\colon D^n\times S^1\hookrightarrow P$, where $P\xrightarrow{\pi} M$ denotes, as before, the principal $S^1$-bundle over $M$ with Euler class $e$. The pull-back $\pi^*(E\setminus \varphi_\xi(D^n\times F)^\circ)$ is then a fiber bundle over $P\setminus\varphi_\pi(D^n\times S^1)^\circ$ with fiber $F$, the same structure group as $\xi$, and boundary $S^{n-1}\times S^1\times F$.
    \begin{definition}
        We define $E(\Sigma_e\xi)$ and $E(\widetilde{\Sigma}_e\xi)$ by
        \[
            E(\Sigma_e\xi)=\pi^*(E\setminus \varphi_\xi(D^n\times F)^\circ)\cup_{\textup{id}_{S^{n-1}\times S^1\times F}}(S^{n-1}\times D^2\times F)
        \]
        and
        \[
            E(\widetilde{\Sigma}_e\xi)=\pi^*(E\setminus \varphi_\xi(D^n\times F)^\circ)\cup_{\tilde{\alpha}\times \textup{id}_F}(S^{n-1}\times D^2\times F).
        \]
        Since we glue fibers to fibers, where we consider the right-hand side as the trivial bundle $S^{n-1}\times D^2\times F\to S^{n-1}\times D^2$, we obtain the structure of two fiber bundles with fiber $F$, the same structure group as $\xi$, and base
        \[P\setminus\varphi_\pi(D^n\times S^1)^\circ\cup_{\textup{id}_{S^{n-1}\times S^1}}S^{n-1}\times D^2=\Sigma_e M \]
        and
        \[P\setminus\varphi_\pi(D^n\times S^1)^\circ\cup_{\tilde{\alpha}}S^{n-1}\times D^2=\widetilde{\Sigma}_e B, \]
        respectively. We denote the projection maps $E(\Sigma_e\xi)\to \Sigma_e M$ and $E(\widetilde{\Sigma}_e\xi)\to\widetilde{\Sigma}_e M$ by $\Sigma_e\xi$ and $\widetilde{\Sigma}_e\xi$, respectively.
    \end{definition}

    Now we restrict to the case of linear sphere bundles, i.e.\ let $E\xrightarrow{\xi}M^n$ be a linear $S^m$-bundle and let $e\in H^2(M)$. It follows from the corresponding  constructions that the bundle $\Sigma_e\xi$ is trivial over the right-hand side of the decomposition
    \[\Sigma_e M=P\setminus\varphi_\pi(D^n\times S^1)^\circ\cup_{\textup{id}_{S^{n-1}\times S^1}}S^{n-1}\times D^2, \]
    i.e.\ it is given by $S^{n-1}\times D^2\times S^m$ (and the construction provides a canonical identification) and similarly for the bundle $\widetilde{\Sigma}_e\xi$. By decomposing $S^m=D^m\cup_{S^{m-1}}D^m$ and identifying $D^2\times D^m\cong D^{m+2}$, we obtain embeddings
    \[ \iota_{\xi}\colon S^{n-1}\times D^{m+2}\hookrightarrow E(\Sigma_e\xi) \]
    and
    \[ \widetilde{\iota}_{\xi}\colon S^{n-1}\times D^{m+2}\hookrightarrow E(\widetilde{\Sigma}_e\xi). \]
    \begin{proposition}
        \label{P:ESIGMA_SURGERY}
        If $n\geq 2$, then the manifold $\Sigma_{\xi^*e}E$ (resp.\ $\widetilde{\Sigma}_{\xi^*e}E$) is diffeomorphic to the manifold obtained by surgery on $E(\Sigma_e\xi)$ (resp.\ $E(\widetilde{\Sigma}_e\xi)$) along the embedding $\iota_\xi$ (resp.\ $\widetilde{\iota}_\xi$).
    \end{proposition}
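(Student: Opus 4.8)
The plan is to unwind both sides of the claimed diffeomorphism into explicit gluings of pieces and match them. Write $P\xrightarrow{\pi}M$ for the principal $S^1$-bundle with Euler class $e$, so that, by definition, $E(\Sigma_e\xi)=\pi^*(E\setminus\varphi_\xi(D^n\times F)^\circ)\cup_{\mathrm{id}}(S^{n-1}\times D^2\times S^m)$, where here $F=S^m$. The surgery along $\iota_\xi\colon S^{n-1}\times D^{m+2}\hookrightarrow E(\Sigma_e\xi)$ removes $\iota_\xi(S^{n-1}\times (D^{m+2})^\circ)$ and glues in $D^n\times S^{m+1}$ along $S^{n-1}\times S^{m+1}$. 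The key point is that the image of $\iota_\xi$ lies entirely in the "trivial" piece $S^{n-1}\times D^2\times S^m$ of $E(\Sigma_e\xi)$: recall $\iota_\xi$ was built by decomposing $S^m=D^m\cup_{S^{m-1}}D^m$, taking $S^{n-1}\times D^2\times D^m\cong S^{n-1}\times D^{m+2}$ for one of the two $D^m$'s. So performing the surgery only alters that piece, and the rest of $E(\Sigma_e\xi)$—namely $\pi^*(E\setminus\varphi_\xi(D^n\times F)^\circ)$—is untouched.

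First I would analyze what happens to the piece $S^{n-1}\times D^2\times S^m$ after surgery along $\iota_\xi$. Write $S^m=D^m_+\cup_{S^{m-1}}D^m_-$ with $\iota_\xi$ using $D^m_-$, i.e.\ $\iota_\xi(S^{n-1}\times D^{m+2})=S^{n-1}\times D^2\times D^m_-$ under $D^2\times D^m\cong D^{m+2}$. Removing its interior leaves $S^{n-1}\times D^2\times D^m_+$ together with a collar, and gluing in $D^n\times S^{m+1}$ along the boundary $S^{n-1}\times \partial(D^2\times D^m_-)=S^{n-1}\times S^{m+1}$ produces
$(S^{n-1}\times D^2\times D^m_+)\cup_{S^{n-1}\times S^{m+1}}(D^n\times S^{m+1})$,
which I claim is diffeomorphic to the piece of $\Sigma_{\xi^*e}E$ sitting over the corresponding region. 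On the other hand, $\Sigma_{\xi^*e}E$ is, by definition, $Q\setminus(\varphi_{\pi_E}(D^{n+m}\times S^1)^\circ)\cup_{\mathrm{id}}(S^{n+m-1}\times D^2)$, where $Q\to E$ is the principal $S^1$-bundle with Euler class $\xi^*e$. The crucial identification is $Q\cong\pi^*E$: indeed $\pi^*E\to E$ is a principal $S^1$-bundle (pull back the $S^1$-action on $P$), and its Euler class is $\xi^*e$ by naturality of the Euler class under pullback, so $Q\cong\pi^*E$ by uniqueness of principal $S^1$-bundles. Thus $\Sigma_{\xi^*e}E$ is obtained from $\pi^*E$ by removing a trivialized $D^{n+m}\times S^1$ (over a disc $D^{n+m}\hookrightarrow E$ contained in a single fiber-chart) and gluing back $S^{n+m-1}\times D^2$.

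The heart of the argument, then, is a local matching: choose the disc $D^{n+m}\hookrightarrow E$ used to form $\Sigma_{\xi^*e}E$ to lie inside the chart $\varphi_\xi(D^n\times D^m_-)\subseteq\varphi_\xi(D^n\times S^m)$, i.e.\ inside the part of $E$ that was removed when building $E(\Sigma_e\xi)$, thickened appropriately. Then $\pi^*E\setminus(\varphi_{\pi_E}(D^{n+m}\times S^1)^\circ)$ decomposes as $\pi^*(E\setminus\varphi_\xi(D^n\times S^m)^\circ)$ glued to a leftover chunk of the bundle over $\varphi_\xi(D^n\times D^m_+)$ (the half of the sphere not deleted), and I must check that this leftover chunk, after we further glue in $S^{n+m-1}\times D^2$, is diffeomorphic to $(S^{n-1}\times D^2\times D^m_+)\cup_{S^{n-1}\times S^{m+1}}(D^n\times S^{m+1})$ from the surgery side, compatibly with the common boundary $\pi^{-1}(S^{n-1}\times S^{m-1})$-type gluing region along $\pi^*(E\setminus\cdots)$. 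This is a finite-dimensional combinatorial/smoothing bookkeeping exercise: over the disc $D^n$ the bundle $\pi$ is trivial, so over $\varphi_\xi(D^n\times D^m_+)$ the bundle $\pi^*E$ is just $D^n\times D^m_+\times S^1$, and after removing the sub-$D^{n+m}\times S^1$ and gluing in $S^{n+m-1}\times D^2$ one recognizes exactly $\partial(D^{n+m}\times D^2)$-type pieces, matching the surgery description after identifying $D^2\times D^m_+\cong D^{m+2}$ and $D^n\times D^{m}_+$-thickenings. I expect the main obstacle to be precisely this: keeping the corner-smoothings, the identifications $D^2\times D^m\cong D^{m+2}$, and the two collar structures coherent so that the pieces glue to a genuine diffeomorphism rather than just a homeomorphism, and making sure the canonical trivializations supplied by the $\Sigma_e\xi$-construction are the ones used on both sides. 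Once the local picture is pinned down, the global diffeomorphism is assembled by the identity on $\pi^*(E\setminus\varphi_\xi(D^n\times F)^\circ)$ and the local match over the disc; the argument for $\widetilde{\Sigma}_{\xi^*e}E$ versus surgery along $\widetilde\iota_\xi$ is identical, inserting the twisting diffeomorphism $\tilde\alpha\times\mathrm{id}_F$ in the same place on both sides so it cancels in the comparison.
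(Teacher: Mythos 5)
Your overall strategy is essentially the paper's argument run in the opposite direction: the paper starts from the definition of $\Sigma_{\xi^*e}E$, chooses the disc $D^{n+m}\hookrightarrow E$ to be $\varphi_\xi(D^n\times S^m_+)$ inside the chart of $\xi$, and reorganizes the resulting gluing diagram until it is visibly surgery on $E(\Sigma_e\xi)$ along $\iota_\xi$; you start from the surgery side and propose the same decomposition-and-matching bookkeeping, including the two genuinely useful points you make explicit (that the circle bundle over $E$ with Euler class $\xi^*e$ is $\pi^*E$ by naturality, and that the suspension disc may be placed inside the chart $\varphi_\xi$ by uniqueness of embedded discs up to isotopy). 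For the untwisted suspension this plan is sound, modulo the corner-smoothing and collar coherence you yourself flag as still to be checked.

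The genuine gap is in the twisted case, which you dismiss with the sentence that one inserts ``the twisting diffeomorphism $\tilde\alpha\times\mathrm{id}_F$ in the same place on both sides so it cancels.'' The two sides do \emph{not} use the same twisting map: $E(\widetilde\Sigma_e\xi)$ is glued via $\tilde\alpha\times\mathrm{id}_F$ with $\alpha\colon S^1\to\mathrm{SO}(n)$ acting only on the $S^{n-1}$-factor, whereas $\widetilde\Sigma_{\xi^*e}E$ is, by definition, glued via a loop representing the non-trivial class of $\pi_1(\mathrm{SO}(n+m))$ acting linearly on all of $S^{n+m-1}$. To make the comparison you must show that the loop $\lambda\mapsto\alpha_\lambda\in\mathrm{SO}(n)\subseteq\mathrm{SO}(n+m)$, viewed through the decomposition $S^{n+m-1}\cong(S^{n-1}\times S^m_+)\cup(D^n\times S^{m-1})$, still represents the non-trivial element of $\pi_1(\mathrm{SO}(n+m))$; otherwise your twisted gluing could produce $\Sigma_{\xi^*e}E$ instead of $\widetilde\Sigma_{\xi^*e}E$. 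This is exactly the verification the paper carries out: the inclusion $\mathrm{SO}(n)\subseteq\mathrm{SO}(n+m)$ induces a surjection on fundamental groups (an isomorphism for $n>2$, and the onto map $\Z\to\Z/2$ for $n=2$), so the image loop is non-trivial. Without this step the twisted half of the proposition is not established, so you should add it (and note that it is also the reason the case $n=2$, where $\pi_1(\mathrm{SO}(2))\cong\Z$, causes no trouble).
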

    \begin{proof}
        Recall that we have a local trivialization
        \[\varphi_\xi\colon D^n\times S^m\hookrightarrow E. \]
        Thus, after smoothing corners, the restriction of $\varphi_\xi$ to $D^n\times S^m_+\cong D^n\times D^m$, where $S^m_+$ denotes the (closed) upper hemisphere of $S^m$, is an orientation-preserving embedding of $D^{m+n}$ into $E$.

        It follows that in the decomposition
        \[\pi^*(E)\cong \pi^*(E\setminus\varphi_\xi(D^n\times S^m))\cup_{\textup{id}_{S^{n-1}\times S^m\times S^1}}(D^n\times S^m\times S^1) \]
        a local trivialization for $\pi^*(E)$ is given by the inclusion of $D^n\times S^m_+\times S^1$ into the right-hand side. Hence, to construct the space $\Sigma_{\xi^*c}E$ (resp.\ $\widetilde{\Sigma}_{\xi^*c}E$), we need to glue the product $S^{n+m-1}\times D^2$ to $\pi^*(E\setminus\varphi_\xi(D^n\times S^m))\cup_{\textup{id}_{S^{n-1}\times S^m_-\times S^1}}(D^n\times S^m_-\times S^1)$ along the boundary $S^{n+m-1}\times S^1$, which, in this decomposition, is given by
        \[ (S^{n-1}\times S^m_+\times S^1)\cup_{\textup{id}_{S^{n-1}\times S^{m-1}\times S^1}}(D^{n}\times S^{m-1}\times S^1). \]
        If we now decompose
        \[ (S^{n+m-1}\times D^2)\cong (S^{n-1}\times S^m_+\times D^2)\cup_{\textup{id}_{S^{n-1}\times S^{m-1}\times D^2}}(D^{n}\times S^{m-1}\times D^2), \]
        we obtain that the space $\Sigma_{\xi^*c}E$ (resp.\ $\widetilde{\Sigma}_{\xi^*c}E$) is the result of gluing according to the following diagram, where the map $\phi$ will be constructed below:
        \begin{equation}
            \label{DIAG:GLUING}
            \begin{tikzcd}[cells={nodes={draw=black, anchor=center,minimum height=2em}}]
	 	\pi^*(E\setminus\varphi_\xi(D^n\times S^m))\arrow{r}{\mathrm{id}_{S^{n-1}\times S^{m}_-\times S^1}}\arrow{d}{\phi|_{S^{n-1}\times S^m_+\times S^1} } &[5em] D^n\times S^m_-\times S^1\arrow{d}{\phi|_{D^n\times S^{m-1}\times S^1}}\\[5ex]
	 	S^{n-1}\times D^m\times D^2\arrow{r}{\mathrm{id}_{S^{n-1}\times S^{m-1}\times D^2}} &[5em] D^n\times S^{m-1}\times D^2
	 	\end{tikzcd}  
        \end{equation}
        Here, an arrow denotes gluing of the two spaces it connects along parts of their boundary via the map indicated.

        The map $\phi$ in diagram \eqref{DIAG:GLUING} is a self-diffeomorphism of
        \[(S^{n-1}\times S^m_+\times S^1)\cup_{\mathrm{id}_{S^{n-1}\times S^{m-1}\times S^1}}(D^n\times S^{m-1}\times S^1)\cong S^{n+m-1}\times S^1 \]
        defined as follows: For $\Sigma_{\xi^*c}E$, set $\phi=\mathrm{id}_{S^{n+m-1}\times S^1}$. For $\widetilde{\Sigma}_{\xi^*c}E$, let $\alpha$ be a smooth representative of a generator of $\pi_1(\mathrm{SO}(n))$ (which is isomorphic to $\Z/2$ if $n>2$ and to $\Z$ if $n=2$) and set
        \[\phi(x,y,\lambda)=(T_\lambda x,y,\lambda). \]
        We claim that $\phi$ is the gluing map in the construction of $\Sigma_{\xi^*c}E$ (resp.\ $\widetilde{\Sigma}_{\xi^*c}E$). For $\Sigma_{\xi^*c}E$, this is clear by construction. For $\widetilde{\Sigma}_{\xi^*c}E$, note that in the decomposition
        \[S^{n+m-1}\cong (S^{n-1}\times S^m_+)\cup_{\mathrm{id}_{S^{n-1}\times S^{m-1}}}(D^n\times S^{m-1}) \]
        the first factor corresponds to the embedding of a tubular neighborhood of $S^{n-1}\subseteq \R^n\subseteq\R^{n+m}$ into $S^{n+m-1}\subseteq \R^{n+m-1}$. Since the inclusion $\mathrm{SO}(n)\subseteq\mathrm{SO}(n+m)$ induces a surjection on fundamental groups (and in fact an isomorphism if $n>2$), it follows that the map $\phi$ represents the non-trivial class in $\pi_1(\mathrm{SO}(n+m))$.

        We now modify diagram \eqref{DIAG:GLUING} by noting that the map $\phi|_{S^n\times S^{m-1}\times S^1}$ extends over $D^n\times S^m_-\times S^1$ as the identity on the second factor, and we denote the extension again by $\phi$. Hence, we obtain the following gluing diagram:

        \begin{equation}
            \label{DIAG:GLUING2}
            \begin{tikzcd}[cells={nodes={draw=black, anchor=center,minimum height=2em}}]
	 	\pi^*(E\setminus\varphi_\xi(D^n\times S^m))\arrow{r}{\phi|_{S^{n-1}\times S^{m}_-\times S^1}}\arrow{d}{\phi|_{S^{n-1}\times S^m_+\times S^1} } &[5em] D^n\times S^m_-\times S^1\arrow{d}{\mathrm{id}_{D^n\times S^{m-1}\times S^1}}\\[5ex]
	 	S^{n-1}\times D^m\times D^2\arrow{r}{\mathrm{id}_{S^{n-1}\times S^{m-1}\times D^2}} &[5em] D^n\times S^{m-1}\times D^2
	 	\end{tikzcd}  
        \end{equation}

        We observe now that gluing according to the right vertical part of diagram \eqref{DIAG:GLUING2} yields the space
        \[(D^n\times S^m_-\times S^1)\cup_{\mathrm{id}_{D^n\times S^{m-1}\times S^1}}(D^n\times S^{m-1}\times D^2)\cong (D^n\times S^{m+1}), \]
        while gluing according to the left vertical part yields the space
        \begin{align*}
            \pi^*(E\setminus\varphi_\xi(D^n\times S^m))\cup_{\phi|_{S^{n-1}\times S^m_+\times S^1}}(S^{n-1}\times D^m\times D^2),
        \end{align*}
        which can be alternatively written as
        \begin{align*}
            \pi^*(E\setminus\varphi_\xi(D^n\times S^m))\cup_{\phi|_{S^{n-1}\times S^m_+\times S^1}}((S^{n-1}\times S^m\times D^2)\setminus (S^{n-1}\times S^m_-\times D^2)).
        \end{align*}
        This space is, by construction, the space $E(\Sigma_e\xi)$ (resp.\ $E(\widetilde{\Sigma}_e\xi)$) with the image of the embedding $\iota_\xi$ (resp.\ $\widetilde{\iota}_\xi$) removed. It follows that $\Sigma_{\xi^*e}E$ (resp.\ $\widetilde{\Sigma}_{\xi^*e}E$) is obtained from $E(\Sigma_e\xi)$ (resp.\ $E(\widetilde{\Sigma}_e\xi)$) by surgery along $\iota_\xi$ (resp.\ $\widetilde{\iota}_\xi$).
    \end{proof}
    
    \begin{proposition}
        \label{P:SUSP_SPH_BDL}
        Let $E\xrightarrow{\xi}S^n$ be a linear $S^m$-bundle with $m,n\geq2$. Let $T\colon S^{n-1}\to \mathrm{SO}(m+1)$ be the clutching function of $\xi$, and assume that the image of $T$ is contained in $\mathrm{SO}(m)\subseteq\mathrm{SO}(m+1)$.
        \begin{enumerate}
            \item If $n>2$, then the manifold $\Sigma_0 E$ (resp.\ $\widetilde{\Sigma}_0 E$) is diffeomorphic to the connected sum of $E(\Sigma_0\xi)$ (resp.\ $E(\widetilde{\Sigma}_0\xi)$) and the linear $S^{m+1}$-bundle over $S^n$ with clutching function given by the composition of $T$ with the inclusion $\mathrm{SO}(m+1)\subseteq\mathrm{SO}(m+2)$. In particular, if $\xi$ is trivial, i.e.\ $E=S^n\times S^m$, then both $\Sigma_0 E$ and $\widetilde{\Sigma}_0 E$ are diffeomorphic to
            \[ (S^{n+1}\times S^m)\#(S^n\times S^{m+1}). \]
            \item If $n=2$, where we have $E\cong S^2\times S^m$ or $E\cong S^2\ttimes S^m$, let $m\geq 3$ and $e\in H^2(S^2)$. We denote by $d$ the divisibility of $e$. Then
            \[ \Sigma_{\xi^*e} E\cong \begin{cases}
                (S^2\times S^{m+1})\# (S^3\times S^m),\quad & E\cong S^2\times S^m\text{ and }d\text{ is even, or }\\&E\cong S^2\ttimes S^m\text{ and }d\text{ is odd},\\
                (S^2\ttimes S^{m+1})\# (S^3\times S^m),\quad &\text{else,}
            \end{cases} \]
            and 
            \[ \widetilde{\Sigma}_{\xi^*e} E\cong \begin{cases}
                (S^2\times S^{m+1})\# (S^3\times S^m),\quad & E\cong S^2\times S^m,\\
                (S^2\ttimes S^{m+1})\# (S^3\times S^m),\quad &\text{else.}
            \end{cases} \]
        \end{enumerate}
    \end{proposition}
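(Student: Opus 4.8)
Both parts will be deduced from Proposition~\ref{P:ESIGMA_SURGERY}, which presents the relevant twisted suspension as a surgery on the total space of a sphere bundle over $\Sigma_e S^n$ (resp.\ $\widetilde\Sigma_e S^n$); the remaining work is to recognize that surgery as a connected sum with a linear sphere bundle. For \emph{Part~(1)} one has $H^2(S^n)=0$, hence $\xi^*e=0$, and Proposition~\ref{P:ESIGMA_SURGERY} identifies $\Sigma_0 E$ (resp.\ $\widetilde\Sigma_0 E$) with the result of surgery on $E(\Sigma_0\xi)$ (resp.\ $E(\widetilde\Sigma_0\xi)$) along the embedding $\iota_\xi$ (resp.\ $\widetilde\iota_\xi$) of $S^{n-1}\times D^{m+2}$. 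I would first check that the core $S^{n-1}\times\{0\}\times\{N\}$ of $\iota_\xi$, where $N$ is the centre of the hemisphere $D^m=S^m_+$, bounds an embedded disc with trivial normal bundle: since the image of $T$ lies in $\mathrm{SO}(m)$, which fixes $N$, the ``$N$-section'' is consistently defined across the gluing and gives a global section of $E(\Sigma_0\xi)$ over $\Sigma_0 S^n\cong S^{n+1}$, and the core, lying in this section over the standard unknotted $S^{n-1}\subseteq S^{n+1}$, bounds the corresponding disc. Surgery along $\iota_\xi$ then yields $E(\Sigma_0\xi)$ connect-summed with a linear $S^{m+1}$-bundle $B$ over $S^n$, whose clutching function is the class of the framing $\iota_\xi$ relative to the framing extending over that disc. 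Unwinding the construction of $E(\Sigma_0\xi)$, the bounding framing is the product trivialization on the $\pi^{-1}(M\setminus(D^n)^\circ)$-piece while $\iota_\xi$ is the product trivialization on the $S^{n-1}\times D^2\times S^m$-piece; these differ over $S^{n-1}\times S^1$ precisely by the transition $(\lambda,\theta)\mapsto T_\lambda$, acting in the $D^m$-directions only, so $B$ has clutching function $T$ followed by $\mathrm{SO}(m+1)\subseteq\mathrm{SO}(m+2)$. The same argument for $\widetilde\iota_\xi$ gives the same $B$: the extra twist $\tilde\alpha$ comes from $\pi_1(\mathrm{SO}(n))$, so it acts in the $S^{n-1}$-directions of the base and for $n>2$ does not change the normal framing class in $\pi_{n-1}(\mathrm{SO}(m+2))$. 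Taking $\xi$ trivial gives $T$ trivial, $B=S^n\times S^{m+1}$ and $E(\Sigma_0\xi)=\Sigma_0 S^n\times S^m\cong S^{n+1}\times S^m$ by Example~\ref{EX:SPHERE_SUSP}, which is the asserted special case.

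For \emph{Part~(2)}, Proposition~\ref{P:ESIGMA_SURGERY} identifies $\Sigma_{\xi^*e}E$ (resp.\ $\widetilde\Sigma_{\xi^*e}E$) with surgery on $E(\Sigma_e\xi)$ (resp.\ $E(\widetilde\Sigma_e\xi)$) along the framed circle $\iota_\xi$ (resp.\ $\widetilde\iota_\xi$). By Example~\ref{EX:SPHERE_SUSP} we have $\Sigma_e S^2\cong\widetilde\Sigma_e S^2\cong S^3$, and every linear $S^m$-bundle over $S^3$ is trivial since $\pi_2(\mathrm{SO}(m+1))=0$; hence $E(\Sigma_e\xi)\cong E(\widetilde\Sigma_e\xi)\cong S^3\times S^m$, which is simply-connected, so the surgery circles are null-homotopic. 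By Lemma~\ref{L:NULL-HTPIC} the surgery produces $S^3\times S^m$ connect-summed with a linear $S^{m+1}$-bundle over $S^2$, which by Lemma~\ref{L:SPIN} is $S^2\times S^{m+1}$ or $S^2\ttimes S^{m+1}$ according as that bundle is trivial or not; since $S^3\times S^m$ is spin, the two cases are distinguished by spinness of the surgered manifold. It thus remains to decide when $\Sigma_{\xi^*e}E$ and $\widetilde\Sigma_{\xi^*e}E$ are spin, which is Lemma~\ref{L:SUSP_TOP}(3) --- whose hypothesis $\dim E\geq 5$ is exactly why $m\geq 3$ is assumed: $\Sigma_{\xi^*e}E$ is spin iff $w_2(E)\equiv\xi^*e\bmod 2$, and $\widetilde\Sigma_{\xi^*e}E$ is spin iff $E$ is spin. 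Since $w_2(S^2\times S^m)=0$, $w_2(S^2\ttimes S^m)\neq0$ by Lemma~\ref{L:SPIN}, and $\xi^*e$ reduces to $0$ mod $2$ iff its divisibility $d$ is even, the stated case distinctions follow at once.

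The one genuinely delicate step is the framing computation in Part~(1): showing that, under the surgery, the normal framing $\iota_\xi$ of the core sphere differs from the bounding framing exactly by the clutching function $T$ of $\xi$ (pushed into $\mathrm{SO}(m+2)$). This requires careful bookkeeping of the trivializations and transition functions built into $E(\Sigma_0\xi)$ and $\iota_\xi$, together with the observation that for $n>2$ the twist $\tilde\alpha$ separating $\Sigma_0$ from $\widetilde\Sigma_0$ is invisible to $\pi_{n-1}(\mathrm{SO}(m+2))$. Part~(2) is, by comparison, routine once Lemma~\ref{L:SUSP_TOP}(3) is available, since there the only ambiguity --- triviality of the $S^{m+1}$-bundle over $S^2$ --- is detected by $w_2$.
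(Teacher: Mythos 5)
Your proposal is correct and follows essentially the paper's own route: Proposition~\ref{P:ESIGMA_SURGERY} plus the "surgery on a null-homotopic framed $S^{n-1}$ is a connected sum with the sphere bundle given by the relative framing, which here is the clutching function $T$ stabilized to $\mathrm{SO}(m+2)$" argument for part (1), and Lemma~\ref{L:NULL-HTPIC} together with the spin criterion of Lemma~\ref{L:SUSP_TOP}(3) for part (2). The only place you are looser than the paper is in dismissing the $\tilde{\alpha}$-twist in part (1): the paper makes this precise by deforming $\alpha$ to be the identity on $S^1_-$ and isotoping both $\iota_\xi$ and $\widetilde{\iota}_\xi$ to one and the same embedding lying in the untwisted region, after which the relative framing is visibly $T'_x(y,z)=(y,T_x z)$.
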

    \begin{proof}
        \emph{(1)}. By definition, we can decompose the spaces $E(\Sigma_0 \xi)$ and $E(\widetilde{\Sigma}_0\xi)$ as
            \begin{equation}
                \label{EQ:ESIGMA_DECOMP3}
                 E(\Sigma_0\xi)\cong (D^n\times S^1\times S^m)\cup_{\phi_1}(S^{n-1}\times D^2\times S^m)
            \end{equation}
            and
            \begin{equation}
                \label{EQ:ESIGMA_DECOMP4}
                E(\widetilde{\Sigma}_0\xi)\cong (D^n\times S^1\times S^m)\cup_{\phi_2}(S^{n-1}\times D^2\times S^m), 
            \end{equation}
            where the diffeomorphisms $\phi_1,\phi_2\colon S^{n-1}\times S^1\times S^m\to S^{n-1}\times S^1\times S^m$ are given by
            \[ \phi_1(x,y,z)=(x,y,T_x z) \]
            and
            \[\phi_2(x,y,z)=(\alpha_y x,y, T_x z). \]
            We further decompose
            \[S^{n-1}\times D^2\times S^m\cong (S^{n-1}\times D^2\times S^m_+)\cup_{\mathrm{id}_{S^{n-1}\times D^2\times S^{m-1}}}(S^{n-1}\times D^2\times S^m_-) \]
            and the embeddings $\iota_\xi$ and $\widetilde{\iota}_\xi$ are given by the inclusion of the second factor.

            Since the image of $T$ is contained in $\mathrm{SO}(m)$, we can assume that $T_x$ preserves $S^m_-$ and is given by a linear map on $S^m_-$ when identifying $S^m_-\cong D^m$. In particular, $T_x$ fixes the south pole $z_S\in S^m_-$. Further, we can deform the map $\alpha$ to be constant $\mathrm{id}_{\R^n}$ on $S^1_-$.
            
            By isotoping the embeddings $\iota_\xi$ and $\widetilde{\iota}_\xi$ to the left-hand side of \eqref{EQ:ESIGMA_DECOMP3} and \eqref{EQ:ESIGMA_DECOMP4}, respectively, we obtain in both cases the embedding
            \[ \iota\colon S^{n-1}\times D^1\times S^1_-\times S^m_-\hookrightarrow D^n\times S^1\times S^m,\quad (x,y_1,y_2,z)\mapsto ((x,\frac{1}{2}y_1),y_2,T_x^{-1}z), \]
            where we have identified $D^2\cong D^1\times S^1_-$ and $D^n$ as the space obtained from $S^{n-1}\times D^1=S^{n-1}\times [-1,1]$ by collapsing $S^{n-1}\times\{-1\}$ to a point.
            
            Now, define the map $T^\prime\colon S^{n-1}\to\mathrm{SO}(m+2)$,
            \[ T'_x(y,z)=T_xz \]
            for $x\in S^{n-1}$, $y\in \R^2$ and $z\in \R^m$. Then, when viewing $\iota$ as a normal framing of an embedding of $S^{n-1}$, modifying the framing by $T'$ yields a normal framing that extends over an embedded disc. It follows  as in Lemma \ref{L:NULL-HTPIC} (see e.g.\ \cite[Lemma 3.8]{Re22}) that the manifold obtained by surgery along the embedding $\iota_\xi$, which by Proposition \ref{P:ESIGMA_SURGERY} is diffeomorphic to $\Sigma_0 E$, is diffeomorphic to
            \[ E(\Sigma_0\xi)\#((D^n\times S^{m+1})\cup_{\tilde{T}'}(D^n\times S^{m+1}), \]
            where $\tilde{T}'\colon S^{n-1}\times S^{m+1}\to S^{n-1}\times S^{m+1}$ is defined by $\tilde{T}'(x,y)=(x,T'_x y)$, and similarly for $E(\widetilde{\Sigma}_0\xi)$. The right-hand side is the total space of the linear $S^{m+1}$-bundle over $S^n$ with clutching function $T'$.
            
        \emph{(2)}. By Proposition \ref{P:ESIGMA_SURGERY}, the manifold $\Sigma_{\xi^* e}E$ (resp.\ $\widetilde{\Sigma}_{\xi^* e}E$) is obtained by surgery on an embedding of $S^1\times D^{m+2}$ in $E(\Sigma_e\xi)$ (resp.\ $E(\widetilde{\Sigma}_e\xi)$). The spaces $E(\Sigma_e\xi)$ and $E(\widetilde{\Sigma}_e\xi)$ are total spaces of linear sphere bundles over $\Sigma_e S^2$ and $\widetilde{\Sigma}_e S^2$, respectively, which, by Example \ref{EX:SPHERE_SUSP}, are diffeomorphic to $S^3$. Since any linear sphere bundle over $S^3$ is trivial, both $E(\Sigma_e\xi)$ and $E(\widetilde{\Sigma}_e\xi)$ are diffeomorphic to $S^3\times S^m$.
            
            Since $S^3\times S^m$ is simply-connected, it follows from Lemma \ref{L:NULL-HTPIC} that $\Sigma_{\xi^* e}E$ (resp.\ $\widetilde{\Sigma}_{\xi^* e}E$) is diffeomorphic to either $(S^2\times S^{m+1})\# (S^3\times S^m)$ (which is spin) or $(S^2\ttimes S^{m+1})\# (S^3\times S^m)$ (which is non-spin). By Lemma \ref{L:SUSP_TOP}, we can characterize when $\Sigma_{\xi^* e}E$ (resp.\ $\widetilde{\Sigma}_{\xi^* e}E$) in terms of the Euler class and Stiefel--Whitney class of $E$, which yields the different cases as claimed.
    \end{proof}

    \begin{proof}[Proof of Theorem \ref{T:SUSP_EX}]
        Item (1) follows from Proposition \ref{P:FRAMING_TRIV} and Lemma \ref{L:NULL-HTPIC} and items (2) and (3) follow from Proposition \ref{P:SUSP_SPH_BDL}.
    \end{proof}

 % SECTION>: PROOFS OF THEOREM C AND COROLLARIES D-G
 %--------------------------------------------------

\section{Proof of Theorems \ref{T:S1_BUNDLES}, \ref{T:FREE_CIRCLE} and \ref{T:COHOM_4}, and Corollaries \ref{C:FREE_TORUS}, \ref{C:COHOM_2} and \ref{C:RIC>0}} 
\label{S:CIRCLE_BUNDLES}

    In this section, we prove Theorems \ref{T:S1_BUNDLES}, \ref{T:FREE_CIRCLE} and \ref{T:COHOM_4}, and Corollaries \ref{C:FREE_TORUS}, \ref{C:COHOM_2} and \ref{C:RIC>0}. First, for the proof of Theorem \ref{T:S1_BUNDLES}, we show the following more general result.
    \begin{theorem}
        \label{T:S1_BUNDLES2}
        Let $B^n=B_1\# B_2$, $n\geq 5$, and let $P\xrightarrow{\pi}B$ be a principal $S^1$-bundle with primitive Euler class $e$. We assume that $B_1$ is of the form \eqref{EQ:COND} and that $B_2$ is closed and simply-connected. Denote by $e_i$ the restriction of $e$ to $B_i$ and by $d_i$ the divisibility of $e_i$. If $b_2(B_1)=1$, we additionally assume that $d_1\equiv\pm 1\mod d_2$. Then, we have
        \[P\cong\begin{cases}
            \hat{B}_1\# \Sigma_{e_2}B_2,\quad &\text{if }B_1\text{ is non-spin},\\
            \hat{B}_1\# \widetilde{\Sigma}_{e_2}B_2,\quad &\text{if }B_1\text{ is spin},
        \end{cases} \]
        where $\hat{B}_1$ is of the form \eqref{EQ:COND} with
        \[ b_i(\hat{B}_1)=\begin{cases}
            b_2(B_1)-1,\quad & i=2,n-2,\\
            b_{i-1}(B_1)+b_{i}(B_1),\quad & 2<i<n-2,
        \end{cases} \]
        and $\hat{B}_1$ is spin if and only if the restriction of $e_1$ to each $(S^2\ttimes S^{n-2})$-summand in $B_1$ has odd divisibility.
    \end{theorem}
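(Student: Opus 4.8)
The plan is to reduce the statement to Theorem~\ref{T:MAIN}. Indeed, the asserted decomposition $P\cong \hat B_1\#\Sigma_{e_2}B_2$ (resp.\ $\hat B_1\#\widetilde\Sigma_{e_2}B_2$) is precisely the conclusion of Theorem~\ref{T:MAIN}, with $\hat B_1$ the total space of the principal $S^1$-bundle over $B_1$ with Euler class $e_1$, \emph{provided} $e_1$ is primitive on the universal cover of $B_1$; since $B_1$ is of the form \eqref{EQ:COND} it is simply connected, so this amounts to $e_1$ itself being primitive, which in general fails because $e_1$ has divisibility $d_1$. So there are two things to do: (a) replace $P$ by an isomorphic principal bundle whose Euler class restricts primitively to $B_1$ (or to a suitable $S^2$-type summand of it), and (b) identify the resulting total space. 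Throughout one may use self-diffeomorphisms of $B$ freely: if $\psi\colon B\to B$ is a diffeomorphism, then the bundle with Euler class $e$ has total space diffeomorphic to that of the bundle with Euler class $\psi^*e$.

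For (a) I would first normalize $e_1$. Write $B_1=A\#C$, where $A$ collects the $S^2$-type summands of $B_1$ and $C$ the remaining summands $S^{m}\times S^{n-m}$ with $3\le m\le n-3$, so that $H^2(B_1)=H^2(A)$. Using Corollary~\ref{C:HOMOLOGYDIFF} on $A$, together with Corollary~\ref{C:CONN_SUM_DIFF} to arrange at most one $S^2\ttimes S^{n-2}$-summand, I would bring $e_1$ into the standard form $e_1=d_1x_1$ supported on a single $S^2$-type summand $C_1$; these diffeomorphisms extend over $B=B_1\#B_2$ and leave $e_2$ unchanged. The decisive step is then to change the $C_1$-component from $d_1$ to $\pm1$. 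For an $S^2$-type summand and any spherical class $\xi$ in the complementary connected summand --- and by the Hurewicz theorem every degree-$2$ homology class of a simply connected manifold is spherical --- Corollaries~\ref{C:HOMOLOGY_DIFF_CS} and~\ref{C:HOMOLOGY_DIFF_CS_NONSP} produce a diffeomorphism that adds $\langle e,\xi\rangle$ times the dual generator of that summand to the Euler class (interchanging $S^2\times S^{n-2}$ and $S^2\ttimes S^{n-2}$ in the second case). Viewing $B=C_1\#(C\#B_2)$, the shifts of the $C_1$-component realized this way run over $\delta\,\Z$, where $\delta$ is the divisibility of $e|_{C\#B_2}$. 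If $b_2(B_1)=1$, then $C$ contains no $S^2$-summand and hence $\delta=d_2$, so the hypothesis $d_1\equiv\pm1\bmod d_2$ is exactly what lets us reach $\pm1$. If $b_2(B_1)\ge2$, then there is a second $S^2$-type summand $C_2$; primitivity of $e$ forces $\gcd(d_1,d_2)=1$, and the analogous move on $C_2$ changes its component through all of $\gcd(d_1,d_2)\,\Z=\Z$, so we may set it equal to $1$ and thereby make $e_1$ primitive with no divisibility assumption. Either way we arrive at a bundle $P'\cong P$ over $B$ whose Euler class restricts to a primitive class on $B_1$ and still restricts to $e_2$ on $B_2$.

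With $e_1$ primitive, Theorem~\ref{T:MAIN} applies (the fiber inclusion in $\hat B_1\to B_1$ is null-homotopic by Lemmas~\ref{L:PR_BDL_TOPOLOGY} and~\ref{L:PR_BDL_TOPOLOGY2}, since $B_1$ is simply connected), giving $P\cong \hat B_1\#\Sigma_{e_2}B_2$ or $P\cong \hat B_1\#\widetilde\Sigma_{e_2}B_2$ according to whether $B_1$ is spin, where $\hat B_1$ is the bundle over $B_1$ with this primitive Euler class. It remains to show $\hat B_1$ is of the form \eqref{EQ:COND} with the stated invariants; this is the special case $B_2=S^n$ of the theorem, which I would establish first by induction on the number of summands of $B_1$: peeling off a summand $S^m\times S^{n-m}$ with $3\le m\le n-3$ and applying Theorem~\ref{T:MAIN} introduces a factor $\Sigma_0(S^m\times S^{n-m})\cong(S^m\times S^{n-m+1})\#(S^{m+1}\times S^{n-m})$ (Theorem~\ref{T:SUSP_EX}(2)) and lowers the number of summands, while the base case $B_1=C_1$ is handled by the move above, with $P_{C_1}\cong S^3\times S^{n-2}$. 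Granting this, the Betti numbers of $\hat B_1$ come from the Gysin sequence of $S^1\to \hat B_1\to B_1$ with $H^2(\hat B_1)\cong H^2(B_1)/\langle e_1\rangle$ (Lemma~\ref{L:PR_BDL_TOPOLOGY}), and the spin type from Corollary~\ref{C:P_W2}, namely $\hat B_1$ is spin iff $w_2(B_1)\in\langle e_1\rangle\bmod 2$, which translates into the divisibility of the restriction of $e_1$ to the $S^2\ttimes S^{n-2}$-summands; since a manifold of the form \eqref{EQ:COND} is determined by dimension, Betti numbers and spin type (the discussion after Theorem~\ref{T:SUSP_EX}), this pins down $\hat B_1$.

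I expect the main obstacle to be the bookkeeping rather than any single hard idea. One must track how the diffeomorphisms of Corollary~\ref{C:HOMOLOGY_DIFF_CS_NONSP} swap summands between $S^2\times S^{n-2}$ and $S^2\ttimes S^{n-2}$, so that the spin type of the final $\hat B_1$ --- and of $P$, via Lemma~\ref{L:SUSP_TOP} --- comes out exactly as predicted by the divisibility of $e_1$ on the twisted summands, and one must check that all invariants produced depend only on the data $(B_1,e_1,e_2,B_2)$ and not on the choices made during normalization. The case $b_2(B_1)=1$ is the genuine pinch point: with a single $S^2$-summand available, the $C_1$-component can only be shifted by multiples of $d_2$, which is exactly why the hypothesis $d_1\equiv\pm1\bmod d_2$ is imposed there and is unnecessary when $b_2(B_1)\ge2$.
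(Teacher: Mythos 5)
Your overall route is the same as the paper's: normalize the Euler class by the Wall-type self-diffeomorphisms of Section \ref{S:SURGERY} (Corollaries \ref{C:HOMOLOGYDIFF}, \ref{C:HOMOLOGY_DIFF_CS}, \ref{C:HOMOLOGY_DIFF_CS_NONSP}), using a spherical class $\xi_0\in H_2(B_2)$ with $e_2\frown\xi_0=d_2$ --- B\'ezout when $b_2(B_1)\geq 2$, the hypothesis $d_1\equiv\pm1\bmod d_2$ when $b_2(B_1)=1$ --- so that the restriction of the Euler class to an $S^2$-type summand becomes primitive, and then apply Theorem \ref{T:MAIN} together with Theorem \ref{T:SUSP_EX}; the differences (making all of $e_1$ primitive and treating $\hat B_1$ by a separate induction, versus the paper's arranging the coefficient on one summand to be $1$ and peeling summand by summand) are organizational.

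However, the step you defer as ``bookkeeping'' is a genuine gap, and it is exactly where the content of the statement lies. First, Corollary \ref{C:P_W2} computes the spin type of $\hat B_1$ from the \emph{modified} class $e_1'$, not from $e_1$: your moves change the class mod $2$ (adding $ld_2$ times a generator with $d_2$ odd, or adding $x_2^*$ to a second summand), so the criterion does not transfer verbatim; and in any case $w_2(B_1)\in\langle e_1'\rangle\bmod 2$ is \emph{not} a condition on the $S^2\ttimes S^{n-2}$-summands alone --- it also forces the restriction to every $S^2\times S^{n-2}$-summand to be even (compare the spin criterion in Theorem \ref{T:S1_BUNDLES}), so the asserted ``translation'' is not correct as stated. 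Second, whenever the chosen spherical class pairs nontrivially with $w_2$ of the complement you are forced to use Corollary \ref{C:HOMOLOGY_DIFF_CS_NONSP}, which trades an $S^2\times S^{n-2}$-summand of the $B_1$-part for an $S^2\ttimes S^{n-2}$-summand (or conversely); after such a flip, the manifold playing the role of $B_1$ when you invoke Theorem \ref{T:MAIN} can have the opposite spin type from the original $B_1$, and it is this new spin type that decides whether $\Sigma_{e_2}B_2$ or $\widetilde{\Sigma}_{e_2}B_2$ appears. Concretely, if $B_1=S^2\times S^{n-2}$, $d_1$ is even and every $\xi\in H_2(B_2)$ with $e_2\frown\xi=\pm d_2$ pairs oddly with $w_2(B_2)$, the normalization necessarily converts the $B_1$-summand into $S^2\ttimes S^{n-2}$, and your construction outputs $\hat B_1\#\Sigma_{e_2}B_2$ rather than $\hat B_1\#\widetilde{\Sigma}_{e_2}B_2$; by Lemma \ref{L:SUSP_TOP} these have different spin types when $e_2\equiv w_2(B_2)\bmod 2$, so the discrepancy cannot be waved away. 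A complete proof must therefore track, through the normalization, both the mod-$2$ change of the Euler class and the changes of summand type of the $B_1$-part, and only then read off which suspension and which spin type occur; citing Corollary \ref{C:P_W2} for the new class and asserting independence of the choices does not suffice.
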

    Theorem \ref{T:S1_BUNDLES} now follows from Theorem \ref{T:S1_BUNDLES2} by setting $B_2=S^n$, in which case $\Sigma_0 B_2\cong \widetilde{\Sigma}_0 B_2\cong S^{n+1}$.

    Before we prove Theorem \ref{T:S1_BUNDLES2}, we first note the following observation.
    \begin{lemma}
        \label{L:COND_DIFFEO}
        A manifold $M$ of the form \eqref{EQ:COND} is uniquely determined (up to diffeomorphism) by its dimension $n$, the Betti numbers $b_2(M),\dots,b_{\lfloor\frac{n}{2}\rfloor}$, and  whether $M$ is spin or not. Conversely, any sequence $b_2,\dots,b_{\lfloor\frac{n}{2}\rfloor}\in\N_0$ with $b_{\frac{n}{2}}$ even if $n$ is even can be realized as the Betti numbers of an $n$-dimensional spin manifold of the form \eqref{EQ:COND}, and of an $n$-dimensional non-spin manifold of the form \eqref{EQ:COND} provided $b_2\geq 1$.
    \end{lemma}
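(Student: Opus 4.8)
The plan is to reduce an arbitrary manifold of the form \eqref{EQ:COND} to a canonical normal form, using the single diffeomorphism
\[
(S^2\ttimes S^{n-2})\#(S^2\ttimes S^{n-2})\cong(S^2\ttimes S^{n-2})\#(S^2\times S^{n-2})
\]
provided by Corollary~\ref{C:CONN_SUM_DIFF} (applied with $M=S^2\ttimes S^{n-2}$, which is simply connected with $w_2\neq0$, hence $w_2(\widetilde M)\neq0$), and then to match that normal form against the listed invariants.

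First I would record the cohomological contribution of each building block. Writing $M\cong B_1\#\dots\#B_l$ as in \eqref{EQ:COND} and using that reduced cohomology of a connected sum of closed $n$-manifolds splits, $\widetilde H^i(B_1\#B_2)\cong\widetilde H^i(B_1)\oplus\widetilde H^i(B_2)$ for $0<i<n$, one gets $b_i(M)=\sum_j b_i(B_j)$ in that range. Here a factor $S^m\times S^{n-m}$ with $2\le m<\tfrac n2$ contributes $1$ to $b_m$ and to $b_{n-m}$; a factor $S^{n/2}\times S^{n/2}$ (possible only for $n$ even) contributes $2$ to $b_{n/2}$; and $S^2\ttimes S^{n-2}$, a nontrivial $S^{n-2}$-bundle over $S^2$ with vanishing Euler class since $n-1>2$, has $b_2=b_{n-2}=1$ and all other reduced Betti numbers zero, exactly like $S^2\times S^{n-2}$. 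Since $n\ge5$ forces $2<n-2$ and $n/2\notin\{2,n-2\}$, the only summands hitting $b_{n/2}$ are the $S^{n/2}\times S^{n/2}$ factors, so $b_{n/2}$ is even when $n$ is even; all summands are simply connected, hence so is $M$ and $b_1=b_{n-1}=0$; and Poincaré duality $b_i=b_{n-i}$ shows all Betti numbers are determined by $b_2,\dots,b_{\lfloor n/2\rfloor}$.

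Next I would pin down the spin type. Since the tangent bundle of a connected sum restricts, on the complement of an open $n$-disc, to that of the corresponding summand, and $H^2$ is unaffected by removing such a disc when $n\ge4$, a Mayer--Vietoris argument gives $H^2(M)\cong\bigoplus_j H^2(B_j)$ carrying $w_2(M)$ to $(w_2(B_j))_j$; as products of spheres and $S^2\times S^{n-2}$ are spin while $S^2\ttimes S^{n-2}$ is not (Lemma~\ref{L:SPIN}), $M$ is spin precisely when it has no twisted summand. For uniqueness, given any $M$ of the form \eqref{EQ:COND} I would apply the displayed diffeomorphism repeatedly to leave no twisted summand (if $M$ is spin) or exactly one (if not), then reorder the rest; the number of factors of each remaining type is now forced by the Betti numbers — $b_m$ copies of $S^m\times S^{n-m}$ for $2<m<\tfrac n2$, $b_{n/2}/2$ copies of $S^{n/2}\times S^{n/2}$ when $n$ is even, and $b_2$ (resp.\ $b_2-1$) copies of $S^2\times S^{n-2}$ in the spin (resp.\ non-spin) case, with $M\cong S^n$ if all of these vanish — so the diffeomorphism type depends only on $n$, the $b_i$, and the spin type.

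For the converse I would simply exhibit this normal form: given $b_2,\dots,b_{\lfloor n/2\rfloor}\in\N_0$ with $b_{n/2}$ even when $n$ is even, build the connected sum just described, prefixing one $S^2\ttimes S^{n-2}$ summand in the non-spin case (which is where $b_2\ge1$ is needed). By the contribution count it realizes the prescribed Betti numbers, by the $w_2$-computation it has the prescribed spin type, and it is of the form \eqref{EQ:COND} by construction. I do not expect a genuine obstacle here; the only points requiring care are the elementary bookkeeping of which summand contributes to which Betti number — where the hypothesis $n\ge5$ rules out the low-dimensional coincidences (e.g.\ the failure of this statement in dimension $4$) — and the correct application of Corollary~\ref{C:CONN_SUM_DIFF}.
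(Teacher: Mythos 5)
Your proposal is correct and follows essentially the same route as the paper: the paper's proof consists precisely of invoking Corollary~\ref{C:CONN_SUM_DIFF} to trade a second $S^2\ttimes S^{n-2}$ summand for $S^2\times S^{n-2}$ and declaring the remaining claims obvious. Your write-up simply fills in that bookkeeping (Betti-number contributions of each summand, the $w_2$ computation, and the explicit normal form for the converse), and all of these details check out.
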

    \begin{proof}
        Since
        \[(S^2\ttimes S^{n-2})\# (S^2\times S^{n-2})\cong (S^2\ttimes S^{n-2})\# (S^2\ttimes S^{n-2}) \]
        by Corollary \ref{C:CONN_SUM_DIFF}, the information whether $M$ is spin or not is sufficient (together with the Betti numbers) to determine its diffeomorphism type. All other claims are obvious.
    \end{proof}
    \begin{proof}[Proof of Theorem \ref{T:S1_BUNDLES2}]
        Let $\xi_0\in H_2(B_2)$ be a class such that $e_2\frown \xi_0=d_2$.
        %Since $B_2$ is simply-connected, the Hurewicz homomorphism $\pi_2(B_2)\to H_2(B_2)$ is surjective, so any class $\xi\in H_2(B_2)$ can be represented by a map $S^2\to B_2$. In particular, there exists a map $S^2\to B_2$ representing a class $\xi_0\in H_2(B_2)$ such that $e_2\frown \xi_0=d_2$.
        We first assume that $b_2(B_1)\geq 2$. Let $k,l\in\Z$ so that $k d_1+l d_2=1$ (which exist since $e$ is primitive). Denote by $x_i^*$ a generator of the second cohomology of the $i$-th summand in $B_1$ that is a sphere bundle over $S^2$. Then, by Corollary \ref{C:HOMOLOGYDIFF}, we can apply a self-diffeomorphism of $B_1$ so that $e_1$ is given by $k d_1 x_1^*+d_1 x_2^*$. Hence, if we write $B_1\# B_2$ as $M\# N$, where $N$ is the summand of $B_1$ with $H^2(N)$ generated by $x_1^*$ and $M$ is the connected sum of all remaining summands, we can apply Corollary \ref{C:HOMOLOGY_DIFF_CS} or \ref{C:HOMOLOGY_DIFF_CS_NONSP} with $\xi=l\xi_0$ (and note that the class $l\xi_0$ can be represented by a map $S^2\to B_2$ by the Hurewicz Theorem). Thus, we obtain a self-diffeomorphism of $B_1\# B_2$ that maps $e$ to $x_1^*+d_1 x_2^*+e_2$.
        %by Corollary \ref{C:HOMOLOGY_DIFF_CS} or \ref{C:HOMOLOGY_DIFF_CS_NONSP} applied to $\xi = l\xi_0$, there exists a self-diffeomorphism of $B_1\# B_2$ mapping $e$  to $x_1^*+d_1 x_2^*+e_2$.
        Hence, the restriction of $e$ to one $(S^2\times S^{n-2})$ or $(S^2\ttimes S^{n-2})$-summand is primitive.

        In case $b_2(B_1)=1$, we obtain the same conclusion by applying Corollary \ref{C:HOMOLOGY_DIFF_CS} or \ref{C:HOMOLOGY_DIFF_CS_NONSP} to $\xi=l\xi_0$, where here $l\in \Z$ is chosen so that $d_1+l d_2=\pm1$.

        We now repeatedly apply Theorem \ref{T:MAIN} to obtain that $P$ is the connected sum of $(S^3\times S^{n-2})$ (which is the total space of the principal $S^1$-bundle over $(S^2\times S^{n-2})$ or $(S^2\ttimes S^{n-2})$ with Euler class $x_1^*$) and twisted suspensions of $B_2$ along $e_2$ and of the remaining products of spheres or $(S^2\ttimes S^{n-2})$-summands. Thus, the claim now follows from Theorem \ref{T:SUSP_EX}.
    \end{proof}

    \begin{remark}
        The proof shows that Theorem \ref{T:S1_BUNDLES2} can be generalized to the case where $B_2$ is not simply-connected if we assume that there exists a homology class $\xi\in H_2(B_2)$ with $e_2\frown\xi=ld_2$ that is represented by a map $S^2\to B_2$. In this case $e_2$ might change within its equivalence class in $H^2_F(B_2)$. This can be avoided if one assumes that $H_1(B_2)$ is torsion-free, so that $H^2_F(B_2)\cong H^2(B_2)$.
    \end{remark}

    \begin{proof}[Proof of Corollary \ref{C:FREE_TORUS}]
        We first consider the case $k=1$ and assume that $M$ is of the form \eqref{EQ:COND} and its partial Euler characteristics satisfy the stated conditions. We define the manifold $B$, such that $B$ is of the form \eqref{EQ:COND}, has dimension $n-1$, is non-spin, and has Betti numbers $b_i(B)=(-1)^i\chi_i(M)$ for all $i=2,\dots,\lfloor\frac{n-1}{2}\rfloor$. Note that, if $n-1$ is even, then $b_{\frac{n-1}{2}}(B)$ is even by assumption, so $B$ is well-defined and unique by Lemma \ref{L:COND_DIFFEO}.

        Now, let $e\in H^2(B)$ be a primitive element that satisfies $e\equiv w_2(B)\mod 2$ if and only if $M$ is spin. We define $P$ as the total space of the principal $S^1$-bundle over $B$ with Euler class $e$. By Theorem \ref{T:S1_BUNDLES}, the manifold $P$ is of the form \eqref{EQ:COND} and satisfies the following conditions:
        \begin{itemize}
            \item $b_2(P)=b_2(B)-1=\chi_2(M)-1=b_2(M)$.
            \item For $2<i<\lfloor \frac{n-1}{2}\rfloor$, or $i=\frac{n-1}{2}$ if $n-1$ is even, we have
            \[b_i(P)=b_{i-1}(B)+b_i(B)=(-1)^{i-1}\chi_{i-1}(M)+(-1)^i\chi_i(M)=b_i(M).\] 
            \item If $n$ is even, we have
            \[b_{\frac{n}{2}}(P)=2b_{\frac{n-2}{2}}(B)=2(-1)^{\frac{n}{2}-1}\chi_{\frac{n}{2}-1}(M)=b_{\frac{n}{2}}(M),\]
            since $0=\chi(M)=2\chi_{\frac{n}{2}-1}(M)+(-1)^{\frac{n}{2}}b_{\frac{n}{2}}(M)$.
        \end{itemize}
        Since $P$ is spin if and only if $M$ is spin by Lemma \ref{L:PR_BDL_TOPOLOGY}, it follows that $P$ is diffeomorphic to $M$ by Lemma \ref{L:COND_DIFFEO}. Hence, $M$ admits a free $S^1$-action with quotient of the form \eqref{EQ:COND}.

        For general $k$ we iterate the above argument to obtain a sequence $M\cong P_k\xrightarrow{\pi_k}\dots\xrightarrow{\pi_1} P_0$ of principal $S^1$-bundles with Euler classes $e(\pi_i)\in H^2(P_{i-1})$, so that each $P_i$ is of the form \eqref{EQ:COND}. Let $e_i\in H^2(P_0)$ so that $e(\pi_i)=\pi_{i-1}^*\dots\pi_1^* e_i$, which exists since each $e(\pi_i)$ is primitive and the induced map on $H^2$ of each $\pi_i$ can be identified with the quotient map by $e_i$, by Lemma \ref{L:PR_BDL_TOPOLOGY}. Define $P$ as the principal $T^k$-bundle with Euler class $(e_1,\dots,e_k)$. Then, by Lemma \ref{L:TORUS_BDL_STR}, $P$ is diffeomorphic to $P_k\cong M$, showing that $M$ admits a free $T^k$-action with quotient $P_0$, which is of the form \eqref{EQ:COND}.

        Conversely, assume that $M$ admits a free $S^1$-action with quotient of the form \eqref{EQ:COND}. Then $M$ is the total space of a principal $S^1$-bundle with base $B$ of the form \eqref{EQ:COND} and it follows inductively from Theorem \ref{T:S1_BUNDLES} that $b_i(B)=(-1)^i\chi_i(M)$ for $i=2,\dots,\lfloor\frac{n-1}{2}\rfloor$, which is non-negative since $b_i(B)\geq 0$. Further, by Lemma \ref{L:COND_DIFFEO}, we have that $(-1)^{\frac{n-1}{2}}\chi_{\frac{n-1}{2}}(M)$ is even when $n-1$ is even, and, since $b_{\frac{n}{2}}(M)=2b_{\frac{n}{2}-1}(B)=(-1)^{\frac{n}{2}-1}\chi_{\frac{n}{2}-1}(M)$ if $n$ is even, it also follows that $\chi_n(M)=0$ if $n$ is even. The statement for general $k$ now follows by induction.      
    \end{proof}

    To prove Theorem \ref{T:FREE_CIRCLE}, we first prove the following lemmas.
    \begin{lemma}
        \label{L:PROJ_BUNDLE}
        Let $E\xrightarrow{\xi}S^2$ be a complex vector bundle of rank $r+1$ and let $P(E)\to S^2$ be the associated projective bundle, i.e.\ $P(E)$ consists of all complex one-dimensional subspaces of fibers in $E$, so we obtain a fiber bundle with fiber $\C P^r$. Let $P\to P(E)$ denote the sphere bundle of the tautological line bundle over $P(E)$. Then
        \[ P\cong \begin{cases}
            S^2\times S^{2r+1},\quad & \text{ if } c_1(\xi)\text{ is even},\\
            S^2\ttimes S^{2r+1},\quad & \text{ if } c_1(\xi)\text{ is odd}.
        \end{cases} \]
    \end{lemma}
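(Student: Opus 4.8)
The plan is to recognize the total space $P$ as the unit sphere bundle of $E$ over $S^2$ and then invoke the classification of linear sphere bundles over a surface from Section~\ref{S:PRELIMINARIES}. Since the space $S^2\ttimes S^{2r+1}$ in the statement is defined only for $2r+1\geq 2$ (Lemma~\ref{L:SPIN}), we may and will assume $r\geq 1$, so that the real rank $2r+2$ of $E$ is at least $4$. Write $\gamma\to P(E)$ for the tautological line bundle, whose fiber over a complex line $\ell\subseteq E_b$ is $\ell$ itself. A point of the unit sphere bundle of $\gamma$ lying over $\ell$ is a unit vector $v\in\ell$, and since $v$ determines $\ell=\C v$, the assignment $v\mapsto(\C v,v)$ defines a diffeomorphism from the unit sphere bundle $S(E)$ of $E$ onto $P$, under which the composite $P\to P(E)\to S^2$ becomes the bundle projection $S(E)\to S^2$. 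In particular, $P$ is the total space of the linear $S^{2r+1}$-bundle $S(E)$ over $S^2$, whose structure group may be taken to be $\mathrm{SO}(2r+2)$ because $E$, being complex, is canonically oriented.

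It then remains to determine when $S(E)\to S^2$ is trivial. The oriented vector bundle $E$ and its sphere bundle are classified by the same map $S^2\to\mathrm{BSO}(2r+2)$, and since $\pi_1(\mathrm{SO}(2r+2))\cong\Z/2$ there are exactly two linear $S^{2r+1}$-bundles over $S^2$: the trivial one, with total space $S^2\times S^{2r+1}$, and the unique non-trivial one, with total space $S^2\ttimes S^{2r+1}$ (Lemma~\ref{L:SPIN}). By Proposition~\ref{P:BUNDLES_SURFACES}(2), the bundle $E$, and hence also $S(E)\to S^2$, is the trivial bundle if and only if $w_2(E)=0$; and $w_2(E)$ is the mod-$2$ reduction of $c_1(\xi)\in H^2(S^2)\cong\Z$ since $E$ is complex (see, e.g., \cite{MS74}). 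Therefore $P\cong S^2\times S^{2r+1}$ when $c_1(\xi)$ is even and $P\cong S^2\ttimes S^{2r+1}$ when $c_1(\xi)$ is odd.

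I do not expect a genuine obstacle here. The only point requiring care is the identification of $P$ with $S(E)$ over $S^2$, together with the remark that triviality of this sphere bundle is equivalent to triviality of the underlying oriented vector bundle $E$ (both amount to the vanishing of the same element of $\pi_1(\mathrm{SO}(2r+2))$); once this is in place, the conclusion follows directly from Proposition~\ref{P:BUNDLES_SURFACES} and Lemma~\ref{L:SPIN}.
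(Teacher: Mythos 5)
Your proof is correct and follows essentially the same route as the paper: identify the sphere bundle of the tautological line bundle with the unit sphere bundle $S(E)\to S^2$ via $v\mapsto(\C v,v)$, then use $w_2(\xi)=c_1(\xi)\bmod 2$ together with the classification of linear sphere bundles over $S^2$ (Proposition~\ref{P:BUNDLES_SURFACES} and Lemma~\ref{L:SPIN}). The extra details you supply (the harmless restriction $r\geq1$ and the equivalence of triviality for $E$ and its sphere bundle) are just explicit versions of steps the paper leaves implicit.
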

    \begin{proof}
        By definition, the total space of the sphere bundle $S(T)\to P(E)$ of the tautological line bundle $T\to P(E)$ is given by
        \[ S(T)=\{(v,\varphi)\in E\times P(E)\mid v\in\varphi,\, \lVert v\rVert=1\}.  \]
        By projection onto the first coordinate, we obtain an identification of $S(T)$ with the total space $S(E)$ of the sphere bundle of $\xi$. Since $w_2(\xi)=c_1(\xi)\mod 2$, the claim follows.
    \end{proof}
    \begin{lemma}
        \label{L:CPm_BUNDLE}
        There exists a linear $S^r$-bundle $E\to\C P^m$, $r\geq 2$, with the following properties:
        \begin{enumerate}
            \item The total space $E$ is spin if and only if $m$ is even.
            \item If $P\to E$ denotes the principal $S^1$-bundle, whose Euler class is given by the pull-back of a generator of $H^2(\C P^m)$, then $P\cong S^{2m+1}\times S^r$.
        \end{enumerate}
        %such that $E$ is spin if and only if $m$ is even. Further, if $P\to E$ denotes the principal $S^1$-bundle, whose Euler class is given by the pull-back of a generator of $H^2(\C P^m)$, then $P\cong S^{2m+1}\times S^r$.
    \end{lemma}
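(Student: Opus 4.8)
The plan is to exhibit $E$ explicitly as the unit sphere bundle of a rank-$(r+1)$ real vector bundle over $\C P^m$ built from a complex line bundle stabilized by a trivial summand, and then to verify the two properties by a Gysin-sequence computation of $w_2$ and a base-change identification of $P$. Concretely, I would fix a complex line bundle $L\to\C P^m$ whose first Chern class $c_1(L)$ generates $H^2(\C P^m)$ — say the dual of the tautological line bundle — write $L_{\R}$ for its underlying oriented real plane bundle, and set $V=L_{\R}\oplus\underline{\R^{r-1}}_{\C P^m}$, an oriented real vector bundle of rank $r+1$. Then $E\xrightarrow{\xi}\C P^m$ is defined to be the unit sphere bundle $S(V)$; this is a linear $S^r$-bundle, and the construction makes sense for every $r\geq 2$ since $\underline{\R^{r-1}}$ is then a genuine summand.

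For property (1), I would invoke the standard isomorphism $TE\oplus\underline{\R}_E\cong\xi^*T\C P^m\oplus\xi^*V$ (exactly as in the computation in the proof of Lemma \ref{L:SPIN}). This gives $w_1(E)=0$, so $E$ is orientable, and $w_2(E)=\xi^*\bigl(w_2(\C P^m)+w_2(V)\bigr)$. Since $w_2(\C P^m)=(m+1)x$ and $w_2(V)=w_2(L_{\R})=c_1(L)\bmod 2=x$, where $x\in H^2(\C P^m;\Z/2)$ is the generator, this equals $\xi^*\bigl((m+2)x\bigr)$. The Gysin sequence of the $S^r$-bundle $\xi$ shows that $\xi^*\colon H^2(\C P^m;\Z/2)\to H^2(E;\Z/2)$ is injective, because $r\geq 2$ forces the relevant preceding term $H^{1-r}(\C P^m;\Z/2)$ to vanish. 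Hence $w_2(E)=0$ precisely when $m+2$, equivalently $m$, is even, which is property (1).

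For property (2), I would first recall that the principal $S^1$-bundle over $\C P^m$ whose Euler class generates $H^2(\C P^m)$ is the Hopf bundle $p\colon S^{2m+1}\to\C P^m$ (the choice of generator, hence of conjugate Hopf bundle, being irrelevant up to diffeomorphism of the total space). Since $P\to E$ is by definition the principal $S^1$-bundle classified by $\xi^*$ of that generator, $P$ is the pull-back $\xi^*S^{2m+1}$, and its total space $E\times_{\C P^m}S^{2m+1}$ is equally the total space of the pulled-back $S^r$-bundle $p^*\xi\colon p^*E\to S^{2m+1}$, namely $S(p^*V)$. Now $p^*V=p^*L_{\R}\oplus\underline{\R^{r-1}}$, and $p^*L$ is a complex line bundle over $S^{2m+1}$, hence trivial since $H^2(S^{2m+1})=0$; therefore $p^*V$ is trivial and $P\cong S(p^*V)\cong S^{2m+1}\times S^r$.

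The only delicate point is the base-change identification in the last paragraph: that the principal $S^1$-bundle $\xi^*S^{2m+1}$ over $E$ and the $S^r$-bundle $p^*E$ over $S^{2m+1}$ share the same total space, being two readings of the fiber product $E\times_{\C P^m}S^{2m+1}$. Once this symmetry of fiber products is set up carefully, the argument reduces to the vanishing of $H^2(S^{2m+1})$ together with routine Stiefel--Whitney bookkeeping, for which the needed inputs — $w_2(\C P^m)=(m+1)x$ and $w_2$ of the realification of a complex line bundle — are standard and already used elsewhere in the paper.
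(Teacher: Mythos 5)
Your proposal is correct and follows essentially the same route as the paper: the paper also takes the sphere bundle of (a) line bundle with generating Chern class stabilized by $\underline{\R}^{r-1}_{\C P^m}$, computes $w_2(E)$ via $TE\oplus\underline{\R}_E\cong\xi^*T\C P^m\oplus\xi^*V$, and identifies $P$ as the pull-back of the Hopf fibration, concluding triviality over $S^{2m+1}$ from the $S^1$ structure group (equivalently, your observation that $H^2(S^{2m+1})=0$ kills $p^*L$). Your explicit Gysin-sequence check that $\xi^*$ is injective on $H^2(\,\cdot\,;\Z/2)$ is a detail the paper leaves implicit, but the arguments are the same in substance.
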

    \begin{proof}
        We define $\overline{E}\xrightarrow{\pi} \C P^m$ as the sum of the tautological line bundle with the trivial bundle $\underline{\R}^{r-1}_{\C P^m}$. Then $w_2(\pi)$ is non-trivial (see e.g.\ \cite[Theorem 14.4]{MS74}). If $E\to \C P^m$ denotes the corresponding sphere bundle, we have $TE\oplus\underline{\R}_E\cong \pi^*T\C P^m\oplus \pi^*\overline{E}$, cf.\ Lemma \ref{L:SPIN}. Hence, $w_2(E)$ is trivial if and only if $w_2(\C P^m)$ is non-trivial, which is the case if and only if $m$ is even.

        By construction, the bundle $P\to E$ fits into the following pull-back diagram:
        \[
            \begin{tikzcd}
                P\arrow{r}\arrow{d} &S^{2m+1}\arrow{d}\\
                E\arrow{r} & \C P^m
            \end{tikzcd}
        \]
        Here $S^{2m+1}\to\C P^m$ denotes the Hopf fibration (i.e.\ the principal $S^1$-bundle whose Euler class is a generator of $H^2(\C P^m)$). It follows that $P\to S^{2m+1}$ is a linear $S^r$-bundle. Since the structure group of this bundle is contained in $\mathrm{SO}(2)\cong S^1$, and since $S^1$ has trivial higher homotopy groups, this bundle is trivial, so $P\cong S^{2m+1}\times S^r$.
    \end{proof}
   
    \begin{proof}[Proof of Theorem \ref{T:FREE_CIRCLE}]
        Let $E(m,r)$ denote the total space of the linear $S^r$-bundle over $\C P^m$ from Lemma \ref{L:CPm_BUNDLE}. We then set
        \[ E_m^r=\begin{cases}
            \C P^m\times S^r,\quad & m\text{ odd},\\
            E(m,r),\quad & m\text{ even},
        \end{cases}\quad \text{ and }\widetilde{E}_m^r=\begin{cases}
            E(m,r),\quad & m\text{ even},\\
            \C P^m\times S^r,\quad & m\text{ odd},
        \end{cases} \]
        so that $E_m^r$ is spin and $\widetilde{E}_m^r$ is non-spin, and the principal $S^1$-bundle over $E_m^r$ or $\widetilde{E}_m^r$ whose Euler class is the pull-back of a generator of $H^2(\C P^m)$ has total space $S^{2m+1}\times S^r$.
        
        \emph{(1).} We define
        \[B=\begin{cases}
            \left(\#_l\C P^{\frac{n-1}{2}}\right)\#\left(\#_{i=1}^{\frac{n-3}{2}}\#_{b_{2i+1}(M)}E_i^{n-2i-1}\right),\quad & n\equiv 3\mod 4,\\
            \left(\#_l\C P^{\frac{n-1}{2}}\right)\#\left(\#_{i=1}^{\frac{n-3}{2}}\#_{b_{2i+1}(M)}\widetilde{E}_i^{n-2i-1}\right),\quad & n\equiv 1\mod 4.\\
        \end{cases} \]
        for some $l\geq 0$. Let $e\in H^2(B)$ be a class that restricts to a generator of $H^2(\C P^{\frac{n-1}{2}})$ on each $\C P^{\frac{n-1}{2}}$-summand and to the pull-back of a generator of $H^2(\C P^i)$ on each $E_i^j$ and $\widetilde{E}_i^j$-summand. Then, for the principal $S^1$-bundle $P\to B$ with Euler class $e$, we have by Theorems \ref{T:MAIN} and \ref{T:SUSP_EX} and Lemma \ref{L:CPm_BUNDLE} (note that the summands of $B$ are either all spin or all non-spin)
        \[ P\cong \#_a (S^2\times S^{n-2})\#_{i=1}^{\frac{n-3}{2}}\#_{b_{2i+1(M)}}(S^{2i+1}\times S^{n-2i-1}), \]
        where $a=l-1+\sum_{i=1}^{\frac{n-5}{2}} b_{2i+1}(M)$. Thus, since $b_{2i+1}(M)= b_{n-2i-1}(M)$, we have for $3\leq i\leq \frac{n-1}{2}$, that $b_i(P)=b_i(M)$. Hence, if $M$ is spin, it becomes diffeomorphic to $P$ after connected sum with sufficiently many copies of $(S^2\times S^{n-2})$ and by choosing $l$ large enough.

        For the non-spin case, or the case where $M$ is spin and we take connected sums with copies of $(S^2\ttimes S^{n-2})$, we replace one $E_i^j$-summand in $B$ by $\widetilde{E}_i^j$ or vice versa, provided there is a non-trivial summand of this form. Then $P$ has a summand of the form $(S^2\ttimes S^{n-2})$, hence the claim follows for $l$ large enough by Corollary \ref{C:CONN_SUM_DIFF}. If there exists no such summand, we additionally introduce a summand for $B$ given by $E_{\frac{n-3}{2}}^2\#\widetilde{E}_{\frac{n-3}{2}}^2$, which results in an additional summand for $P$ given by $\#_4(S^2\ttimes S^{n-2}) $.
        % \[B=P(E)\#_l\C P^{\frac{n-1}{2}}\#_{i=1}^{\frac{n-3}{2}}\#_{b_{2i+1}(M)}(\C P^{i}\times S^{n-2i-1}) \]
        % for some $l\geq  0$, where $P(E)$ is the total space of a projective bundle of a vector bundle as in Lemma \ref{L:PROJ_BUNDLE} with odd first Chern class. Let $e\in H^2(B)$ be a class that restricts to a generator of $H^2(\C P^i)$ on each summand involving a $\C P^i$ and that restricts to the Euler class of the tautological line bundle on $P(E)$. Let $P\to B$ be the principal $S^1$-bundle with Euler class $e$. Then, by Theorem \ref{T:MAIN}, Lemma \ref{L:SUSP_CPm} and Corollary \ref{C:CONN_SUM_DIFF}, we have that
        % \[P\cong \#_a (S^2\ttimes S^{n-2})\#_{i=1}^{\frac{n-3}{2}}\#_{b_{2i+1(M)}}(S^{2i+1}\times S^{n-2i-1}), \]
        % where $a=l+1+\sum_{i=1}^{\frac{n-3}{2}} b_{2i+1}(M)$. Thus, since $b_{2i+1}(M)= b_{n-2i-1}(M)$, we have for $3\leq i\leq \frac{n-1}{2}$, that $b_i(P)=b_i(M)$. Hence, $M$ becomes diffeomorphic to $P$ after connected sum with sufficiently many copies of $S^2\ttimes S^{n-2}$ and by choosing $l$ large enough.
        
        \emph{(2).} We consider each dimension separately. First, note that the result in dimension 5 was shown in \cite[Corollary 2]{DL05} by proving that any $5$-manifold of the form \eqref{EQ:COND} is the total space of a principal circle bundle over a closed, simply-connected 4-manifold. The $6$-dimensional case follows directly from Corollary \ref{C:FREE_TORUS} (see also \cite[Corollary B]{CG20} and \cite[Theorem C]{Duan}).
        
        For dimensions $7$--$10$, we summarize in Table \ref{TB:BUNDLES} how the base manifold $B$ in each case is given. One then easily verifies, using Theorems \ref{T:MAIN} and \ref{T:SUSP_EX}, that the total space of the principal circle bundle over $B$ with suitable Euler class $e$ is diffeomorphic to $M$. By $P(E)$ we denote the total space of a projective bundle of a vector bundle with odd first Chern class of appropriate dimension (cf.\ Lemma \ref{L:PROJ_BUNDLE}). The Euler class $e$ will always be the pull-back of a generator of $H^2(\C P^i)$ on each summand of the form $E_i^j$, the Euler class of the tautological line bundle over $P(E)$, and a generator of the second cohomology on each summand of the form $S^2\times S^{n-2}$ and $S^2\ttimes S^{n-2}$.

        \begin{table}[ht]
        \begin{center}
            \caption{Manifolds $M$ of the form \eqref{EQ:COND} and quotient manifold $B$ of a free circle action on $M$.}
            \label{TB:BUNDLES}
         {\renewcommand{\arraystretch}{1.2}
            \begin{tabular}{|c|c|c|c|c|c| l|l |} \hline
                \multicolumn{7}{|l|}{Manifold $M$, $n=\dim(M)$} & Base manifold $B$ \\\cline{1-7}
                $n$ & $w_2$ & $b_2$ & $b_3$ & $b_4$ & $b_5$ & condition &\\
                \hline\hline 
                $7$ & $0$ & $p$ & $q$ & & & $q$ even & $\#_{p+1}\C P^3\#_{\frac{q}{2}}(S^3\times S^3)$ \\
                & $0$ & $p$ & $q$ & & & $q$ odd & $(S^2\times S^4)\#_{p}\C P^3\#_{\frac{q-1}{2}}(S^3\times S^3)$\\
                & $1$ & $p$ & $q$ & & & $q\geq 2$ even, $p\geq 1$ & $(S^2\ttimes S^4)\#(S^2\times S^4)\#_{p-1}\C P^3\#_{\frac{q-2}{2}}(S^3\times S^3)$ \\
                & $1$ & $p$ & $q$ & & & $q$ odd, $p\geq 1$ & $(S^2\ttimes S^4)\#_{p}\C P^3\#_{\frac{q-1}{2}}(S^3\times S^3)$\\
                & $1$ & $p$ & $0$ & & & $p>1$ & $\widetilde{E}_2^2\#_{p-1}\C P^3$\\
                & $1$ & $1$ & $0$ & & &  & $P(E)$\\\hline
                8 & $0$ & $p$ & $q$ & $2r$ & & $p+r+1=q$ & $\#_{p+1}(S^2\times S^5)\#_r(S^3\times S^4)$\\
                & $1$ & $p$ & $q$ & $2r$ & & $p+r+1=q$ & $(S^2\ttimes S^5)\#_{p}(S^2\ttimes S^5)\#_r(S^3\times S^4)$\\\hline
                9 & $0$ & $p$ & $q$ & $r$ & & $q>0$,  & $\#_{p+1-a}\C P^4\#_a (S^2\ttimes S^6)\#_b(S^3\times S^5)\#_{\frac{r-b}{2}} (S^4\times S^4)$\\ & & & & & & $1+p+r\geq q$ & for $a+b=q$, $a\leq p+1$, $b\leq r$, $r-b$ even\\
                & $0$ & $q$ & $0$ & $r$ & & $r$ even & $\#_{p+1}\C P^4\#_{\frac{r}{2}}(S^4\times S^4) $\\
                & $0$ & $p$ & $0$ & $r$ & & $r$ odd & $\widetilde{E}_2^4\#_{p}\C P^4\#_{\frac{r-1}{2}}(S^4\times S^4) $\\
                & $1$ & $p$ & $q$ & $r$ & & $p>0$, $q>1$, & $\#_{p+1-a}\C P^4\#_a (S^2\times S^6)\#_b(S^3\times S^5)\#_{\frac{r-b}{2}} (S^4\times S^4)$\\ & & & & & & $1+p+r\geq q$ & for $a+b=q$, $1\leq a\leq p+1$, $b\leq r$, $r-b$ even\\
                & $1$ & $p$ & $1$ & $r$ & & $p>0$, $r$ even & $\#_{p}\C P^4\# (S^2\times S^6)\#_{\frac{r}{2}} (S^4\times S^4)$\\ 
                & $1$ & $p$ & $1$ & $r$ & & $p>0$, $r$ odd & $\#_{p-1}\C P^4\#(S^2\ttimes S^6)\#E_2^4\#_{\frac{r-1}{2}} (S^4\times S^4)$\\
                & $1$ & $p$ & $0$ & $r$ & & $p>0$, $r\geq 2$ even &  $E_2^4\# \widetilde{E}_2^4\#_{p-1}\C P^4\#_{\frac{r-2}{2}}(S^4\times S^4) $\\
                & $1$ & $p$ & $0$ & $r$ & & $p>0$, $r$ odd &  $E_2^4\#_{p}\C P^4\#_{\frac{r-1}{2}}(S^4\times S^4) $\\
                & $1$ & $p$ & $0$ & $0$ & & $p>1$ &  $E_3^2\#_{p-1}\C P^4$\\
                & $1$ & $1$ & $0$ & $0$ & & & $P(E)$\\\hline
                $10$ & $0$ & $p$ & $q$ & $r$ & $2s$ & $p+r+1=q+s$, & $\#_q (S^2\times S^7)\#_r (S^4\times S^5)\#_{s-r}E_2^5 $ \\
                & & & & & & $s\geq r$ & \\
                & $0$ & $p$ & $q$ & $r$ & $2s$ & $p+r+1=q+s$, & $\#_{p+1} (S^2\times S^7)\#_{r-s}(S^3\times S^6)\#_s (S^4\times S^5)$ \\
                & & & & & & $s< r$ & \\
                & $1$ & $p$ & $q$ & $r$ & $2s$ & $p+r+1=q+s$, & $(S^2\ttimes S^7)\#_{q-1} (S^2\ttimes S^7)\#_r (S^4\times S^5)\#_{s-r}E_2^5 $ \\
                & & & & & & $s\geq r$, $p,q>0$ & \\
                & $1$ & $p$ & $q$ & $r$ & $2s$ & $p+r+1=q+s$, & $(S^2\times S^7)\#_{p} (S^2\ttimes S^7)\#_{r-s}(S^3\times S^6)\#_s (S^4\times S^5)$ \\
                & & & & & & $s< r$, $p>0$ & \\
                & $1$ & $p$ & $0$ & $r$ & $2s$ & $p+r+1=s$, & $\#_r (S^4\times S^5)\#\widetilde{E}_2^5\#_{s-r-1}E_2^5 $ \\
                & & & & & & $p>0$ & \\\hline
            \end{tabular}
            }
        \end{center}
        \end{table}
    \end{proof}

    We now show that the additional assumption in the $9$-dimensional case in Theorem \ref{T:FREE_CIRCLE} cannot be removed in general.
    \begin{proposition}
        \label{P:S3xS6}
        The manifold $\#_{2p+1} (S^3\times S^6)$ does not admit a free circle action for any $p>0$.
    \end{proposition}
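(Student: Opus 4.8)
The plan is to assume that $M:=\#_{2p+1}(S^3\times S^6)$ admits a free circle action and to derive a contradiction from the cup‑product structure of the quotient. By Lemma~\ref{L:FREE_ACTION_CHAR} such an action exhibits $M$ as the total space of a principal $S^1$‑bundle $M\xrightarrow{\pi}B$ over the closed $8$‑manifold $B=M/S^1$. First I would record that $B$ is simply‑connected (from the homotopy exact sequence of the bundle and $\pi_1(M)=0$), hence orientable, and that by Lemma~\ref{L:PR_BDL_TOPOLOGY}, together with $H^2(M)=0$ and $\pi_1(M)=0$, the base satisfies $b_2(B)=1$ with the Euler class $e\in H^2(B)$ a generator (so $e\ne 0$ rationally); by Poincaré duality $b_6(B)=1$ as well. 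Note that the standard obstructions are inconclusive: $\chi(M)=0$ and, since $M$ is stably parallelizable, all its Stiefel--Whitney and Pontryagin numbers vanish, so Corollary~\ref{L:PR_BDL_CHAR_NR} gives nothing.

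The heart of the argument is the rational Gysin sequence
\[\cdots\to H^{k-2}(B;\Q)\xrightarrow{\smile e}H^k(B;\Q)\xrightarrow{\pi^*}H^k(M;\Q)\xrightarrow{\pi_*}H^{k-1}(B;\Q)\xrightarrow{\smile e}H^{k+1}(B;\Q)\to\cdots\]
combined with Poincaré duality on $B$. Using $H^j(M;\Q)=0$ for $j\in\{1,2,4,5\}$ I would extract: $\smile e\colon H^6(B;\Q)\to H^8(B;\Q)$ is an isomorphism (by Poincaré duality, as $e$ is a basis of $H^2(B;\Q)$); $\smile e\colon H^3(B;\Q)\to H^5(B;\Q)$ is onto, hence an isomorphism since $b_3(B)=b_5(B)$; $H^4(B;\Q)$ is spanned by $e^2$, so $\delta:=\dim H^4(B;\Q)\in\{0,1\}$; and, reading $H^3(M;\Q)=\Q^{2p+1}$ off the sequence, $b_3(B)=2p+\delta$. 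Fixing a generator $\gamma$ of $H^6(B;\Q)$ with $\gamma\smile e\ne 0$, I then consider the bilinear form $q$ on $H^3(B;\Q)$ defined by $a\smile b=q(a,b)\,\gamma$. It is \emph{alternating}, since $\deg a=3$ is odd, and \emph{non-degenerate}: given $a\ne 0$, Poincaré duality supplies $c\in H^5(B;\Q)$ with $a\smile c\ne 0$, and writing $c=b\smile e$ (surjectivity of $\smile e$ on degree $3$) yields $q(a,b)(\gamma\smile e)=a\smile c\ne 0$.

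It remains to play off these two properties against the value of $\delta$. If $\delta=1$, then $b_3(B)=2p+1$ is odd, and no non-degenerate alternating form exists on an odd-dimensional space -- a contradiction. If $\delta=0$, then $H^4(B;\Q)=0$, and the Gysin sequence shows $\pi^*\colon H^6(B;\Q)\to H^6(M;\Q)$ is injective; but in $\#_{2p+1}(S^3\times S^6)$ every cup product of two degree-$3$ classes vanishes, so $\pi^*(a\smile b)=\pi^*a\smile\pi^*b=0$ for all $a,b\in H^3(B;\Q)$, forcing $q\equiv 0$, which contradicts non-degeneracy because $b_3(B)=2p\ge 2$. In both cases we reach a contradiction, so $M$ admits no free circle action.

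I expect the bookkeeping with the Gysin sequence and Poincaré duality -- pinning down that $\smile e\colon H^3(B;\Q)\to H^5(B;\Q)$ is an isomorphism and that $H^4(B;\Q)$ is at most one-dimensional -- to be the most delicate part; once these structural facts are in hand, the alternating-form dichotomy is immediate. A secondary point worth flagging is that the $\delta=0$ branch genuinely uses the ring structure of the connected sum (vanishing of $H^3\smile H^3$), while the $\delta=1$ branch uses only the parity of $b_3(M)$ -- which is precisely why the statement concerns the odd connected sums $\#_{2p+1}(S^3\times S^6)$ and not $\#_{2p}(S^3\times S^6)$.
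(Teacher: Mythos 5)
Your proof is correct and follows essentially the same route as the paper: the (rational) Gysin sequence plus Poincar\'e duality to pin down $H^*(B;\Q)$, the non-degenerate skew-symmetric cup-product form on $H^3(B;\Q)$, and the vanishing of degree-$3$ cup products in $\#_{2p+1}(S^3\times S^6)$. The only difference is organizational: you split into cases according to $\dim H^4(B;\Q)\in\{0,1\}$, whereas the paper first uses the trivial degree-$3$ products (via exactness of the Gysin sequence) to force $H^4(B)\cong\Z$ and then gets the parity contradiction $b_3(B)=2p+1$.
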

    \begin{proof}
        Suppose there exists a principal $S^1$-bundle $P\xrightarrow{\pi} B$ with $P\cong \#_{2p+1}(S^3\times S^6)$. Then $B$ is a closed $8$-manifold, and, by the long exact sequence of homotopy groups for the bundle $\pi$, the manifold $B$ is simply-connected. Then, by the Gysin sequence, cup product with the Euler class $\cdot\smile e(\pi)\colon H^i(B)\to H^{i+2}(B)$ is an isomorphism for $i=0,3,6$, injective for $i=4$ and surjective for $i=2$. In particular, $H^2(B)\cong H^6(B)\cong\Z$ and $H^4(B)$ is either trivial or isomorphic to $\Z$, in particular torsion-free. By using Poincaré duality and the universal coefficient theorem, it follows that $B$ has torsion-free cohomology.

        From the Gysin sequence we can now extract the following exact sequence:
        \[ 0\longrightarrow H^3(B)\xrightarrow{\pi^*}H^3(P)\longrightarrow H^2(B)\xrightarrow{\cdot\smile e(\pi)}H^4(B)\longrightarrow0. \]
        It follows that, depending on whether $H^4(B)$ is trivial or isomorphic to $\Z$, $H^3(B)$ is isomorphic to $\Z^{2p}$ or $\Z^{2p+1}$. We now show that only the latter can be the case.

        For that, let $x\in H^3(B)$ and $y\in H^5(B)$ with $x\smile y\neq 0$, which exist by Poincaré duality (here, we use $p>0$). Since $\cdot\smile e(\pi)\colon H^3(B)\to H^5(B)$ is an isomorphism, there exists $y'\in H^3(B)$ with $y=y'\smile e(\pi)$. In particular, $x\smile y'\neq 0$. Since $\pi^*x\smile \pi^*y'=0$ (as $P$ has trivial cup products in degree $3$), by exactness of the Gysin sequence, there exists $z\in H^4(B)$ with $z\smile e(\pi)=x\smile y'\neq 0$. In particular, $H^4(B)$ is non-trivial, so $H^4(B)\cong\Z$ and $H^3(B)\cong \Z^{2p+1}$.

        By Poincaré duality and since $\cdot\smile e(\pi)\colon H^6(B)\to H^8(B)$ is an isomorphism, the cup product $H^3(B)\times H^3(B)\to H^6(B)\cong \Z$ is a non-degenerate skew-symmetric bilinear form. In particular, $H^3(B)$ has even rank, which is a contradiction.
    \end{proof}

    Theorem \ref{T:COHOM_4} is a direct consequence of the following theorem. Recall that we define $a_{ki}(r)$ for $r,k\in\N_0$ and $2\leq i \leq k+2$ by
    \[a_{ki}(r)= (i-2)%{k\choose i-1}
    \binom{k}{i-1}
    +r%{k\choose i-2}
    \binom{k}{i-2}
    +(2+k-i)%{k\choose i-3}
    \binom{k}{i-3}. 
    \]
    \begin{theorem}
        \label{T:Tk_BUNDLES}
		Let $P$ be the total space of a principal $T^k$-bundle over a closed, simply-connected 4-manifold $B$ and denote by $e(\pi)=(e_1(\pi),\dots,e_k(\pi))\in H^2(B)^k$ its Euler class. If $P$ is simply-connected, or, equivalently, $e(\pi)$ can be extended to a basis of $H^2(B)$, then $P$ is of the form \eqref{EQ:COND} with $b_i(P)=a_{ki}(b_2(B)-k)$ and $P$ is spin if and only if $w_2(B)$ is contained in the subspace of $H^2(B,\Z/2)$ generated by $e(\pi)\mod 2$.
    \end{theorem}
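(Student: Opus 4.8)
The plan is to split the principal $T^k$-bundle into a tower of principal circle bundles with primitive Euler classes, identify the bottom stage using the classification of simply-connected $5$-manifolds, and then climb the tower applying Theorem~\ref{T:S1_BUNDLES} at each step. First I would note that by Lemma~\ref{L:PR_BDL_TOPOLOGY} the hypothesis ``$P$ simply-connected'' is indeed equivalent to ``$e(\pi)$ extends to a basis of $H^2(B)$'', and then use Lemma~\ref{L:TORUS_BDL_STR} to write $P = P_k \xrightarrow{\pi_k} P_{k-1} \to \cdots \xrightarrow{\pi_1} P_0 = B$, where $P_j = P/T^{k-j}$ is a principal $T^j$-bundle over $B$ with Euler class $(e_1(\pi),\dots,e_j(\pi))$, $\dim P_j = 4+j$, and $\pi_j$ is the principal circle bundle with Euler class $\pi_{j-1}^{*}\cdots\pi_1^{*}e_j(\pi)$. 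Since $(e_1(\pi),\dots,e_k(\pi))$ extends to a basis of $H^2(B)$, so does every initial segment $(e_1(\pi),\dots,e_j(\pi))$; hence by Lemma~\ref{L:PR_BDL_TOPOLOGY} each $P_j$ is simply-connected and $\pi^{*}\colon H^2(B)\to H^2(P_j)$ is surjective with kernel $\langle e_1(\pi),\dots,e_j(\pi)\rangle$. Under this identification the Euler class of $\pi_j$ is the class of $e_j(\pi)$ in $H^2(B)/\langle e_1(\pi),\dots,e_{j-1}(\pi)\rangle$, which is primitive because $e_1(\pi),\dots,e_j(\pi)$ is part of a basis.

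Next I would treat the base case $P_1\xrightarrow{\pi_1}B$ over the closed simply-connected $4$-manifold $B$. Here the Gysin sequence, together with $H^1(B)=H^3(B)=0$ and the fact that cup product with the primitive class $e_1(\pi)$ maps $H^2(B)$ onto $H^4(B)\cong\Z$, shows that $P_1$ has torsion-free cohomology with $H^2(P_1)\cong H^3(P_1)\cong\Z^{b_2(B)-1}$ and $H^4(P_1)=0$. By the classification of closed simply-connected $5$-manifolds with torsion-free homology (Smale and Barden; cf.\ also \cite{DL05}), $P_1$ is diffeomorphic to a connected sum of copies of $S^2\times S^3$ and at most one copy of $S^2\ttimes S^3$, hence is of the form \eqref{EQ:COND} with $b_2(P_1)=b_2(B)-1$; and by Lemma~\ref{L:PR_BDL_CHAR_CL}, $P_1$ is spin precisely when $w_2(B)$ lies in $\langle e_1(\pi)\rangle\bmod 2$. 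This is the one place where genuinely external input is used, and I expect it to be the main obstacle in the sense that everything above it is formal.

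For the inductive step, suppose $2\le j\le k$ and $P_{j-1}$ is of the form \eqref{EQ:COND}, of dimension $j+3\ge 5$. Since $\pi_j$ has primitive Euler class, Theorem~\ref{T:S1_BUNDLES} applies and shows that $P_j$ is again of the form \eqref{EQ:COND}, with $b_2(P_j)=b_2(P_{j-1})-1$ and $b_i(P_j)=b_{i-1}(P_{j-1})+b_i(P_{j-1})$ for $3\le i\le j+1$, the remaining Betti numbers being fixed by Poincaré duality; in particular $b_2(P_j)=b_2(B)-j$. It then remains to solve this recursion, and I would prove by induction on $j$ that $b_i(P_j)=a_{ji}\bigl(b_2(B)-j\bigr)$ for $2\le i\le j+2$. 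The case $j=1$ is the base case (note $a_{1,2}(r)=a_{1,3}(r)=r$), the values at $i=2$ and $i=j+2$ are immediate from $b_2(P_j)=b_2(B)-j$ together with the symmetry $a_{k,i}(r)=a_{k,k+4-i}(r)$, and the remaining cases reduce—via Pascal's rule $\binom{j}{m}=\binom{j-1}{m}+\binom{j-1}{m-1}$ applied to the three binomial coefficients in the definition of $a_{ki}$—to the purely arithmetic identity $a_{j-1,i-1}(r+1)+a_{j-1,i}(r+1)=a_{j,i}(r)$. Taking $j=k$ yields $b_i(P)=a_{ki}(b_2(B)-k)$ for all $2\le i\le n-2$. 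This is routine bookkeeping once the identity above is checked.

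Finally, for the spin type of $P$ itself I would not track it through the tower but apply Corollary~\ref{C:P_W2} directly to the principal $T^k$-bundle $P\to B$, using that $B$ is orientable: $P$ is spin if and only if $w_2(B)\in\langle e_1(\pi),\dots,e_k(\pi)\rangle\bmod 2$, which is exactly the asserted condition. Combined with the Betti number computation and the fact (Lemma~\ref{L:COND_DIFFEO}) that a manifold of the form \eqref{EQ:COND} is determined by its dimension, its Betti numbers, and whether it is spin, this completes the proof.
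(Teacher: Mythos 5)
Your proposal is correct and follows essentially the same route as the paper: decompose the $T^k$-bundle into a tower of circle bundles via Lemma~\ref{L:TORUS_BDL_STR}, settle the case $k=1$ by the Smale--Barden classification (as in Duan--Liang), apply Theorem~\ref{T:S1_BUNDLES} inductively and verify the binomial identity for $a_{ki}$, and read off the spin condition from Lemma~\ref{L:PR_BDL_CHAR_CL} (Corollary~\ref{C:P_W2}). The only differences are expository: you spell out the primitivity of the intermediate Euler classes and sketch the Gysin computation behind the $5$-dimensional base case, which the paper delegates to citations.
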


    \begin{proof}
        The claims on simply-connectedness and the spin condition follow from Lemmas \ref{L:PR_BDL_TOPOLOGY} and \ref{L:PR_BDL_CHAR_CL}. By Lemma \ref{L:TORUS_BDL_STR}, the bundle $\pi$ can be decomposed into a sequence of principal $S^1$-bundles, which, by Lemma \ref{L:PR_BDL_TOPOLOGY}, all have simply-connected total space. We now proceed by induction.

        The case $k=1$ is a consequence of the classification of closed, simply-connected 5-manifolds by Smale \cite{Sm62} and Barden \cite{Ba65} and was treated by Duan and Liang \cite{DL05}. Now assume that $B^n$ is a manifold of the form \eqref{EQ:COND} with $b_i(B)=a_{ki}(r)$ for some $r\in\N$ and let $P\xrightarrow{\pi}B$ be a principal $S^1$-bundle with $P$ simply-connected. Then, by Theorem \ref{T:S1_BUNDLES}, the manifold $P$ is also of the form \eqref{EQ:COND} and we have
        \begin{align*}
            b_2(P)=b_2(B)-1=r-1=a_{k+1,2}(r-1)
        \end{align*}
        and
        \begin{align*}
            b_i(P)=&b_{i-1}(B)+b_i(B)=a_{k,i-1}(r)+a_{k,i}(r)\\
            =& (i-3)\binom{k}{i-2}%{k\choose i-2}
            +r\binom{k}{i-3}%{k\choose i-3}
            +(3+k-i)\binom{k}{i-4}\\%{k\choose i-4}\\
            &+(i-2)\binom{k}{i-1}%{k\choose i-1}
            +r\binom{k}{i-2}%{k\choose i-2}
            +(2+k-i)\binom{k}{i-3}\\%{k\choose i-3}\\
            =& (i-2)\binom{k+1}{i-1}%{k+1\choose i-1}
            -\binom{k}{i-2}%{k\choose i-2}
            +r\binom{k+1}{i-2}%{k+1\choose i-2}
            +(3+k-i)\binom{k+1}{i-3}%{k+1\choose i-3}
            -\binom{k}{i-3}\\%{k\choose i-3}\\
            =& (i-2)\binom{k+1}{i-1}%{k+1\choose i-1}
            +(r-1)\binom{k+1}{i-2}%{k+1\choose i-2}
            +(3+k-i)\binom{k+1}{i-3}%{k+1\choose i-3}
            =a_{k+1,i}(r-1).
        \end{align*}
        for $2<i<n-2$.
    \end{proof}
    \begin{proof}[Proof of Theorem \ref{T:COHOM_4}]
        If $M$ is a closed, simply-connected $n$-manifold with a free action of the torus $T^{n-4}$, then, by taking the quotient $B=M/T^{n-4}$, we obtain a principal $T^{n-4}$-bundle over the simply-connected $4$-manifold $B$ with total space $M$. Hence, we can apply Theorem \ref{T:Tk_BUNDLES}.

        Conversely, by Theorem \ref{T:Tk_BUNDLES} any $n$-manifold $M$ of the form \eqref{EQ:COND} with $b_i(M)=a_{n-4,i}(b_2(M))$ is the total space of a principal $T^{n-4}$-bundle over
        \[B=\#_{b_2(M)+n-4}\C P^2 \]
        (or any other closed, simply-connected non-spin 4-manifold $B$ with $b_2(B)=b_2(M)+n-4$) with Euler class $e\in H^2(B)^{n-4}$ that can be extended to a basis of $H^2(B)$ and so that $w_2(B)$ is contained in the subspace generated by $e\mod 2$ if and only if $M$ is spin.
    \end{proof}

    \begin{remark}
        \label{R:COHOM_4}
        Note that a closed, simply-connected $n$-manifold $M$ with $n\geq 4$ cannot admit a free action of a torus $T^k$ with $k>n-4$. To see this, assume that such an action exists. Then, by dividing out a subtorus of dimension $n-4$, we obtain a free action of $T^{n-4-k}$ on the simply-connected $4$-manifold $M/T^{n-4}$. However, a simply-connected 4-manifold has positive Euler characteristic, thus admitting no free torus action by Lemma \ref{L:FREE_ACTION_CHAR}.
    \end{remark}

    \begin{proof}[Proof of Corollary~\ref{C:COHOM_2}]
        First, suppose that such an action exists. By taking the quotient of $M$ by the free subaction of cohomogeneity 6, we obtain a closed, simply-connected $6$-manifold $M/T^{n-6}$ with an effective action of $T^4$. By the classification of Oh \cite{Oh82}, the manifold $M/T^{n-6}$ is of the form \eqref{EQ:COND} and the Betti numbers satisfy the assumptions of Theorem \ref{T:COHOM_4}. Hence, there exists a free $T^2$-action on $M/T^{n-6}$. By the lifting results of \cite{HY,Su}, $M$ therefore admits a free $T^{n-4}$-action, and the claim follows from Theorem \ref{T:COHOM_4}.

        Conversely, if $M$ is of the form \eqref{EQ:COND} with $b_i(M)=a_{ki}(b_2(M))$ for all $2\leq i\leq n-2$, then, by Theorem \ref{T:Tk_BUNDLES}, $M$ is the total space of a principal $T^{n-4}$-bundle over $B=\#_{b_2(M)+n-4}\C P^2$. By the classification of closed, simply-connected 4-manifolds with an effective $T^2$-action by Orlik and Raymond \cite{OR70}, $B$ admits an effective $T^2$-action. Hence, by the lifting results of \cite{HY,Su}, $M$ admits a cohomogeneity-$2$ torus action that contains a free subaction of cohomogeneity $4$, in particular it contains a free subaction of cohomogeneity $6$.
    \end{proof}
    We note that it follows from the proof of Corollary \ref{C:COHOM_2} that, if $M$ admits a cohomogeneity-two torus action that contains a free subaction of cohomogeneity six, then $M$ also admits a (possibly different) cohomogeneity-two torus action with a free subaction of cohomogeneity four. 
    
    \begin{remark}\label{R:NO.FREE.SUBTORUS}
        Note that not all cohomogeneity-two actions of $T^{n-2}$ on a closed, simply-connected $n$-manifold $M$ admit a free subaction of cohomogeneity six. Indeed, if every involution of $T^{n-2}$ is contained in one of the isotropy subgroups of the action, every $T^1$-subgroup of $T^{n-2}$ necessarily intersects non-trivially with an isotropy subgroup. Such an action can for example be constructed as follows: 

        Let $A=\{0,1\}^{n-2}\setminus\{0\}$ and consider a $(2^{n-2}-1)$-gon, where each edge is labeled by one of the vectors in $A$ so that each element of $A$ appears precisely once. It is easily verified that this is a legally weighted orbit space in the sense of \cite[Section 2]{GGK14}, and therefore defines closed, simply-connected $n$-manifold $M$ with a cohomogeneity-two torus action for which $T^1(v)$ appears as an isotropy subgroup for all $v\in A$, where $T^1(v)$ is the circle in $T^{n-2}$ with slope $v$. Hence, by construction, all involutions of $T^{n-2}$ are contained in an isotropy subgroup. We thank Lee Kennard and Lawrence Mouillé for providing this example.
    \end{remark}

    \begin{proof}[Proof of Corollary~\ref{C:RIC>0}]
        We use the core metric construction introduced by Burdick \cite{Bu19} to construct a metric of positive Ricci curvature on each quotient manifold. By \cite[Theorem C]{Bu19}, \cite[Theorem B]{Bu20} and \cite[Theorem C]{Re21}, spheres, complex projective spaces and total spaces of linear sphere bundles over spheres and complex projective spaces admit core metrics, where in the latter case the dimension is at least 6. Hence, by \cite[Theorem B]{Bu19}, any finite connected sum of such manifolds admits a metric of positive Ricci curvature. In dimension 5, it was shown by Sha and Yang \cite[Theorem 1]{SY91}, that any 5-manifold of the form \eqref{EQ:COND} admits a metric of positive Ricci curvature. Finally, by a classical result of Nash \cite[Theorem 3.5]{Na79}, projective bundles over spheres admit metrics of positive Ricci curvature.

        Hence, for each manifold $M$ appearing in Corollaries \ref{C:FREE_TORUS} and \ref{C:COHOM_2} and in Theorems \ref{T:FREE_CIRCLE} and \ref{T:COHOM_4}, and for the free torus action considered in the proof of the corresponding result, the quotient admits a metric of positive Ricci curvature. Hence, $M$ is the total space of a principal torus bundle over a manifold with a metric of positive Ricci curvature. Since $M$ is simply-connected, it follows from the lifting result of Gilkey--Park--Tuschmann \cite{GPT98}, that $M$ admits a metric of positive Ricci curvature that is invariant under the corresponding torus action.
    \end{proof}
    
\begin{conflict}
    The authors have no competing interest to declare.
\end{conflict}

\begin{funding}
    Both authors acknowledge funding by the Deutsche Forschungsgemeinschaft (DFG, German Research Foundation) -- 281869850 (RTG 2229) and grant GA 2050 2-1 within the SPP 2026 ``Geometry at Infinity''. Further, P.R. acknowledges funding by the SNSF-Project 200020E\textunderscore 193062 and the DFG-Priority programme SPP 2026.
\end{funding}
 
\bibliographystyle{plainurl}
\bibliography{References}
\end{document}